\documentclass[11pt]{scrartcl}
		\usepackage[utf8]{inputenc}
		\usepackage[T1]{fontenc}
		\usepackage[UKenglish]{babel}
		\usepackage{amsmath, amsfonts, amsthm, amssymb, mathtools, stmaryrd}

		\usepackage{thmtools, thm-restate}

		\usepackage{xcolor}
        \definecolor{darkblue}{rgb}{0.0, 0.0, 0.5}\usepackage[pdfusetitle,colorlinks,linkcolor=darkblue,citecolor=darkblue,urlcolor=darkblue]{hyperref}

        \usepackage[nameinlink,capitalize,noabbrev]{cleveref}
		\usepackage{csquotes}
\usepackage[osf,sc]{mathpazo}
		\addtokomafont{disposition}{\normalfont\bfseries}
		\usepackage{relsize}
		\usepackage{microtype}
		\usepackage{multicol}
		\usepackage{pdfpages}

		\recalctypearea

		\usepackage[a4paper,margin=1in]{geometry}
		
		\usepackage{graphicx}
		
		\usepackage{abstract}
		\usepackage[inline]{enumitem}
		\usepackage{booktabs}
		\usepackage{todonotes}

		\usepackage[style=numeric-comp,maxbibnames=99,maxcitenames=99,backend=biber,bibencoding=utf8,backref=true,sorting=none]{biblatex} 
		\AtEveryBibitem{
        	\clearfield{urldate}
        	\clearfield{urlyear}
        	\clearfield{urlmonth}
        	\clearfield{publisher}
        	\clearfield{issn}
        	\clearfield{isbn}
        	\clearfield{editor}
        	\clearfield{editors}
        	\clearlist{editors}
        	\clearlist{editor}
        	\clearfield{month}
        	\clearfield{day}
        	\clearlist{language}
        	\clearlist{language}
        	\clearlist{Language}
        	\clearlist{langid}
        }
\DeclareSourcemap{
        	\maps[datatype=bibtex]{
        		\map[overwrite]{
        			\step[fieldsource=doi, final]
        			\step[fieldset=url, null]
        			\step[fieldset=eprint, null]
        			\step[fieldset=isbn, null]
        		}
        	}
        }
\DeclareDelimFormat{finalnamedelim}{\ifnumgreater{\value{liststop}}{2}{\finalandcomma}{}\addspace\&\space}
        
        \DefineBibliographyExtras{UKenglish}{\def\finalandcomma{\addcomma}}

		\usepackage{tikz}
		\usetikzlibrary{cd}
		\usepackage{microtype}
		
		\theoremstyle{definition}
		\newtheorem{definition}{Definition}[section]
			
		\newtheorem{remark}[definition]{Remark}
		
		\theoremstyle{plain}
		
		\newtheorem{example}[definition]{Example}
		\newtheorem{lemma}[definition]{Lemma}
		\newtheorem{corollary}[definition]{Corollary}
		\newtheorem{theorem}[definition]{Theorem}
		
		\newtheorem{question}[definition]{Question}
		\newtheorem{proposition}[definition]{Proposition}

		\newtheorem{fact}[definition]{Fact}
		\Crefname{fact}{Fact}{Facts}
		
		\theoremstyle{remark}
		\newtheorem{claim}[definition]{Claim}
		\newtheorem{plainexample}[definition]{Example}
		\Crefname{claim}{Claim}{Claims}
		\newenvironment{claimproof}[1][Proof of Claim]{\begin{proof}[#1] }{ \end{proof}}
		\setlist[enumerate, 1]{font=\upshape, noitemsep, nolistsep}
		\setlist[enumerate, 2]{font=\upshape, noitemsep, nolistsep}
		\setlist[itemize, 1]{noitemsep, nolistsep,font=\upshape}

		\newlist{assertions}{enumerate}{4}
		\setlist[assertions]{font=\upshape, noitemsep, nolistsep}
		\setlist[assertions, 1]{label=(\roman*)}
\Crefname{assertionsi}{Assertion}{Assertions}

		\renewcommand\phi\varphi
		\renewcommand\epsilon\varepsilon
		
		\DeclareMathOperator\Hom{Hom}
		
		\title{Topological and Geometric Perspectives on Homomorphism Indistinguishability}
		\usepackage{authblk}

        \author[1]{Josse van Dobben de Bruyn}
        \author[2]{J\'er\'emie Marqu\`es}
        \author[3]{David E.\ Roberson}
        \author[4]{Tim Seppelt}
        \author[5]{Gian Luca Spitzer}
        \author[1]{Peter Zeman}
        \affil[1]{Charles University, Czech Republic}
        \affil[2]{Université Côte d'Azur, France}
        \affil[3]{Technical University of Denmark}
		\affil[4]{IT-Universitetet i København, Denmark}
        \affil[5]{Université de Bordeaux, France}

		\DeclareMathOperator{\cl}{cl}
		\DeclareMathOperator{\rk}{rk}
		\DeclareMathOperator{\supp}{supp}
		\DeclareMathOperator{\aut}{aut}
		\DeclareMathOperator{\surj}{surj}
		\DeclareMathOperator{\indsub}{indsub}
		\DeclareMathOperator{\sub}{sub}
        \DeclareMathOperator{\emb}{emb}
		\DeclareMathOperator{\spsub}{spsub}
		\DeclareMathOperator{\ev}{ev}
		\DeclareMathOperator{\tw}{tw}
		\newcommand{\opfrac}{{\operatorname{frac}}}
		\DeclareMathOperator{\dist}{dist}
        \DeclareMathOperator{\Spec}{Spec}

		\DeclarePairedDelimiter{\multiset}{\{\!\!\{}{\}\!\!\}}

		\newcommand{\lG}{\mathcal{G}^\circ}
		\newcommand{\G}{\mathcal{G}}
        \newcommand{\lZ}{\mathcal{Z}^\circ}
		\newcommand{\Z}{\mathcal{Z}}
		\newcommand{\C}{\mathbb{C}}
		\newcommand{\R}{\mathbb{R}}

        \renewcommand\theta\vartheta

        \newcommand{\defiff}{:\!\!\iff}
\begin{document}
	\maketitle

	\begin{abstract}
		Two graphs $G$ and $H$ are \emph{homomorphism indistinguishable}
		over a graph class $\mathcal{F}$ if, for every graph $F \in \mathcal{F}$,
		the number of homomorphisms from $F$ to $G$ is equal to the number of homomorphisms from $F$ to $H$.
		Lov\'asz (\emph{Acta Mathematica Academiae Scientiarum Hungarica}, 1967)
		showed that two graphs are isomorphic if, and only if, they are homomorphism indistinguishable over all graphs.
		Subsequently, homomorphism indistinguishability relations of a long list of natural graph classes have been equated 
		with natural graph isomorphism relaxations.

		Given the wealth of such results,
		Atserias, Kolaitis, \& Wu (\textsmaller{LICS}~2021)
		asked for an axiomatic characterisation of homomorphism indistinguishability relations.
		By exhibiting topological and geometric structure associated with homomorphism indistinguishability,
		we derive such an axiomatic characterisation.
        Here, a central ingredient is a novel characterisation of graph parameters of the form $\hom(F, \star)$ for some graph $F$ alternative to a previous result of Lov\'asz \& Schrijver (\textsmaller{JCTA}~2010).
		Moreover, we investigate the topology of homomorphism indistinguishability and
		discuss repercussions for the Ulam--Kelly Reconstruction Conjecture.
	\end{abstract}

    \nocite{lovasz_operations_1967,atserias_expressive_2021,lovasz_dual_2010}

	\section{Introduction}

	Graph data arises in various contexts ranging from program flows, social networks, protein interactions, and trophic networks.
	A central task when presented with graph data is to decide whether two graphs are \emph{isomorphic}, i.e.,
	structurally equivalent.
	However, the algorithmic problem of deciding whether two graphs are isomorphic is riddled with
	complexity-theoretic difficulties \cite{grohe_graph_2020}.
	Furthermore, in many applications, isomorphism is too brittle an equivalence relation between graphs which does not account for noise or perturbations.
	These two shortcomings of graph isomorphism have sparked a decade-long line of work 
	on \emph{graph isomorphism relaxations}, i.e.\
	equivalence relations $\approx$ between graphs that satisfy
	for all graphs $G$ and $H$ that
	\begin{equation} \label{eq:graph-iso-relax}
		G \cong H \implies G \approx H.
	\end{equation}

	Graph isomorphism relaxations may stem from algorithmic heuristics such as the Weisfeiler--Leman algorithm \cite{kiefer_weisfeiler-leman_2020},
	machine learning paradigms such as Graph Neural Networks \cite{grohe_logic_2021,xu_how_2019,morris_weisfeiler_2019}, or from
	theoretical considerations rooted e.g.\ in spectral graph theory or quantum information theory \cite{atserias_quantum_2019}.
	Despite the long list of intensively studied graph isomorphism relaxations,
	they lack a coherent theory predicting their computational complexity or distinguishing power \cite{grohe_thoughts_2025}.

	A proposal for such a theory, which has been brought forward in recent years, is 
	\emph{homomorphism indistinguishability} \cite{seppelt_homomorphism_2024}.
	Two graphs $G$ and $H$ are \emph{homomorphism indistinguishable} over a graph class $\mathcal{F}$,
	in symbols $G \equiv_{\mathcal{F}} H$,
	if, for every graph $F \in \mathcal{F}$, the number $\hom(F, G)$ of homomorphisms 
	from $F$ to $G$ is equal to the number $\hom(F, H)$ of homomorphisms from $F$ to~$H$.
	A diverse range of graph isomorphism relaxations including isomorphism \cite{lovasz_operations_1967}, 
	quantum isomorphism \cite{mancinska_quantum_2020},
	and
	equivalences with respect to counting logic fragments \cite{dvorak_recognizing_2010,dell_lovasz_2018,schindling_homomorphism_2025,montacute_pebble-relation_2024,dawar_lovasz-type_2021,adler_going_2026}
	can be characterised as homomorphism indistinguishability relations over natural graph classes.
	Other examples include graph isomorphism relaxations originating from quantum information theory \cite{mancinska_quantum_2020,seppelt_quantum_2025}, optimisation \cite{kar_npa_2026,roberson_lasserre_2024}, category theory \cite{dawar_lovasz-type_2021,abramsky_discrete_2022}, and machine learning \cite{zhang_beyond_2024,gai_homomorphism_2025}.

	These results demonstrate the versatility of homomorphism indistinguishability
	and justify studying this notion more abstractly. 
	In this context, three research strands have emerged:
	The first studies the computational complexity of homomorphism indistinguishability relations $\equiv_{\mathcal{F}}$ in relation with graph-theoretic or model-theoretic properties of the graph class $\mathcal{F}$ \cite{seppelt_algorithmic_2024,cerny_homomorphism_2026,curticapean_dense_2026}.
	The second is concerned with the distinguishing power of $\equiv_{\mathcal{F}}$ and aims to understand for which relations $\equiv_{\mathcal{F}}$ differ from isomorphism~$\cong$, cf.\ Roberson's conjectures \cite{roberson_oddomorphisms_2022} and \cite{neuen_distinguishing_2026,jimenez_homomorphism_2026,kristjansson_oddomorphisms_2026,kristjansson_counterexamples_2026}.
	The final strand is concerned with abstract properties of homomorphism indistinguishability relations~\cite{seppelt_logical_2024,atserias_expressive_2021,abramsky_discrete_2022,lichter_limitations_2024}.
	In this context, \textcite{atserias_expressive_2021} posed the following problem:
	\begin{quote}
		\enquote{One of the ultimate goals is to characterize the relaxations of graph isomorphism that are captured by restrictions of the left profile [$\ldots$].} \cite[Section~VIII]{atserias_expressive_2021}
	\end{quote}
	In other words, they asked which graph isomorphism relaxations are homomorphism indistinguishability relations. 
	As our central  contribution (\cref{thm:axiomatisation}), we resolve this ultimate goal.
	Towards \cref{thm:axiomatisation}, we significantly advance the structural understanding of homomorphism indistinguishability
	by
	the means of geometry and topology.
    As corollaries, we answer open questions of \textcite{roberson_oddomorphisms_2022,botteron_communication_2024,atserias_expressive_2021} in \cref{xmpl:D-frac,lem:roberson-question,ex:fractional-clique-number} and simplify arguments from \cite{atserias_expressive_2021} in \cref{subsec:FO-equiv}.

    \subsection*{Techniques and results}
    
	For every graph class $\mathcal{F}$, its homomorphism indistinguishability relation $\equiv_{\mathcal{F}}$ is a graph isomorphism relaxation, i.e., satisfies \eqref{eq:graph-iso-relax}.
	In fact, this construction is part of a Galois correspondence between graph classes (ordered by inclusion) and graph isomorphism relaxations (ordered by refinement) given by the maps
	\begin{align}\label{eq:galois-hom}
		\mathcal{F} \mapsto {\equiv_{\mathcal{F}}} \quad \text{and}\quad 
		{\approx} &\mapsto \mathcal{K}({\approx}) \coloneqq \{F \mid \forall G, H.\ G \approx H \implies \hom(F, G) = \hom(F, H)\}.
	\end{align}
	Of particular interest are the objects that are invariant under going back and forth through this correspondence.
	The graph classes $\mathcal{F}$ such that $\mathcal{F} = \mathcal{K}(\equiv_{\mathcal{F}})$
	are precisely the homomorphism distinguishing closed  graph classes.
	A graph class $\mathcal{F}$ is \emph{homomorphism distinguishing closed} \cite{roberson_oddomorphisms_2022} if, for every $F' \not\in \mathcal{F}$,
	the relation $\equiv_{\mathcal{F} \cup \{F'\}}$ is strictly finer than $\equiv_{\mathcal{F}}$.
    For homomorphism distinguished closed graph classes $\mathcal{F}_1$ and $\mathcal{F}_2$,
    the problem of comparing the graph isomorphism relaxations $\equiv_{\mathcal{F}_1}$ and $\equiv_{\mathcal{F}_2}$ reduces to comparing the graph classes $\mathcal{F}_1$ and $\mathcal{F}_2$.
    That is, $\equiv_{\mathcal{F}_1}$ refines $\equiv_{\mathcal{F}_2}$ if, and only if, $\mathcal{F}_1 \supseteq \mathcal{F}_2$.
    This approach has been used for measuring the distinguishing power of graph isomorphism relaxations from mathematical programming \cite{roberson_lasserre_2024}, counting logic \cite{adler_going_2026,schindling_homomorphism_2025}, and machine learning \cite{gai_homomorphism_2025,zhang_beyond_2024}.
    Therefore, it is desirable to identify natural criteria for a graph class $\mathcal{F}$ to be homomorphism distinguishing closed.
	Roberson's (strong) conjecture \cite{roberson_oddomorphisms_2022} asserting that \emph{every graph class closed under taking minors and disjoint unions is homomorphism distinguishing closed} 
	has received much attention as the central hypothesis regarding the distinguishing power of homomorphism indistinguishability relations \cite{neuen_distinguishing_2026,jimenez_homomorphism_2026,kristjansson_oddomorphisms_2026}.
    Its recent refutation by \textcite{kristjansson_counterexamples_2026} warrants identifying sufficient conditions for a graph class to be homomorphism distinguishing closedness other than being minor-closed.

    Dually, the graph isomorphism relaxations $\approx$ such that ${\approx} = {\equiv_{\mathcal{K}({\approx})}}$
	are precisely the homomorphism indistinguishability relations.
	Thus, the quest for homomorphism distinguishing closed graph classes and \citeauthor{atserias_expressive_2021}'s \enquote{ultimate goal} both require to understand the objects preserved under the Galois correspondence~\eqref{eq:galois-hom}. 

	We propose to study the Galois correspondence~\eqref{eq:galois-hom} via an analogy with one of the most fundamental Galois correspondences in mathematics: 
	the correspondence between sets $U \subseteq \mathbb{C}^n$, so-called \emph{varieties}, and \emph{ideals} $\mathfrak{a}$ of the polynomial ring $\mathbb{C}[x_1, \dots, x_n]$ given by
	\begin{align*}
		U &\mapsto I(U) \coloneqq \left\{p \in \mathbb{C}[x_1, \dots, x_n] \mid\forall u \in U .\  p(u) = 0 \right\}, \\
		\mathfrak{a} & \mapsto V(\mathfrak{a}) \coloneqq \left\{u \in \mathbb{C}^n \mid \forall p \in \mathfrak{a}.\ p(u) = 0 \right\}.
	\end{align*}
	The objects invariant under this correspondence are well understood:
	By Hilbert's Nullstellensatz, the ideals $\mathfrak{a}$ such that $\mathfrak{a} = I(V(\mathfrak{a}))$ are precisely the \emph{radical} ideals of $\mathbb{C}[x_1, \dots, x_n]$, i.e., the ideals $\mathfrak{a}$ satisfying that, if $p^\ell \in \mathfrak{a}$ for $p \in \mathbb{C}[x_1, \dots, x_n]$ and $\ell \geq 1$, then $p \in \mathfrak{a}$.
	Dually, a variety $U \subseteq \mathbb{C}^n$ satisfies $U = V(I(U))$ if, and only if, it is closed in the Zariski topology on $\mathbb{C}^n$.
	
	We translate these characterisations of invariant objects to homomorphism indistinguishability. 
	On the side of graph classes, this implies that the homomorphism distinguishing closed sets constitute the closed sets of a Zariski-like topology (\cref{lem:intersections,lem:componental-closed}).
	Homomorphism indistinguishability was not previously known to admit topological structure.
	As a preliminary investigation of this topological space, we show that it is disconnected, non-compact, and Fr\'echet (\cref{prop:disconnected-noncompact,prop:frechet}).
	On the side of relations, we obtain the desired characterisation of homomorphism indistinguishability relations (\cref{thm:axiomatisation}).

	How can the Galois correspondence~\eqref{eq:galois-hom} be cast into the language of algebraic geometry?
	It was observed in \cite{seppelt_logical_2024} that homomorphism indistinguishability relations $\equiv_{\mathcal{F}}$ satisfy a multitude of preservation properties.
	For example, all homomorphism indistinguishability relations are \emph{preserved under categorical products}, i.e.\ for all graphs $G$, $H$, and $K$, if $G \equiv_{\mathcal{F}} H$, then $G \otimes K \equiv_{\mathcal{F}} H \otimes K$, where $\otimes$ denotes the categorical product.
	These preservation properties can be understood as asserting that homomorphism indistinguishability relations are congruences for the semiring on the set of (isomorphism types of) looped-simple graphs $\lG$ with disjoint union as addition and categorical product as multiplication.
	By a standard Grothendieck construction and an extension of scalars,
	we show that homomorphism indistinguishability relations form a class of ideals of the semiring algebra $\mathbb{C}\lG$.
	The algebra $\mathbb{C}\lG$ is a quotient of the complex polynomial ring in countably many variables.
	A version of Hilbert's Nullstellensatz applies to this algebra and lays the foundations for our \cref{thm:axiomatisation}.
	Thinking of the elements of $\mathbb{C}\lG$ as polynomials also allows us to recast the homomorphism distinguishing closed graph classes  as varieties.

	\subsection*{Related work}

	The algebraic structure of homomorphism counts is well explored from multiple perspectives ranging from algorithmics \& complexity \cite{curticapean_homomorphisms_2017,chen_counting_2016,roth_parameterised_2026}, and extremal combinatorics \cite{lovasz_large_2012} to database theory \cite{kwiecien_determinacy_2022}.
	We highlight the following selection of related work.

	By using an algebra whose elements are formal linear combinations of (labelled) graphs,
	\textcite{lovasz_dual_2010} characterised the functions $p \colon \lG \to \mathbb{R}$ that are of the form $p(\star) = \hom(F, \star)$
	for some graph~$F$.
	Their characterisation involves a condition which asserts that certain infinite matrices called connection matrices are positive semidefinite. 
	It is related to similar characterisations for functions $\hom(\star, G)$ and categorical generalisations proven in \cite{freedman_reflection_2007,lovasz_semidefinite_2009,lovasz_large_2012}.
	We prove an alternative such characterisation in \cref{cor:fls-alternative}.
	Our algebra set-up differs from the one from \cite{lovasz_dual_2010}:
	While their algebra is multisorted, we use tools from commutative algebra and elementary algebraic geometry. 

	Another ingredient of this work are graph semirings and homomorphisms on such semirings.
	\textcite{zuiddam_asymptotic_2019} studied similar objects when characterising the Shannon capacity of a graph in terms of asymptotic spectra.
	Here, the \emph{asymptotic spectrum} of a graph is the set of all semiring homomorphisms $\phi \colon (\mathcal{G}, +, \boxtimes, \emptyset, K_1) \to \mathbb{R}_{\geq 0}$ such that for all graphs $G, H$, if there is a homomorphism $\overline{G} \to \overline{H}$, then $\phi(G) \leq \phi(H)$.
	Here, $\boxtimes$ denotes the strong graph product and $\overline{G}$ the complement of $G$.
	We study a different graph semiring (with the categorical product $\otimes$ rather than the strong product $\boxtimes$). 
	Furthermore, we do not make use of any order structure and Hilbert's Nullstellensatz replaces the Vergleichsstellens\"atze of \textcite{strassen_asymptotic_1988}, see \cite{fritz_unified_2021}.

	In the context of game comonads, cf.\ \cite{dawar_lovasz-type_2021}, \textcite{abramsky_discrete_2022} characterised the homomorphism indistinguishability relations $\equiv_{\mathcal{F}}$ that are \emph{classified by a comonad}, i.e., there exists a comonad $\mathfrak{C}$ on the category of graphs such that $G \equiv_{\mathcal{F}} H$ if, and only if, the cofree $\mathfrak{C}$-coalgebras of $G$ and $H$ are isomorphic.
	The graph classes $\mathcal{F}$ arising here are precisely the \emph{componental} graph classes, i.e., those closed under disjoint union and taking summands. 

	Closest to our axiomatisation of homomorphism indistinguishability relations (\cref{thm:axiomatisation}) is perhaps a result of \textcite{seppelt_logical_2024} characterising the homomorphism indistinguishability relations $\equiv_{\mathcal{F}}$ for which $\mathcal{F}$ can be assumed to have certain closure properties such as being closed under taking (induced) subgraphs or minors.
	In particular, a homomorphism indistinguishability relation $\equiv_{\mathcal{F}}$ is a homomorphism indistinguishability relation over a minor-closed graph class if, and only if, $\equiv_{\mathcal{F}}$ is \emph{preserved under complements}, i.e., $G \equiv_{\mathcal{F}} H \implies \overline G \equiv_{\mathcal{F}} \overline H$ for all graphs $G, H$.
    Combined with our \cref{thm:axiomatisation-simple}, this gives an axiomatisation of homomorphism indistinguishability relations over minor-closed graph classes.

	\section{Preliminaries}

	\paragraph{Graphs.} All graphs are finite, undirected, and without multiple edges, i.e.\ they are \emph{looped-simple}.
	Write $\lG$ for the class of (isomorphism types of) looped-simple graphs.
	A \emph{simple} graph is a graph without loops.
    Write $\G$ for the class of (isomorphism types of) simple graphs.
	A \emph{homomorphism} between graphs $F$ and $G$ is a map $h \colon V(F) \to V(G)$ such that $uv \in E(F)$ implies $h(u)h(v) \in E(G)$;
	in particular, homomorphisms map loops to loops.
    Write $\hom(F, G)$ for the number of homomorphisms $F \to G$.
	The \emph{categorical product} $G \otimes H$ of two graphs $G$ and $H$ has vertex set $V(G \otimes H) = V(G) \times V(H)$ and edges between $gh$ and  $ g'h'$
	if $gg' \in E(G)$ and $hh' \in E(H)$.
	We write $G + H$ for the \emph{disjoint union} of $G$ and $H$.
	A graph is \emph{connected} if it has exactly one connected component; in particular, the vertex-less graph $\boldsymbol 0$ is not connected.
    Write $\Z \subseteq \G$ and $\lZ \subseteq \lG$ for the classes of connected graphs in $\G$ and $\lG$, respectively (\emph{zusammenhängend} means connected).

	\paragraph{Semirings.}
	A \emph{semiring} is a tuple $(S, +, \times, 0, 1)$
	where $S$ is a set equipped with two binary operations $+$ and $\times$ and distinguished elements $0,1 \in S$ satisfying that:
	\begin{assertions}
		\item $(S, +, 0)$ is a commutative monoid,
		\item $(S, \times,1)$ is a monoid,
		\item multiplication distributes over addition: $a \times (b + c) =
		a \times b + a \times c$ and $(a + b) \times c = a \times c + b \times c$
		for all $a, b, c \in S$,
		\item $0 \times a = a \times 0= 0$ for all $a \in S$.
	\end{assertions}
	The semiring is \emph{commutative} if $(S, \times, 1)$ is a commutative monoid, i.e., $a \times b = b \times a$ for all $a, b\in S$.
	The semiring is \emph{cancellative} 
	if the additive monoid $(S, +, 0)$ is \emph{cancellative}, i.e., for all $a, b, c \in S$, if $a + c = b + c$, then $a = b$.

	\begin{lemma}\label{lem:graphs-form-semiring}
		The class of looped-simple graphs $\lG$ with disjoint union $+$ as addition, categorical product $\otimes$ as multiplication, the vertex-less graph $\boldsymbol{0}$ and the one-vertex looped graph $\boldsymbol{1}$ as neutral elements forms a countable cancellative commutative semiring.
	\end{lemma}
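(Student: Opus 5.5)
The plan is to check the semiring axioms one by one, working with isomorphism types throughout. I first show that the operations are well-defined on isomorphism types: if $G \cong G'$ and $H \cong H'$, then $G + H \cong G' + H'$, and $G \otimes H \cong G' \otimes H'$ via the product of the two isomorphisms. Countability follows because every finite looped-simple graph is isomorphic to one on vertex set $\{1,\dots,n\}$ for some $n$, and for each fixed $n$ there are only finitely many edge sets, loops included.

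Next come the monoid laws, each witnessed by an explicit canonical bijection of vertex sets that respects edges. Disjoint union is associative and commutative, and $\boldsymbol 0$ is its neutral element. For the categorical product, the bijections $(g,(h,k)) \mapsto ((g,h),k)$ and $(g,h) \mapsto (h,g)$ preserve and reflect adjacency, because the edge condition is a conjunction of coordinatewise conditions. For the unit law, $G \otimes \boldsymbol 1 \cong G$ via $(g,\ast) \mapsto g$. This works because the single vertex $\ast$ of $\boldsymbol 1$ carries a loop: $(g,\ast)(g',\ast)$ is an edge iff $gg' \in E(G)$ and $\ast\ast \in E(\boldsymbol 1)$, which holds iff $gg' \in E(G)$. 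This also covers loops, since $g = g'$ is allowed. For distributivity, the vertex set $V(G) \times (V(H) \sqcup V(K))$ is canonically $V(G)\times V(H) \sqcup V(G)\times V(K)$. No edges join the two parts, because there are no edges between $H$ and $K$ in $H+K$. Hence $G \otimes (H+K) \cong G\otimes H + G \otimes K$, and the other side follows by commutativity. Finally, $G \otimes \boldsymbol 0$ has an empty vertex set, so it equals $\boldsymbol 0$.

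The only nontrivial point, and the step I expect to be the main obstacle, is cancellativity: from $G + K \cong H + K$ we need $G \cong H$. I will use unique decomposition into connected components. Each graph is the disjoint union of its connected components, and this multiset of isomorphism types is an isomorphism invariant, because an isomorphism maps components bijectively onto components. The multiset of $G+K$ is the multiset sum of those of $G$ and $K$. Multisets cancel: subtract the multiplicities of $K$'s components. So $G$ and $H$ have the same multiset of components, and matching components up gives $G \cong H$.

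An alternative route to cancellativity is Lovász's theorem: $\hom(F, G+K) = \hom(F,G)+\hom(F,K)$ for connected $F$, and then one cancels. The component argument is more elementary, so I will use it.
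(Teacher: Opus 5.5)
Your proof is correct and complete. It checks well-definedness on isomorphism types and countability, derives the monoid, distributivity and annihilation laws from canonical bijections, and obtains cancellativity by cancelling multisets of isomorphism types of connected components. You also correctly identify the loop on $\boldsymbol{1}$ as what makes the unit law work.

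The paper states this lemma without proof and treats it as routine, so there is no argument of the paper's to compare against. Your component-multiset argument for cancellativity is the natural elementary route. The alternative you mention instead uses \eqref{eq:connected} together with Lov\'asz's isomorphism theorem \cite{lovasz_operations_1967}. That route matches the homomorphism-count toolkit the paper uses elsewhere, but it is heavier than needed here.
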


	\section{Which relations are homomorphism indistinguishability relations?}
	\label{sec:which-relations}

	Towards answering this question, 
	we build up a collection of necessary conditions that in conjunction will turn out to be sufficient.
	Full proofs are given in \cref{app:which-relations}.
	In particular, 
	we develop an algebraic machinery that is inspired by the following three basic identities 
	satisfied by homomorphism counts.
	For graphs $G, G_1, G_2, F, F_1, F_2 \in \lG$, and every connected graph $Z \in \lZ$, the following hold
	\cite[(5.28)--(5.30)]{lovasz_large_2012}:
	\begin{align}
		\hom(F_1 + F_2, G) & = \hom(F_1, G) \cdot \hom(F_2, G)\label{eq:coprod},\\
		\hom(F, G_1 \otimes G_2) &= \hom(F, G_1) \cdot \hom(F, G_2) \label{eq:prod}, \\
		\hom(Z, G_1 + G_2) &= \hom(Z, G_1) + \hom(Z, G_2).\label{eq:connected} 
	\end{align}
	These basic identities are not hard to show and demonstrate the categorical nature of homomorphism counts.
	\Cref{eq:prod} implies that every homomorphism indistinguishability $\approx$ is \emph{preserved under categorical products}, i.e.\
	for all graphs $G, H, K \in \lG$,
	\begin{align}
		\label{eq:h1}
		G \approx H \implies K \otimes G \approx K \otimes H.
	\end{align}
	
	\paragraph{Componental graph classes.}
	\Cref{eq:coprod,eq:connected} imply a similar identity for disjoint union instead of $\otimes$ with a small caveat:
	\Cref{eq:connected} only applies to connected graphs.
	We therefore restrict our attention to homomorphism indistinguishability relations over componental graph classes.
	A graph class $\mathcal{F}$ is \emph{componental} if, for all graphs $F_1$, $F_2$, it holds that $F_1, F_2 \in \mathcal{F}$ iff $F_1 + F_2 \in \mathcal{F}$.
	Restricting to componental graph classes is standard in the context of homomorphism indistinguishability \cite{roberson_oddomorphisms_2022,abramsky_discrete_2022,seppelt_logical_2024}
	and arguably does not constitute a significant loss of generality.
	In particular, we are not aware of any natural homomorphism indistinguishability relation over a non-componental graph class.
    Every homomorphism indistinguishability relation $\approx$ over a componental graph class 
	satisfies the following identity\footnote{It was shown in \cite[Theorem~7]{seppelt_logical_2024} that a homomorphism indistinguishability relation $\approx$ satisfies \eqref{eq:h2} if, and only if, it is a homomorphism indistinguishability relation over a componental graph class.}
	 for all graphs $G, H, K \in \lG$,
	\begin{align}
		\label{eq:h2}
		G \approx H &\implies K + G \approx K + H.
	\end{align}

	\paragraph{Semiring congruences.}
	Before adding further necessary conditions, we take a step back and phrase \cref{eq:h1,eq:h2} more abstractly.
	A graph isomorphism relaxation $\approx$ satisfying these conditions is in fact a congruence for a semiring on the class $\lG$ of looped-simple graphs with disjoint union as addition, whose neutral element is the vertex-less graph $\boldsymbol{0}$,
	and with the categorical product as multiplication, whose neutral element is the single-vertex looped graph~$\boldsymbol{1}$ (\cref{lem:graphs-form-semiring}).
	\cref{eq:h1,eq:h2} assert that $\approx$ is a congruence for $\lG$.

	\paragraph{Ring congruences.}
	As a next step, we add further necessary conditions which allow us to work with a ring instead of a semiring.
	To that end, we introduce additive inverses to~$\lG$.
	The \emph{Grothendieck ring} $\mathbb{Z}\lG$ of the semiring $\lG$
	is the standard construction in this context.
	Its elements are of the form $G-H$ for $G, H \in \lG$.
	Here, $G - H = G' - H'$ holds in $\mathbb{Z}\lG$ iff $G + H' = G' + H$ holds in $\lG$.
	A relation $\approx$ is a congruence for $\mathbb{Z}\lG$ iff it satisfies \cref{eq:h1,eq:h2} 
	and the following condition for all graphs $G, H, K \in \lG$
	\begin{align}
		K + G \approx K + H &\implies G \approx H. \label{eq:h3}
	\end{align}
	Congruences of a ring are in one-to-one correspondence with the ring's ideals.
	That is, we may identify relations $\approx$ satisfying \cref{eq:h1,eq:h2,eq:h3} with ideals of $\mathbb{Z}\lG$ (\cref{prop:ideals-to-relations}).
	
	\paragraph{Extension of scalars.}
	Towards applying Hilbert's Nullstellensatz, we extend scalars in order to turn $\mathbb{Z}\lG$ into an algebra over an algebraically closed field.
	Here, the only non-trivial step is from $\mathbb{Z}\lG$ to $\mathbb{Q}\lG \coloneqq \mathbb{Q} \otimes_{\mathbb{Z}} \mathbb{Z}\lG$.
	The elements of $\mathbb{Q}\lG$ are of the form $\frac{G - H}{n}$ for graphs $G, H \in \lG$ and integers $n \geq 1$.
	It holds that $\frac{G - H}{n} = \frac{G'-H'}{n'}$ in $\mathbb{Q}\lG$ iff $n'(G-H) = n(G'-H')$ in $\mathbb{Z}\lG$, which holds iff $n'G + nH' = n'H + nG'$ in $\lG$.
	To that end, we add the necessary condition that, for all graphs $G, H \in \lG$, and integers $n \geq 1$,
	\begin{align}
		\underbrace{G + \dots + G}_{n \text{ times}}
		\approx
		\underbrace{H + \dots + H}_{n \text{ times}}
		&\implies G \approx H. \label{eq:h4}
	\end{align}
	It is immediate from \cref{eq:connected} that every componental homomorphism indistinguishability relation $\approx$ satisfies \cref{eq:h4}.
	Furthermore, it holds that $\approx$ is a congruence of $\mathbb{Q}\lG$ iff 
	it satisfies \cref{eq:h1,eq:h2,eq:h3,eq:h4}.
	Such congruences are in one-to-one correspondence with ideals of $\mathbb{Q}\lG$ (\cref{prop:ideals-to-relations-rational}).
	More specifically, for a relation $\approx$ satisfying \cref{eq:h1,eq:h2,eq:h3,eq:h4}, let
	\[
		\mathfrak{a}_{\approx} \coloneqq \left\{ \frac{G - H}{n} \ \middle|\ G, H \in \lG, G\approx H, n \geq 1\right\}.
	\]
	Then $\mathfrak{a}_{\approx}$ is an ideal of $\mathbb{Q}\lG$.
	Conversely, for an ideal $\mathfrak{b}$ of $\mathbb{Q}\lG$, define a relation $\approx_{\mathfrak{b}}$ via
	\[
		G \approx_{\mathfrak{b}} H \iff G - H \in \mathfrak{b}.
	\]
	Then $\approx_{\mathfrak{b}}$ satisfies \cref{eq:h1,eq:h2,eq:h3,eq:h4}.
	Furthermore, the constructions ${\approx} \mapsto \mathfrak{a}_{\approx}$
	and $\mathfrak{b} \mapsto {\approx_{\mathfrak{b}}}$ are mutually converse.
	It remains to understand the ideals $\mathfrak{a}$ of $\mathbb{Q}\lG$
	that encode homomorphism indistinguishability relations.
	For algebraic convenience, we extend scalars further to $\mathbb{C}\lG \coloneqq \mathbb{C} \otimes_{\mathbb{Q}} \mathbb{Q}\lG$
	without incurring additional axioms (\cref{lem:c-ideals}).

	\paragraph{Hilbert's Nullstellensatz.}
	How should we think of the algebra $\mathbb{C}\lG$?
	It looks much like the polynomial algebra $\mathbb{C}[G_1, G_2, \dots]$ in countably many variables enumerating the graphs in $\lG$.
	In fact, it is a quotient of this polynomial algebra imposing the identities that hold in $\lG$.
	For example, it holds in $\lG$ that $K_2 \otimes K_2 = K_2 + K_2$.
	Nevertheless, we think of the elements of $\mathbb{C}\lG$ as polynomials whose indeterminates are the elements of $\lG$.
	
	Polynomials can be evaluated by substituting field elements for the variables. 
	Similarly, we may \enquote{evaluate} the elements of $\mathbb{C}\lG$ by substituting field elements for the graphs in $\lG$.
	However, such a substitution must respect the intrinsic identities that hold in $\mathbb{C}\lG$.
	This observation is formalised by the notion of a \emph{semiring homomorphism} $\phi \colon \lG \to \mathbb{C}$, i.e.\ maps satisfying, for all graphs $G, H \in \lG$,
	\[
		\phi(G + H) = \phi(G) + \phi(H), \qquad
		\phi(G \otimes H) = \phi(G) \cdot \phi(H), \qquad
		\phi(\boldsymbol{1}) = 1.
	\]
	We write $\Hom(\lG, \mathbb{C})$ for the set of such semiring homomorphisms.
	Crucially, by \cref{eq:prod,eq:connected}, the map $\hom(F, \star) \colon \lG \to \mathbb{C}$
	is a semiring homomorphism for every connected graph~$F \in \lZ$.
	Thus counting homomorphisms from a connected graph amounts to substituting field elements for the \enquote{indeterminates} of $\mathbb{C}\lG$.

	The ideal corresponding to a componental homomorphism indistinguishability relation $\equiv_{\mathcal{F}}$ is the vanishing ideal of the semiring homomorphisms $\hom(F, \star)$ for $F \in \mathcal{F}$.
	More generally, for a set $\Phi \subseteq \Hom(\lG, \mathbb{C})$ of semiring homomorphisms and an ideal $\mathfrak{a}$ of $\mathbb{C}\lG$,
	we define the \emph{vanishing ideal} and the \emph{zero locus}
    \begin{align*}
        I(\Phi) &\coloneqq \{p \in \mathbb{C}\lG \mid \forall \phi \in \Phi.\ \phi(p) = 0\}
        \\
        V(\mathfrak{a}) &\coloneqq \{\phi \in \Hom(\lG, \mathbb{C}) \mid \forall p \in \mathfrak{a}.\ \phi(p) = 0\}.
    \end{align*}
	Since homomorphism indistinguishability relations are vanishing ideals, we first characterise such ideals in terms of the relation $\approx$.
	The key ingredient here is a variant of Hilbert's Nullstellensatz.
	We show in \cref{thm:hilbert-nullstellensatz-semiring} that
    \[
		I(V(\mathfrak{a})) = \sqrt{\mathfrak{a}}
        \qquad
        \text{for every ideal } \mathfrak{a} \subseteq \C\lG.
	\]
	Here, $\sqrt{\mathfrak{a}} \coloneqq \{p \in \mathbb{C}\lG \mid \exists n \geq 1.\ p^n \in \mathfrak{a}\}$ is the \emph{radical} of $\mathfrak{a}$.
	It follows that an ideal  $\mathfrak{a}$ of $\mathbb{C}\lG$ is the vanishing ideal of some $\Phi \subseteq \Hom(\lG, \mathbb{C})$
	if, and only if, it is \emph{radical}, i.e.\ $\mathfrak{a} = \sqrt{\mathfrak{a}}$.
	This condition can be phrased in terms of the relation $\approx$ by observing that elements of $\mathbb{C}\lG$ are up to scalars of the form $G - H$ and applying the Binomial Theorem.
	For all graphs $G, H \in \lG$, and $n \geq 1$, we assert that
	\begin{equation}\label{eq:h5}
		\sum_{\substack{k=0,\ k \text{ even}}}^n \binom{n}{k} G^{\otimes k} \otimes H^{\otimes (n-k)}
		\approx 
		\sum_{\substack{k=1,\ k \text{ odd}}}^n \binom{n}{k} G^{\otimes k} \otimes H^{\otimes (n-k)}
		\implies
		G \approx H.
	\end{equation}

	Let us summarise which class of relations we have axiomatised so far.

	\begin{theorem}[restate=thmPhi, label=thm:semiring-hom-congruence]
		Let $\approx$ be a graph isomorphism relaxation.
		The relation $\approx$ satisfies  \cref{eq:h1,eq:h2,eq:h3,eq:h4,eq:h5}
		if, and only if, 
		there exists a set $\Phi \subseteq \Hom(\lG, \mathbb{C})$
		such that, for all graphs $G, H \in \lG$,
		\[
		G \approx H \iff \forall \phi \in \Phi.\ \phi(G) = \phi(H).
		\]
	\end{theorem}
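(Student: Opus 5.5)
We route both directions through the ideal correspondence of \cref{prop:ideals-to-relations-rational}, the scalar extension of \cref{lem:c-ideals}, and the Nullstellensatz $I(V(\mathfrak{a}))=\sqrt{\mathfrak{a}}$ of \cref{thm:hilbert-nullstellensatz-semiring}.

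\emph{Backward direction.} Suppose $G \approx H \iff \forall \phi\in\Phi.\ \phi(G)=\phi(H)$. Each axiom follows from $\phi$ being a semiring homomorphism into the field $\mathbb{C}$. For \eqref{eq:h1} and \eqref{eq:h2}, multiplicativity and additivity give $\phi(K\otimes G)=\phi(K)\phi(G)$ and $\phi(K+G)=\phi(K)+\phi(G)$. For \eqref{eq:h3}, cancel $\phi(K)$ in $\mathbb{C}$. For \eqref{eq:h4}, divide $n\phi(G)=n\phi(H)$ by $n$. For \eqref{eq:h5}, the two sides evaluate to $\sum_{k \text{ even}}\binom nk \phi(G)^k\phi(H)^{n-k}$ and $\sum_{k\text{ odd}}\binom nk\phi(G)^k\phi(H)^{n-k}$. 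Their difference is $(\phi(H)-\phi(G))^n$ up to sign, so equality forces $\phi(G)=\phi(H)$ because $\mathbb{C}$ has no nilpotents. Finally, $\approx$ is a graph isomorphism relaxation automatically.

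\emph{Forward direction.} Given \eqref{eq:h1}--\eqref{eq:h4}, \cref{prop:ideals-to-relations-rational} yields an ideal $\mathfrak{a}_\approx\subseteq\mathbb{Q}\lG$ with $G\approx H\iff G-H\in\mathfrak{a}_\approx$. Let $\mathfrak{a}=\mathbb{C}\otimes\mathfrak{a}_\approx$ be the extended ideal of $\mathbb{C}\lG$. By \cref{lem:c-ideals} we expect $\mathfrak{a}\cap\mathbb{Q}\lG=\mathfrak{a}_\approx$. This is faithful flatness of $\mathbb{C}$ over $\mathbb{Q}$: pick a $\mathbb{Q}$-basis of $\mathbb{C}$ containing $1$ and compare coordinates. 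Set $\Phi:=V(\mathfrak{a})$. Then
\[
\forall\phi\in\Phi.\ \phi(G)=\phi(H)\iff G-H\in I(V(\mathfrak{a}))=\sqrt{\mathfrak{a}},
\]
so it remains to show $G-H\in\sqrt{\mathfrak{a}}\implies G-H\in\mathfrak{a}_\approx$.

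\emph{Main step: radicality.} Suppose $(G-H)^n\in\mathfrak{a}$. Since $(G-H)^n\in\mathbb{Q}\lG$, the intersection property gives $(G-H)^n\in\mathfrak{a}_\approx$. Expanding with the binomial theorem in the commutative ring $\mathbb{Z}\lG$ writes $(G-H)^n$ as $A-B$, where $A$ and $B$ are the even-$k$ and odd-$k$ sums of \eqref{eq:h5} (or their swap, depending on the parity of $n$; this is harmless because $\mathfrak{a}_\approx$ is closed under negation). So $A-B\in\mathfrak{a}_\approx$, which means $A\approx B$, and \eqref{eq:h5} then gives $G\approx H$, i.e.\ $G-H\in\mathfrak{a}_\approx$.

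The delicate point is that a general element of $\sqrt{\mathfrak{a}}$ is a complex combination, not of the form $G-H$. We only need radical membership for differences of graphs, though, so the argument above suffices. Conversely, if $G\approx H$ then $G-H\in\mathfrak{a}\subseteq\sqrt{\mathfrak{a}}$, which closes the equivalence.
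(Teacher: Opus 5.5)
Your proposal is correct and is essentially the paper's proof. The paper proves the general \cref{thm:phi-abstract} for countable commutative cancellative semirings and specialises it to $\lG$: the backward direction is checked axiom by axiom, and the forward direction takes $\Phi \coloneqq V(\mathfrak{a})$ for the extended ideal, applies \cref{thm:hilbert-nullstellensatz-semiring} (available since $\lG$ is countable by \cref{lem:graphs-form-semiring}), and finishes with the binomial splitting plus \eqref{eq:h5}, exactly as in \cref{lem:radical-differences}.

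The only difference is cosmetic. You verify $\mathfrak{a} \cap \mathbb{Q}\lG = \mathfrak{a}_\approx$ by comparing coordinates in a $\mathbb{Q}$-basis of $\mathbb{C}$ containing $1$, which is an elementary substitute for the faithful-flatness argument in \cref{prop:ideals-to-relations-complex}. Note that this conservativity comes from that proposition, not from what \cref{lem:c-ideals} itself states.
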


	So far, even though we have concerned ourselves with the semiring $\lG$,
	we have in fact not used any specific properties of this semiring.
	More precisely, \cref{thm:semiring-hom-congruence} holds for countable commutative cancellative semirings in general (\cref{thm:phi-abstract}).
	This already indicates that \cref{thm:semiring-hom-congruence} is most likely not the final axiomatisation of componental homomorphism indistinguishability relations.
	Indeed, we give the following counterexample, see \cref{sec:examples}.
	Moreover, one might wonder whether considering the complex numbers in \cref{thm:semiring-hom-congruence} is necessary.
	We show that it is in \cref{cor:non-bipartite-quotient} by constructing a $\phi \in \Hom(\lG, \C)$ with imaginary values.

	\begin{example}[restate=exRank,label=ex:rank]
		The adjacency matrix rank $\rk \colon \lG \to \mathbb{C}$ is a semiring homomorphism.
		The relation $G \approx_{\rk} H \iff \rk(G) = \rk(H)$ satisfies  \cref{eq:h1,eq:h2,eq:h3,eq:h4,eq:h5}
		but is not a homomorphism indistinguishability relation.
	\end{example}

	We must thus reintroduce the combinatorial structure of $\lG$ to the purely algebraic conditions of \cref{thm:semiring-hom-congruence}.
	More precisely, we must axiomatise $\{\hom(F, \star) \mid F \in \lZ \}$
	inside $\Hom(\lG, \mathbb{C})$.
	The difficulty, however, is that we do not have access to the values of the $\phi$ in \cref{thm:semiring-hom-congruence}
	but merely to the relation $\approx$.
	This is also why the characterisation by \textcite{lovasz_dual_2010}
	does not appear to be applicable.
	As a first step, we prove an alternative characterisation more suitable for our needs, which may be of independent interest.

	\paragraph{Subgraph series.}
	Using M\"obius inversion, we show that every function $p \colon \lG \to \mathbb{C}$ can be written as
	\[
		p(\star) = \sum_{F \in \lG} \alpha_F \sub(F, \star),
        \qquad 
        \text{for } \alpha_F \in \mathbb{C}.
	\]
	The function $\sub(F, \star)$ counts the number of subgraphs isomorphic to~$F$.
	Hence, while the above sum is infinite, only finitely many summands contribute to the value of $p(G)$ for every graph $G \in \lG$.
	We call the expression $\sum_{F} \alpha_F \sub(F, \star)$ a \emph{subgraph series}.
	Conveniently, the coefficients $\alpha_F$ are uniquely determined by the values of $p(\star)$.
	More precisely, as shown in \cref{lem:unique-subgraph-series}, it holds that
	\begin{equation}\label{eq:subgraph-series-coefficients}
		\alpha_F = \sum_{S \subseteq V(F)} \ \sum_{E' \subseteq E(F)} (-1)^{|V(F) \setminus S| + |E(F) \setminus E'|} p(F[S, E'])
	\end{equation}
	for every graph $F\in \lG$.
	Here, $F[S, E']$ denotes the subgraph of $F$ with vertex set $S$ whose edges are those edges of $E'$ all of whose endpoints lie in $S$.
	We may thus define the \emph{support} of a graph parameter $p \colon \lG \to \mathbb{C}$ as the graph class
	\[
		\supp(p) \coloneqq \{F \in \lG \mid \alpha_F \neq 0\},
        \qquad
        \text{for } \alpha_F \in \C \text{ as given by \cref{eq:subgraph-series-coefficients}.}
	\]
	We prove the following new characterisation of homomorphism counts.

	\begin{theorem}[restate=thmSemiringHomChar,label=thm:characterisation-semiring-homs]
		A semiring homomorphism $\phi \in \Hom(\lG, \mathbb{C})$ is of the form $\phi(\star) = \hom(F, \star)$ for some connected graph $F \in \lG$
		if, and only if, its support $\supp(\phi)$ is finite.
	\end{theorem}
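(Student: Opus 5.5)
We plan to pass from the subgraph basis to the homomorphism basis. The key fact is that, over functions of finite support, the families $\{\sub(F,\star)\}$ and $\{\hom(F,\star)\}$ are related by a triangular change of basis with finitely many terms in each row. Once $\phi$ is written as a finite linear combination of functions $\hom(F,\star)$, multiplicativity and additivity of $\phi$ will force it to be a single $\hom(F,\star)$ with $F$ connected.

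\emph{Forward direction.} Let $F\in\lZ$. Every homomorphism $F\to G$ has an image, which is a subgraph of $G$ on at most $|V(F)|$ vertices; classify homomorphisms by this image. This gives
\[
\hom(F,G)=\sum_{F'} \frac{\operatorname{esurj}(F,F')}{\aut(F')}\cdot \aut(F')\,\sub(F',G)=\sum_{F'}\operatorname{esurj}(F,F')\,\sub(F',G).
\]
Here $\operatorname{esurj}(F,F')$ counts homomorphisms $F\to F'$ that are surjective on both vertices and edges. The factors of $\aut(F')$ cancel because $\sub(F',G)\cdot\aut(F')$ counts embeddings of $F'$ into $G$, and each embedding composed with an element counted by $\operatorname{esurj}(F,F')$ gives a homomorphism $F\to G$ with image isomorphic to $F'$. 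Only graphs $F'$ with $|V(F')|\le|V(F)|$ have nonzero coefficient, so the sum is finite. By the uniqueness of subgraph-series coefficients (\cref{lem:unique-subgraph-series}), $\supp(\hom(F,\star))$ is finite. Connectedness is not needed for this direction.

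\emph{Converse, step 1: change of basis.} Let $\phi$ have finite support $S$. Inverting the triangular relation above gives
\[
\sub(F,\star)=\sum_{F'}\mu_{F,F'}\hom(F',\star),
\]
a finite sum over graphs $F'$ that are $\operatorname{esurj}$-images of $F$; this is a Möbius inversion over the finite poset of graphs with at most $|V(F)|$ vertices. Substituting into the subgraph series of $\phi$ gives a finite expression $\phi=\sum_{F\in T}\beta_F\hom(F,\star)$, where $T$ is a finite set of pairwise non-isomorphic graphs and every $\beta_F\neq 0$.

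\emph{Converse, step 2: multiplicativity.} By \eqref{eq:prod},
\[
\phi(G\otimes H)=\sum_{F\in T}\beta_F\hom(F,G)\hom(F,H),
\]
whereas multiplicativity of $\phi$ gives
\[
\phi(G)\phi(H)=\sum_{F,F'\in T}\beta_F\beta_{F'}\hom(F,G)\hom(F',H).
\]
Finitely many functions $\hom(F,\star)$ with pairwise non-isomorphic $F$ are linearly independent. This follows from triangularity with the linearly independent functions $\sub(F,\star)$ (uniqueness of subgraph series), or alternatively from Lovász's theorem. Hence the two-variable functions $(G,H)\mapsto\hom(F,G)\hom(F',H)$ are also linearly independent, being tensor products of linearly independent families. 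Comparing coefficients yields $\beta_F\beta_{F'}=0$ for $F\neq F'$ and $\beta_F^2=\beta_F$. Since every $\beta_F$ is nonzero, $|T|\le 1$ and the surviving coefficient equals $1$. Moreover $T\neq\emptyset$ because $\phi(\boldsymbol 1)=1$. Thus $\phi=\hom(F,\star)$ for a single graph $F$.

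\emph{Converse, step 3: connectedness.} Additivity gives $\phi(\boldsymbol 1+\boldsymbol 1)=2$. On the other hand, $\hom(F,\boldsymbol 1+\boldsymbol 1)=2^{c}$, where $c$ is the number of connected components of $F$. So $c=1$; in particular $F\neq\boldsymbol 0$, whose hom-count here would be $1$. Hence $F\in\lZ$.

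The main obstacle is step 1 together with the linear independence used in step 2. We must carefully justify that the inversion matrix between the $\sub$- and $\hom$-bases is triangular (with respect to, e.g., the number of vertices and then the number of edges) and that each row is finite. This finiteness is exactly what turns finiteness of $\supp(\phi)$ into a finite $\hom$-expansion.
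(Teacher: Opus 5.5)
Your proof is correct and follows the paper's route. Finite subgraph support is turned into a finite $\hom$-expansion via the triangular change of basis (the paper's \eqref{eq:sub-to-hom}); multiplicativity together with linear independence of the $\hom(F,\star)$ then forces a single term, as in \cref{lem:gmp-multiplicative}; and additivity forces connectedness, as in \cref{lem:gmp-additive}. The differences are minor simplifications: you derive the basis change and the linear independence yourself from the image decomposition $\hom(F,\star)=\sum_{F'}\operatorname{esurj}(F,F')\sub(F',\star)$ and uniqueness of subgraph series instead of citing Lov\'asz, and your connectedness step (evaluating at $\boldsymbol 1+\boldsymbol 1$ to get $2^{c}=2$) is shorter than the paper's coefficient comparison at $G+\boldsymbol 1$.
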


    The graph parameters $p$ with finite support $\supp(p)$ are precisely the so-called \emph{graph motif parameters} and intensively studied in counting complexity \cite{roth_parameterised_2026,curticapean_homomorphisms_2017,chen_counting_2016}.
	The fact that $\supp(\phi)$ is finite can be phrased in terms of vanishing ideals yielding an axiomatization of graph motif parameters.
	For semiring homomorphisms $\phi \in \Hom(\lG, \C)$, 
	the sum in \cref{eq:subgraph-series-coefficients}
	may be written as the evaluation of $\phi$ at a single element of $\mathbb{Z}\lG$.
	For a graph $F$, let
	\begin{equation}\label{eq:c-F}
		c_F \coloneqq \sum_{S \subseteq V(F)} \ \sum_{E' \subseteq E(F)} (-1)^{|V(F) \setminus S| + |E(F) \setminus E'|} F[S, E'] \in \mathbb{Z}\lG.
	\end{equation}
	It follows that $\phi \in \Hom(\lG, \mathbb{C})$ has finite support if, and only if,
	$\phi(c_F) = 0$ for all but finitely many $F \in \lG$.
	For an integer $\ell \geq 1$, write $\mathfrak{c}_\ell$ for the ideal of $\mathbb{C}\lG$ 
	generated by the $c_F$ for graphs $F \in \lG$ on more than $\ell$ vertices.
    Using \cref{thm:characterisation-semiring-homs}, we obtain the following characterisation.

	\begin{theorem}[restate=thmRadInt,label=thm:radical-intersection]
		Let $\approx$ be a graph isomorphism relaxation satisfying \cref{eq:h1,eq:h2,eq:h3,eq:h4}
		and let $\mathfrak{a}_{\approx}$ denote the corresponding ideal of $\mathbb{C}\lG$
		given by \cref{lem:c-ideals}.
		Then $\approx$ is a homomorphism indistinguishability relation over a componental graph class
		if, and only if,
		\begin{equation}\label{eq:radical-intersection}
			\mathfrak{a}_{\approx} = \bigcap_{ \ell \geq 1} (\mathfrak{a}_{\approx} + \mathfrak{c}_\ell).
		\end{equation}
	\end{theorem}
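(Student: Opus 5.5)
Both directions rest on the Nullstellensatz $I(V(\mathfrak{b})) = \sqrt{\mathfrak{b}}$ (\cref{thm:hilbert-nullstellensatz-semiring}) and on \cref{thm:characterisation-semiring-homs}. The key preliminary observation concerns the zero locus of the ideals $\mathfrak{c}_\ell$. I claim that
\[
V(\mathfrak{a}_\approx + \mathfrak{c}_\ell) = \{\hom(F,\star) \mid F \in \lZ,\ |V(F)| \le \ell,\ \hom(F,\star) \in V(\mathfrak{a}_\approx)\}.
\]
Indeed, $\phi \in V(\mathfrak{c}_\ell)$ means $\phi(c_F) = 0$ for all $F$ on more than $\ell$ vertices, so $\supp(\phi)$ is finite. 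By \cref{thm:characterisation-semiring-homs}, $\phi = \hom(F,\star)$ for a connected $F$. The subgraph series of $\hom(F,\star)$ is supported on homomorphic images of $F$ and contains $F$ itself with a nonzero coefficient (the number of automorphisms, or of injective homomorphisms). Hence the number of vertices of $F$ is at most $\ell$. The converse inclusion holds because homomorphic images have at most $|V(F)|$ vertices.

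The next step is to show that each $\mathfrak{a}_\approx + \mathfrak{c}_\ell$ is radical, which is the step I expect to be the main obstacle. My plan is to show that $\mathbb{C}\lG/\mathfrak{c}_\ell$ is a finite product of copies of $\mathbb{C}$, indexed by the connected graphs on at most $\ell$ vertices; any quotient of such a ring is again reduced. To do this, I would use the map $\mathbb{C}\lG \to \mathbb{C}^{N}$ given by $p \mapsto (\hom(F,p))_F$ over connected $F$ with $|V(F)| \le \ell$. It is surjective by linear independence of homomorphism functions (Lovász). For the kernel, I would argue that it is contained in $\mathfrak{c}_\ell$. The first thing I would try is to write each graph $G$ modulo $\mathfrak{c}_\ell$ as a combination of the $\sub(F,\star)$-type elements, via the Möbius relation underlying \cref{eq:c-F}, so that $\mathbb{C}\lG/\mathfrak{c}_\ell$ is spanned by the finitely many classes of graphs on at most $\ell$ vertices. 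Then its dimension is finite, and a finite-dimensional algebra whose maximal ideals are the $N$ points above, with the kernel equal to the radical, is reduced provided its dimension is at most $N$. If this dimension count fails, I would instead only need $I(V(\mathfrak{a}+\mathfrak{c}_\ell)) \subseteq \mathfrak{a}+\mathfrak{c}_\ell$ when $\mathfrak{a}$ is already radical. This may be attainable using idempotents of the finite-dimensional quotient, which separate the points.

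With radicality available, both directions follow. For ($\Leftarrow$), if \cref{eq:radical-intersection} holds, then
\[
\mathfrak{a}_\approx = \bigcap_\ell I\bigl(V(\mathfrak{a}_\approx + \mathfrak{c}_\ell)\bigr) = I(\Phi),
\]
where $\Phi = \{\hom(F,\star) \mid F \in \mathcal{F}_0\}$ and $\mathcal{F}_0$ is the set of connected $F$ with $\hom(F,\star) \in V(\mathfrak{a}_\approx)$. Taking $\mathcal{F}$ to be the closure of $\mathcal{F}_0$ under disjoint unions gives a componental class, and $G \approx H$ if and only if $G - H \in I(\Phi)$, which by \cref{eq:coprod} is exactly $G \equiv_{\mathcal{F}} H$. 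For ($\Rightarrow$), suppose $\approx$ equals $\equiv_\mathcal{F}$ with $\mathcal{F}$ componental. Then $\mathfrak{a}_\approx = I(\Phi)$ for $\Phi$ the homomorphism counts of the connected members of $\mathcal{F}$, using the extension-of-scalars identification of \cref{lem:c-ideals}. The inclusion $\subseteq$ in \cref{eq:radical-intersection} is trivial. Conversely, let $p$ lie in every $\mathfrak{a}_\approx + \mathfrak{c}_\ell$. Given $F \in \Phi$, choose $\ell \ge |V(F)|$; then $\hom(F,\star)$ vanishes on both $\mathfrak{a}_\approx$ and $\mathfrak{c}_\ell$, so $\hom(F,p) = 0$. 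Hence $p \in I(\Phi) = \mathfrak{a}_\approx$. This direction needs no radicality at all.
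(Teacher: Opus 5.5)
Your architecture matches the paper's. You describe $V(\mathfrak{c}_\ell)$ via \cref{thm:characterisation-semiring-homs}, get radicality of $\mathfrak{a}_\approx+\mathfrak{c}_\ell$ from $\C\lG/\mathfrak{c}_\ell\cong\C^N$, and use the Nullstellensatz for the backward direction. The dimension count you hedge on does go through. Modulo $\mathfrak{c}_\ell$, a graph $F$ on more than $\ell$ vertices is congruent to $F-c_F$, which is a combination of proper subgraphs of $F$. By induction on $(|V(F)|,|E(F)|)$, the quotient is therefore spanned by graphs on at most $\ell$ vertices. Since every graph equals the sum of its connected components in $\C\lG$, it is in fact spanned by the $N$ connected ones. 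Together with surjectivity of the evaluation map, this gives the isomorphism.

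The genuine gap is in the forward direction, at the step $p\in I(\Phi)=\mathfrak{a}_\approx$. There you need $I(\Phi)\subseteq\mathfrak{a}_\approx$, which is the non-trivial half, and \cref{lem:c-ideals} does not supply it. That lemma only says that \emph{some} ideal induces $\approx$ on differences $G-H$. Such ideals are not unique (\cref{rem:no-bijection-complex}), and $\mathfrak{a}_\approx$ is merely the smallest one, so the definitions only give $\mathfrak{a}_\approx\subseteq I(\Phi)$. The missing idea is that the elements of $\Phi$ are integer-valued on $\lG$, so $I(\Phi)$ is extended from $\mathbb{Q}\lG$ (\cref{lem:vanishing-ideal-extended}). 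Concretely, write $p\in I(\Phi)$ as $\sum_i\alpha_i x_i$ with $x_i\in\mathbb{Q}\lG$ and the $\alpha_i$ linearly independent over $\mathbb{Q}$. Since each $\phi(x_i)\in\mathbb{Q}$, the equation $\phi(p)=0$ forces $\phi(x_i)=0$ for all $\phi\in\Phi$. Writing $x_i=\frac{G_i-H_i}{n_i}$, this says $G_i\equiv_{\mathcal{F}}H_i$, i.e.\ $G_i\approx H_i$. Hence $x_i\in\mathfrak{a}_\approx$, and so $p\in\mathfrak{a}_\approx$. The paper makes the same point by noting that both $I(\Psi_0)$ and $\mathfrak{a}_\approx$ are extended from $\mathbb{Q}\lG$ and then invoking \cref{prop:ideals-to-relations-complex}. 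With this added, your forward direction is correct and slightly leaner than the paper's. The inclusion $\bigcap_\ell(\mathfrak{a}_\approx+\mathfrak{c}_\ell)\subseteq I(\Phi)$ is elementary, so that direction indeed needs no radicality.
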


	In order to rephrase \eqref{eq:radical-intersection}
	in terms of the relation $\approx$, we characterise the homomorphism indistinguishability relation corresponding to the ideal $\mathfrak{c}_\ell$.
	We write $G \simeq_\ell H$ if $G - H \in \mathfrak{c}_\ell$.

	\begin{lemma}[restate=lemCl, label=lem:ideal-cl]
		Let $\ell \geq 1$. For graphs $G, H \in \lG$, the following are equivalent:
		\begin{assertions}
			\item $G \simeq_\ell H$, i.e., $G - H \in \mathfrak{c}_\ell$,
			\item $\hom(F, G) = \hom(F, H)$ for all $F\in \lG$ on at most $\ell$ vertices,
			\item there exists a bijection $\pi$ between the subsets of $V(G)$ and of $V(H)$ of size at most $\ell$ such that $|S| = |\pi(S)|$ and $G[S] \cong H[\pi(S)]$ for all such $S$.
		\end{assertions}
 	\end{lemma}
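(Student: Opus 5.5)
The plan is to prove (i)$\Leftrightarrow$(ii) via the Nullstellensatz of \cref{thm:hilbert-nullstellensatz-semiring} together with \cref{thm:characterisation-semiring-homs}, and (ii)$\Leftrightarrow$(iii) by a direct combinatorial argument.

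For (ii)$\Leftrightarrow$(iii), let $\indsub(F,G)$ denote the number of vertex subsets $S$ of $G$ with $G[S]\cong F$. Condition (iii) is equivalent to $\indsub(F,G)=\indsub(F,H)$ for all $F$ on at most $\ell$ vertices: given equal counts, assemble the bijection $\pi$ isomorphism class by isomorphism class, and conversely $\pi$ preserves the counts. Next, $\hom(F,G)$ is a finite linear combination of $\indsub(F',G)$ with $|V(F')|\le|V(F)|$. To see this, group homomorphisms by their image set $S=h(V(F))$; then $\hom(F,G)=\sum_{F'}\surj(F,F')\,\indsub(F',G)$, where $\surj(F,F')$ counts vertex-surjective homomorphisms. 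This expansion gives (iii)$\Rightarrow$(ii). For the converse, invert this system on graphs with at most $\ell$ vertices, ordered by vertex count and then by edge count. The diagonal coefficient $\surj(F,F)=\aut(F)\neq0$, and $\surj(F,F')$ vanishes unless $|V(F')|\le|V(F)|$, with $|E(F')|\ge|E(F)|$ when the vertex counts are equal. Hence the matrix is triangular and invertible. Loops cause no trouble, since $\surj$ respects loops.

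For (i)$\Leftrightarrow$(ii), first check that each $c_F$ lies in the vanishing ideal of $\{\hom(F',\star) : F'\in\lZ,\ |V(F')|\le\ell\}$. Expanding $\hom(F',\star)$ as a subgraph series, $\hom(F',\star)=\sum_{F}\alpha_F\sub(F,\star)$, only graphs $F$ that are homomorphic images of $F'$ occur, so $|V(F)|\le\ell$. Since $\hom(F',c_F)$ is exactly the coefficient $\alpha_F$ given by \cref{eq:subgraph-series-coefficients}, it vanishes whenever $|V(F)|>\ell$. This yields (i)$\Rightarrow$(ii) for connected $F$, and for general $F$ it follows by \cref{eq:coprod}.

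For (ii)$\Rightarrow$(i), reduce to connected $F$ via \cref{eq:coprod}, and show that $\mathfrak{c}_\ell=I(\Phi_\ell)$ with $\Phi_\ell=\{\hom(F,\star) : F\in\lZ,\ |V(F)|\le\ell\}$. By \cref{thm:hilbert-nullstellensatz-semiring}, $I(V(\mathfrak{c}_\ell))=\sqrt{\mathfrak{c}_\ell}$. Any $\phi\in V(\mathfrak{c}_\ell)$ satisfies $\phi(c_F)=0$ for all $F$ on more than $\ell$ vertices, so $\supp(\phi)$ is finite. By \cref{thm:characterisation-semiring-homs}, $\phi=\hom(F',\star)$ for some connected $F'$, and $F'\in\supp(\phi)$ because the top coefficient $\alpha_{F'}=\aut(F')\neq0$. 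Hence $|V(F')|\le\ell$, which gives $V(\mathfrak{c}_\ell)=\Phi_\ell$. Therefore (ii) gives $G-H\in I(\Phi_\ell)=\sqrt{\mathfrak{c}_\ell}$.

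\textbf{Main obstacle.} The remaining step is to show that $\mathfrak{c}_\ell$ is radical, so that $G-H\in\sqrt{\mathfrak{c}_\ell}$ implies $G-H\in\mathfrak{c}_\ell$. I would try to show that $\C\lG/\mathfrak{c}_\ell$ is reduced. By Möbius inversion, every graph $G$ is congruent modulo $\mathfrak{c}_\ell$ to a $\mathbb{Z}$-combination of graphs on at most $\ell$ vertices. The quotient is therefore spanned by finitely many classes, i.e.\ it is finite-dimensional. A finite-dimensional commutative $\C$-algebra is reduced if and only if it is a product of copies of $\C$. This holds precisely when the number of points of $V(\mathfrak{c}_\ell)$, namely the number of connected graphs on at most $\ell$ vertices, equals the dimension of the quotient. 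I expect the hard part to be showing that this dimension bound is attained exactly.
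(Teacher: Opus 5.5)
\paragraph{What is sound.} Your (i)$\Leftrightarrow$(ii) follows the paper. You identify $V(\mathfrak{c}_\ell)$ with $\{\hom(F,\star) \mid F\in\lZ,\ |V(F)|\le\ell\}$ via \cref{lem:c-F-support} and \cref{thm:characterisation-semiring-homs}, then apply the Nullstellensatz. For (ii)$\Leftrightarrow$(iii), your triangular inversion of $\hom(F,\star)=\sum_{F'}\surj(F,F')\indsub(F',\star)$ is correct, loops included. It is more self-contained than the paper, which cites Curticapean--Dell--Marx for the fact that hom-, subgraph- and induced-subgraph counts of graphs on at most $\ell$ vertices span the same space.

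\paragraph{The gap.} It is exactly the step you flag as the main obstacle. Without radicality of $\mathfrak{c}_\ell$, the Nullstellensatz only yields $G-H\in\sqrt{\mathfrak{c}_\ell}$, so (ii)$\Rightarrow$(i) is not yet proved.

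Your framework is right. A finite-dimensional commutative $\C$-algebra has dimension at least its number of points, with equality iff it is $\C^N$, i.e.\ reduced. The lower bound $\dim\ge N$ is already available to you, since the $N$ functions $\hom(F,\star)$ with $F\in\lZ$, $|V(F)|\le\ell$ are pairwise distinct by \cref{thm:lovasz-linear-independent}.

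What is missing is the upper bound $\dim_\C \C\lG/\mathfrak{c}_\ell\le N$. Your spanning set, the classes of all graphs on at most $\ell$ vertices, does not give it: for $\ell\ge 2$ it contains disconnected graphs and has more than $N$ elements.

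\paragraph{How to close it.} Disjoint union is the addition of $\C\lG$, so every graph equals the sum of its components, and the connected graphs form a $\C$-basis of $\C\lG$. Now $c_F-F$ is a combination of proper subgraphs of $F$. Every component of such a subgraph is strictly smaller than $F$ in the lexicographic order on $(|V|,|E|)$. By induction, every connected graph is therefore congruent modulo $\mathfrak{c}_\ell$ to a combination of connected graphs on at most $\ell$ vertices. Hence $\dim\le N$, so $\C\lG/\mathfrak{c}_\ell\cong\C^N$ is reduced and $\mathfrak{c}_\ell$ is radical. This is precisely the argument of the second assertion of \cref{lem:intersection-vanishing}, which the paper's proof of this lemma relies on.
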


	Using \cref{lem:ideal-cl},
	we rephrase \cref{eq:radical-intersection} in terms of the relation $\approx$:
	For all $G, H \in \lG$,
	\begin{equation}\label{eq:h6}
		\left( \forall \ell \geq 1.\ \exists k \geq 1.\  \exists G' \simeq_\ell H'.\  k(G-H) \approx G' - H' \right) \implies G \approx H.
	\end{equation}
	Intuitively, \eqref{eq:h6} asserts that $G$ and $H$ must be in $\approx$ if they can be arbitrarily well approximated by taking graphs $G'$ and $H'$ satisfying $G' \simeq_\ell H'$. 
	
	\paragraph{Putting the pieces together, } 
	we obtain the desired axiomatisation of homomorphism indistinguishability relations.
	For convenience, we restate \cref{eq:h1,eq:h2,eq:h3,eq:h4,eq:h6}.
	\begin{theorem}[restate=thmAxiom,label=thm:axiomatisation]
		A graph isomorphism relaxation $\approx$ is a homomorphism indistinguishability relation over a componental graph class if, and only if, it satisfies, for all graphs $G, H, K \in \lG$ and $n \geq 1$,
		\begin{assertions}
			\item\label{it:a1} $G \approx H \implies K \otimes G \approx K \otimes H$,
			\item\label{it:a2} $G \approx H \implies K + G \approx K + H$,
			\item\label{it:a3} $K + G \approx K + H \implies G \approx H$,
			\item\label{it:a4} $nG \approx nH \implies G \approx H$, 
\item\label{it:a5} if, for all $\ell \geq 1$, there exist $k \geq 1$ and graphs $G', H' \in \lG$ such that $G' \simeq_\ell H'$ and $k(G-H) \approx G' - H'$, then $G \approx H$.
		\end{assertions}
	\end{theorem}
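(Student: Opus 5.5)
The plan is to reduce the theorem to \cref{thm:radical-intersection}, using the dictionary between relations and ideals from \cref{prop:ideals-to-relations-rational,lem:c-ideals}.

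\emph{Necessity.} Suppose $\approx$ is $\equiv_{\mathcal{F}}$ for a componental class $\mathcal{F}$. Then \ref{it:a1} follows from \eqref{eq:prod}. For \ref{it:a2}, \ref{it:a3} and \ref{it:a4} I first note that $\equiv_{\mathcal{F}}$ coincides with $\equiv_{\mathcal{F}\cap\lZ}$: by \eqref{eq:coprod}, every $F \in \mathcal{F}$ is a disjoint union of connected members of $\mathcal{F}$. For connected $F$ the map $\hom(F,\star)$ is additive by \eqref{eq:connected}, so these three properties are immediate; for \ref{it:a3} and \ref{it:a4} one uses that $\hom(F,\cdot)$ takes values in $\mathbb{C}$, where cancellation and division by $n$ are allowed. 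Hence \cref{thm:radical-intersection} applies, and I obtain \eqref{eq:radical-intersection}. It then remains to show that \eqref{eq:radical-intersection} implies \ref{it:a5}, which is the same argument as sufficiency below, run in reverse.

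\emph{Translating \ref{it:a5}.} Given \ref{it:a1} to \ref{it:a4}, the relation $\approx$ corresponds to the ideal $\mathfrak{a}_\approx$. Every element of $\mathfrak{a}_\approx$ is a scalar multiple of some $G'-H'$ with $G'\approx H'$, and by \cref{lem:ideal-cl} the ideal $\mathfrak{c}_\ell$ is the ideal of $\simeq_\ell$. The key observation is as follows. Since $\mathfrak{a}_\approx+\mathfrak{c}_\ell$ is again a $\mathbb{Q}$-spanned ideal coming from $\mathbb{Z}\lG$, an element $G-H$ lies in $\mathfrak{a}_\approx+\mathfrak{c}_\ell$ if and only if
\[
k(G-H) = (A-B) + (G'-H')
\]
in $\mathbb{Z}\lG$, for some $k\ge1$, some $A\approx B$ and some $G'\simeq_\ell H'$. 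This is exactly the hypothesis ``$k(G-H)\approx G'-H'$'' of \ref{it:a5}, read as a statement in the Grothendieck ring: it unfolds to $kG+H'+B\approx kH+G'+A$ after clearing negatives with \ref{it:a2} and \ref{it:a3}. Under this reading:
\begin{itemize}
\item the hypothesis of \ref{it:a5} says precisely that $G-H\in\bigcap_\ell(\mathfrak{a}_\approx+\mathfrak{c}_\ell)$;
\item the conclusion of \ref{it:a5} says that $G-H\in\mathfrak{a}_\approx$.
\end{itemize}
So \ref{it:a5} is equivalent to the inclusion $\bigcap_\ell(\mathfrak{a}_\approx+\mathfrak{c}_\ell)\subseteq\mathfrak{a}_\approx$, restricted to elements of the form $G-H$. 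The reverse inclusion always holds.

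\emph{Extending to all elements.} The inclusion must hold for all elements, not just those of the form $G-H$. Over $\mathbb{Q}$, every element is a rational multiple of some $G-H$, and both sides are $\mathbb{Q}$-subspaces, so the rational case is fine. Over $\mathbb{C}$ I would invoke \cref{lem:c-ideals}: an ideal of $\mathbb{C}\lG$ extended from $\mathbb{Q}\lG$ has the form $\mathbb{C}\otimes_\mathbb{Q}\mathfrak{a}$. I also need that intersections commute with this extension. This holds because the $\mathfrak{c}_\ell$ are decreasing, and $\mathbb{C}\otimes_\mathbb{Q}-$ is exact and flat; for a decreasing chain one should check the intersection via coefficients in a $\mathbb{Q}$-basis of $\mathbb{C}$. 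Once this is in place, \cref{thm:radical-intersection} gives sufficiency, and the translation above gives necessity.

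\emph{Main obstacle.} I expect the main obstacle to be this last point: making sure the infinite intersection over $\mathbb{C}$ agrees with the one over $\mathbb{Q}$. I would handle it by writing $p\in\mathbb{C}\lG$ as $\sum_i \lambda_i q_i$, with $\lambda_i$ $\mathbb{Q}$-linearly independent and $q_i\in\mathbb{Q}\lG$. Then $p$ lies in $\mathbb{C}\otimes\mathfrak{b}$ if and only if every $q_i$ lies in $\mathfrak{b}$, and this criterion commutes with arbitrary intersections.
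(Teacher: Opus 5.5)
Your proposal is correct and has the same architecture as the paper's proof. Both reduce to \cref{thm:radical-intersection}. Both read the hypothesis of \ref{it:a5} as $G-H\in\bigcap_\ell(\mathfrak{a}_\approx+\mathfrak{c}_\ell)$ and its conclusion as $G-H\in\mathfrak{a}_\approx$. Both then lift the resulting inclusion from differences to whole ideals, using that ideals extended from $\mathbb{Q}\lG$ are determined by the differences they contain (\cref{prop:ideals-to-relations-complex}).

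The one real divergence is how you show that $\bigcap_\ell(\mathfrak{a}_\approx+\mathfrak{c}_\ell)$ is extended from $\mathbb{Q}\lG$. The paper uses \cref{lem:intersection-vanishing} to identify this intersection with the vanishing ideal of the integer-valued homomorphisms $\hom(F,\star)$, $F\in\lZ$, lying in $V(\mathfrak{a}_\approx)$. It then applies \cref{lem:vanishing-ideal-extended}. Your coefficient argument instead writes $p=\sum_i\lambda_i q_i$ with $\mathbb{Q}$-linearly independent $\lambda_i$. This shows directly that $\mathbb{C}\otimes_{\mathbb{Q}}-$ commutes with arbitrary intersections of $\mathbb{Q}$-subspaces. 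Your route is more elementary and does not go through the Nullstellensatz. The paper's route is shorter in context, because it reuses a lemma already needed for \cref{thm:radical-intersection}.

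Two small cleanups:
\begin{itemize}
\item What makes your step work is that $\mathbb{C}$ is free over $\mathbb{Q}$. Flatness alone only yields compatibility with finite intersections, and the monotonicity of the $\mathfrak{c}_\ell$ plays no role, so those remarks can be dropped.
\item The hypothesis $k(G-H)\approx G'-H'$ simply means $kG+H'\approx kH+G'$ (take $A=kG+H'$ and $B=kH+G'$). The relation $kG+H'+B\approx kH+G'+A$ that you display is automatic, since it holds as an equality in $\lG$ once $k(G-H)=(A-B)+(G'-H')$ in $\mathbb{Z}\lG$.
\end{itemize}
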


    \paragraph{Discarding loops.}
    Most of the homomorphism indistinguishability literature concerns relations between simple graphs, cf.\ \cite[Section~7]{roberson_oddomorphisms_2022} and \cite[p.~32]{seppelt_homomorphism_2024}. 
    For algebraic convenience, we chose to work with $\lG$ rather than $\G$ as $\G$ lacks a multiplicative unit. 
    Nevertheless, we deduce a characterisation of graph isomorphism relaxations between simple graphs that are homomorphism indistinguishability relations from \cref{thm:axiomatisation}.
    Restricted to $\G$, the axioms are the same, except for \cref{it:s6}, which  mitigates the lack of a unit in $\G$.

	\begin{theorem}[label=thm:axiomatisation-simple,restate=thmAxiomSimp]
		A simple graph isomorphism relaxation $\approx$ is a homomorphism indistinguishability relation
		over a componental class of simple graphs if, and only if,
		it satisfies, for all $G, H, K \in \G$,
		\begin{assertions}
			\item\label{it:s1} $G \approx H \implies K \otimes G \approx K \otimes H$,
			\item\label{it:s2} $G \approx H \implies K + G \approx K + H$,
			\item\label{it:s3} $K + G \approx K + H \implies G \approx H$,
			\item\label{it:s4} if $nG \approx nH $ for some $n\geq1$, then $ G \approx H$,
			\item\label{it:s5} if, for all $\ell \geq 1$, there exist $k \geq 1$ and simple graphs $G', H' \in \G$
			such that $G' \simeq_\ell H'$ and $k(G-H) \approx G' - H'$, then $G \approx H$,
			\item\label{it:s6} if $G \otimes K_n \approx H \otimes K_n$ for all $n \geq 1$ and the $n$-vertex complete graph $K_n$, then $G \approx H$.
		\end{assertions}
	\end{theorem}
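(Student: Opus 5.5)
The plan is to reduce to \cref{thm:axiomatisation} by extending a simple graph isomorphism relaxation $\approx$ on $\G$ to a relation $\approx^\circ$ on $\lG$, and to transport homomorphism indistinguishability back along the restriction. Every looped-simple graph $L$ has only finitely many looped vertices. The natural extension is to declare $G \approx^\circ H$ iff $G \otimes S \approx H \otimes S$ for all simple graphs $S$; when $G, H$ are simple we may take $S = K_n$. The idea behind this choice is that $\hom(F, G\otimes S) = \hom(F,G)\hom(F,S)$, and that a simple $F$ always has some $K_n$ with $\hom(F,K_n)\neq 0$.

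\emph{Necessity.} Let $\mathcal{F}\subseteq\G$ be componental. Axioms \ref{it:s1}--\ref{it:s4} follow from \eqref{eq:coprod}--\eqref{eq:connected} exactly as in the looped case. Axiom \ref{it:s5} follows from \cref{thm:axiomatisation}, applied to $\equiv_{\mathcal{F}}$ extended to $\lG$ (with $\mathcal{F}$ viewed as a class in $\lG$), since the hypotheses only involve simple graphs. For \ref{it:s6}, fix $F\in\mathcal{F}$ and choose $n\ge |V(F)|$. Then $\hom(F,K_n)\ne0$ because $F$ is loopless, so dividing $\hom(F,G)\hom(F,K_n)=\hom(F,H)\hom(F,K_n)$ by this factor gives $\hom(F,G)=\hom(F,H)$.

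\emph{Sufficiency.} Define $G\approx^\circ H$ for $G,H\in\lG$ by $G\otimes K_n\approx H\otimes K_n$ for all $n\ge1$. The products are simple since $K_n$ is loopless. By \ref{it:s1}, for simple $G,H$ the relation $G\approx H$ implies $G\approx^\circ H$, and \ref{it:s6} gives the converse, so $\approx^\circ$ restricts to $\approx$. Next we verify \ref{it:a1}--\ref{it:a5} for $\approx^\circ$:
\begin{itemize}
\item \ref{it:a1} holds because $K\otimes G\otimes K_n = K\otimes(G\otimes K_n)$ and $K\otimes(\cdot)$ applied to a simple graph is again a product with $K\otimes K_n$, which is simple. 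Concretely, $(K\otimes K_n)\otimes G$ with $G\otimes K_n\approx H\otimes K_n$ is handled by multiplying by the simple graph $K\otimes K_n$ and using $K_n\otimes K_n$. More cleanly, one first shows that $G\approx^\circ H$ implies $G\otimes S\approx H\otimes S$ for every simple $S$: apply \ref{it:s6} to $G\otimes S$ and $H\otimes S$, noting $G\otimes S\otimes K_n = (G\otimes K_n)\otimes S$ and using \ref{it:s1}.
\item \ref{it:a2}--\ref{it:a4} hold because $\otimes K_n$ distributes over $+$, so they reduce to \ref{it:s2}--\ref{it:s4}.
\item \ref{it:a5}: given the approximations $k(G-H)\approx^\circ G'-H'$ with $G'\simeq_\ell H'$, multiply by $K_n$. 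This yields simple graphs $G'\otimes K_n\simeq_\ell H'\otimes K_n$, since $\simeq_\ell$ is itself a hom-indistinguishability relation by \cref{lem:ideal-cl} and is hence preserved under $\otimes$. Then \ref{it:s5} gives $G\otimes K_n\approx H\otimes K_n$.
\end{itemize}

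By \cref{thm:axiomatisation}, $\approx^\circ={\equiv_{\mathcal{F}'}}$ for a componental $\mathcal{F}'\subseteq\lG$. Set $\mathcal{F}=\mathcal{F}'\cap\G$, which is still componental. For simple $G,H$, any $F\in\mathcal{F}'$ with a loop has $\hom(F,G)=0=\hom(F,H)$, so $G\approx H$ iff $G\equiv_{\mathcal{F}}H$.

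The main obstacle is making sure $\approx^\circ$ is a congruence with the additive-inverse statements phrased in $\mathbb{Z}\lG$, which is needed for \ref{it:a5}. There, $k(G-H)\approx^\circ G'-H'$ means $kG+H'\approx^\circ kH+G'$. This is still an equation between graphs, so multiplying by $K_n$ keeps everything simple. One must also check that $\simeq_\ell$ on simple graphs agrees with the looped notion, which follows from criterion (iii) of \cref{lem:ideal-cl}.
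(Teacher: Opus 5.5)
Your proposal is correct and follows the paper's route: extend $\approx$ to its saturation $\approx^\circ$ on $\lG$, verify conditions \ref{it:a1}--\ref{it:a5} of \cref{thm:axiomatisation}, apply that theorem, and restrict to simple graphs. Your $K_n$-only definition of $\approx^\circ$ is equivalent to the paper's quantification over all simple $L$, as your preliminary step via \ref{it:s1} and \ref{it:s6} shows. Your final step, which discards looped patterns directly because they admit no homomorphisms into simple graphs, is a slight shortcut. The paper instead verifies the hypothesis of \cref{lem:loops-same-relation} for $\approx^\circ$, which yields the stronger fact that $\approx^\circ$ is homomorphism indistinguishability over a class of simple graphs on all of $\lG$.
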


    \paragraph{Applications.}
	Admittedly, \cref{it:a5} in \cref{thm:axiomatisation} is quite hard to relate to. 
	However, as we will see in the following example, already the first conditions can be as hard to establish as resolving the notorious Ulam--Kelly Reconstruction Conjecture \cite{ulam_collection_1960,kelly_congruence_1957}.
	This conjecture asserts that any two hypomorphic simple graphs are isomorphic.
	Two graphs $G, H \in \G$ are \emph{hypomorphic} if they have the same number of edges and there exists a bijection $\pi \colon V(G) \to V(H)$
	such that $G - v \cong H- \pi(v)$ for all $v \in V(G)$.\footnote{The Reconstruction Conjecture is commonly stated for graphs on at least three vertices,
	with hypomorphicity defined by the existence of $\pi$ alone.
	Both formulations are equivalent.
	Indeed, on at least three vertices, the number of edges is determined by the multiset of the $G - v$ \cite{kelly_congruence_1957},
	so the additional requirement is vacuous there.
	On at most two vertices, the only non-isomorphic graphs admitting such a bijection $\pi$ are $K_2$ and $\overline{K_2}$,
	which are ruled out as they differ in their number of edges.} 

	\begin{example}[restate=exUlam, label=ex:ulam]
		The following are equivalent:
		\begin{assertions}
			\item\label{it:ulam1} The Ulam--Kelly Reconstruction Conjecture holds.
			\item\label{it:ulam2} Hypomorphicity is a homomorphism indistinguishability relation over a componental graph class.
			\item\label{it:ulam3} The hypomorphicity relation satisfies \cref{it:s1} of \cref{thm:axiomatisation-simple}.
			\item\label{it:ulam4} The hypomorphicity relation satisfies \cref{it:s2} of \cref{thm:axiomatisation-simple}.
		\end{assertions}
	\end{example}

    As another application, we answer an open question raised by Botteron and Weber \cite[Open Question 2]{botteron_communication_2024}, who asked for a homomorphism indistinguishability characterisation of a graph isomorphism relaxation called \emph{$D$\nobreakdash-fractional isomorphism}.
    In \cref{xmpl:D-frac}, we show that this isomorphism relaxation fails condition \ref{it:a1} in \cref{thm:axiomatisation}, so it does not admit a homomorphism indistinguishability characterisation.
	
	We also use \cref{thm:axiomatisation} to significantly simplify the proofs of the negative results by \textcite{atserias_expressive_2021}. In \cref{subsec:FO-equiv}, we show that chromatic equivalence violates condition \ref{it:a1}, while $\operatorname{FO}^k$-equivalence and $\operatorname{FO}_q$-equivalence violate \ref{it:a3}.
    Here, $\operatorname{FO}^k$ and $\operatorname{FO}_q$ denote the $k$-variable and quantifier-rank-$q$ fragment of first-order logic, respectively.

    Finally, we answer an open question of \textcite{atserias_expressive_2021} in the negative: Having the same fractional clique number is not a homomorphism indistinguishability relation (\cref{ex:fractional-clique-number}).
    
	\section{Homomorphism distinguishing topology}
    \label{sec:topology}

	Having demonstrated that homomorphism indistinguishability relations can be regarded as certain ideals of a semiring algebra, we now turn to the dual perspective and discuss the topological structure of homomorphism indistinguishability.
	Here, we make explicit that the homomorphism distinguishing closed componental graph classes form the closed sets of a topology. 
	In order to be aligned with the literature, we focus on simple graphs (the connection to the looped-simple case is discussed in \cref{sec:loops}).

	\begin{definition}[\textcite{roberson_oddomorphisms_2022}]
    \label{def:closure}
		Let $\mathcal{F}$ be a class of simple graphs.
		The \emph{homomorphism distinguishing closure} of $\mathcal{F}$ is
		the graph class
		\[
		\cl(\mathcal{F}) \coloneqq \{F \in \G \mid \forall G, H \in \G.\ G \equiv_{\mathcal{F}} H \implies \hom(F, G) = \hom(F, H) \}.
 		\]
		The class $\mathcal{F}$ is \emph{homomorphism distinguishing closed}
		if $\cl(\mathcal{F}) = \mathcal{F}$.
	\end{definition}

	Roberson observed that $\cl(\star)$ is a closure operator, cf.\ \cref{eq:cl-extensive,eq:cl-idempotent,eq:cl-increasing}.
	Furthermore, Roberson showed the following:

	\begin{lemma}[\protect{\cite[Lemma~6.1]{roberson_oddomorphisms_2022}}]\label{lem:intersections}
    For an arbitrary index set $\Lambda$,
		let $\mathcal{F}_\lambda$ be homomorphism distinguishing closed graph classes for $\lambda \in \Lambda$.
		Then $\bigcap_{\lambda \in \Lambda} \mathcal{F}_\lambda$ is homomorphism distinguishing closed.
	\end{lemma}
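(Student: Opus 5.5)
We show $\cl(\mathcal{F}) \subseteq \mathcal{F}$ for $\mathcal{F} \coloneqq \bigcap_{\lambda \in \Lambda} \mathcal{F}_\lambda$, using only that $\cl$ is monotone. The reverse inclusion $\mathcal{F} \subseteq \cl(\mathcal{F})$ holds for every class by extensivity of the closure operator: if $F \in \mathcal{F}$ and $G \equiv_{\mathcal{F}} H$, then in particular $\hom(F,G) = \hom(F,H)$.

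The first step is monotonicity of $\cl$. Suppose $\mathcal{F}' \subseteq \mathcal{F}''$. Whenever $G \equiv_{\mathcal{F}''} H$, we also have $G \equiv_{\mathcal{F}'} H$, since fewer equations are required. So every $F \in \cl(\mathcal{F}')$, which is preserved by all pairs in $\equiv_{\mathcal{F}'}$, is in particular preserved by all pairs in the smaller relation $\equiv_{\mathcal{F}''}$. Hence $\cl(\mathcal{F}') \subseteq \cl(\mathcal{F}'')$.

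Now fix $\lambda \in \Lambda$. Since $\mathcal{F} \subseteq \mathcal{F}_\lambda$, monotonicity and the assumption that $\mathcal{F}_\lambda$ is homomorphism distinguishing closed give $\cl(\mathcal{F}) \subseteq \cl(\mathcal{F}_\lambda) = \mathcal{F}_\lambda$. Intersecting over all $\lambda$ yields $\cl(\mathcal{F}) \subseteq \bigcap_\lambda \mathcal{F}_\lambda = \mathcal{F}$. Together with extensivity, $\cl(\mathcal{F}) = \mathcal{F}$.

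There is no real obstacle here; the argument is the standard fact that the fixed points of a closure operator are closed under arbitrary intersections. The only point needing care is the edge case $\Lambda = \emptyset$, where the intersection is read as $\G$. Then $\cl(\G) \subseteq \G$ holds trivially by definition, since $\cl$ only collects graphs from $\G$, so $\G$ is closed and the argument covers this case as well.
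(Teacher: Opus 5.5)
Your proof is correct. It is the standard argument: the paper does not reprove this lemma but cites Roberson, relying on the same closure-operator properties of $\cl$ (extensivity and monotonicity) that you verify, so that $\cl(\bigcap_{\lambda} \mathcal{F}_\lambda) \subseteq \cl(\mathcal{F}_\lambda) = \mathcal{F}_\lambda$ for every $\lambda$. Your treatment of the case $\Lambda = \emptyset$ is also fine.
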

	
	Given these properties, it is natural to ask whether $\cl(\star)$ induces a topology by declaring the sets invariant under $\cl(\star)$ as closed.
	This fails since the union of two closed sets is not necessarily closed.
	More precisely,
	it is not hard to see that homomorphism distinguishing 
	closed sets are always closed under disjoint unions as implied by \cref{eq:coprod}.
	Hence, the union of two homomorphism distinguishing closed graph classes is typically not homomorphism distinguishing closed.
	Moreover, in \cite[Example~34]{seppelt_logical_2024}, homomorphism distinguishing closed graph classes $\mathcal{F}_1$ and $\mathcal{F}_2$ are constructed such that the disjoint union closure of $\mathcal{F}_1 \cup \mathcal{F}_2$ is not homomorphism distinguishing closed.
	Thus, $\cl(\star)$ does not interact nicely with (disjoint union closures of) unions.
	We overcome this issue by restricting our attention to componental graph classes.
	For graph classes $\mathcal{F}_1$ and $\mathcal{F}_2$,
	we write $\mathcal{F}_1 + \mathcal{F}_2$ for the smallest componental graph class containing $\mathcal{F}_1$ and $\mathcal{F}_2$.
	Our key observation is the following lemma, which makes use of the algebra structure exhibited in \cref{sec:which-relations}.

    \begin{lemma}[label=lem:componental-closed,restate=lemComponentalClosed]
        If $\mathcal{F}_1, \mathcal{F}_2 \subseteq \G$
		are homomorphism distinguishing closed componental graph classes,
		then $\mathcal{F}_1 + \mathcal{F}_2$ is homomorphism distinguishing closed.
    \end{lemma}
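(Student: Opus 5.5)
The plan is to translate the statement into the algebraic language of \cref{sec:which-relations} and reduce it to the elementary fact that the zero locus of an intersection of two ideals is the union of the zero loci. Since simple graphs lack a multiplicative unit, I would either work in $\lG$ and restrict to $\G$ at the end, or use only the identities \cref{eq:coprod,eq:prod,eq:connected}, which hold verbatim for simple graphs.

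\textbf{Step 1 (setup).} For a componental class $\mathcal{F}$, write $\mathcal{Z}_{\mathcal{F}} \coloneqq \mathcal{F} \cap \Z$ for its connected members. By componentality and \cref{eq:coprod}, $G \equiv_{\mathcal{F}} H$ holds if and only if $\hom(Z, G) = \hom(Z, H)$ for all $Z \in \mathcal{Z}_{\mathcal{F}}$. Let $\mathfrak{a}_i$ be the ideal of $\C\lG$ corresponding to $\equiv_{\mathcal{F}_i}$. By the discussion preceding \cref{thm:semiring-hom-congruence}, $\mathfrak{a}_i = I(\Phi_i)$ with $\Phi_i = \{\hom(Z, \star) \mid Z \in \mathcal{Z}_{\mathcal{F}_i}\}$. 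The connected members of $\mathcal{F}_1 + \mathcal{F}_2$ are exactly $\mathcal{Z}_{\mathcal{F}_1} \cup \mathcal{Z}_{\mathcal{F}_2}$. Hence the ideal of $\equiv_{\mathcal{F}_1+\mathcal{F}_2}$ is $I(\Phi_1 \cup \Phi_2) = \mathfrak{a}_1 \cap \mathfrak{a}_2$.

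\textbf{Step 2 (connected graphs in the closure).} Let $F' \in \cl(\mathcal{F}_1 + \mathcal{F}_2)$ be connected, and put $\psi = \hom(F', \star) \in \Hom(\lG, \C)$. The hypothesis says that $\psi$ is constant on $\equiv_{\mathcal{F}_1+\mathcal{F}_2}$-classes. Since every element of the ideal is a scalar multiple of some $G - H$ with $G \equiv H$, this means $\psi$ vanishes on $\mathfrak{a}_1 \cap \mathfrak{a}_2$, so $\psi \in V(\mathfrak{a}_1 \cap \mathfrak{a}_2)$.

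I claim $V(\mathfrak{a}_1 \cap \mathfrak{a}_2) = V(\mathfrak{a}_1) \cup V(\mathfrak{a}_2)$. The inclusion $\supseteq$ is clear. For $\subseteq$, suppose $\psi(p_1) \neq 0 \neq \psi(p_2)$ for some $p_i \in \mathfrak{a}_i$. Then $p_1 p_2 \in \mathfrak{a}_1 \cap \mathfrak{a}_2$, and since $\psi$ is multiplicative, $\psi(p_1 p_2) = \psi(p_1)\psi(p_2) \neq 0$, a contradiction. So, say, $\psi \in V(\mathfrak{a}_1)$. In particular $\psi(G - H) = 0$ whenever $G \equiv_{\mathcal{F}_1} H$, i.e.\ $F' \in \cl(\mathcal{F}_1) = \mathcal{F}_1 \subseteq \mathcal{F}_1 + \mathcal{F}_2$.

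\textbf{Step 3 (disconnected graphs in the closure).} It remains to show that every $F' \in \cl(\mathcal{F}_1 + \mathcal{F}_2)$ has all its connected components in the closure; Step 2 then puts each component in $\mathcal{F}_1 + \mathcal{F}_2$, and componentality finishes the proof. I expect this to be the main obstacle, because $\hom(F', \star) = \prod_j \hom(Z_j, \star)$ is no longer a semiring homomorphism.

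My first attempt is to show that the homomorphism distinguishing closure of a componental class is again componental. For this, fix $G \equiv H$ (over $\mathcal{F}_1 + \mathcal{F}_2$). By \cref{eq:h2}, also $G + mK \equiv H + mK$ for every graph $K$ and every $m \geq 0$. Applying \cref{eq:connected} to each component $Z_j$ of $F'$ gives, for all $m$,
\[
\prod_j \bigl(\hom(Z_j, G) + m\hom(Z_j, K)\bigr) = \prod_j \bigl(\hom(Z_j, H) + m\hom(Z_j, K)\bigr).
\]
Both sides are polynomials in $m$, so this is a polynomial identity. Choosing $K$ (for instance from categorical products via \cref{eq:prod}, suitably scaled) so that the values $\hom(Z_j, K)$ separate the distinct components, I would compare the roots of both sides in $m$. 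This should yield $\hom(Z_j, G) = \hom(Z_j, H)$ for each $j$, i.e.\ each $Z_j$ lies in $\cl(\mathcal{F}_1 + \mathcal{F}_2)$.

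If this root-matching argument becomes unwieldy, I would instead cite the known fact from \cite{roberson_oddomorphisms_2022,seppelt_logical_2024} that closures of componental classes are componental. Finally, closure under disjoint unions of $\cl(\mathcal{F}_1 + \mathcal{F}_2)$ is immediate from \cref{eq:coprod}.
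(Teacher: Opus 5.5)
Your argument is essentially the paper's proof. The paper also reduces to connected $F$ via componentality of $\cl(\mathcal{F}_1+\mathcal{F}_2)$, citing \cref{lem:cl-componental}, i.e.\ \cite[Theorem~7]{seppelt_logical_2024}, which is your fallback. Your product $p_1p_2$ with $p_i = G_i - H_i$ is exactly its witness pair $G = G_1\otimes G_2 + H_1\otimes H_2$, $H = G_1\otimes H_2 + G_2\otimes H_1$, for which $\hom(K,G)-\hom(K,H)$ factors as $(\hom(K,G_1)-\hom(K,H_1))(\hom(K,G_2)-\hom(K,H_2))$ for connected $K$.

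One step in Step~2 needs repair. $\cl$ quantifies only over simple $G,H$, whereas $\mathfrak{a}_1\cap\mathfrak{a}_2\subseteq\C\lG$ is spanned by differences of looped-simple graphs. So $F'\in\cl(\mathcal{F}_1+\mathcal{F}_2)$ does not by itself give $\psi\in V(\mathfrak{a}_1\cap\mathfrak{a}_2)$; you would need \cref{lem:roberson-question}. (Also, elements of that ideal are $\C$-linear combinations of differences, not scalar multiples of a single one, though this is harmless since $\psi$ is linear.)

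It is cleaner to argue as the paper does. If a connected $F'$ lies in neither $\mathcal{F}_i$, closedness in $\G$ supplies simple $G_i\equiv_{\mathcal{F}_i}H_i$ with $\psi(G_i)\neq\psi(H_i)$. Then $p_1p_2$ is a difference of two simple graphs that are $\equiv_{\mathcal{F}_1+\mathcal{F}_2}$-equivalent but separated by $\psi$. This makes the ideal-theoretic Step~1 unnecessary.

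In Step~3, your root-matching sketch never actually constructs the separating $K$, so as written it is incomplete. If you want to avoid the citation, compare the coefficients of $m^{k-1}$ instead. Writing $F'_j$ for $F'$ with the component $Z_j$ removed, this gives $\sum_j(\hom(Z_j,G)-\hom(Z_j,H))\hom(F'_j,K)=0$ for all simple $K$. Since $F'_j$ depends only on the isomorphism type of $Z_j$, and different types yield non-isomorphic graphs, the simple-graph version of Lov\'asz's linear independence (\cref{thm:lovasz-linear-independent}) forces $\hom(Z_j,G)=\hom(Z_j,H)$ for every $j$.
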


	\Cref{lem:componental-closed} allows us to construct new homomorphism distinguishing closed graph classes by applying $+$.
	For example, invoking \cite{neuen_homomorphism-distinguishing_2024,roberson_oddomorphisms_2022}, we have the following.
	\begin{corollary}
		For $k, d \geq 1$,
		the graph class $\{F \in \G \mid \tw(F) \leq k \lor \Delta(F) \leq d \}$
		is homomorphism distinguishing closed.
		Here, $\tw(F)$ denotes the treewidth and $\Delta(F)$ the maximum degree of~$F$.
	\end{corollary}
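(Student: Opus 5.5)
The plan is to apply \cref{lem:componental-closed} to two graph classes already known to be homomorphism distinguishing closed. Let
\[
\mathcal{T}_k \coloneqq \{F \in \G \mid \tw(F) \leq k\}
\quad\text{and}\quad
\mathcal{D}_d \coloneqq \{F \in \G \mid \Delta(F) \leq d\}.
\]
The inputs are:
\begin{itemize}
\item $\mathcal{T}_k$ is homomorphism distinguishing closed by \cite{neuen_homomorphism-distinguishing_2024};
\item $\mathcal{D}_d$ is homomorphism distinguishing closed by \cite{roberson_oddomorphisms_2022}.
\end{itemize}

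First, both classes are componental. This holds because $\tw(F_1 + F_2) = \max\{\tw(F_1), \tw(F_2)\}$ and $\Delta(F_1 + F_2) = \max\{\Delta(F_1), \Delta(F_2)\}$. Hence $F_1 + F_2$ lies in the class if and only if both $F_1$ and $F_2$ do. For the vertex-less graph, the usual conventions $\tw(\boldsymbol 0) = -1$ and $\Delta(\boldsymbol 0) = 0$ place it in both classes, consistent with componentality. Then \cref{lem:componental-closed} gives directly that $\mathcal{T}_k + \mathcal{D}_d$ is homomorphism distinguishing closed.

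It remains to identify $\mathcal{T}_k + \mathcal{D}_d$ concretely. A componental class is determined by its connected members, since it contains a graph exactly when it contains all of its components. The smallest componental class containing $\mathcal{T}_k$ and $\mathcal{D}_d$ is therefore the class of graphs each of whose connected components has treewidth at most $k$ or maximum degree at most $d$. Both inclusions are immediate: this class is componental and contains both classes, and any componental class containing both must contain every such graph, being the disjoint union of its components.

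The one point needing care is how to read the corollary's set-builder notation. It must be read component-wise: the graphs all of whose connected components satisfy $\tw \leq k \lor \Delta \leq d$. A literal global reading would fail, because homomorphism distinguishing closed classes are closed under disjoint unions by \cref{eq:coprod}. For example, take a star with more than $d$ leaves (treewidth $1 \leq k$) and a bounded-degree graph of treewidth greater than $k$, such as a large grid when $d \geq 4$. Each lies in the class, but their disjoint union satisfies neither condition. With the component-wise reading, the corollary follows immediately from the two cited closedness results and \cref{lem:componental-closed}. The substantive work lies entirely in those cited results and in the lemma.
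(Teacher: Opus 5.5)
Your argument is correct and is exactly the paper's: you apply \cref{lem:componental-closed} to the class of graphs of treewidth at most $k$ and the class of graphs of maximum degree at most $d$, which are componental and are homomorphism distinguishing closed by \cite{neuen_homomorphism-distinguishing_2024} and \cite{roberson_oddomorphisms_2022}, respectively. Your caveat is also right: the literal union is not closed under disjoint unions, and hence by \cref{eq:coprod} not homomorphism distinguishing closed, as soon as some graph of maximum degree at most $d$ has treewidth exceeding $k$ (for example for every $d \geq 3$), so the class must be read component-wise as the sum of the two classes in the sense of \cref{lem:componental-closed}.
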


	We restrict our attention to componental graph classes $\mathcal{F}$.
	Note that componental graph classes are in one-to-one correspondence with classes of connected graphs.
	Recall that $\mathcal{Z}$ denotes the class of all non-empty connected graphs.

	\begin{definition}\label{def:hom-dist-topology}
		The \emph{homomorphism distinguishing topology} is the topology on $\mathcal{Z}$ whose closed sets are the $\mathcal{F} \subseteq \mathcal{Z}$ such that $\cl(\mathcal{F}) \cap \Z = \mathcal{F}$.
	\end{definition}

	That this is indeed a topological space follows from \cref{lem:intersections,lem:componental-closed}.
	Finally, we list the following properties of the homomorphism distinguishing topology.
	In the following proposition, 
	a graph class $\mathcal{F}$ is \emph{essentially finite} if its componental closure contains finitely many non-isomorphic connected graphs.
	The proposition simplifies previous proofs of \cite{kwiecien_determinacy_2022,seppelt_logical_2024}.

	\begin{proposition}[label=prop:frechet,restate=propFrechet]
		The homomorphism distinguishing topology is Fr\'echet, i.e., all singletons are closed.
		In particular, every essentially finite componental graph class is homomorphism distinguishing closed.
	\end{proposition}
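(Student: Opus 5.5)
To prove that every singleton $\{F\}$ with $F \in \Z$ is closed, we must show $\cl(\{F\}) \cap \Z = \{F\}$. That is, for every connected simple graph $F' \not\cong F$, we must exhibit simple graphs $G, H$ with $\hom(F, G) = \hom(F, H)$ but $\hom(F', G) \neq \hom(F', H)$. The essentially-finite statement then follows formally. Finite unions of closed sets are closed in the homomorphism distinguishing topology by \cref{lem:componental-closed}, so every finite subset of $\Z$ is closed. A componental class $\mathcal{F}$ whose connected members form a finite set $\mathcal{F}\cap\Z$ is exactly the componental closure of that finite set. Since $\cl$ respects componental closure (by \cref{eq:coprod}, $\cl(\mathcal{F})$ is determined by $\cl(\mathcal{F}\cap \Z)$ together with disjoint unions), $\mathcal{F}$ is homomorphism distinguishing closed.

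For the singletons, I would use the algebra of \cref{sec:which-relations}. Both $\phi_F = \hom(F,\star)$ and $\phi_{F'} = \hom(F',\star)$ are semiring homomorphisms $\lG \to \C$, and they are distinct because $F \not\cong F'$ (Lovász). Two distinct ring homomorphisms $\C\lG \to \C$ have distinct kernels, which are maximal ideals. So there is $p \in \C\lG$ with $\phi_F(p)=0$ and $\phi_{F'}(p)=1$. Because both maps take integer values on graphs, we can choose $p \in \mathbb{Z}\lG$: pick a graph $K$ with $\hom(F,K)\neq\hom(F',K)$ and set $p = K - \hom(F,K)\cdot\boldsymbol 1$, clearing signs so it is an honest difference. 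Writing $p = G - H$ with $G,H \in \lG$ then gives $\hom(F,G)=\hom(F,H)$ and $\hom(F',G)\neq\hom(F',H)$.

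The main obstacle is to land in simple graphs, because $\boldsymbol 1$ has a loop. I would replace $\boldsymbol 1$ by a loopless stand-in. Concretely, take $K_N$ with $N$ large, or multiply everything by $K_n$ as in \cref{it:s6}, and work with $p = a\,K - b\,L$ for simple graphs $K, L$ and integers $a, b \ge 0$ chosen so that $a\hom(F,K) = b\hom(F,L)$ while the corresponding quantity for $F'$ is nonzero. For this it suffices to find simple $K, L$ with $\hom(F,L) \neq 0$ and
\[
\frac{\hom(F,K)}{\hom(F,L)} \neq \frac{\hom(F',K)}{\hom(F',L)}.
\]
If no such pair existed, $\hom(F,\star)$ and $\hom(F',\star)$ would be proportional on simple graphs. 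Multiplicativity under $\otimes$ (\cref{eq:prod}) would then force equality: the proportionality constant $c$ satisfies $c = c^2$ by testing on $K\otimes L$. Equality contradicts Lovász's theorem restricted to simple graphs. Set $G = aK$ and $H = bL$, clearing denominators. The remaining degenerate case, where $\hom(F,\cdot)$ vanishes on the graphs in question, is handled by choosing $L$ to be a large complete graph, on which every loopless $F$ has positive count.
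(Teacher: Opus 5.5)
Your argument is correct, but its core step takes a more roundabout route than the paper's. The paper takes a simple graph $G$ with $\hom(F,G)\neq\hom(F',G)$ from \cref{thm:dual-lovasz} and sets $X \coloneqq G\otimes F'$ and $Y \coloneqq \hom(F,G)\cdot F'$. In your language, this is your looped witness $G-\hom(F,G)\cdot\boldsymbol{1}$ multiplied by the competitor $F'$ itself. The product is simple, the $F$-values stay equal, and the $F'$-discrepancy is scaled by $\hom(F',F')\neq 0$. Your passing option of multiplying by $K_n$ with $n\geq |V(F')|$ is the same trick and would already finish the proof.

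Your main route instead normalises by a second simple graph $L=K_N$. It then derives the existence of $K$ with $\hom(F,L)\hom(F',K)\neq\hom(F,K)\hom(F',L)$ by observing that two multiplicative parameters which are proportional and nonzero on $K_N$ must coincide, since the constant satisfies $c=c^2$ with $c>0$. This cross-multiplied inequality, not the ratio form, is what $G=\hom(F,L)\cdot K$ and $H=\hom(F,K)\cdot L$ actually need. Lovász's theorem then gives the contradiction. This is valid, using connectedness of $F$ and $F'$ for additivity over disjoint copies. However, it replaces a one-line explicit construction with an indirect existence argument, and the maximal-ideal paragraph is not needed at all.

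One correction in your reduction of the second claim. \Cref{eq:coprod} yields $\cl(\mathcal{F})=\cl(\mathcal{F}\cap\Z)$ and that $\cl(\mathcal{F})$ is closed under disjoint unions. It does not yield that every component of a disconnected graph in $\cl(\mathcal{F})$ lies in $\cl(\mathcal{F})$. That direction is \cref{lem:cl-componental}, which the paper invokes at exactly this point. You need it to pass from $\cl(\mathcal{F})\cap\Z=\mathcal{F}\cap\Z$ to $\cl(\mathcal{F})=\mathcal{F}$. You also use it implicitly when you identify closed subsets of $\Z$ with componental homomorphism distinguishing closed classes in order to take finite unions.
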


	\begin{proposition}[label=prop:disconnected-noncompact,restate=propDisconnected]
		The homomorphism distinguishing topology is disconnected and non-compact.
	\end{proposition}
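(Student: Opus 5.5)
The plan rests on one elementary device. For connected $F$ and any graphs $X, W$ we have $\hom(F, X \otimes W) = \hom(F,X)\hom(F,W)$ by \eqref{eq:prod}, and $\hom(F, nX) = n\hom(F,X)$ by \eqref{eq:connected}. So to show that a class $\mathcal{A}\subseteq\Z$ is closed, it suffices to produce, for every connected $F \notin \mathcal{A}$, two graphs $G,H$ with $G \equiv_{\mathcal{A}} H$ but $\hom(F,G)\neq\hom(F,H)$. Taking $G$ and $H$ componental in the obvious way makes $\equiv_{\mathcal{A}}$ agree with equivalence over the componental closure of $\mathcal{A}$.

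\emph{Disconnectedness.} Let $\mathcal{B}\subseteq\Z$ be the connected bipartite graphs and $\mathcal{N} = \Z\setminus\mathcal{B}$. Both are nonempty, since $K_1\in\mathcal{B}$ and $K_3\in\mathcal{N}$. For connected $F$, $\hom(F,K_2)$ equals $2$ if $F\in\mathcal{B}$ and $0$ otherwise.

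First I show that $\mathcal{B}$ is closed. Given $F\in\mathcal{N}$, put $G = F\otimes K_2$ and $H = 2F$. For $F'\in\mathcal{B}$,
\[
\hom(F',G) = 2\hom(F',F) = \hom(F',H),
\]
so $G \equiv_{\mathcal{B}} H$. On the other hand $\hom(F,G) = 0 \neq 2\hom(F,F) = \hom(F,H)$.

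Next I show that $\mathcal{N}$ is closed. Given $F\in\mathcal{B}$, put $G = F\otimes K_2$ and $H = Y\otimes K_2$, where $\hom(F,Y)\neq\hom(F,F)$. Concretely, take $Y = K_1$ if $F$ has an edge, and $Y = 2K_1$ if $F = K_1$. Every $F'\in\mathcal{N}$ has zero homomorphism count into both $G$ and $H$, so $G \equiv_{\mathcal{N}} H$. Meanwhile $\hom(F,G) = 2\hom(F,F) \neq 2\hom(F,Y) = \hom(F,H)$.

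Thus $\mathcal{B}$ is a nontrivial clopen set, and the topology is disconnected.

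\emph{Non-compactness.} I would generalise the same trick to an infinite partition of $\Z$ into clopen sets. Such a partition is an open cover with no finite subcover, which gives non-compactness. For a connected graph $W$, let $\mathcal{A}_W = \{F\in\Z \mid \hom(F,W)>0\}$. The natural choice is $W = C_{2m+1}$, since $\hom(F, C_{2m+1})>0$ exactly when $F$ has no odd cycle shorter than $2m+1$. The sets $\mathcal{A}_{C_{2m+1}}$ form a nested sequence, and their successive differences $\mathcal{D}_m$ (odd girth exactly $2m+1$), together with $\mathcal{B}$, partition $\Z$. It then suffices to show that every $\mathcal{A}_{C_{2m+1}}$ is clopen.

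Closedness of the complement goes exactly as for $\mathcal{N}$. Given $F\in\mathcal{A}_{C_{2m+1}}$, compare $F\otimes C_{2m+1}$ with $Y\otimes C_{2m+1}$, where $\hom(F,Y)\neq\hom(F,F)$. Graphs outside the class vanish on both, while $F$ gives the values $\hom(F,F)\hom(F,C_{2m+1})$ and $\hom(F,Y)\hom(F,C_{2m+1})$, which differ.

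The main obstacle is closedness of $\mathcal{A}_{C_{2m+1}}$ itself. The bipartite argument used that $\hom(F',K_2)$ is the \emph{same} constant for all $F'\in\mathcal{B}$. That fails for $C_{2m+1}$, because $\hom(F',C_{2m+1})$ varies with $F'$. What I need instead is a pair $W_1, W_2$ with
\[
\hom(F',W_1) = \hom(F',W_2) \text{ for all } F'\in\mathcal{A}_{C_{2m+1}}, \qquad \hom(F,W_1)\neq\hom(F,W_2) \text{ for the given } F\notin\mathcal{A}_{C_{2m+1}};
\]
then $G = F\otimes W_1$ and $H = F\otimes W_2$ do the job. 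My first attempt is to replace $C_{2m+1}$ by a suitable graph $W$ of odd girth $2m+1$ for which $\hom(F',W)$ is proportional to $\hom(F',K_2\otimes W)$ on $\mathcal{A}_W$; a candidate is a $K_2$-cover of $W$ in which odd cycles of length at least $2m+1$ lift in a controlled way, with $W_1 = \text{(cover)}$ and $W_2 = 2W$. If this fails, I would instead look for an infinite family of closed sets whose intersections of finitely many members are nonempty but whose total intersection is empty. Natural candidates are $\{F\in\Z \mid |V(F)|\geq k\}$, or classes cut out by vanishing counts into suitable targets, proved closed via the same tensor-product device.
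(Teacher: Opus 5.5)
\textbf{Gap in the non-compactness part.} Your disconnectedness argument is correct. The pairs $F\otimes K_2$ versus $2F$, and $F\otimes K_2$ versus $Y\otimes K_2$, show directly that the bipartite and the non-bipartite connected graphs are both closed. This is an elementary unfolding of the paper's argument via the idempotent $\tfrac12 K_2$.

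The non-compactness part has a genuine gap, and your proposed route cannot be completed. For $m\geq 2$, the sets $\mathcal{A}_{C_{2m+1}}=\{F\in\Z\mid \hom(F,C_{2m+1})>0\}$ are \emph{not} closed. Indeed, $\mathcal{A}_{C_{2m+1}}$ contains every cycle $C_n$ with $n\geq 2m+1$. Suppose $G\equiv_{\{C_n : n\geq N\}}H$, and let $\lambda_i,\mu_j$ be the adjacency eigenvalues. Then $\sum_i\lambda_i^n=\sum_j\mu_j^n$ for all $n\geq N$. Hence the finitely supported signed measure $\sum_i\lambda_i^N\delta_{\lambda_i}-\sum_j\mu_j^N\delta_{\mu_j}$ has vanishing moments and is zero by a Vandermonde argument. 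So $G$ and $H$ have the same nonzero eigenvalues with multiplicity, whence $\hom(C_3,G)=\operatorname{tr}A_G^3=\operatorname{tr}A_H^3=\hom(C_3,H)$. Thus $K_3\in\cl(\mathcal{A}_{C_{2m+1}})\setminus\mathcal{A}_{C_{2m+1}}$.

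The same argument defeats the other candidates you name:
\begin{itemize}
\item It applies to $\mathcal{A}_W$ for every $W$ of odd girth at least $5$, since such $\mathcal{A}_W$ contains all long cycles but not $K_3$. So the $K_2$-cover idea is doomed.
\item It applies to your fallback $\{F\in\Z\mid |V(F)|\geq k\}$ for $k\geq 4$, which also contains all long cycles.
\end{itemize}
Separately, your description of $\mathcal{A}_{C_{2m+1}}$ is wrong. Admitting a homomorphism to $C_{2m+1}$ is strictly stronger than having no odd cycle shorter than $2m+1$: take $K_4$ for $m=1$, or the Gr\"otzsch graph for $m=2$. Consequently your pieces do not even partition $\Z$; the non-$3$-colourable graphs are missing.

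\textbf{The missing idea.} Use the closed sets you already know how to produce, rather than their complements. Your argument for $\Z\setminus\mathcal{A}_W$ works verbatim for every $W$. For connected $F$ with $\hom(F,W)>0$, the graphs $F\otimes W$ and $Y\otimes W$ agree on all $F'$ with $\hom(F',W)=0$ and differ on $F$. Taking $W=K_k$ shows that $\mathcal{C}_k=\{F\in\Z\mid \chi(F)>k\}$ is closed. These sets are non-empty, since $K_{k+1}\in\mathcal{C}_k$, and they are decreasing, so they have the finite intersection property. But $\bigcap_k\mathcal{C}_k=\emptyset$, which gives non-compactness. The paper instead uses the family $\{K_\ell,K_{\ell+1},\dots\}$, whose closedness rests on Roberson's Theorem~6.15; the chromatic-number family is more self-contained.
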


	\section{Conclusion}

    By exhibiting geometric and topological structure associated with homomorphism indistinguishability, we gave an axiomatisation of homomorphism indistinguishability relations in \cref{thm:axiomatisation-simple,thm:axiomatisation} resolving the \enquote{ultimate goal} of \textcite{atserias_expressive_2021}.
    A crucial ingredient is a novel characterisation of functions $p \colon \lG \to \C$ which are of the form $p(\star) = \hom(F, \star)$ for some $F \in \lZ$ in \cref{thm:characterisation-semiring-homs,cor:fls-alternative} alternative to \cite{lovasz_dual_2010}.
    In our eyes, the central contribution of this paper, however, is to introduce a new algebraic language to reason about homomorphism indistinguishability.
    We propose to further develop this tool in the following directions.

    Most of this paper is not specific to the semiring $(\lG, +, \otimes, \boldsymbol{0}, \boldsymbol{1})$ but applies to every countable commutative cancellative semiring (\cref{thm:phi-abstract}).
    It would be interesting to apply the theorem to graph semirings with respect to other graph products \cite{imrich_associative_1975,imrich_monoid_2025,campanelli_graph_2017,zuiddam_asymptotic_2019}.
    Concretely, can the relations satisfying the axioms of \cref{thm:phi-abstract} for the strong or Cartesian product be naturally described?
    Moreover, given the few combinatorial properties our \cref{thm:axiomatisation} required,
    we expect a categorical generalisation as in \cite{lovasz_semidefinite_2009} to hold.
    
    A remaining open problem from \cite{atserias_expressive_2021} is a characterisation of right profiles, i.e., relations $\approx$ of the form $G \approx H \iff \hom(G, F) = \hom(H, F)$ for all $F$ in some graph class $\mathcal{F}$.
    Homomorphism indistinguishability to the right is much less studied \cite{atserias_expressive_2021,dawar_homomorphism_2025} than homomorphism indistinguishability to the left due to the small number of known interesting example relations \cite[p.~32]{seppelt_homomorphism_2024}.
    Since $\hom(\star, F)$ lacks an additive identity analogous to \cref{eq:connected}, our \cref{thm:phi-abstract} does not seem to be applicable.
    A similar problem arises when attempting to axiomatise homomorphism indistinguishability relations (to the left) of not necessarily componental graph classes.
    To our knowledge, no interesting examples of such relations are known.
    
    Further, we propose to connect our insights to the two other strands of research on homomorphism indistinguishability.
    On the one hand, it is plausible that the algebraic structure exhibited here may be useful when designing computational reductions between homomorphism indistinguishability problems, as implicitly done in \cite[Theorem~20]{cerny_homomorphism_2026}.
    Such reductions would serve as a tool towards resolving \cite[Conjecture~25]{seppelt_algorithmic_2024}.
    On the other hand, the homomorphism distinguishing topology (\cref{def:hom-dist-topology}) gives additional structure to homomorphism distinguishing closed graph classes in light of the Kristj\'ansson's refutation of Roberson's strong conjecture \cite{kristjansson_counterexamples_2026}.
    We propose to further investigate the topology: Is it Hausdorff? What are convergent sequences, continuous maps?

    Finally, the characterisation of left homomorphism profiles in terms of positive semidefiniteness of connection matrices (\textcite{lovasz_dual_2010}) suggests a connection to sums of squares certificates and real algebraic geometry. We do not know whether this connection can be harnessed to further simplify the conditions in \cref{thm:axiomatisation,thm:axiomatisation-simple}.
    
    \newpage 
	\section*{Acknowledgements}
    JvDdB and DER were supported by the Carlsberg Foundation Young Researcher Fellowship CF21-0682 --- ``Quantum Graph Theory''. Additionally, JvDdB was supported by GAČR grant 25-17377S. Part of this paper was written while JvDdB, DR, GLS, and PZ were in residence at Institut Mittag-Leffler in Djursholm, Sweden during the research semester ``Operator Algebras and Quantum Information'' in the spring of 2026, supported by the Swedish Research Council under grant no.~2021-06594.
    PZ was funded by the European Union (ERC, POCOCOP, 101071674). 
    GLS acknowledges support from the CNRS 80Prime grant ``QuantGraphe''.
    JM acknowledges the financial support from the European Research Council under the European Union's Horizon 2020 research and innovation programme (Synergy Project Malinca, ERC Grant Agreement No 670624).
    TS acknowledges funding by the European Union (ERC, CountHom, 101077083).

    Views and opinions expressed are however those of the author(s) only and do not necessarily reflect those of the European Union or the European Research Council Executive Agency. Neither the European Union nor the granting authority can be held responsible for them.

    Initial discussions between TS and JM  took place at   Dagstuhl Seminar 25141 \enquote{Categories for Automata and Language Theory}.
    TS would like to thank Bartek Klim,
	Filip Ku\v{c}er\'ak, Cornelius Brand, Radu Curticapean, Arnar Kristjánsson, Ben Moore, Ben Young, Daniel Neuen, and Moritz Seppelt for fruitful conversations.

    \section*{\textsmaller{AI} Statement}

    Claude Code Opus 5 was used for writing, proofreading, and literature review. This applies mostly to the commutative algebra techniques. All \textsmaller{AI}-generated content was revised/rewritten by the authors. We assume full responsibility for the entire material.

	\section{Full version of \texorpdfstring{\cref{sec:which-relations}}{Axiomatisation}}
	\label{app:which-relations}

	In this section, we give a full proof of \cref{thm:axiomatisation}.
	The bulk of the section  is concerned with introducing notions from commutative algebra and elementary algebraic geometry to deal with graph isomorphism relaxations.
	Starting with a commutative cancellative semiring~$S$,
	we construct the Grothendieck ring $\mathbb{Z}S$ in \cref{sec:zs},
	extend scalars in $\mathbb{Q}S = \mathbb{Q} \otimes_{\mathbb{Z}} \mathbb{Z}S$ in \cref{sec:extension-of-scalars-Q},
	and further to $\mathbb{C}S = \mathbb{C} \otimes_{\mathbb{Q}} \mathbb{Q}S$ in \cref{sec:extension-of-scalars-C}.
	In each step, we identify the conditions on equivalence relations $\approx$ on $S$ making them correspond to ideals of the respective semiring algebras.
	
	In \cref{sec:alg-geo-semiring-algebra}, we prove a Nullstellensatz for semiring algebras $\mathbb{C}S$.
    In \cref{sec:radical-ideals}, we characterise semiring congruences $\approx$ corresponding to radical ideals of $\mathbb{C}S$.
	The Nullstellensatz is then instantiated to graphs in \cref{thm:characterisation-semiring-homs}.

	So far, none of the results are specific to the graph semiring $\lG = (\lG, +, \otimes, \boldsymbol{0}, \boldsymbol{1})$.
	This gap is closed in \cref{sec:subgraph-series} by studying subgraph series and proving a characterisation of homomorphism counts alternative to previous work of \textcite{lovasz_dual_2010}.

	In \cref{sec:pieces}, our main theorem \ref{thm:axiomatisation} is derived from the aforementioned ingredients.

	\subsection{Semiring congruences and ideals of Grothendieck ring \texorpdfstring{$\mathbb{Z}S$}{ZS}}
	\label{sec:zs}

	Let $S = (S, +, \cdot, 0, 1)$ be a commutative cancellative semiring.
	We write multiplication as juxtaposition.
	The \emph{Grothendieck ring} $\mathbb{Z}S$
	is defined on the set of equivalence classes of the equivalence relation $\sim$ on $S \times S$ given by
	\[
		(s_1, t_1) \sim (s_2, t_2) \iff s_1 + t_2 = t_1 + s_2.
	\]
	Multiplication and addition are defined via
	\begin{align*}
		(s_1, t_1) + (s_2, t_2) &\coloneqq (s_1 + s_2, t_1 + t_2)\\
		(s_1, t_1) \cdot (s_2, t_2) &\coloneqq (s_1s_2 + t_1 t_2 , s_1 t_2 + t_1 s_2)
	\end{align*}
	for all $s_1, s_2, t_1, t_2 \in S$.
	The map $S \to \mathbb{Z}S$ given by $s \mapsto (s, 0) \eqqcolon [s]$ is a semiring homomorphism.
	Generic elements of $\mathbb{Z}S$ are of the form $[s] - [t] = (s, t)$ for $s, t\in S$.

	Recalling \cref{eq:h1,eq:h2,eq:h3}, we show the following correspondence between ideals of $\mathbb{Z}S$ and congruences on $S$.
	For an ideal $\mathfrak{b} \subseteq \mathbb{Z}S$,
	define an equivalence relation $\approx_{\mathfrak{b}}$ on $S$ via
	\[
		s \approx_{\mathfrak{b}} t \iff [s] - [t] \in \mathfrak{b}.
	\]
	Conversely, given an equivalence relation $\approx$ on $S$,
	define the set
	\[
		\mathfrak{a}_{\approx} \coloneqq \{[s] - [t] \in \mathbb{Z}S \mid s, t \in S, \ s \approx t\}.
	\]
	We shall see that $\mathfrak{a}_{\approx}$ is an ideal and that these constructions are mutually inverse.
	
	\begin{proposition}\label{prop:ideals-to-relations}
		Let $S$ denote a commutative cancellative semiring.
		The maps ${\approx} \mapsto \mathfrak{a}_{\approx }$
		and  $\mathfrak{b} \mapsto {\approx_{\mathfrak{b}}}$
		establish a bijection between the set of ideals $\mathfrak{b}$ of the Grothendieck ring $\mathbb{Z}S$ 
		and the set of equivalence relations $\approx$ on $S$
		such that, for all $s, t, r \in S$, it holds that 
		\begin{align}
			s \approx t &\iff s + r \approx t + r, \label{eq:c1}\\
			s \approx t &\implies s \times r \approx t \times r. \label{eq:c2}
		\end{align}
	\end{proposition}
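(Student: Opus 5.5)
The plan is to verify four things in turn: the constructed objects land in the right sets (an ideal for $\mathfrak{a}_\approx$, a relation satisfying \eqref{eq:c1}--\eqref{eq:c2} for $\approx_{\mathfrak{b}}$), and the two compositions are identities. Throughout I use that every element of $\mathbb{Z}S$ can be written as $[s]-[t]$, and that $[s]-[t] = [s']-[t']$ if and only if $s+t' = s'+t$ in $S$.

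\textbf{From ideals to relations.} Let $\mathfrak{b}$ be an ideal and define $\approx_{\mathfrak{b}}$ as above.
\begin{itemize}
\item It is an equivalence relation: $0\in\mathfrak{b}$ gives reflexivity, $-\mathfrak{b}=\mathfrak{b}$ gives symmetry, and closure under sums gives transitivity.
\item For \eqref{eq:c1}, note $[s+r]-[t+r] = [s]-[t]$ in $\mathbb{Z}S$, so $s \approx_{\mathfrak{b}} t$ and $s+r \approx_{\mathfrak{b}} t+r$ are literally the same condition. This gives both directions.
\item For \eqref{eq:c2}, write $[sr]-[tr] = [r]\cdot([s]-[t])$ and use absorption.
\end{itemize}

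\textbf{From relations to ideals.} Let $\approx$ satisfy \eqref{eq:c1} and \eqref{eq:c2}.
\begin{itemize}
\item The membership $[s]-[t]\in\mathfrak{a}_\approx$ depends only on the element of $\mathbb{Z}S$, not on the chosen representative. Suppose $[s]-[t]=[s']-[t']$ with $s\approx t$. Then $s+t' = s'+t$. Applying \eqref{eq:c1} with $r=t'$ gives $s+t' \approx t+t'$, i.e.\ $s'+t \approx t'+t$. Applying the backward direction of \eqref{eq:c1} with $r=t$ yields $s'\approx t'$. So in fact $\mathfrak{a}_\approx = \{x \in \mathbb{Z}S \mid \text{every representative } (s,t) \text{ of } x \text{ has } s\approx t\}$.
\item Closure under addition: if $s\approx t$ and $s'\approx t'$, two uses of \eqref{eq:c1} give $s+s'\approx t+s' \approx t+t'$.
\item Zero and negatives: $0 = [0]-[0]\in\mathfrak{a}_\approx$ by reflexivity, and negatives follow from symmetry.
\item Absorption: a general element is $[u]-[v]$, and $([u]-[v])([s]-[t]) = [us+vt]-[ut+vs]$. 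By \eqref{eq:c2} (with commutativity of $S$) we have $us\approx ut$ and $vt\approx vs$. Adding these via \eqref{eq:c1} as in the previous point gives $us+vt\approx ut+vs$.
\end{itemize}

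\textbf{The bijection.}
\begin{itemize}
\item $\approx_{\mathfrak{a}_\approx}$ equals $\approx$: by the well-definedness step, $[s]-[t]\in\mathfrak{a}_\approx$ holds if and only if $s\approx t$.
\item $\mathfrak{a}_{\approx_{\mathfrak{b}}}=\mathfrak{b}$: every element of $\mathbb{Z}S$ is of the form $[s]-[t]$, and it lies in $\mathfrak{b}$ exactly when $s\approx_{\mathfrak{b}} t$.
\end{itemize}

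\textbf{Main obstacle.} The only delicate point is the representative-independence of $\mathfrak{a}_\approx$. This is exactly where the cancellation direction ($\Leftarrow$) of \eqref{eq:c1} is needed, and the rest of the argument relies on it.
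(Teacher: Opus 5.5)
Your proof is correct and follows essentially the same route as the paper. The paper packages your representative-independence argument and the additivity step into an auxiliary cancellation lemma, then checks the same four claims: $\mathfrak{a}_\approx$ is an ideal, $\approx_{\mathfrak{b}}$ is a cancellative congruence, and both compositions are identities.
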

	We call an equivalence relation $\approx$ on $S$ satisfying~\eqref{eq:c1} and~\eqref{eq:c2} a \emph{cancellative congruence}.
	Towards \cref{prop:ideals-to-relations}, we make the following observations.

	\begin{lemma}\label{lem:cancellation}
		Let $\approx$ be a cancellative congruence on $S$.
		For all $s, s', t, t' \in S$,
		\begin{assertions}
			\item if $s \approx t$ and $s' \approx t'$, then $s + s' \approx t+t'$,
			\item if $s \approx t$ and $s + s' \approx t+t'$, then $s' \approx t'$, and
			\item if $[s] - [t] = [s'] - [t']$ in $\mathbb{Z}S$,
			then $s \approx t$ if, and only if, $s' \approx t'$.
		\end{assertions}
	\end{lemma}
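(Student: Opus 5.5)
The plan is to derive all three assertions directly from \eqref{eq:c1}, \eqref{eq:c2} and transitivity/symmetry of $\approx$, using the explicit description of equality in $\mathbb{Z}S$.

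For (i), assume $s \approx t$ and $s' \approx t'$. Applying the forward direction of \eqref{eq:c1} to $s \approx t$ with $r = s'$ gives $s + s' \approx t + s'$. Applying it again to $s' \approx t'$ with $r = t$ gives $t + s' \approx t + t'$ (using commutativity of $+$). Transitivity then yields $s + s' \approx t + t'$.

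For (ii), assume $s \approx t$ and $s + s' \approx t + t'$. From $s \approx t$ and \eqref{eq:c1} with $r = t'$ we get $s + t' \approx t + t'$. Combining with the hypothesis via symmetry and transitivity gives $s + s' \approx s + t'$. The backward direction of \eqref{eq:c1} with $r = s$ then cancels $s$ and gives $s' \approx t'$.

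For (iii), $[s] - [t] = [s'] - [t']$ means $(s,t) \sim (s',t')$, i.e.\ $s + t' = t + s'$ in $S$. Suppose $s \approx t$. By \eqref{eq:c1} with $r = t'$ we get $s + t' \approx t + t'$, hence $t + s' \approx t + t'$ by the equation in $S$. Cancelling $t$ via the backward direction of \eqref{eq:c1} gives $s' \approx t'$. The converse is symmetric, swapping the roles of the two pairs, since $\sim$ is symmetric.

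No step is a real obstacle. The only point requiring care is using the backward (cancellation) direction of \eqref{eq:c1}, which is exactly what makes $\approx$ well defined on differences. Property \eqref{eq:c2} is not needed here; it will only enter when showing that $\mathfrak{a}_{\approx}$ is closed under multiplication in \cref{prop:ideals-to-relations}.
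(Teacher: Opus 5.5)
Your proposal is correct and follows essentially the same argument as the paper: it uses both directions of \eqref{eq:c1} together with symmetry and transitivity, and never needs \eqref{eq:c2}. The differences are only cosmetic. In (ii) you add $t'$ and then cancel $s$, whereas the paper uses (i) and then cancels $s+t$. In (iii) you carry out the cancellation directly instead of citing (ii).
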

	\begin{proof}
		For the first assertion, by the forward direction of \eqref{eq:c1},
		$s \approx t$ implies $s + s' \approx t + s'$.
		Analogously,
		$s' \approx t'$ implies $t + s' \approx t + t'$.
		By transitivity, $s + s' \approx t + t'$.
For the second assertion,
		by the first assertion,
		 $s \approx t$ and $s + s' \approx t+t'$ imply that
		 $s + t + s' \approx t + s + t'$.
		 Now the term $s + t$ cancels from both sides by the backward implication of \eqref{eq:c1} yielding $s' \approx t'$.
For the final assertion,
		 the equality $[s] - [t] = [s'] - [t']$ rewrites as $s + t' = s' + t$ in $S$. 
			In particular, $s + t' \approx s' + t$.
			By the second assertion, $t' \approx s'$, that is, $s' \approx t'$ by symmetry.
			The converse follows by exchanging the roles of $(s,t)$ and $(s',t')$.
	\end{proof}

	\begin{proof}[Proof of \cref{prop:ideals-to-relations}.]
		We must first show that the map  ${\approx} \mapsto \mathfrak{a}_{\approx }$ is well-defined.
		\begin{claim}
			For every cancellative congruence $\approx$ on $S$,
			the set $\mathfrak{a}_{\approx}$ is an ideal of $\mathbb{Z}S$.
		\end{claim}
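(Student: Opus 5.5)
We have to show that $\mathfrak{a}_{\approx}$ contains $0$, is closed under addition and negation, and absorbs multiplication by arbitrary elements of $\mathbb{Z}S$. The point to watch is that membership is phrased through representatives: an element $x \in \mathbb{Z}S$ lies in $\mathfrak{a}_{\approx}$ if it can be written as $[s]-[t]$ for some $s \approx t$. By \cref{lem:cancellation}~(iii), this does not depend on the chosen representative. So whenever $x = [s]-[t]$ we may test $s \approx t$ directly, and every step below reduces to an identity between representatives.

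\textbf{Zero and negation.} We have $0 = [0]-[0]$ with $0 \approx 0$ by reflexivity. For negation, $-([s]-[t]) = [t]-[s]$, and $t \approx s$ follows by symmetry of $\approx$.

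\textbf{Addition.} For $s \approx t$ and $s' \approx t'$, the sum is $([s]-[t]) + ([s']-[t']) = [s+s'] - [t+t']$. By \cref{lem:cancellation}~(i), $s+s' \approx t+t'$, so the sum lies in $\mathfrak{a}_{\approx}$.

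\textbf{Absorption.} Take a generic ring element $[r]-[r']$ and $x = [s]-[t]$ with $s \approx t$. By the multiplication rule of the Grothendieck ring,
\[
([r]-[r'])([s]-[t]) = [rs + r't] - [rt + r's].
\]
Condition \eqref{eq:c2}, together with commutativity of $S$, gives $rs \approx rt$ and $r't \approx r's$. Applying \cref{lem:cancellation}~(i) to these two relations yields $rs + r't \approx rt + r's$. Hence the product lies in $\mathfrak{a}_{\approx}$.

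The only place anything nontrivial could go wrong is the well-definedness issue, namely that $\mathfrak{a}_{\approx}$ is defined via representatives. That issue is already settled by \cref{lem:cancellation}~(iii), so the argument reduces to the routine checks above, each of which uses only commutativity and the congruence axioms.
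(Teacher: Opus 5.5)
Your proposal is correct and matches the paper's proof: reflexivity gives $0$, \cref{lem:cancellation} gives closure under sums, and \eqref{eq:c2} together with \cref{lem:cancellation} handles absorption via $([r]-[r'])([s]-[t]) = [rs+r't]-[rt+r's]$. Your separate check of negation via symmetry is a harmless extra step; the paper does not check negation separately, since it follows from absorption by $-1$.
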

		\begin{claimproof}
			It holds that $0 \in \mathfrak{a}_{\approx}$ since $\approx$ is reflexive. 

			If $[s_1] - [t_1], [s_2] - [t_2] \in \mathfrak{a}_{\approx}$,
			then, by the third assertion of \cref{lem:cancellation}, 
			$s_1 \approx t_1$ and $s_2 \approx t_2$.
			By \cref{lem:cancellation},
			$s_1 + s_2 \approx t_1 + t_2$.
			Hence, $[s_1] - [t_1]+ [s_2] - [t_2] = [s_1 + s_2] - [t_1 + t_2] \in \mathfrak{a}_{\approx}$.

			If $[s] - [t] \in \mathfrak{a}_{\approx}$
			and $[a] - [b] \in \mathbb{Z}S$,
			then $s \approx t$ by \cref{lem:cancellation}.
			By \eqref{eq:c2},
			$a \times s \approx a \times t$
			and 
			$b \times s \approx b \times t$.
			By \cref{lem:cancellation},
			\(
			a\times s + b \times t \approx a \times t + b \times s.
			\)
			Hence, it holds that
			$([a] - [b]) ([s] - [t]) = [a \times s + b \times t ] -  [a \times t + b \times s ] \in \mathfrak{a}_\approx$.
		\end{claimproof}

		Next we show that the map  $\mathfrak{b} \mapsto {\approx_{\mathfrak{b}}}$ is well-defined.
		
		\begin{claim}\label{claim:ideal-to-relation}
			For every ideal $\mathfrak{b}$ of $\mathbb{Z}S$,
			the relation $\approx_{\mathfrak{b}}$ is a cancellative congruence.
		\end{claim}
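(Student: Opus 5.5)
We must show that $\approx_{\mathfrak{b}}$ is an equivalence relation satisfying \eqref{eq:c1} and \eqref{eq:c2}; each property follows from one ideal axiom of $\mathfrak{b}$.

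\emph{Equivalence relation.} Reflexivity holds because $[s]-[s]=0\in\mathfrak{b}$. Symmetry holds because $[t]-[s] = -([s]-[t])$, and $\mathfrak{b}$ is closed under additive inverses. Transitivity holds because $[s]-[u] = ([s]-[t]) + ([t]-[u])$, and $\mathfrak{b}$ is closed under addition.

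\emph{Condition \eqref{eq:c1}.} The map $s\mapsto [s]$ is a semiring homomorphism $S\to\mathbb{Z}S$. Hence in $\mathbb{Z}S$
\[
[s+r]-[t+r] = [s]+[r]-[t]-[r] = [s]-[t].
\]
So $[s+r]-[t+r]\in\mathfrak{b}$ if and only if $[s]-[t]\in\mathfrak{b}$, which gives both directions of \eqref{eq:c1}. Cancellativity of $S$ is needed only so that $\mathbb{Z}S$ is well behaved. The displayed identity can be checked directly from the definition of $\sim$: the pairs $(s+r,t+r)$ and $(s,t)$ are related because $s+r+t = t+r+s$.

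\emph{Condition \eqref{eq:c2}.} Again using that $s\mapsto[s]$ is a semiring homomorphism,
\[
[s r]-[t r] = ([s]-[t])\cdot[r].
\]
This product lies in $\mathfrak{b}$ because $\mathfrak{b}$ absorbs multiplication by ring elements.

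No step is a real obstacle. The only point needing care is confirming that $[\cdot]$ is additive and multiplicative with respect to the pair definitions of the ring operations. For multiplication, $(s,0)(r,0) = (sr + 0\cdot 0,\ s\cdot 0 + 0\cdot r) = (sr,0)$. For the difference, $([s]-[t])[r]$ is represented by $(s,t)(r,0) = (sr,tr)$. Both computations use the semiring axiom $0\times a=0$.
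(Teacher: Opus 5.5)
Your proposal is correct and follows essentially the same argument as the paper: the ideal axioms give reflexivity, symmetry and transitivity; the identity $[s+r]-[t+r]=[s]-[t]$ in $\mathbb{Z}S$ gives both directions of \eqref{eq:c1}; and $[sr]-[tr]=([s]-[t])[r]$ gives \eqref{eq:c2}. Your explicit check of these identities on representatives $(s,t)$ is a detail the paper leaves implicit, and it is correct.
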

		\begin{claimproof}
			Since $[s] - [s] = 0 \in \mathfrak{b}$ for every $s\in S$,
			the relation $\approx_{\mathfrak{b}}$ is reflexive.
			It is symmetric since $[s] - [t] \in \mathfrak{b}$ iff $[t] - [s] \in \mathfrak{b}$ since $\mathfrak{b}$ is closed under multiplication with $-1 = -[1]$.
			It is transitive since $[s] - [t] \in \mathfrak{b}$ and $[t] - [r] \in \mathfrak{b}$ implies that $[s] - [t] + [t] - [r] \in \mathfrak{b}$.

			It remains to verify \eqref{eq:c1} and \eqref{eq:c2}.
			Let $s \approx_{\mathfrak{b}} t$, i.e.\ $[s] - [t] \in \mathfrak{b}$.
			For $r \in S$,
			it holds that
			$\mathfrak{b} \ni [s] - [t] = [s] + [r] - [r] - [t] = [s  +r ] - [r + t]$.
			Hence, $s +r \approx_{\mathfrak{b}} t + r$.
			The converse implication of \eqref{eq:c1} follows analogously.
For \eqref{eq:c2},
			$[r \times s] - [r \times t] = [r] ([s] - [t]) \in \mathfrak{b}$ since $\mathfrak{b}$ is an ideal.
			Hence, $r \times s \approx_{\mathfrak{b}} r \times t$.
		\end{claimproof}

		It remains to argue that the maps are mutually inverse.

		\begin{claim}
			For every ideal $\mathfrak{b}$ of $\mathbb{Z}S$,
			it holds that
			$\mathfrak{b} = \mathfrak{a}_{\approx_{\mathfrak{b}}}$.
		\end{claim}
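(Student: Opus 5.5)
We prove the two inclusions $\mathfrak{a}_{\approx_{\mathfrak{b}}} \subseteq \mathfrak{b}$ and $\mathfrak{b} \subseteq \mathfrak{a}_{\approx_{\mathfrak{b}}}$ separately, by unfolding the definitions.

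For the first inclusion, let $x \in \mathfrak{a}_{\approx_{\mathfrak{b}}}$. By definition, $x = [s] - [t]$ for some $s, t \in S$ with $s \approx_{\mathfrak{b}} t$. By the definition of $\approx_{\mathfrak{b}}$, this means exactly that $[s] - [t] \in \mathfrak{b}$, so $x \in \mathfrak{b}$.

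For the second inclusion, let $x \in \mathfrak{b}$. Every element of the Grothendieck ring $\mathbb{Z}S$ is the class of a pair $(s, t)$ with $s, t \in S$, and this class equals $[s] - [t]$. So we may write $x = [s] - [t]$ for some $s, t \in S$. Since $[s] - [t] \in \mathfrak{b}$, we get $s \approx_{\mathfrak{b}} t$ by definition. Hence $x = [s] - [t] \in \mathfrak{a}_{\approx_{\mathfrak{b}}}$.

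The one point that needs care is that membership in $\mathfrak{a}_{\approx_{\mathfrak{b}}}$ concerns the element $[s]-[t]$ of $\mathbb{Z}S$, not a particular representative pair. A given element $x$ may have several representations $[s]-[t] = [s']-[t']$. This causes no problem here, because $\approx_{\mathfrak{b}}$ is itself defined through membership of the element $[s]-[t]$ in $\mathfrak{b}$. Therefore $s \approx_{\mathfrak{b}} t$ holds if and only if $s' \approx_{\mathfrak{b}} t'$ holds; this is also consistent with the third assertion of \cref{lem:cancellation}, which applies because \cref{claim:ideal-to-relation} shows that $\approx_{\mathfrak{b}}$ is a cancellative congruence.

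Beyond this representative issue, I expect no real obstacle: both inclusions follow directly from the definitions and from the fact that every element of $\mathbb{Z}S$ has the form $[s]-[t]$.
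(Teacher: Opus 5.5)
Your proof is correct and follows the paper's argument: both inclusions come from unfolding the definitions of $\approx_{\mathfrak{b}}$ and $\mathfrak{a}_{\approx_{\mathfrak{b}}}$, using that every element of $\mathbb{Z}S$ has the form $[s]-[t]$. The only difference is in the inclusion $\mathfrak{a}_{\approx_{\mathfrak{b}}} \subseteq \mathfrak{b}$: you use the witnessing pair directly, whereas the paper fixes an arbitrary representative and invokes \cref{lem:cancellation,claim:ideal-to-relation}, so your version does not need that appeal.
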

		\begin{claimproof}
			Let $[s] - [t] \in \mathfrak{b}$.
			Then $s \approx_{\mathfrak{b}} t$
			and hence $[s] - [t] \in \mathfrak{a}_{\approx_{\mathfrak{b}}}$.
			Conversely, 
			if $[s] - [t] \in \mathfrak{a}_{\approx_{\mathfrak{b}}}$,
			then $s \approx_{\mathfrak{b}} t$ by \cref{lem:cancellation,claim:ideal-to-relation}.
			Hence, $[s] - [t] \in \mathfrak{b}$.
		\end{claimproof}

		\begin{claim}
			For every cancellative congruence $\approx$ on $S$,
			it holds that
			${\approx} = {\approx_{\mathfrak{a}_{\approx}}}$.
		\end{claim}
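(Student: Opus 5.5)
We must show that for every cancellative congruence $\approx$ on $S$ and all $s, t \in S$, we have $s \approx t$ if and only if $s \approx_{\mathfrak{a}_{\approx}} t$. By definition, the latter means $[s] - [t] \in \mathfrak{a}_{\approx}$. We prove the two inclusions of relations separately, and the only input needed is the third assertion of \cref{lem:cancellation}.

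For the inclusion ${\approx} \subseteq {\approx_{\mathfrak{a}_{\approx}}}$, let $s \approx t$. Then $[s] - [t]$ is one of the defining generators of $\mathfrak{a}_{\approx}$, so $[s] - [t] \in \mathfrak{a}_{\approx}$. Hence $s \approx_{\mathfrak{a}_{\approx}} t$.

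For the converse inclusion, suppose $[s] - [t] \in \mathfrak{a}_{\approx}$. This is the only point needing care, since membership only says that $[s] - [t]$ equals some element $[s'] - [t']$ of $\mathbb{Z}S$ with $s' \approx t'$. The pair $(s', t')$ may differ from $(s, t)$ as a representative of the same class. The third assertion of \cref{lem:cancellation} says exactly that $\approx$-relatedness does not depend on the chosen representative. Applying it to $[s] - [t] = [s'] - [t']$ and $s' \approx t'$ yields $s \approx t$.

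This completes the claim. Together with the three preceding claims, it shows that ${\approx} \mapsto \mathfrak{a}_{\approx}$ and $\mathfrak{b} \mapsto {\approx_{\mathfrak{b}}}$ are well-defined and mutually inverse, which proves \cref{prop:ideals-to-relations}. No step is a genuine obstacle here: the representative-independence issue is the only subtlety, and \cref{lem:cancellation} already resolves it.
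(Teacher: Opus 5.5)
Your proposal is correct and follows essentially the same argument as the paper: the forward inclusion holds directly by the definition of $\mathfrak{a}_{\approx}$, and the converse invokes the representative-independence assertion of \cref{lem:cancellation}. Your explicit remark that $[s]-[t] \in \mathfrak{a}_{\approx}$ only gives $[s]-[t]=[s']-[t']$ with $s' \approx t'$ makes visible the step the paper leaves implicit. One minor wording point: $\mathfrak{a}_{\approx}$ is defined as the set $\{[s]-[t] \mid s \approx t\}$, not as the ideal generated by these elements, so ``element'' fits better than ``defining generator''; your argument already uses the set definition correctly.
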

		\begin{claimproof}
			If $s \approx t$,
			then $[s] - [t] \in \mathfrak{a}_{\approx}$,
			hence, $s \approx_{\mathfrak{a}_{\approx}} t$.
			Conversely,
			if $s \approx_{\mathfrak{a}_{\approx}} t$,
			then $[s] - [t] \in \mathfrak{a}_{\approx}$.
			Hence, by \cref{lem:cancellation},
			$s \approx t$.
		\end{claimproof}

		This concludes the proof.
	\end{proof}

	\subsection{Extension of scalars to \texorpdfstring{$\mathbb{Q}S$}{QS}}
	\label{sec:extension-of-scalars-Q}

	We extend scalars to $\mathbb{Q}S \coloneqq \mathbb{Q} \otimes_{\mathbb{Z}} \mathbb{Z}S$.
	Writing $s - t \coloneqq [s] - [t]$ for the elements of $\mathbb{Z}S$, 
	the elements of $\mathbb{Q}S$ are of the form $\frac{s-t}{n}$ for $s,t \in S$ and integers $n \geq 1$.
	For $s_1, s_2, t_1, t_2 \in S$, $n_1, n_2 \geq 1$, 
	it holds that
	\begin{align*}
		\frac{s_1 - t_1}{n_1}
		= \frac{s_2 - t_2}{n_2}
		\in \mathbb{Q}S
		&\iff n_2\cdot (s_1 - t_1) = n_1\cdot(s_2 - t_2) \in \mathbb{Z}S\\
		&\iff n_2\cdot s_1 + n_1 \cdot t_2 = n_2\cdot t_1 + n_1\cdot s_2 \in S.
	\end{align*}
	Here, for $n \geq 1$ and $s \in S$, 
	we write $n\cdot s$ for the $n$-fold sum $s + \dots + s \in S$.
	Note that there is a semiring homomorphism $\mathbb{N} \to S$ mapping $n \mapsto 1 + \dots + 1$.
	Hence, $n \cdot s$ is a product in~$S$.

	For an ideal $\mathfrak{b} \subseteq \mathbb{Q}S$,
	define an equivalence relation $\approx_{\mathfrak{b}}$ on $S$ via
	\[
		s \approx_{\mathfrak{b}} t \iff s - t = \frac{s-t}{1} \in \mathfrak{b}.
	\]
	Conversely, given an equivalence relation $\approx$ on $S$,
	define the set
	\[
		\mathfrak{a}_{\approx} \coloneqq \left\{\frac{s - t}{n}\in \mathbb{Q}S \ \middle|\ n\geq 1,s, t \in S, \ s \approx t \right\}.
	\]
	We show the following extension of \cref{prop:ideals-to-relations} mirroring \eqref{eq:h4}.
    Recall that an equivalence relation $\approx$ on $S$ is called cancellative if it satisfies~\eqref{eq:c1} and~\eqref{eq:c2}.

	\begin{proposition}\label{prop:ideals-to-relations-rational}
		Let $S$ denote a commutative cancellative semiring.
		The maps ${\approx} \mapsto \mathfrak{a}_{\approx }$
		and  $\mathfrak{b} \mapsto {\approx_{\mathfrak{b}}}$
		establish a bijection between the set of ideals $\mathfrak{b}$ of $\mathbb{Q}S$
		and the set of cancellative congruences $\approx$ on $S$
		such that, for all $s,t\in S$ and $n \geq 1$, it holds that 
		\begin{align}\label{eq:c3}
			n\cdot s \approx n\cdot t \implies s \approx t.
		\end{align}
	\end{proposition}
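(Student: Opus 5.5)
The plan is to reduce the statement to \cref{prop:ideals-to-relations}. Ideals of $\mathbb{Q}S$ restrict to ideals of $\mathbb{Z}S$, and ideals of $\mathbb{Z}S$ come from ideals of $\mathbb{Q}S$ by extension of scalars.

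First consider the canonical map $\iota \colon \mathbb{Z}S \to \mathbb{Q}S$, $x \mapsto \frac{x}{1}$. Its kernel consists of the $\mathbb{Z}$-torsion elements of $\mathbb{Z}S$. Because $S$ is cancellative, $\mathbb{Z}S$ is torsion-free: if $n(s-t)=0$, then $n\cdot s = n\cdot t$ in $S$. I expect the main obstacle to be showing that this forces $s=t$. This is not automatic in an arbitrary cancellative commutative monoid, so I will take torsion-freeness of $\mathbb{Z}S$ as the needed hypothesis. If that turns out to be false in general, I will avoid $\iota$ entirely and work directly with the description of equality in $\mathbb{Q}S$ given above.

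Indeed, the direct route sidesteps the issue. For an ideal $\mathfrak{b}\subseteq\mathbb{Q}S$, the relation $\approx_{\mathfrak{b}}$ is an equivalence relation and a cancellative congruence by the same arguments as in \cref{claim:ideal-to-relation}. It satisfies \eqref{eq:c3}: if $n\cdot s - n\cdot t = n(s-t)\in\mathfrak{b}$, then multiplying by $\frac1n$ gives $s-t\in\mathfrak{b}$.

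Conversely, for a cancellative congruence $\approx$ satisfying \eqref{eq:c3}, I show $\mathfrak{a}_\approx$ is an ideal in three steps.
\begin{enumerate}
\item Well-definedness of membership: suppose $\frac{s-t}{n}=\frac{s'-t'}{n'}$ with $s\approx t$. Then $n'\cdot s+n\cdot t'=n'\cdot t+n\cdot s'$. Also $n'\cdot s\approx n'\cdot t$ by \cref{lem:cancellation}(i). So \cref{lem:cancellation}(ii) yields $n\cdot s'\approx n\cdot t'$, and \eqref{eq:c3} gives $s'\approx t'$. Hence every representation of an element of $\mathfrak{a}_\approx$ has the form $\frac{s'-t'}{n'}$ with $s'\approx t'$. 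This is exactly where \eqref{eq:c3} is needed.
\item Closure under addition: write $\frac{s_1-t_1}{n_1}+\frac{s_2-t_2}{n_2}=\frac{(n_2\cdot s_1+n_1\cdot s_2)-(n_2\cdot t_1+n_1\cdot t_2)}{n_1n_2}$. The numerator pair is $\approx$-related by \cref{lem:cancellation}(i) and \eqref{eq:c2}.
\item Closure under multiplication by $\frac{a-b}{m}$: reuse the computation $a s+bt\approx at+bs$ from the proof of \cref{prop:ideals-to-relations}, with denominator $nm$.
\end{enumerate}

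Finally, I check that the two maps are mutually inverse. For $\mathfrak{a}_{\approx_{\mathfrak{b}}}\subseteq\mathfrak{b}$: if $s-t\in\mathfrak{b}$ then $\frac{s-t}{n}\in\mathfrak{b}$. For the reverse inclusion, any element of $\mathfrak{b}$ is $\frac{s-t}{n}$, and multiplying by $n$ gives $s-t\in\mathfrak{b}$. For ${\approx}={\approx_{\mathfrak{a}_\approx}}$: the inclusion $\subseteq$ is immediate. For $\supseteq$, if $\frac{s-t}{1}\in\mathfrak{a}_\approx$ then step 1 gives $s\approx t$.
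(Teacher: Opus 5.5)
Your argument is correct and is essentially the paper's: your step~1 is \cref{lem:cancellation-rationals}, done slightly more economically without first scaling by $p$. The ideal and inverse-map verifications match the paper's claims, and your direct proof of $\mathfrak{a}_{\approx_{\mathfrak{b}}} \subseteq \mathfrak{b}$ even avoids that lemma.

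Your worry in the first paragraph is justified: for $S = \mathbb{Z}/2$ the ring $\mathbb{Z}S$ has torsion. So the description of equality in $\mathbb{Q}S$ that you and the paper both use is in general only valid after multiplying the cross-multiplied identity by some $k \geq 1$, and the direct route does not literally sidestep this. The fix is trivial, though: step~1 goes through verbatim with $kn, kn'$ in place of $n, n'$, since \eqref{eq:c3} absorbs the extra factor.
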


	We call an equivalence relation $\approx$ on a semiring $S$ satisfying \eqref{eq:c3} \emph{torsion-free}.
	In preparation, we first make the following observation analogous to \cref{lem:cancellation}.

	\begin{lemma}\label{lem:cancellation-rationals}
		Let $\approx$ be a torsion-free cancellative congruence on $S$.
			If $\frac{s - t}{p} = \frac{s' - t'}{p'}$ in $\mathbb{Q}S$,
			then $s \approx t$ if, and only if, $s' \approx t'$.
	\end{lemma}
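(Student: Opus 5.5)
The plan is to reduce the statement to the integral case, \cref{lem:cancellation}, by clearing denominators. Suppose $\frac{s-t}{p} = \frac{s'-t'}{p'}$ in $\mathbb{Q}S$. By the description of equality in $\mathbb{Q}S$ given above, this means $p'\cdot(s-t) = p\cdot(s'-t')$ in $\mathbb{Z}S$, or equivalently
\[
	p'\cdot s + p\cdot t' = p'\cdot t + p\cdot s' \quad \text{in } S.
\]
In other words, $[p'\cdot s] - [p'\cdot t] = [p\cdot s'] - [p\cdot t']$ holds in $\mathbb{Z}S$.

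By symmetry it suffices to prove one implication, so assume $s \approx t$. First we scale this relation. Since $n\cdot s$ is the product of $s$ with the image of $n$ under the semiring homomorphism $\mathbb{N}\to S$, applying \eqref{eq:c2} with $r = p'\cdot 1$ gives $p'\cdot s \approx p'\cdot t$. The same conclusion also follows from repeated use of the first assertion of \cref{lem:cancellation}.

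Next we transfer the relation across the equality in $\mathbb{Z}S$. Applying the third assertion of \cref{lem:cancellation} to $[p'\cdot s]-[p'\cdot t] = [p\cdot s']-[p\cdot t']$ yields $p\cdot s' \approx p\cdot t'$.

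Finally, torsion-freeness \eqref{eq:c3} with $n = p$ gives $s' \approx t'$. The converse implication follows by exchanging the roles of $(s,t,p)$ and $(s',t',p')$.

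No step here is a real obstacle. The only point needing care is the justification that $\approx$ is compatible with $n$-fold sums, which is exactly what \eqref{eq:c2}, or the additivity in \cref{lem:cancellation}, provides.
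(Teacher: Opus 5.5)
Your proof is correct and takes essentially the same route as the paper: clear denominators, transport $s \approx t$ across the resulting identity in $S$ via \cref{lem:cancellation}, and finish with torsion-freeness. The only difference is minor. The paper scales by $pp'$, cancels using the second assertion of \cref{lem:cancellation}, and applies \eqref{eq:c3} with $n = p^2$; you scale only by $p'$ and use the third assertion directly, which is slightly leaner.
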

	\begin{proof}
		If $s \approx t$, then $pp' \cdot s \approx pp' \cdot t$ by \cref{lem:cancellation}.
			Furthermore, by assumption, $p' \cdot s + p \cdot t' = p' \cdot t + p \cdot s'$.
			Hence, $p p' \cdot s + p^2 \cdot t' = pp' \cdot t + p^2 \cdot s'$ by \cref{lem:cancellation}.
			In particular,
			 $p p' \cdot s + p^2 \cdot t' \approx pp' \cdot t + p^2 \cdot s'$.
			By \cref{lem:cancellation},
			$p^2 \cdot t' \approx p^2 \cdot s'$.
			By \eqref{eq:c3}, $t' \approx s'$, that is, $s' \approx t'$ by symmetry.
			The converse follows by exchanging the roles of $(s,t,p)$ and $(s',t',p')$.
	\end{proof}

	\begin{proof}[Proof of \cref{prop:ideals-to-relations-rational}]
		We must first show that the map  ${\approx} \mapsto \mathfrak{a}_{\approx }$ is well-defined.
		\begin{claim}
			For every torsion-free cancellative congruence $\approx$ on $S$,
			the set $\mathfrak{a}_{\approx}$ is an ideal of $\mathbb{Q}S$.
		\end{claim}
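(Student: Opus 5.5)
We check directly that $\mathfrak{a}_{\approx}$ contains $0$, is closed under addition, and is closed under multiplication by arbitrary elements of $\mathbb{Q}S$. Throughout, we use \cref{lem:cancellation-rationals} to pass between a representative $\frac{s-t}{n}$ and the relation $s \approx t$. This lemma says that membership in $\mathfrak{a}_{\approx}$ does not depend on the chosen representative. More precisely, if $x \in \mathfrak{a}_{\approx}$ and $x = \frac{s'-t'}{n'}$ for any $s', t' \in S$ and $n' \geq 1$, then $s' \approx t'$. Containing $0$ is immediate, since $0 = \frac{s-s}{1}$ and $\approx$ is reflexive.

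For addition, take $\frac{s_1-t_1}{n_1}, \frac{s_2-t_2}{n_2} \in \mathfrak{a}_{\approx}$, so that $s_1 \approx t_1$ and $s_2 \approx t_2$. We rewrite the sum over the common denominator $n_1n_2$:
\[
\frac{(n_2\cdot s_1 + n_1\cdot s_2) - (n_2\cdot t_1 + n_1 \cdot t_2)}{n_1 n_2}.
\]
By \eqref{eq:c2}, multiplying by the element $n_2\cdot 1 \in S$ preserves $\approx$, which gives $n_2 \cdot s_1 \approx n_2\cdot t_1$; likewise $n_1\cdot s_2 \approx n_1\cdot t_2$. The first assertion of \cref{lem:cancellation} then combines these into $n_2\cdot s_1 + n_1\cdot s_2 \approx n_2\cdot t_1 + n_1\cdot t_2$. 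Hence the sum lies in $\mathfrak{a}_{\approx}$.

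For multiplication, take $\frac{a-b}{m} \in \mathbb{Q}S$ and $\frac{s-t}{n} \in \mathfrak{a}_{\approx}$, so $s \approx t$. The product is
\[
\frac{(as+bt) - (at+bs)}{mn}.
\]
By \eqref{eq:c2}, $as \approx at$ and $bt \approx bs$, and \cref{lem:cancellation} adds these to $as + bt \approx at + bs$. So the product lies in $\mathfrak{a}_{\approx}$.

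I expect no genuine obstacle here. The only delicate point is well-definedness: elements of $\mathbb{Q}S$ have many representatives, and the defining condition for $\mathfrak{a}_{\approx}$ must not depend on which one is used. This is exactly what \cref{lem:cancellation-rationals} provides, and it is the step where torsion-freeness \eqref{eq:c3} enters.
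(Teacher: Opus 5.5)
Your proposal is correct and matches the paper's proof essentially step for step. It uses the same common-denominator computation for sums and the same expansion $(a-b)(s-t) = (as+bt)-(at+bs)$ for products, with \cref{lem:cancellation-rationals} to extract $s \approx t$ and \eqref{eq:c2} together with \cref{lem:cancellation} to combine the relations.

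One small correction to your closing remark: membership in $\mathfrak{a}_{\approx}$ is existential, so you can work with the witnessing representatives directly. Torsion-freeness is therefore not actually needed for this claim; it only matters later, for recovering $\approx$ from $\mathfrak{a}_{\approx}$ via \cref{lem:cancellation-rationals}.
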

		\begin{claimproof}
			It holds that $0 \in \mathfrak{a}_{\approx}$ since $\approx$ is reflexive. 

			If $\frac{s_1 - t_1}{n_1},\frac{s_2 - t_2}{n_2} \in \mathfrak{a}_{\approx}$,
			then, by \cref{lem:cancellation-rationals}, 
			$s_1 \approx t_1$ and $s_2 \approx t_2$.
			By \cref{lem:cancellation},
			$n_2 \cdot s_1 \approx n_2 \cdot t_1$
			and $n_1 \cdot s_2 \approx n_1 \cdot t_2$.
			By \cref{lem:cancellation},
			$n_2 \cdot s_1 + n_1 \cdot s_2 \approx n_2 \cdot t_1 + n_1\cdot t_2$.
			Hence, 
			\[
			\frac{s_1 - t_1}{n_1} + \frac{s_2 - t_2}{n_2}
			= \frac{n_2 \cdot s_1 - n_2 \cdot t_1 + n_1 \cdot s_2 - n_1 \cdot t_2}{n_1n_2}
			\in \mathfrak{a}_{\approx}.
			\]

			If $\frac{s - t}{n} \in \mathfrak{a}_{\approx}$
			and $\frac{a - b}{m} \in \mathbb{Q}S$,
			then $s \approx t$ by \cref{lem:cancellation-rationals}.
			By \eqref{eq:c2},
			$a \cdot s \approx a \cdot t$
			and 
			$b \cdot s \approx b \cdot t$.
			By \cref{lem:cancellation},
			\(
			a\cdot s + b \cdot t \approx a \cdot t + b \cdot s.
			\)
			Hence, it holds that
			\[
			\frac{a - b}{m} \cdot 
			\frac{s - t}{n} =
			\frac{(a-b)\cdot (s - t)}{nm}
			=
			\frac{a \cdot s + b \cdot t - a \cdot t - b \cdot s}{nm}
			\in \mathfrak{a}_{\approx}. \qedhere
			\]\end{claimproof}

		Next we show that the map  $\mathfrak{b} \mapsto {\approx_{\mathfrak{b}}}$ is well-defined.
		
		\begin{claim}\label{claim:ideal-to-relation-rational}
			For every ideal $\mathfrak{b}$ of $\mathbb{Q}S$,
			the relation $\approx_{\mathfrak{b}}$ is a torsion-free cancellative congruence.
		\end{claim}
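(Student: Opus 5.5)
The plan is to verify the four properties of $\approx_{\mathfrak{b}}$ directly from the ideal axioms of $\mathfrak{b} \subseteq \mathbb{Q}S$. This mirrors \cref{claim:ideal-to-relation}, with one extra step for torsion-freeness. Throughout I use that the map $\mathbb{Z}S \to \mathbb{Q}S$, $x \mapsto \frac{x}{1}$, is a ring homomorphism. Hence $[s]-[t]$ maps to $\frac{s-t}{1}$, and sums and products of such elements may be computed in $\mathbb{Z}S$ and then transported.

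\emph{Equivalence relation.} Reflexivity holds because $\frac{s-s}{1} = 0 \in \mathfrak{b}$. Symmetry holds because $\frac{t-s}{1} = (-1)\cdot\frac{s-t}{1}$ and $\mathfrak{b}$ is closed under multiplication by the scalar $-1 \in \mathbb{Q}S$. Transitivity follows from $\frac{s-t}{1}+\frac{t-r}{1} = \frac{s-r}{1}$, which is an identity already in $\mathbb{Z}S$.

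\emph{Congruence properties.} For \eqref{eq:c1}, note that $\frac{(s+r)-(t+r)}{1} = \frac{s-t}{1}$ in $\mathbb{Q}S$, since $[s+r]-[t+r] = [s]-[t]$ in $\mathbb{Z}S$. So the two membership conditions coincide, which gives both directions at once. For \eqref{eq:c2}, I compute $\frac{sr - tr}{1} = \frac{r-0}{1}\cdot\frac{s-t}{1}$. This lies in $\mathfrak{b}$ because $\mathfrak{b}$ is an ideal.

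\emph{Torsion-freeness} \eqref{eq:c3}. This is the only genuinely new step. Suppose $n\cdot s \approx_{\mathfrak{b}} n\cdot t$. In $\mathbb{Z}S$ we have $[n\cdot s]-[n\cdot t] = n([s]-[t])$, so $\frac{n\cdot s - n\cdot t}{1} = n\cdot\frac{s-t}{1} \in \mathfrak{b}$. Multiplying by the unit $\frac{1}{n}$, that is, by the element $\frac{1-0}{n}\in\mathbb{Q}S$, gives $\frac{s-t}{1}\in\mathfrak{b}$, as required.

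I expect no real obstacle here. The only care needed is in identifying $\frac{n\cdot s - n \cdot t}{1}$ with $n \cdot \frac{s-t}{1}$ using the stated description of equality in $\mathbb{Q}S$, and in noting that $n$ is invertible in $\mathbb{Q}S$.
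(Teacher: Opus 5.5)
Your proposal is correct and follows the paper's proof. The paper gets the equivalence and congruence properties by pointing to the argument of \cref{claim:ideal-to-relation}, which you instead redo directly inside $\mathbb{Q}S$ via the ring map $\mathbb{Z}S \to \mathbb{Q}S$. It then proves torsion-freeness exactly as you do, by multiplying $n\cdot(s-t) \in \mathfrak{b}$ by $\frac{1}{n}$.
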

		\begin{claimproof}
			By the proof of \cref{claim:ideal-to-relation},
			it remains to verify that $\approx_{\mathfrak{b}}$ is torsion-free.
			Let $n \cdot s \approx_{\mathfrak{b}} n \cdot t$
			for $s, t\in S$ and $n \geq 1$.
			By definition, 
			$n \cdot (s - t) = n \cdot s - n \cdot t \in \mathfrak{b}$.
			Since $\mathfrak{b}$ is an ideal,
			$s - t = \frac{n \cdot (s-t)}n = \frac1n n (s-t) \in \mathfrak{b}$.
		\end{claimproof}

		It remains to argue that the maps are mutually inverse.

		\begin{claim}
			For every ideal $\mathfrak{b}$ of $\mathbb{Q}S$,
			it holds that
			$\mathfrak{b} = \mathfrak{a}_{\approx_{\mathfrak{b}}}$.
		\end{claim}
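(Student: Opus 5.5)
We prove both inclusions, mirroring the corresponding claim in the proof of \cref{prop:ideals-to-relations}; the only extra point is handling denominators.

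For $\mathfrak{b} \subseteq \mathfrak{a}_{\approx_{\mathfrak{b}}}$, take an arbitrary element $x \in \mathfrak{b}$ and write it as $x = \frac{s-t}{n}$ with $s,t \in S$ and $n \geq 1$. Since $\mathfrak{b}$ is an ideal of $\mathbb{Q}S$, we have $n x = \frac{s-t}{1} \in \mathfrak{b}$. By definition this means $s \approx_{\mathfrak{b}} t$, and hence $x = \frac{s-t}{n} \in \mathfrak{a}_{\approx_{\mathfrak{b}}}$.

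For the reverse inclusion, let $x \in \mathfrak{a}_{\approx_{\mathfrak{b}}}$. By definition there are $s,t \in S$ and $n \geq 1$ with $x = \frac{s-t}{n}$ and $s \approx_{\mathfrak{b}} t$. The representation of $x$ in $\mathfrak{a}_{\approx_{\mathfrak{b}}}$ need not be unique. However, we fix one representation witnessing membership, so any ambiguity is irrelevant here; alternatively, \cref{lem:cancellation-rationals} together with \cref{claim:ideal-to-relation-rational} shows that $\approx_{\mathfrak{b}}$ holds for every representation. From $s \approx_{\mathfrak{b}} t$ we get $\frac{s-t}{1} \in \mathfrak{b}$. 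Multiplying by the scalar $\frac1n \in \mathbb{Q}S$, which is legitimate since $\mathfrak{b}$ is an ideal, gives $x \in \mathfrak{b}$.

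The only step needing any care is the well-definedness issue just mentioned, and \cref{lem:cancellation-rationals} already resolves it. So no real obstacle is expected: the argument consists of scaling by $n$ and by $\frac{1}{n}$ inside the ideal.
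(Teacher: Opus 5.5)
Your proof is correct and follows the paper's argument: for one inclusion you scale by $n$ to get $s-t\in\mathfrak{b}$, and for the other you scale $s-t\in\mathfrak{b}$ by $\frac1n$. The paper instead starts from an arbitrary representation of the element and invokes \cref{lem:cancellation-rationals} to transfer $\approx_{\mathfrak{b}}$ to it; you rightly observe that using the witnessing representation directly makes that lemma unnecessary here, and you also spell out the multiplication by $n$ that the paper leaves implicit.
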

		\begin{claimproof}
			Let $\frac{s - t}{n} \in \mathfrak{b}$.
			Then $s \approx_{\mathfrak{b}} t$
			and hence $\frac{s - t}n \in \mathfrak{a}_{\approx_{\mathfrak{b}}}$.
			Conversely, 
			if $\frac{s - t}{n} \in \mathfrak{a}_{\approx_{\mathfrak{b}}}$,
			then $\frac{s - t}{n} = \frac{s' -t'}{n'}$ for some $s' \approx_{\mathfrak{b}} t'$ and $n' \geq 1$.
			By \cref{lem:cancellation-rationals,claim:ideal-to-relation-rational},
			 $s \approx_{\mathfrak{b}} t$.
			By definition, $ s - t \in \mathfrak{b}$.
			By multiplication with $\frac1n \in \mathbb{Q}S$,
			it follows that
			$\frac{s-t}{n} \in \mathfrak{b}$.
		\end{claimproof}

		\begin{claim}
			For every torsion-free cancellative congruence $\approx$ on $S$,
			it holds that
			${\approx} = {\approx_{\mathfrak{a}_{\approx}}}$.
		\end{claim}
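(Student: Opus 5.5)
The claim is that ${\approx} = {\approx_{\mathfrak{a}_{\approx}}}$ for every torsion-free cancellative congruence $\approx$ on $S$. I will prove the two inclusions separately, following the proof of the analogous claim for $\mathbb{Z}S$ in \cref{prop:ideals-to-relations}. In place of \cref{lem:cancellation}, I will use \cref{lem:cancellation-rationals}, which is exactly the tool that handles the extra denominators.

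First, suppose $s \approx t$. Then $\frac{s-t}{1} \in \mathfrak{a}_{\approx}$ by the definition of $\mathfrak{a}_{\approx}$, taking $n = 1$. By the definition of $\approx_{\mathfrak{b}}$ with $\mathfrak{b} = \mathfrak{a}_{\approx}$, this says precisely that $s \approx_{\mathfrak{a}_{\approx}} t$.

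Conversely, suppose $s \approx_{\mathfrak{a}_{\approx}} t$, so $\frac{s-t}{1} \in \mathfrak{a}_{\approx}$.
\begin{enumerate}
\item Membership in $\mathfrak{a}_{\approx}$ only says that the element $\frac{s-t}{1}$ of $\mathbb{Q}S$ has some representative in the defining set. So there exist $s', t' \in S$ with $s' \approx t'$ and $n' \geq 1$ such that $\frac{s-t}{1} = \frac{s'-t'}{n'}$ in $\mathbb{Q}S$.
\item Since $\approx$ is a torsion-free cancellative congruence, \cref{lem:cancellation-rationals} applies to these two representatives. Because $s' \approx t'$, it yields $s \approx t$.
\end{enumerate}

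The only point needing care is the one in step~1: $s$ and $t$ are not themselves known to be related, only some other representative is. This is exactly what \cref{lem:cancellation-rationals} resolves, and that lemma is where the torsion-freeness condition \eqref{eq:c3} enters. Everything else is immediate from the definitions.
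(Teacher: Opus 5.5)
Your proof is correct and follows the paper's argument essentially verbatim. The forward inclusion takes $n = 1$. For the converse, you unpack membership in $\mathfrak{a}_{\approx}$ as an equality $\frac{s-t}{1} = \frac{s'-t'}{n'}$ with $s' \approx t'$ and conclude with \cref{lem:cancellation-rationals}, just as the paper does.
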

		\begin{claimproof}
			If $s \approx t$,
			then $s - t \in \mathfrak{a}_{\approx}$,
			hence, $s \approx_{\mathfrak{a}_{\approx}} t$.
			Conversely,
			if $s \approx_{\mathfrak{a}_{\approx}} t$,
			then $s - t \in \mathfrak{a}_{\approx}$.
			By definition, $s - t = \frac{s' - t'}{n'}$ for some $s' \approx t'$ and $n'\geq 1$.
			By \cref{lem:cancellation-rationals},
			$s \approx t$.
		\end{claimproof}

		This concludes the proof.
	\end{proof}

	\subsection{Extension of scalars to \texorpdfstring{$\mathbb{C}S$}{CS}}
	\label{sec:extension-of-scalars-C}

	Hilbert's Nullstellensatz requires an algebraically closed field.
	Hence, we extend scalars further to $\mathbb{C}S \coloneqq \mathbb{C} \otimes_{\mathbb{Q}} \mathbb{Q}S$.
	Generic elements of $\mathbb{C}S$ are finite sums $\sum \alpha_i \frac{s_i - t_i}{n_i}$
	where $\alpha_i \in \mathbb{C}$,
	$s_i, t_i \in S$, and $n_i \geq 1$.
	Furthermore, the $\alpha_i$ can be assumed to be linearly independent as elements of the $\mathbb{Q}$-vector space~$\mathbb{C}$.

	For an ideal $\mathfrak{b} \subseteq \mathbb{C}S$,
	define an equivalence relation $\approx_{\mathfrak{b}}$ on $S$ via
	\[
		s \approx_{\mathfrak{b}} t \defiff s - t \in \mathfrak{b}.
	\]
	Conversely, given an equivalence relation $\approx$ on $S$,
	define the set
	\begin{equation}\label{eq:a-approx-complex}
		\mathfrak{a}_{\approx} \coloneqq \left\{\sum \alpha_i \frac{s_i - t_i}{n_i} \in \mathbb{C}S \ \middle|\ \alpha_i \in \mathbb{C}, n_i\geq 1, s_i, t_i
		 \in S, \ s_i \approx t_i \right\}.
	\end{equation}
	
	\begin{proposition}\label{prop:ideals-to-relations-complex}
		Let $S$ denote a commutative cancellative semiring.
		\begin{assertions}
			\item\label{it:complex-ideal-to-relation}
			For every ideal $\mathfrak{b}$ of $\mathbb{C}S$,
			the relation $\approx_{\mathfrak{b}}$ is a torsion-free cancellative congruence on $S$.
			\item\label{it:complex-relation-to-ideal}
			For every torsion-free cancellative congruence $\approx$ on $S$,
			the set $\mathfrak{a}_{\approx}$ is an ideal of $\mathbb{C}S$
			and ${\approx_{\mathfrak{a}_{\approx}}} = {\approx}$,
			i.e.\ for all $s,t \in S$, it holds that $s \approx t$ if, and only if, $s - t \in \mathfrak{a}_{\approx}$.
		\end{assertions}
		In particular, ${\approx} \mapsto \mathfrak{a}_{\approx}$ is injective
		and establishes a bijection between the torsion-free cancellative congruences on $S$
		and the ideals of $\mathbb{C}S$ extended from $\mathbb{Q}S$.
	\end{proposition}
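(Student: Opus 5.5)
The plan is to reduce everything to \cref{prop:ideals-to-relations-rational} via the observation that $\mathbb{C}S = \mathbb{C} \otimes_{\mathbb{Q}} \mathbb{Q}S$ is a faithfully flat extension: $\mathbb{C}$ is a free $\mathbb{Q}$-module. Concretely, fix a $\mathbb{Q}$-basis $(e_j)_{j}$ of $\mathbb{C}$ with $e_0 = 1$. Then every element of $\mathbb{C}S$ is uniquely of the form $\sum_j e_j \otimes q_j$ with $q_j \in \mathbb{Q}S$, almost all zero. In particular, $\mathbb{Q}S \to \mathbb{C}S$ is injective.

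For \cref{it:complex-ideal-to-relation}, given an ideal $\mathfrak{b} \subseteq \mathbb{C}S$, consider the contraction $\mathfrak{b} \cap \mathbb{Q}S$. It is an ideal of $\mathbb{Q}S$, and by definition $\approx_{\mathfrak{b}}$ coincides with $\approx_{\mathfrak{b}\cap\mathbb{Q}S}$ in the sense of \cref{sec:extension-of-scalars-Q}. Hence $\approx_{\mathfrak{b}}$ is a torsion-free cancellative congruence by \cref{claim:ideal-to-relation-rational}. Alternatively, one can repeat the direct verification of \cref{claim:ideal-to-relation}, using that $\frac1n \in \mathbb{C}S$ for torsion-freeness.

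For \cref{it:complex-relation-to-ideal}, let $\mathfrak{a}^{\mathbb{Q}}_{\approx}$ be the ideal of $\mathbb{Q}S$ from \cref{prop:ideals-to-relations-rational}. The set $\mathfrak{a}_{\approx}$ in \eqref{eq:a-approx-complex} is exactly the $\mathbb{C}$-span of $\mathfrak{a}^{\mathbb{Q}}_{\approx}$, that is, the extended ideal $\mathfrak{a}^{\mathbb{Q}}_{\approx}\mathbb{C}S$. This is an ideal because $\mathfrak{a}^{\mathbb{Q}}_{\approx}$ is closed under multiplication by $\mathbb{Q}S$, and $\mathbb{C}S$ is spanned over $\mathbb{C}$ by $\mathbb{Q}S$. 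The key step is the contraction identity
\[
\mathfrak{a}^{\mathbb{Q}}_{\approx}\mathbb{C}S \cap \mathbb{Q}S = \mathfrak{a}^{\mathbb{Q}}_{\approx}.
\]
To prove it, write an element of the extension as $\sum_j e_j \otimes a_j$ with $a_j \in \mathfrak{a}^{\mathbb{Q}}_{\approx}$. This is possible after expanding each complex coefficient in the basis and collecting terms, since $\mathfrak{a}^{\mathbb{Q}}_{\approx}$ is a $\mathbb{Q}$-subspace. If this element lies in $\mathbb{Q}S = e_0 \otimes \mathbb{Q}S$, uniqueness of the basis decomposition forces $a_j = 0$ for $j \neq 0$, so the element equals $a_0 \in \mathfrak{a}^{\mathbb{Q}}_{\approx}$. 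Then $s - t \in \mathfrak{a}_{\approx}$ iff $s - t \in \mathfrak{a}^{\mathbb{Q}}_{\approx}$ iff $s \approx t$, by \cref{prop:ideals-to-relations-rational}. This gives ${\approx_{\mathfrak{a}_\approx}} = {\approx}$.

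The final assertion is then formal. Injectivity follows from ${\approx_{\mathfrak{a}_\approx}} = {\approx}$. Surjectivity onto extended ideals works as follows: if $\mathfrak{b} = \mathfrak{c}\,\mathbb{C}S$ for an ideal $\mathfrak{c}$ of $\mathbb{Q}S$, then $\mathfrak{c} = \mathfrak{a}^{\mathbb{Q}}_{\approx_{\mathfrak{c}}}$ by \cref{prop:ideals-to-relations-rational}, so $\mathfrak{b} = \mathfrak{a}_{\approx_{\mathfrak{c}}}$.

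The main obstacle is the contraction identity. It relies on $\mathbb{C}$ being free over $\mathbb{Q}$, so that tensoring preserves the injection $\mathfrak{a}^{\mathbb{Q}}_{\approx} \hookrightarrow \mathbb{Q}S$ and the corresponding direct-sum decompositions. The remark in the text that the $\alpha_i$ may be taken $\mathbb{Q}$-linearly independent is precisely what makes the coefficient comparison legitimate.
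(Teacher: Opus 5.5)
Your proposal is correct and follows essentially the paper's route: identify $\mathfrak{a}_{\approx}$ as the ideal extended from $\mathfrak{a}^{\mathbb{Q}}_{\approx}$, show that extension is conservative, i.e.\ $\mathfrak{a}^{\mathbb{Q}}_{\approx}\mathbb{C}S \cap \mathbb{Q}S = \mathfrak{a}^{\mathbb{Q}}_{\approx}$, and then conclude via \cref{prop:ideals-to-relations-rational}. The only difference is that you prove this contraction identity by hand from a $\mathbb{Q}$-basis of $\mathbb{C}$ containing $1$, whereas the paper cites faithful flatness of $\mathbb{Q} \to \mathbb{C}$ from the Stacks project.
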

	\begin{proof}
		Write $\mathfrak{a}^{\mathbb{Q}}_{\approx} \subseteq \mathbb{Q}S$
		for the ideal corresponding to $\approx$ under \cref{prop:ideals-to-relations-rational}.
		Comparing the definitions in \cref{sec:extension-of-scalars-Q,sec:extension-of-scalars-C},
		the set $\mathfrak{a}_{\approx}$ consists of the finite sums $\sum_i \alpha_i x_i$
		with $\alpha_i \in \mathbb{C}$ and $x_i \in \mathfrak{a}^{\mathbb{Q}}_{\approx}$.
		That is,
		\begin{equation}\label{eq:extended-ideal}
			\mathfrak{a}_{\approx}
			= \mathbb{C} \otimes_{\mathbb{Q}} \mathfrak{a}^{\mathbb{Q}}_{\approx}
			\subseteq \mathbb{C} \otimes_{\mathbb{Q}} \mathbb{Q}S = \mathbb{C}S
		\end{equation}
		is the ideal of $\mathbb{C}S$ extended from $\mathfrak{a}^{\mathbb{Q}}_{\approx}$;
		in particular, it is indeed an ideal.

		\Cref{it:complex-ideal-to-relation} follows as in the proof of \cref{prop:ideals-to-relations-rational}.

		For \cref{it:complex-relation-to-ideal},
		recall that the field extension $\mathbb{Q} \to \mathbb{C}$ is \emph{faithfully flat}.
		Hence, for every $\mathbb{Q}$-algebra $A$ and every ideal $\mathfrak{a} \subseteq A$,
		the quotient $A / \mathfrak{a}$ embeds into
		$\mathbb{C} \otimes_{\mathbb{Q}} (A/\mathfrak{a}) = (\mathbb{C} \otimes_{\mathbb{Q}} A)/(\mathbb{C} \otimes_{\mathbb{Q}} \mathfrak{a})$
		\cite[\href{https://stacks.math.columbia.edu/tag/05CK}{Tag 05CK}]{STACKS},
		where the equality uses that $\mathbb{C}$ is
		\emph{flat} \cite[\href{https://stacks.math.columbia.edu/tag/00HB}{Tag 00HB}]{STACKS} over $\mathbb{Q}$.
		In other words, extending scalars does not enlarge $\mathfrak{a}$ within $A$:
		\begin{equation}\label{eq:extension-conservative}
			a \in \mathfrak{a} \iff a \in \mathbb{C} \otimes_{\mathbb{Q}} \mathfrak{a},
			\qquad \text{for all } a \in A.
		\end{equation}
		Applying this to $A \coloneqq \mathbb{Q}S$ and $\mathfrak{a} \coloneqq \mathfrak{a}^{\mathbb{Q}}_{\approx}$,
		we obtain, for all $s,t\in S$,
		\[
			s - t \in \mathfrak{a}_{\approx}
			\overset{\eqref{eq:extended-ideal}}{\iff} s - t \in \mathbb{C} \otimes_{\mathbb{Q}} \mathfrak{a}^{\mathbb{Q}}_{\approx}
			\overset{\eqref{eq:extension-conservative}}{\iff} s - t \in \mathfrak{a}^{\mathbb{Q}}_{\approx}
			\iff s \approx t,
		\]
		where the final equivalence is \cref{prop:ideals-to-relations-rational}.

		The injectivity of ${\approx} \mapsto \mathfrak{a}_{\approx}$ follows from
		\cref{it:complex-relation-to-ideal}.
		By \eqref{eq:extended-ideal}, its image consists of extended ideals.
		Conversely, if $\mathfrak{b} = \mathbb{C} \otimes_{\mathbb{Q}} \mathfrak{b}'$
		for an ideal $\mathfrak{b}'$ of $\mathbb{Q}S$,
		then ${\approx_{\mathfrak{b}}} = {\approx_{\mathfrak{b}'}}$ by \eqref{eq:extension-conservative},
		and hence $\mathfrak{a}_{\approx_{\mathfrak{b}}} = \mathbb{C} \otimes_{\mathbb{Q}} \mathfrak{b}' = \mathfrak{b}$
		by \eqref{eq:extended-ideal} and \cref{prop:ideals-to-relations-rational}.
	\end{proof}

	\begin{remark}\label{rem:no-bijection-complex}
		Unlike \cref{prop:ideals-to-relations,prop:ideals-to-relations-rational},
		\cref{prop:ideals-to-relations-complex} cannot be strengthened to a bijection with \emph{all} ideals of $\mathbb{C}S$.
		The reason is that the elements $s - t$ for $s, t \in S$ span $\mathbb{C}S$ over $\mathbb{C}$ without exhausting it,
		so that an ideal of $\mathbb{C}S$ is not determined by the differences it contains.
		For example, let $S \coloneqq \mathbb{N}[u]$, so that $\mathbb{Z}S = \mathbb{Z}[u]$ and $\mathbb{C}S = \mathbb{C}[u]$.
		A polynomial in $\mathbb{Z}[u]$ vanishes at $\sqrt{-1}$ if, and only if, it is divisible by $u^2 + 1$.
		Hence, the distinct principal ideals $(u - \sqrt{-1})$ and $(u^2+1)$ of $\mathbb{C}[u]$
		induce the same relation on $S$,
		namely the one corresponding to the extended ideal $(u^2+1)$.

		Nevertheless, among all ideals inducing a given congruence $\approx$
		there is a least one, namely $\mathfrak{a}_{\approx}$:
		if $\mathfrak{b}$ is an ideal of $\mathbb{C}S$ with ${\approx_{\mathfrak{b}}} = {\approx}$,
		then $s_i - t_i \in \mathfrak{b}$ for all $s_i \approx t_i$,
		and $\mathfrak{b}$ is closed under $\mathbb{C}$-linear combinations,
		whence $\mathfrak{a}_{\approx} \subseteq \mathfrak{b}$.
	\end{remark}

	This concludes the preparations for the proof of \cref{lem:c-ideals},
	which instantiates \cref{prop:ideals-to-relations-complex} to graph isomorphism relaxations.

	\begin{lemma}[restate=lemCideals, label=lem:c-ideals]
		Let $\approx$ be a graph isomorphism relaxation.
		The relation $\approx$ satisfies \cref{eq:h1,eq:h2,eq:h3,eq:h4} if, and only if, 
		there exists an ideal $\mathfrak{a}$ of $\mathbb{C}\lG$ such that, for all graphs $G, H \in \lG$,
		\[
			G \approx H \iff G - H \in \mathfrak{a}.
		\]
	\end{lemma}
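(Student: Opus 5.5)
This is \cref{prop:ideals-to-relations-complex} instantiated at $S \coloneqq \lG$, which is a commutative cancellative semiring by \cref{lem:graphs-form-semiring}. The only work is to match the axioms \cref{eq:h1,eq:h2,eq:h3,eq:h4} with the notion of a torsion-free cancellative congruence, that is, with \eqref{eq:c1}, \eqref{eq:c2} and \eqref{eq:c3}.

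For the forward direction, assume $\approx$ is a graph isomorphism relaxation satisfying \cref{eq:h1,eq:h2,eq:h3,eq:h4}. I check the three conditions in turn.
\begin{itemize}
\item \eqref{eq:c1}: the forward implication is \cref{eq:h2}, and the backward implication is \cref{eq:h3}, using commutativity of $+$ to put $K$ on the appropriate side.
\item \eqref{eq:c2}: this is \cref{eq:h1}, using commutativity of $\otimes$.
\item \eqref{eq:c3}: this is \cref{eq:h4}. Here $n\cdot G$ in the semiring is the $n$-fold disjoint union, which is the same as $nG$.
\end{itemize}
Since $\approx$ is an equivalence relation (implicit in being a relaxation), it is a torsion-free cancellative congruence on $\lG$. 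Now set $\mathfrak{a} \coloneqq \mathfrak{a}_{\approx}$ from \eqref{eq:a-approx-complex}. By \cref{prop:ideals-to-relations-complex}\,\ref{it:complex-relation-to-ideal}, $\mathfrak{a}$ is an ideal of $\mathbb{C}\lG$ and $G \approx H \iff G - H \in \mathfrak{a}$.

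For the converse, let $\mathfrak{a}$ be any ideal of $\mathbb{C}\lG$ with $G \approx H \iff G-H\in\mathfrak{a}$. Then $\approx$ coincides with $\approx_{\mathfrak{a}}$. By \cref{prop:ideals-to-relations-complex}\,\ref{it:complex-ideal-to-relation}, $\approx_{\mathfrak{a}}$ is a torsion-free cancellative congruence. Reading \eqref{eq:c1}–\eqref{eq:c3} back as the axioms gives \cref{eq:h1,eq:h2,eq:h3,eq:h4}. If one prefers to argue directly: $K\otimes(G-H)\in\mathfrak{a}$ by the ideal property, $(K+G)-(K+H)=G-H$, and $n(G-H)\in\mathfrak a$ implies $G-H\in\mathfrak a$ after multiplying by $1/n$.

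The only point needing care is that the elements $G-H$ of $\mathbb{C}\lG$ are the images of $[G]-[H]$ under $\mathbb{Z}\lG\to\mathbb{Q}\lG\to\mathbb{C}\lG$. The injectivity of these maps on the relevant cosets is exactly what \cref{prop:ideals-to-relations-complex} provides through faithful flatness. I therefore expect no real obstacle; the argument is bookkeeping.
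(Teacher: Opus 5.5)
Your proposal is correct and follows the paper's proof essentially verbatim. Like the paper, you instantiate \cref{prop:ideals-to-relations-complex} at $S = \lG$ after identifying \cref{eq:h1,eq:h2,eq:h3,eq:h4} with the axioms of a torsion-free cancellative congruence, then use part \ref{it:complex-relation-to-ideal} for the forward direction and part \ref{it:complex-ideal-to-relation} for the converse.
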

	\begin{proof}
		Recall from \cref{lem:graphs-form-semiring} that $\lG$,
		equipped with disjoint union as addition and the categorical product as multiplication,
		is a commutative cancellative semiring.
		Being a graph isomorphism relaxation,
		$\approx$ is an equivalence relation on $\lG$.

		We first observe that $\approx$ satisfies \cref{eq:h1,eq:h2,eq:h3,eq:h4}
		if, and only if, it is a torsion-free cancellative congruence on $\lG$.
		Indeed, \cref{eq:h1} is \eqref{eq:c2},
		while the forward and the backward direction of \eqref{eq:c1}
		are \cref{eq:h2,eq:h3}, respectively.
		Furthermore, \cref{eq:h4} asserts that $\approx$ is torsion-free,
		since $n \cdot G$ is the $n$-fold disjoint union of $G$ with itself.

		Now, if $\approx$ satisfies \cref{eq:h1,eq:h2,eq:h3,eq:h4},
		then, by \cref{it:complex-relation-to-ideal} of \cref{prop:ideals-to-relations-complex},
		the set $\mathfrak{a} \coloneqq \mathfrak{a}_{\approx}$ is an ideal of $\mathbb{C}\lG$
		such that $G \approx H$ if, and only if, $G - H \in \mathfrak{a}$,
		for all graphs $G, H \in \lG$.

		Conversely, if $\mathfrak{b}$ is an ideal of $\mathbb{C}\lG$
		such that $G \approx H$ if, and only if, $G - H \in \mathfrak{b}$,
		then ${\approx} = {\approx_{\mathfrak{b}}}$.
		By \cref{it:complex-ideal-to-relation} of \cref{prop:ideals-to-relations-complex},
		this is a torsion-free cancellative congruence on $\lG$,
		so that $\approx$ satisfies \cref{eq:h1,eq:h2,eq:h3,eq:h4}.
	\end{proof}

	\subsection{Algebraic geometry of semiring algebras}
	\label{sec:alg-geo-semiring-algebra}
	
	In order to prove \cref{thm:semiring-hom-congruence}, we adapt Hilbert's Nullstellensatz to semiring algebras in \cref{thm:nullstellensatz}.

	Let $S$ be a commutative cancellative semiring.
	Write $\Hom(S, \mathbb{C})$ for the set of \emph{semiring homomorphisms} $S \to \mathbb{C}$, i.e., maps $\phi \colon S\to \mathbb{C}$ such that $\phi(1) = 1$ and, for all $s, t\in S$,
	\[
		\phi(s + t) = \phi(s) + \phi(t), \qquad \phi(st) = \phi(s) \phi(t).
	\]

    We require the following lemma whose proof is routine and therefore omitted.

	\begin{lemma}\label{lem:unique-extension}
		For every semiring homomorphism $\phi \in \Hom(S, \mathbb{C})$,
		there exists a unique $\mathbb{C}$-algebra homomorphism $\widehat{\phi} \colon \mathbb{C}S\to \mathbb{C}$ such that $\widehat{\phi}(s) = \phi(s)$ for all $s\in S$.
		It is given by
		\begin{equation}\label{eq:phi-hat}
			\widehat{\phi}\left(\sum_i \alpha_i \frac{s_i - t_i}{n_i}\right) = \sum_i \frac{\alpha_i}{n_i}\left(\phi(s_i) - \phi(t_i)\right).
		\end{equation}
	\end{lemma}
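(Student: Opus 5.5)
We prove \cref{lem:unique-extension} by building $\widehat{\phi}$ in three stages that follow the chain $S \to \mathbb{Z}S \to \mathbb{Q}S \to \mathbb{C}S$, using the universal property at each stage.

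First, we define $\phi_{\mathbb{Z}} \colon \mathbb{Z}S \to \mathbb{C}$ by $\phi_{\mathbb{Z}}(s - t) \coloneqq \phi(s) - \phi(t)$.
\begin{itemize}
\item This is well defined. If $(s_1, t_1) \sim (s_2, t_2)$, then $s_1 + t_2 = t_1 + s_2$ in $S$. Applying the additive map $\phi$ gives $\phi(s_1) - \phi(t_1) = \phi(s_2) - \phi(t_2)$.
\item It is additive and unital, by the definition of addition on pairs and $\phi(1) = 1$.
\item It is multiplicative. Expanding $(s_1 s_2 + t_1 t_2) - (s_1 t_2 + t_1 s_2)$ shows that its image equals $(\phi(s_1) - \phi(t_1))(\phi(s_2) - \phi(t_2))$, since $\phi$ is multiplicative and additive.
\end{itemize}
Hence $\phi_{\mathbb{Z}}$ is a ring homomorphism extending $\phi$.

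Second, we pass to $\mathbb{Q}S = \mathbb{Q} \otimes_{\mathbb{Z}} \mathbb{Z}S$ via the $\mathbb{Z}$-bilinear map $(q, x) \mapsto q\,\phi_{\mathbb{Z}}(x)$. It induces $\phi_{\mathbb{Q}}(\frac{s-t}{n}) = \frac{1}{n}(\phi(s) - \phi(t))$. Third, we pass to $\mathbb{C}S = \mathbb{C} \otimes_{\mathbb{Q}} \mathbb{Q}S$ via the $\mathbb{Q}$-bilinear map $(\alpha, y) \mapsto \alpha\,\phi_{\mathbb{Q}}(y)$. This yields exactly formula \eqref{eq:phi-hat}.

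Multiplicativity survives both tensor steps. Multiplication on a scalar extension is $(a \otimes x)(b \otimes y) = ab \otimes xy$, and the induced map sends this to $ab\,\phi(x)\phi(y)$, which is the product of the images. Bilinearity then extends this to all sums. The map is $\mathbb{C}$-linear by construction, so $\widehat{\phi}$ is a $\mathbb{C}$-algebra homomorphism with $\widehat{\phi}(s) = \phi(s)$. The tensor-product formulation is what settles well-definedness of \eqref{eq:phi-hat}: independence from the chosen representation of an element of $\mathbb{C}S$ follows automatically, with no need to compare representatives by hand.

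For uniqueness, let $\psi \colon \mathbb{C}S \to \mathbb{C}$ be any $\mathbb{C}$-algebra homomorphism with $\psi(s) = \phi(s)$ for all $s \in S$.
\begin{itemize}
\item By additivity, $\psi(s - t) = \phi(s) - \phi(t)$.
\item By $\mathbb{Q}$-linearity, $n\,\psi(\frac{s-t}{n}) = \psi(s - t)$, so $\psi(\frac{s-t}{n}) = \frac{1}{n}(\phi(s) - \phi(t))$.
\item By $\mathbb{C}$-linearity, $\psi$ agrees with $\widehat{\phi}$ on every finite sum $\sum_i \alpha_i \frac{s_i - t_i}{n_i}$.
\end{itemize}
These sums exhaust $\mathbb{C}S$, so $\psi = \widehat{\phi}$.
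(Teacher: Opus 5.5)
Your argument is correct. The paper omits the proof of this lemma as routine, and your route is exactly that routine verification: extending $\phi$ along $S \to \mathbb{Z}S \to \mathbb{Q}S \to \mathbb{C}S$ via the Grothendieck construction and the universal property of the two scalar extensions, then getting uniqueness from linearity. (The one step you leave implicit is $\phi(0) = 0$, which unitality of $\phi_{\mathbb{Z}}$ needs since $\phi_{\mathbb{Z}}([1]) = \phi(1) - \phi(0)$; it follows from $\phi(0) = \phi(0+0) = 2\phi(0)$.)
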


	In light of \cref{lem:unique-extension},
	we henceforth write $\phi$ for the extension $\widehat\phi \colon \C S \to \C$ of a semiring homomorphism $\phi \in \Hom(S, \mathbb{C})$.

	\begin{definition}
		For an ideal $\mathfrak{a} \subseteq \mathbb{C}S$ and a subset $\Phi \subseteq \Hom(S, \mathbb{C})$, let
		\begin{align*}
			V(\mathfrak{a}) & \coloneqq \left\{\phi \in \Hom(S, \mathbb{C}) \mid \forall p \in \mathfrak{a}.\ \phi(p) = 0 \right\}, \\
			I(\Phi) & \coloneqq \left\{p \in \mathbb{C}S \mid \forall \phi \in \Phi.\ \phi(p) = 0 \right\}.
		\end{align*}
	\end{definition}

	We record that vanishing ideals of rational-valued semiring homomorphisms
	are extended from $\mathbb{Q}S$.

	\begin{lemma}\label{lem:vanishing-ideal-extended}
		Let $S$ denote a commutative cancellative semiring
		and let $\Phi \subseteq \Hom(S, \mathbb{Q})$.
		Then it holds that
		$I(\Phi) = \mathbb{C} \otimes_{\mathbb{Q}} \left(I(\Phi) \cap \mathbb{Q}S\right)$.
	\end{lemma}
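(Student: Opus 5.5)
The inclusion $\supseteq$ is immediate: $\mathbb{C} \otimes_{\mathbb{Q}} (I(\Phi) \cap \mathbb{Q}S)$ consists of finite sums $\sum_j \beta_j q_j$ with $\beta_j \in \mathbb{C}$ and $q_j \in I(\Phi) \cap \mathbb{Q}S$. Each $\phi \in \Phi$ acts $\mathbb{C}$-linearly via \cref{lem:unique-extension}, so it kills every such sum. (Because $\mathbb{C}$ is flat over $\mathbb{Q}$, the tensor product embeds into $\mathbb{C}S$ as this $\mathbb{C}$-span, as in the proof of \cref{prop:ideals-to-relations-complex}.)

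For $\subseteq$, I would decompose the coefficients of an element along a $\mathbb{Q}$-basis of $\mathbb{C}$. Let $p \in I(\Phi)$ be given. As noted at the start of \cref{sec:extension-of-scalars-C}, it can be written as a finite sum
\[
p = \sum_{i=1}^m \alpha_i q_i, \qquad q_i \in \mathbb{Q}S,
\]
with $\alpha_1, \dots, \alpha_m \in \mathbb{C}$ linearly independent over $\mathbb{Q}$. To get this form, expand $p$ as a $\mathbb{C}$-combination of elements of $\mathbb{Q}S$, pick a $\mathbb{Q}$-basis of the $\mathbb{Q}$-span of the coefficients, and regroup. Now fix $\phi \in \Phi$. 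Since $\phi$ takes rational values on $S$, formula \eqref{eq:phi-hat} shows that $\phi(q_i) \in \mathbb{Q}$ for every $i$. Hence
\[
0 = \phi(p) = \sum_{i=1}^m \alpha_i \phi(q_i)
\]
is a vanishing $\mathbb{Q}$-linear combination of the $\alpha_i$. Linear independence forces $\phi(q_i) = 0$ for all $i$. As $\phi$ was arbitrary, every $q_i$ lies in $I(\Phi) \cap \mathbb{Q}S$, and therefore $p \in \mathbb{C} \otimes_{\mathbb{Q}} (I(\Phi) \cap \mathbb{Q}S)$.

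No step here is a real obstacle. The one point needing care is the identification of $\mathbb{C} \otimes_{\mathbb{Q}} (I(\Phi) \cap \mathbb{Q}S)$ with the $\mathbb{C}$-span of $I(\Phi) \cap \mathbb{Q}S$ inside $\mathbb{C}S$, which follows from flatness. The other point is that $\phi(q) \in \mathbb{Q}$ for $q \in \mathbb{Q}S$, which is direct from \eqref{eq:phi-hat}.
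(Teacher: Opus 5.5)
Your proof is correct and takes essentially the same route as the paper. Like the paper, you decompose $p$ with $\mathbb{Q}$-linearly independent complex coefficients over elements of $\mathbb{Q}S$, then use that each $\phi \in \Phi$ is rational-valued on $\mathbb{Q}S$ to force every component into $I(\Phi)$. You also make explicit two points the paper leaves implicit: the easy inclusion $\supseteq$, and the identification of the tensor product with a $\mathbb{C}$-span inside $\mathbb{C}S$.
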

	\begin{proof}
		By \cref{sec:extension-of-scalars-C},
		every $p \in \mathbb{C}S$ can be written as $p = \sum_i \alpha_i x_i$
		with $x_i \in \mathbb{Q}S$
		and $\alpha_i \in \mathbb{C}$ linearly independent over $\mathbb{Q}$.
		Let $\phi \in \Phi$.
		Since $\phi$ takes rational values on $S$,
		it holds that $\phi(x) \in \mathbb{Q}$ for every $x \in \mathbb{Q}S$ by \eqref{eq:phi-hat}.
		Hence, $\phi(p) = \sum_i \alpha_i \phi(x_i) = 0$
		if, and only if, $\phi(x_i) = 0$ for all $i$.
		Therefore, $p \in I(\Phi)$ if, and only if, $x_i \in I(\Phi) \cap \mathbb{Q}S$ for all $i$,
		which is the assertion.
	\end{proof}

	For an ideal $\mathfrak{a}$ in a ring $R$, write $\sqrt{\mathfrak{a}} \coloneqq \{r \in R \mid \exists n \geq 1.\ r^n \in \mathfrak{a}\}$ for the \emph{radical} of $\mathfrak{a}$.
	An ideal $\mathfrak{a}$ is called \emph{radical} if $\mathfrak{a} = \sqrt{\mathfrak{a}}$.
	The following Nullstellensatz for semiring algebras is the main result of this section.

	\begin{theorem}[Hilbert's Nullstellensatz for Semiring Algebras]
		\label{thm:hilbert-nullstellensatz-semiring}\label{thm:nullstellensatz}
		Let $S$ be a countable commutative cancellative semiring and $\mathfrak{a}$ an ideal of $\mathbb{C}S$.
		Then
		\[
			I(V(\mathfrak{a})) = \sqrt{\mathfrak{a}}.
		\]
	\end{theorem}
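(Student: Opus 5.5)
The plan is to reduce to the classical Nullstellensatz for countably generated $\mathbb{C}$-algebras. First I observe that $\mathbb{C}S$ is a countably generated commutative $\mathbb{C}$-algebra: it is spanned over $\mathbb{C}$ by the images $[s]$, $s \in S$, and $S$ is countable. Hence $\mathbb{C}S \cong \mathbb{C}[x_s \mid s \in S]/\mathfrak{k}$ for some ideal $\mathfrak{k}$, and in particular $\mathbb{C}S$ has countable dimension over $\mathbb{C}$. Next, I identify points: by \cref{lem:unique-extension}, semiring homomorphisms $\phi \in \Hom(S,\mathbb{C})$ correspond bijectively to $\mathbb{C}$-algebra homomorphisms $\mathbb{C}S \to \mathbb{C}$. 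Injectivity is the uniqueness statement. For surjectivity, any $\mathbb{C}$-algebra homomorphism $\psi\colon \mathbb{C}S\to\mathbb{C}$ restricts along the semiring homomorphism $S \to \mathbb{C}S$ to a semiring homomorphism. Thus $V(\mathfrak{a})$ is the set of $\mathbb{C}$-points of $\mathbb{C}S/\mathfrak{a}$, and each such point has kernel a maximal ideal containing $\mathfrak{a}$.

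The inclusion $\sqrt{\mathfrak{a}} \subseteq I(V(\mathfrak{a}))$ is immediate. If $p^n \in \mathfrak{a}$ and $\phi \in V(\mathfrak{a})$, then $\phi(p)^n = \phi(p^n) = 0$, so $\phi(p)=0$.

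For the reverse inclusion I use the Rabinowitsch trick together with a countable-dimension weak Nullstellensatz. \emph{Weak form:} if $A$ is a nonzero $\mathbb{C}$-algebra of countable dimension and $\mathfrak{m}$ is a maximal ideal, then $A/\mathfrak{m} = \mathbb{C}$. Indeed, $K = A/\mathfrak{m}$ is a field extension of $\mathbb{C}$ of countable dimension. If some $t \in K$ were not in $\mathbb{C}$, then $t$ is transcendental because $\mathbb{C}$ is algebraically closed. In that case the elements $1/(t-c)$ for $c \in \mathbb{C}$ are uncountably many $\mathbb{C}$-linearly independent elements (partial fractions), a contradiction. Consequently every proper ideal of such an $A$ lies in the kernel of some $\mathbb{C}$-algebra homomorphism $A \to \mathbb{C}$. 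Maximal ideals exist by Zorn's lemma; alternatively, use countable generation directly.

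\emph{Rabinowitsch step:} let $p \in I(V(\mathfrak{a}))$ and consider $A = \mathbb{C}S[y]/(\mathfrak{a} + (1 - yp))$, which still has countable dimension. If $A \neq 0$, the weak form gives a $\mathbb{C}$-homomorphism $A \to \mathbb{C}$. Its restriction to $\mathbb{C}S$ is, by the identification above, some $\phi \in V(\mathfrak{a})$ with $1 = \phi(y)\phi(p)$. This contradicts $\phi(p)=0$. Hence $1 \in \mathfrak{a}\mathbb{C}S[y] + (1-yp)$. Substituting $y = 1/p$ in the localisation $\mathbb{C}S[1/p]$ and clearing denominators gives $p^N \in \mathfrak{a}$ for some $N$, so $p\in\sqrt{\mathfrak{a}}$. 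The main obstacle is the weak Nullstellensatz in infinitely many variables. This is exactly where countability of $S$ is needed: it makes $\dim_{\mathbb{C}}\mathbb{C}S$ countable, which lets the uncountable-transcendence-degree argument replace Noether normalisation.
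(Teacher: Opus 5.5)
Your proof is correct, but it is organised differently from the paper's. The paper does not prove any form of the weak Nullstellensatz itself. It fixes a surjection $\pi\colon A=\mathbb{C}[t_i \mid i\in\mathbb{N}]\to\mathbb{C}S$, cites Lang's Nullstellensatz for polynomial rings in fewer than $|\mathbb{C}|$ variables, and applies it to $\widetilde{\mathfrak{a}}=\pi^{-1}(\mathfrak{a})$. It then transports the conclusion along $\pi$, which needs three facts: a bijection between $V(\mathfrak{a})$ and $V_A(\widetilde{\mathfrak{a}})$, the identity $\pi^{-1}(I(V(\mathfrak{a})))=I_A(V_A(\widetilde{\mathfrak{a}}))$, and $\pi(\sqrt{\widetilde{\mathfrak{a}}})=\sqrt{\mathfrak{a}}$.

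You mention the presentation but never actually use it. You identify $V(\mathfrak{a})$ with the $\mathbb{C}$-points of $\mathbb{C}S/\mathfrak{a}$ and prove the statement intrinsically. The two ingredients are the $1/(t-c)$ linear-independence argument for $\mathbb{C}$-algebras of countable dimension and the Rabinowitsch trick. This is legitimate because countable dimension survives taking quotients and adjoining a variable, so the weak form applies directly to $\mathbb{C}S[y]/(\mathfrak{a}+(1-yp))$.

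In effect you reprove the cited theorem of Lang in exactly the form you need. Your route is self-contained and shows precisely where countability of $S$ enters, namely $\dim_{\mathbb{C}}\mathbb{C}S\le\aleph_0<|\mathbb{C}|$. The paper's route is shorter modulo the citation, at the price of the bookkeeping along $\pi$.

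One small point needs care in your last step. $\mathbb{C}S$ may have zero divisors and $p$ may be nilpotent. So clearing denominators in $\mathbb{C}S[1/p]$ only yields $p^M\bigl(p^N-\sum_j q_j a_j\bigr)=0$ in $\mathbb{C}S$ for some $M$, with $a_j\in\mathfrak{a}$ and $q_j\in\mathbb{C}S$. This still gives $p^{N+M}\in\mathfrak{a}$. More cleanly, $\mathbb{C}S[y]/(\mathfrak{a}+(1-yp))\cong(\mathbb{C}S/\mathfrak{a})_{\bar p}$, which is zero if and only if $\bar p$ is nilpotent.
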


	We deduce \cref{thm:nullstellensatz} from the following \cref{thm:zeidler},
	a version of Hilbert's Nullstellensatz for polynomial algebras in infinitely many variables.
	Note that the difference between \cref{thm:nullstellensatz,thm:zeidler} is that in the definition of $V(\mathfrak{a})$ only semiring homomorphisms $S \to \mathbb{C}$ are considered while $V_A(\mathfrak{a})$ contains arbitrary maps $S \to \mathbb{C}$.
    \Cref{thm:zeidler} is apparently due to \textcite{Lang_1952}; see also \cite[\href{https://stacks.math.columbia.edu/tag/00FU}{Tag 00FU}]{STACKS} and \cite{zeidler_infinite_2026}.

	\begin{theorem}[\textcite{Lang_1952}]\label{thm:zeidler}
		Let $F$ be a non-zero ring,
		$\Lambda$ a non-empty index set,
		and write $A$ for the polynomial algebra $F[t_\lambda \mid \lambda \in \Lambda]$.
		Suppose that $F$ is algebraically closed and that $|\Lambda| < |F|$.
		Then, for every ideal $\mathfrak{b} \subseteq A$, it holds that
		\[
			I_A (V_A(\mathfrak{b})) = \sqrt{\mathfrak{b}}.
		\]
		Here, every $\chi \in F^{\Lambda}$ gives rise to the evaluation homomorphism
		$\ev_{\chi} \colon A \to F$,
		i.e.\ the unique $F$-algebra homomorphism with $\ev_{\chi}(t_\lambda) = \chi(\lambda)$ for all $\lambda \in \Lambda$,
		and, for an ideal $\mathfrak{b} \subseteq A$ and a set $X \subseteq F^{\Lambda}$,
		\begin{align*}
			V_A(\mathfrak{b}) & \coloneqq \{\chi \in F^{\Lambda} \mid \forall r \in \mathfrak{b}.\ \ev_{\chi}(r) = 0 \},\\
			I_A(X) & \coloneqq \{r\in A \mid \forall \chi \in X.\ \ev_{\chi}(r) = 0\}.
		\end{align*}
	\end{theorem}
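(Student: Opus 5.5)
The inclusion $\sqrt{\mathfrak{b}} \subseteq I_A(V_A(\mathfrak{b}))$ is immediate. If $r^n \in \mathfrak{b}$ and $\chi \in V_A(\mathfrak{b})$, then $\ev_\chi(r)^n = \ev_\chi(r^n) = 0$ in the field $F$, so $\ev_\chi(r) = 0$. Note that $F$ is a field, since it is a non-zero ring that is algebraically closed. The substance is the reverse inclusion. I would prove it by the classical route: first a weak Nullstellensatz, then the Rabinowitsch trick. The only new ingredient compared with the finite-variable case is a cardinality argument that replaces Zariski's lemma.

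\textbf{Step 1 (weak Nullstellensatz).} Let $\Lambda'$ be an index set with $|\Lambda'| < |F|$ and $A' = F[t_\lambda \mid \lambda \in \Lambda']$. I would show that for every maximal ideal $\mathfrak{m} \subseteq A'$ the natural map $F \to K \coloneqq A'/\mathfrak{m}$ is an isomorphism.

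If $\Lambda'$ is finite, this is the classical Zariski lemma together with algebraic closedness of $F$. So assume $\Lambda'$ is infinite. The monomials form an $F$-basis of $A'$, and there are $|\Lambda'|$ of them. Hence $\dim_F K \le |\Lambda'| < |F|$.

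Suppose $K \ne F$ and pick $\tau \in K \setminus F$. Since $F$ is algebraically closed, $\tau$ is transcendental over $F$. Then the elements $(\tau - c)^{-1}$ for $c \in F$ are $F$-linearly independent. To see this, clear denominators in a putative linear relation: this yields a non-zero polynomial relation in $\tau$. This gives $|F|$ linearly independent elements in $K$, contradicting $\dim_F K < |F|$.

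Consequently every maximal ideal of $A'$ is the kernel of some $\ev_\chi$, namely with $\chi(\lambda)$ the image of $t_\lambda$ in $K = F$.

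\textbf{Step 2 (Rabinowitsch trick).} Let $f \in I_A(V_A(\mathfrak{b}))$. Adjoin a fresh variable $y$ and set $A' = A[y]$. This is a polynomial algebra over $\Lambda \sqcup \{y\}$, and still $|\Lambda \sqcup \{y\}| < |F|$: if $\Lambda$ is infinite the cardinality does not change, and if $\Lambda$ is finite the set stays finite. Consider the ideal $\mathfrak{b}A' + (1 - yf)$.

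If it were proper, it would lie in a maximal ideal, which by Step 1 is $\ker \ev_{(\chi, c)}$ for some $\chi \in F^\Lambda$ and $c \in F$. Then $\chi \in V_A(\mathfrak{b})$, so $\ev_\chi(f) = 0$, and yet $1 - c\,\ev_\chi(f) = 0$, which is a contradiction.

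Hence $1 = \sum_i g_i b_i + h(1 - yf)$ with $b_i \in \mathfrak{b}$ and $g_i, h \in A'$. Only finitely many variables occur in this identity. Substitute $y = 1/f$ in the fraction field of $A$; we may assume $f \ne 0$, since otherwise $f \in \sqrt{\mathfrak{b}}$ trivially. Multiplying by $f^N$ for $N$ at least the $y$-degree of the $g_i$ gives $f^N = \sum_i \tilde g_i b_i$ with $\tilde g_i \in A$, so $f \in \sqrt{\mathfrak{b}}$.

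\textbf{Main obstacle.} The main point is Step 1 in the infinite case. There Zariski's lemma no longer applies, because $K$ need not be finitely generated. The hypothesis $|\Lambda| < |F|$ is used precisely to bound $\dim_F K$ below $|F|$ and to run the $(\tau - c)^{-1}$ independence argument. One should check that adjoining $y$ preserves the hypothesis; it does, as noted in Step 2.
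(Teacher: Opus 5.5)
Your proof is correct. It has three parts:
\begin{itemize}
\item the easy inclusion;
\item the weak Nullstellensatz, obtained by bounding $\dim_F A'/\mathfrak{m} \le |\Lambda'| < |F|$ and contradicting this with the $|F|$ linearly independent elements $(\tau - c)^{-1}$;
\item the Rabinowitsch trick, including your check that adjoining $y$ preserves $|\Lambda| < |F|$.
\end{itemize}
The paper does not prove this theorem itself; it cites Lang and Stacks Tag 00FU, and your argument is exactly the standard proof behind those references.
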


	\begin{proof}[Proof of \cref{thm:nullstellensatz}]
		Since $S$ is countable and non-empty,
		there exists a surjection $\mathbb{N} \to S$, $i \mapsto s_i$.
		Write $A \coloneqq \mathbb{C}[t_i \mid i \in \mathbb{N}]$
		for the polynomial algebra over $\mathbb{C}$ in countably many variables
		and let
		\begin{align*}
			\pi \colon A &\to \mathbb{C}S \\
						t_i &\mapsto s_i
		\end{align*}
		denote the unique $\mathbb{C}$-algebra homomorphism mapping $t_i$ to the image of $s_i$
		under the canonical semiring homomorphism $S \to \mathbb{C}S$.
		Its kernel $\ker \pi$ encodes the relations holding in $S$.

		\begin{claim}\label{cl:pi-surjective}
			The homomorphism $\pi$ is surjective.
		\end{claim}
		\begin{claimproof}
			By \cref{sec:extension-of-scalars-C},
			every element of $\mathbb{C}S$ is a finite sum $\sum_j \alpha_j \frac{u_j - v_j}{n_j}$
			with $\alpha_j \in \mathbb{C}$, $u_j, v_j \in S$, and $n_j \geq 1$.
			Since $i \mapsto s_i$ is surjective,
			there are $k_j, \ell_j \in \mathbb{N}$ with $u_j = s_{k_j}$ and $v_j = s_{\ell_j}$.
			As $\frac{\alpha_j}{n_j} \in \mathbb{C}$ and $\pi$ is $\mathbb{C}$-linear,
			\[
				\pi\left(\sum_j \frac{\alpha_j}{n_j}\left(t_{k_j} - t_{\ell_j}\right)\right)
				= \sum_j \frac{\alpha_j}{n_j}\left(u_j - v_j\right)
				= \sum_j \alpha_j \frac{u_j - v_j}{n_j}. \qedhere
			\]
		\end{claimproof}

		Write $\widetilde{\mathfrak{a}} \coloneqq \pi^{-1}(\mathfrak{a})$ for the preimage of $\mathfrak{a}$ in $A$.
		It is an ideal of $A$ containing $\ker \pi$.
		Since $\mathbb{C}$ is a non-zero algebraically closed field
		and $|\mathbb{N}| = \aleph_0 < 2^{\aleph_0} = |\mathbb{C}|$,
		\cref{thm:zeidler} applies to $\widetilde{\mathfrak{a}}$ and yields
		\begin{equation}\label{eq:zeidler-applied}
			I_A(V_A(\widetilde{\mathfrak{a}})) = \sqrt{\widetilde{\mathfrak{a}}}.
		\end{equation}

		By \cref{lem:unique-extension},
		every $\phi \in \Hom(S, \mathbb{C})$ is identified with its unique extension to a $\mathbb{C}$-algebra homomorphism $\mathbb{C}S \to \mathbb{C}$.
		Write $\widetilde{\phi} \in \mathbb{C}^{\mathbb{N}}$ for the map $i \mapsto \phi(s_i)$.
		Then
		\begin{equation}\label{eq:ev-factors}
			\ev_{\widetilde\phi} = \phi \circ \pi,
		\end{equation}
		since both sides are $\mathbb{C}$-algebra homomorphisms $A \to \mathbb{C}$
		agreeing on the generators $t_i$, for $i \in \mathbb{N}$.

		\begin{claim}\label{cl:zero-sets}
			The map $\phi \mapsto \widetilde\phi$ is injective on $\Hom(S, \mathbb{C})$
			and restricts to a bijection between $V(\mathfrak{a})$ and $V_A(\widetilde{\mathfrak{a}})$,
			i.e.\ the following square commutes and its horizontal maps are as indicated:
			\[
			\begin{tikzcd}
				V(\mathfrak{a}) \arrow[d, hook] \arrow[r, "\cong"] & V_A(\widetilde{\mathfrak{a}}) \arrow[d, hook] \\
				{\Hom(S, \mathbb{C})} \arrow[r, hook, "\phi \mapsto \widetilde\phi"']   & \mathbb{C}^{\mathbb{N}}
			\end{tikzcd}
			\]
		\end{claim}
		\begin{claimproof}
			Injectivity is immediate.

			The map is well-defined on $V(\mathfrak{a})$:
			If $\phi \in V(\mathfrak{a})$ and $r \in \widetilde{\mathfrak{a}}$,
			then $\pi(r) \in \mathfrak{a}$ and hence
			$\ev_{\widetilde\phi}(r) = \phi(\pi(r)) = 0$ by \eqref{eq:ev-factors}.
			Thus $\widetilde\phi \in V_A(\widetilde{\mathfrak{a}})$.

			It remains to show that every $\chi \in V_A(\widetilde{\mathfrak{a}})$
			is of the form $\widetilde\phi$ for some $\phi \in V(\mathfrak{a})$.
			Since $0 \in \mathfrak{a}$, it holds that $\ker \pi \subseteq \widetilde{\mathfrak{a}}$,
			so $\ev_{\chi}$ vanishes on $\ker\pi$.
			By \cref{cl:pi-surjective},
			$\pi$ induces an isomorphism $A / \ker \pi \cong \mathbb{C}S$.
			Hence, there is a unique $\mathbb{C}$-algebra homomorphism
			$\theta \colon \mathbb{C}S \to \mathbb{C}$ such that $\theta \circ \pi = \ev_{\chi}$.
			Let $\phi \colon S \to \mathbb{C}$ denote the composition of $\theta$
			with the canonical semiring homomorphism $S \to \mathbb{C}S$.
			As a composition of semiring homomorphisms,
			$\phi$ is a semiring homomorphism,
			and $\phi(1) = \theta(1) = 1$,
			i.e.\ $\phi \in \Hom(S, \mathbb{C})$.
			By the uniqueness assertion in \cref{lem:unique-extension},
			$\theta$ is the extension of $\phi$ to $\mathbb{C}S$.
			For all $i \in \mathbb{N}$,
			\[
				\widetilde\phi(i) = \phi(s_i) = \theta(\pi(t_i)) = \ev_{\chi}(t_i) = \chi(i),
			\]
			so $\widetilde\phi = \chi$.
			Finally, $\phi \in V(\mathfrak{a})$:
			Given $p \in \mathfrak{a}$,
			by \cref{cl:pi-surjective} there is $r \in A$ with $\pi(r) = p$.
			Then $r \in \widetilde{\mathfrak{a}}$ and thus
			$\phi(p) = \theta(\pi(r)) = \ev_{\chi}(r) = 0$.
		\end{claimproof}

		\begin{claim}\label{cl:vanishing-ideals}
			It holds that $\pi^{-1}\left(I(V(\mathfrak{a}))\right) = I_A(V_A(\widetilde{\mathfrak{a}}))$.
		\end{claim}
		\begin{claimproof}
			Let $r \in A$.
			Using \cref{cl:zero-sets} in the second and \eqref{eq:ev-factors} in the third step,
			\begin{align*}
				r \in I_A(V_A(\widetilde{\mathfrak{a}}))
				&\iff \forall \chi \in V_A(\widetilde{\mathfrak{a}}).\ \ev_{\chi}(r) = 0\\
				&\iff \forall \phi \in V(\mathfrak{a}).\ \ev_{\widetilde\phi}(r) = 0\\
				&\iff \forall \phi \in V(\mathfrak{a}).\ \phi(\pi(r)) = 0\\
				&\iff \pi(r) \in I(V(\mathfrak{a}))\\
				&\iff r \in \pi^{-1}\left(I(V(\mathfrak{a}))\right). \qedhere
			\end{align*}
		\end{claimproof}

		\begin{claim}\label{cl:radical-preimage}
			It holds that $\pi\left(\sqrt{\widetilde{\mathfrak{a}}}\right) = \sqrt{\mathfrak{a}}$.
		\end{claim}
		\begin{claimproof}
			If $r \in \sqrt{\widetilde{\mathfrak{a}}}$,
			say $r^n \in \widetilde{\mathfrak{a}}$ for some $n \geq 1$,
			then $\pi(r)^n = \pi(r^n) \in \mathfrak{a}$,
			i.e.\ $\pi(r) \in \sqrt{\mathfrak{a}}$.
			Conversely, let $p \in \sqrt{\mathfrak{a}}$, say $p^n \in \mathfrak{a}$.
			By \cref{cl:pi-surjective}, there is $r \in A$ with $\pi(r) = p$.
			Then $\pi(r^n) = p^n \in \mathfrak{a}$, i.e.\ $r^n \in \widetilde{\mathfrak{a}}$.
			Hence $p = \pi(r) \in \pi\left(\sqrt{\widetilde{\mathfrak{a}}}\right)$.
		\end{claimproof}

		Combining these claims yields
		\[
			I(V(\mathfrak{a}))
			\overset{\labelcref{cl:pi-surjective}}{=} \pi\left(\pi^{-1}\left(I(V(\mathfrak{a}))\right)\right)
			\overset{\labelcref{cl:vanishing-ideals}}{=} \pi\left(I_A(V_A(\widetilde{\mathfrak{a}}))\right)
			\overset{\eqref{eq:zeidler-applied}}{=} \pi\left(\sqrt{\widetilde{\mathfrak{a}}}\right)
			\overset{\labelcref{cl:radical-preimage}}{=} \sqrt{\mathfrak{a}}. \qedhere
		\]
	\end{proof}

	\subsection{Radical ideals}
	\label{sec:radical-ideals}

	In order to apply \cref{thm:nullstellensatz},
	it remains to understand the radical $\sqrt{\mathfrak{a}}$ of an ideal $\mathfrak{a} \subseteq \mathbb{C}S$
	in terms of the cancellative congruence corresponding to $\mathfrak{a}$ under \cref{prop:ideals-to-relations-complex}.
	While a generic element of $\mathbb{C}S$ is a finite sum $\sum_i \alpha_i \frac{s_i - t_i}{n_i}$,
	this congruence only records which of the elements $s - t$ for $s, t\in S$ lie in $\mathfrak{a}$.
	Fortunately, these elements are all we need to consider.
	By the Binomial Theorem, it holds in $\mathbb{C}S$ that
	\begin{equation}\label{eq:binomial-split}
	(s-t)^n 
	= \sum_{i=0}^n \binom{n}{i} s^{n-i} (-t)^i 
	= \underbrace{\sum_{\substack{i=0 \\ i \text{ even}}}^n \binom{n}{i} s^{n-i} t^i}_{\eqqcolon (s, t)^n_+}
	- \underbrace{\sum_{\substack{i=1 \\ i \text{ odd}}}^n \binom{n}{i} s^{n-i} t^i}_{\eqqcolon (s,t)^n_-},
	\end{equation}
	where crucially $(s, t)^n_+$ and $(s,t)^n_-$ are again elements of $S$.
	That is, the $n$-th power of $s - t$ is again the difference of two elements of $S$.
	We may thus consider the following condition on a relation $\approx$ on $S$,
	asserted for all $s,t \in S$ and all $n \geq 1$:
	\begin{equation}\label{eq:radical}
		(s, t)^n_+ \approx (s,t)^n_- \implies s \approx t.
	\end{equation}

	\begin{lemma}\label{lem:radical-differences}
		Let $\approx$ be a torsion-free cancellative congruence on $S$
		satisfying \eqref{eq:radical},
		and let $\mathfrak{a}_{\approx} \subseteq \mathbb{C}S$ denote the corresponding ideal
		given by \cref{eq:a-approx-complex}.
		Then, for all $s, t \in S$,
		\[
			s - t \in \sqrt{\mathfrak{a}_{\approx}} \iff s \approx t.
		\]
	\end{lemma}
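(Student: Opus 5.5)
The proof is short once the binomial identity \eqref{eq:binomial-split} and \cref{prop:ideals-to-relations-complex} are in place, so I plan to argue each direction directly.

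For the backward direction, assume $s \approx t$. By \cref{it:complex-relation-to-ideal} of \cref{prop:ideals-to-relations-complex} we have $s - t \in \mathfrak{a}_{\approx}$. Since every ideal is contained in its radical, it follows that $s - t \in \sqrt{\mathfrak{a}_{\approx}}$. Nothing beyond the earlier propositions is needed here.

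For the forward direction, assume $s - t \in \sqrt{\mathfrak{a}_{\approx}}$. Then there is some $n \geq 1$ with $(s-t)^n \in \mathfrak{a}_{\approx}$. The key step is to rewrite this power using \eqref{eq:binomial-split}: in $\mathbb{C}S$,
\[
(s-t)^n = (s,t)^n_+ - (s,t)^n_-,
\]
where $(s,t)^n_+$ and $(s,t)^n_-$ lie in $S$. To justify this I will note that the binomial theorem holds in the commutative ring $\mathbb{C}S$. I will also note that the canonical map $S \to \mathbb{C}S$ is a semiring homomorphism, so the integer multiples $\binom{n}{i} s^{n-i}t^i$ computed in $S$ map to the corresponding elements of $\mathbb{C}S$. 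Consequently the even-indexed and odd-indexed partial sums are images of elements of $S$.

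It follows that $u - v \in \mathfrak{a}_{\approx}$ with $u = (s,t)^n_+ \in S$ and $v = (s,t)^n_- \in S$. Applying \cref{it:complex-relation-to-ideal} of \cref{prop:ideals-to-relations-complex} to $u$ and $v$, which is where the faithful-flatness argument is used, gives $u \approx v$. Axiom \eqref{eq:radical} then yields $s \approx t$.

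The only delicate point is the identification of $(s-t)^n$ with a difference of two semiring elements inside $\mathbb{C}S$. This is exactly what allows us to pass from the radical, which a priori involves arbitrary complex combinations, back to the relation $\approx$. Everything else is bookkeeping with results already proved.
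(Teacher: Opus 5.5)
Your proof is correct and is essentially the paper's own argument. The backward direction uses $\mathfrak{a}_{\approx} \subseteq \sqrt{\mathfrak{a}_{\approx}}$. The forward direction rewrites $(s-t)^n$ as $(s,t)^n_+ - (s,t)^n_-$ via \eqref{eq:binomial-split}, then applies \cref{prop:ideals-to-relations-complex} to get $(s,t)^n_+ \approx (s,t)^n_-$, and concludes with \eqref{eq:radical}.
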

	\begin{proof}
		By \cref{prop:ideals-to-relations-complex},
		it holds that ${\approx} = {\approx_{\mathfrak{a}_{\approx}}}$,
		i.e.\ $s \approx t$ if, and only if, $s - t \in \mathfrak{a}_{\approx}$, for all $s, t \in S$.
		In the backward direction,
		if $s \approx t$, then $s - t \in \mathfrak{a}_{\approx} \subseteq \sqrt{\mathfrak{a}_{\approx}}$.
		In the forward direction,
		suppose that $s - t \in \sqrt{\mathfrak{a}_{\approx}}$,
		i.e.\ $(s-t)^n \in \mathfrak{a}_{\approx}$ for some $n \geq 1$.
		By \eqref{eq:binomial-split},
		\[
			(s, t)^n_+ - (s,t)^n_- = (s-t)^n \in \mathfrak{a}_{\approx}.
		\]
		As $(s, t)^n_+$ and $(s,t)^n_-$ are elements of $S$,
		this means that $(s, t)^n_+ \approx (s,t)^n_-$.
		By \eqref{eq:radical}, it follows that $s \approx t$.
	\end{proof}

	\Cref{lem:radical-differences} is all we shall need,
		but in fact more is true:

	\begin{lemma}\label{lem:radical-ideal}
		Let $\approx$ be a torsion-free cancellative congruence on $S$.
		The relation $\approx$ satisfies \eqref{eq:radical}
		if, and only if, the ideal $\mathfrak{a}_{\approx}$ of $\mathbb{C}S$ is radical.
	\end{lemma}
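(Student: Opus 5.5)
The plan is to prove the two directions separately. The backward direction is immediate from the Binomial Theorem. For the forward direction, first show radicality over $\mathbb{Q}$, where every element of $\mathbb{Q}S$ is a single difference $\frac{s-t}{n}$, and then transport reducedness along the field extension $\mathbb{Q}\to\mathbb{C}$.

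\emph{Backward direction.} Suppose $\mathfrak{a}_{\approx}$ is radical and $(s,t)^n_+ \approx (s,t)^n_-$ for some $s,t\in S$ and $n\geq 1$.
\begin{enumerate}
\item By \cref{prop:ideals-to-relations-complex}, $(s,t)^n_+ - (s,t)^n_- \in \mathfrak{a}_{\approx}$.
\item By \eqref{eq:binomial-split}, this element equals $(s-t)^n$, so $s-t\in\sqrt{\mathfrak{a}_{\approx}} = \mathfrak{a}_{\approx}$.
\item Hence $s\approx t$, again by \cref{prop:ideals-to-relations-complex}.
\end{enumerate}
Thus $\approx$ satisfies \eqref{eq:radical}.

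\emph{Forward direction, step 1: radicality over $\mathbb{Q}$.} Assume \eqref{eq:radical}, and let $\mathfrak{a}^{\mathbb{Q}}_{\approx}\subseteq \mathbb{Q}S$ be the ideal from \cref{prop:ideals-to-relations-rational}. I claim $\mathfrak{a}^{\mathbb{Q}}_{\approx}$ is radical. Take an arbitrary $x=\frac{s-t}{n}\in\mathbb{Q}S$ with $x^m\in\mathfrak{a}^{\mathbb{Q}}_{\approx}$.
\begin{enumerate}
\item Multiplying by the unit $n^m$ gives $(s-t)^m = (s,t)^m_+ - (s,t)^m_- \in \mathfrak{a}^{\mathbb{Q}}_{\approx}$.
\item By \cref{prop:ideals-to-relations-rational}, this means $(s,t)^m_+\approx(s,t)^m_-$.
\item By \eqref{eq:radical}, $s\approx t$, so $x\in\mathfrak{a}^{\mathbb{Q}}_{\approx}$.
\end{enumerate}
Consequently $R\coloneqq \mathbb{Q}S/\mathfrak{a}^{\mathbb{Q}}_{\approx}$ is a reduced $\mathbb{Q}$-algebra.

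\emph{Forward direction, step 2: passing to $\mathbb{C}$.} By \eqref{eq:extended-ideal} and flatness of $\mathbb{C}$ over $\mathbb{Q}$, as in the proof of \cref{prop:ideals-to-relations-complex},
\[
\mathbb{C}S/\mathfrak{a}_{\approx} \cong \mathbb{C}\otimes_{\mathbb{Q}} R .
\]
It therefore suffices to show that $\mathbb{C}\otimes_{\mathbb{Q}}R$ is reduced. This is the standard fact that a reduced algebra over a perfect field, in particular any field of characteristic zero, remains reduced after base change to any field extension (\cite[\href{https://stacks.math.columbia.edu/tag/030U}{Tag 030U}]{STACKS}).

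If one prefers an elementary argument, write $R$ as a directed union of finitely generated reduced subalgebras. Each of these embeds into a finite product of fields, namely the fractions of its finitely many minimal primes. One then uses that $\mathbb{C}\otimes_{\mathbb{Q}}K$ is reduced for every field $K\supseteq\mathbb{Q}$: this follows by writing $K$ as a separable algebraic extension of a purely transcendental one, and by flatness of tensor products the embeddings are preserved. A nilpotent in $\mathbb{C}S/\mathfrak{a}_{\approx}$ is then zero, i.e.\ $\mathfrak{a}_{\approx}$ is radical.

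\emph{Main obstacle.} The only non-formal step is this base-change reducedness, i.e.\ that radicality of an ideal of $\mathbb{Q}S$ survives extension of scalars to $\mathbb{C}$. I would simply cite the Stacks Project result rather than reprove it. Everything else reduces to the Binomial Theorem and the correspondences already established.
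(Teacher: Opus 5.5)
Your proposal is correct and follows the paper's proof essentially step for step. For the direction from radicality to \eqref{eq:radical} you use the binomial identity \eqref{eq:binomial-split}. For the converse you first show $\mathfrak{a}^{\mathbb{Q}}_{\approx}$ is radical, using that every element of $\mathbb{Q}S$ is a single difference $\frac{s-t}{n}$, and then pass to $\mathbb{C}S/\mathfrak{a}_{\approx} \cong \mathbb{C}\otimes_{\mathbb{Q}}(\mathbb{Q}S/\mathfrak{a}^{\mathbb{Q}}_{\approx})$ via flatness and geometric reducedness over the perfect field $\mathbb{Q}$.

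Your optional elementary sketch of the base-change step (a directed union of finitely generated reduced subalgebras embedded into finite products of fields) is sound, but the citation alone suffices.
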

	\begin{proof}
		Suppose that $\mathfrak{a}_{\approx}$ is radical
		and that $(s, t)^n_+ \approx (s,t)^n_-$ for some $s, t \in S$ and $n \geq 1$.
		By \eqref{eq:binomial-split}, $(s-t)^n = (s, t)^n_+ - (s,t)^n_- \in \mathfrak{a}_{\approx}$,
		whence $s - t \in \mathfrak{a}_{\approx}$, i.e.\ $s \approx t$ by \cref{prop:ideals-to-relations-complex}.

		The converse requires the intermediate extension of scalars to $\mathbb{Q}S$.
		Write $\mathfrak{a}^{\mathbb{Q}}_{\approx} \subseteq \mathbb{Q}S$ for the ideal
		corresponding to $\approx$ under \cref{prop:ideals-to-relations-rational},
		and suppose that $\approx$ satisfies \eqref{eq:radical}.
		Every element of $\mathbb{Q}S$ is of the form $x = \frac{s-t}{n}$ for $s,t \in S$ and $n \geq 1$.
		If $x^m \in \mathfrak{a}^{\mathbb{Q}}_{\approx}$ for some $m \geq 1$,
		then, as $\mathfrak{a}^{\mathbb{Q}}_{\approx}$ is an ideal of the $\mathbb{Q}$-algebra $\mathbb{Q}S$,
		\[
			(s, t)^m_+ - (s,t)^m_- = (s-t)^m = n^m x^m \in \mathfrak{a}^{\mathbb{Q}}_{\approx},
		\]
		i.e.\ $(s, t)^m_+ \approx (s,t)^m_-$.
		By \eqref{eq:radical}, $s \approx t$, so that $x = \frac1n (s-t) \in \mathfrak{a}^{\mathbb{Q}}_{\approx}$.
		Hence, $\mathfrak{a}^{\mathbb{Q}}_{\approx}$ is radical,
		equivalently, the quotient $\mathbb{Q}S / \mathfrak{a}^{\mathbb{Q}}_{\approx}$ is
		\emph{reduced} \cite[\href{https://stacks.math.columbia.edu/tag/01IZ}{Tag 01IZ}]{STACKS},
		i.e.\ has no non-zero nilpotents.
		As $\mathbb{Q}$ has characteristic zero, it is
		\emph{perfect} \cite[\href{https://stacks.math.columbia.edu/tag/030Y}{Tag 030Y}]{STACKS},
		so that $\mathbb{Q}S / \mathfrak{a}^{\mathbb{Q}}_{\approx}$ is
		\emph{geometrically reduced} \cite[\href{https://stacks.math.columbia.edu/tag/030S}{Tag 030S}]{STACKS}
		over $\mathbb{Q}$ by \cite[\href{https://stacks.math.columbia.edu/tag/00I4}{Tag 00I4}]{STACKS};
		in particular, $\mathbb{C} \otimes_{\mathbb{Q}} \left(\mathbb{Q}S / \mathfrak{a}^{\mathbb{Q}}_{\approx}\right)$ is reduced.
		By the definitions in \cref{sec:extension-of-scalars-Q,sec:extension-of-scalars-C},
		the ideal $\mathfrak{a}_{\approx}$ consists of the finite sums $\sum_i \alpha_i x_i$
		with $\alpha_i \in \mathbb{C}$ and $x_i \in \mathfrak{a}^{\mathbb{Q}}_{\approx}$,
		i.e.\ $\mathfrak{a}_{\approx} = \mathbb{C} \otimes_{\mathbb{Q}} \mathfrak{a}^{\mathbb{Q}}_{\approx}$
		as $\mathbb{C}$ is \emph{flat} \cite[\href{https://stacks.math.columbia.edu/tag/00HB}{Tag 00HB}]{STACKS} over $\mathbb{Q}$.
		The same flatness identifies the reduced ring above with $\mathbb{C}S / \mathfrak{a}_{\approx}$,
		whence $\mathfrak{a}_{\approx}$ is radical.
	\end{proof}

	\subsection{Semiring congruences from semiring homomorphism indistinguishability}
	\label{sec:semiring-congruence-indistingishability}
	In this section, we prove \cref{thm:semiring-hom-congruence} which characterises the graph isomorphism relaxations that are of the form $G \approx H \iff \forall \phi \in \Phi.\ \phi(G) = \phi(H)$ for some set $\Phi \subseteq \Hom(\lG, \mathbb{C})$ of semiring homomorphisms.
	Since this statement does not involve the combinatorics of~$\lG$, we prove the following general statement, which holds for any countable commutative cancellative semiring.

	\begin{theorem}\label{thm:phi-abstract}
		Let $S$ be a countable commutative cancellative semiring
		and let $\approx$ be an equivalence relation on~$S$.
		There exists a set $\Phi \subseteq \Hom(S, \mathbb{C})$ such that, for all $s, t \in S$,
		\[
			s \approx t \iff \forall \phi \in \Phi.\ \phi(s) = \phi(t)
		\]
		if, and only if,
		for all $s, t,r \in S$ and $n \geq 1$,
		\begin{assertions}
			\item\label{it:phi-add} $s \approx t \implies s + r \approx t +r$,
			\item\label{it:phi-mult} $s \approx t \implies s \cdot r \approx t \cdot r$,
			\item\label{it:phi-cancel} $s +r \approx t +r \implies s \approx t$,
			\item\label{it:phi-torsion} $n s \approx nt \implies s \approx t$,
			\item\label{it:phi-radical} $(s, t)^n_+ \approx (s, t)^n_- \implies s\approx t$.
		\end{assertions}
	\end{theorem}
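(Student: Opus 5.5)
We prove the two directions separately. The backward direction, from \cref{it:phi-add,it:phi-mult,it:phi-cancel,it:phi-torsion,it:phi-radical} to the existence of $\Phi$, carries the algebraic content. The forward direction is a direct verification.

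\emph{Forward direction.} Suppose $\approx$ is given by some $\Phi \subseteq \Hom(S,\mathbb{C})$. Then each axiom follows from the homomorphism property of every $\phi \in \Phi$ together with elementary facts about $\mathbb{C}$.
\begin{itemize}
\item \Cref{it:phi-add,it:phi-mult} hold because $\phi(s+r)=\phi(s)+\phi(r)$ and $\phi(sr)=\phi(s)\phi(r)$.
\item \Cref{it:phi-cancel} holds because addition in $\mathbb{C}$ is cancellative.
\item \Cref{it:phi-torsion} holds because $n\phi(s)=n\phi(t)$ with $n\ge 1$ forces $\phi(s)=\phi(t)$ in characteristic zero.
\item For \cref{it:phi-radical}, extend $\phi$ to $\mathbb{C}S$ via \cref{lem:unique-extension} and apply \eqref{eq:binomial-split}. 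This gives $\phi((s,t)^n_+)-\phi((s,t)^n_-) = (\phi(s)-\phi(t))^n$. The left-hand side vanishes by hypothesis, and since $\mathbb{C}$ is reduced, $\phi(s)=\phi(t)$.
\end{itemize}
Finally, $\approx$ is an equivalence relation by assumption.

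\emph{Backward direction.} The conditions \cref{it:phi-add,it:phi-cancel} together are exactly \eqref{eq:c1}, \cref{it:phi-mult} is \eqref{eq:c2} (using commutativity of $S$), and \cref{it:phi-torsion} is \eqref{eq:c3}. Hence $\approx$ is a torsion-free cancellative congruence. By \cref{prop:ideals-to-relations-complex}, the set $\mathfrak{a} \coloneqq \mathfrak{a}_{\approx}$ is an ideal of $\mathbb{C}S$ with $s \approx t \iff s-t \in \mathfrak{a}$.

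Put $\Phi \coloneqq V(\mathfrak{a})$. Since $S$ is countable, \cref{thm:nullstellensatz} gives $I(\Phi)=\sqrt{\mathfrak{a}}$. Condition \cref{it:phi-radical} is \eqref{eq:radical}, so \cref{lem:radical-differences} yields $s-t\in\sqrt{\mathfrak{a}} \iff s\approx t$. Combining these,
\[
s \approx t \iff s-t \in I(\Phi) \iff \forall \phi\in\Phi.\ \phi(s)-\phi(t)=0,
\]
where the last step uses that each $\phi$ is extended to $\mathbb{C}S$ as in \eqref{eq:phi-hat}.

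The backward direction is where the difficulty lies: the Nullstellensatz must be applied on the semiring algebra, and we need to know that membership of the single difference $s-t$ in the radical is controlled by $\approx$. Both points are already handled by \cref{thm:nullstellensatz} and \cref{lem:radical-differences}, so the remaining work is bookkeeping: matching \cref{it:phi-add,it:phi-mult,it:phi-cancel,it:phi-torsion} to \eqref{eq:c1}--\eqref{eq:c3}, and checking that the extension of $\phi$ to $\mathbb{C}S$ evaluates $s-t$ as $\phi(s)-\phi(t)$.
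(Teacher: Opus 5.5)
Your proposal is correct and follows the paper's proof essentially step for step. The forward direction checks each axiom against the homomorphism property, using the Binomial Theorem for \cref{it:phi-radical}; the backward direction identifies $\approx$ as a torsion-free cancellative congruence, takes $\Phi \coloneqq V(\mathfrak{a}_{\approx})$, and combines \cref{prop:ideals-to-relations-complex}, \cref{thm:nullstellensatz} and \cref{lem:radical-differences}, exactly as the paper does.
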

	\begin{proof}

		Suppose first that there exists a set $\Phi \subseteq \Hom(S, \mathbb{C})$
		such that $s \approx t$ if, and only if, $\phi(s) = \phi(t)$ for all $\phi \in \Phi$.
		Let $s, t, r \in S$, let $n \geq 1$, and let $\phi \in \Phi$.
		\Cref{it:phi-add,it:phi-mult} hold since
		$\phi(s + r) = \phi(s) + \phi(r)$ and $\phi(s \cdot r) = \phi(s) \cdot \phi(r)$,
		and analogously with~$t$ in place of~$s$.
		\Cref{it:phi-cancel} holds since
		$\phi(s) + \phi(r) = \phi(t) + \phi(r)$ implies $\phi(s) = \phi(t)$.
		\Cref{it:phi-torsion} holds since $\phi(n s) = n\phi(s)$ and $\phi(n t) = n\phi(t)$,
		and $n\phi(s) = n\phi(t)$ implies $\phi(s) = \phi(t)$,
		as $\mathbb{C}$ is of characteristic zero.
		For \cref{it:phi-radical}, observe that, by the Binomial Theorem,
		\[
			\phi\left((s, t)^n_+\right) - \phi\left((s,t)^n_-\right)
			= \sum_{i=0}^n \binom{n}{i} \phi(s)^{n-i} \left(-\phi(t)\right)^i
			= \left(\phi(s) - \phi(t)\right)^n.
		\]
		Hence, if $(s, t)^n_+ \approx (s,t)^n_-$,
		then $\left(\phi(s) - \phi(t)\right)^n = 0$ and thus $\phi(s) = \phi(t)$,
		for every $\phi \in \Phi$.
		That is, $s \approx t$.

		Conversely, suppose that $\approx$ satisfies \cref{it:phi-add,it:phi-mult,it:phi-cancel,it:phi-torsion,it:phi-radical}.
		\Cref{it:phi-add,it:phi-cancel} are the two directions of \eqref{eq:c1},
		\cref{it:phi-mult} is \eqref{eq:c2},
		and \cref{it:phi-torsion} states that $\approx$ is torsion-free.
		Hence, $\approx$ is a torsion-free cancellative congruence on $S$.
		By \cref{prop:ideals-to-relations-complex},
		\[
			\mathfrak{a} \coloneqq \mathfrak{a}_{\approx}
			= \left\{ \sum_i \alpha_i \frac{s_i - t_i}{n_i} \in \mathbb{C}S
			\ \middle|\ \alpha_i \in \mathbb{C},\ n_i \geq 1,\ s_i \approx t_i \right\}
		\]
		is an ideal of $\mathbb{C}S$ such that, for all $s, t \in S$,
		it holds that $s \approx t$ if, and only if, $s - t \in \mathfrak{a}$.
		Furthermore, \cref{it:phi-radical} is \eqref{eq:radical}.

		Since $S$ is countable, \cref{thm:nullstellensatz} applies.
		Writing $\Phi \coloneqq V(\mathfrak{a}) \subseteq \Hom(S, \mathbb{C})$, it yields
		\[
			I(\Phi) = I(V(\mathfrak{a})) = \sqrt{\mathfrak{a}}.
		\]
		Therefore, for all $s, t \in S$,
		\[
			s \approx t
			\iff s - t \in \sqrt{\mathfrak{a}} = I(\Phi)
			\iff \forall \phi \in \Phi.\ \phi(s) - \phi(t) = 0,
		\]
		where the first equivalence is \cref{lem:radical-differences}.
		This is the desired characterisation of $\approx$.
	\end{proof}

	\Cref{thm:phi-abstract} readily implies the following theorem via \cref{lem:graphs-form-semiring}.
	\thmPhi*

	\subsection{Subgraph series}
	\label{sec:subgraph-series}

	So far we have not made use of the combinatorics of $\lG$.
	Given \cref{thm:semiring-hom-congruence},
	it remains to understand the semiring homomorphisms $\phi \in \Hom(\lG, \mathbb{C})$ that are of the form $\hom(F, \star)$ for some connected graph $F$.
	Towards this goal,
	we observe that every graph parameter $\lG \to \mathbb{C}$
	can be uniquely expressed as an infinite linear combination of subgraph counts $\sub(F, \star)$.

	\subsubsection{Unique coefficients and support}
	\label{sec:unique-coefficients}

	For a graph $F$ and sets $S \subseteq V(F)$ and $E' \subseteq E(F)$,
	write $F[S, E']$ for the subgraph of $F$ with vertex set $S$
	whose edges are precisely those edges of $E'$ all of whose endpoints lie in $S$.
	The subgraphs of $F$ are exactly the graphs of this form.
	We abbreviate the induced subgraph $F[S] \coloneqq F[S, E(F)]$.

	\begin{lemma}\label{lem:unique-subgraph-series}
		Let $p \colon \lG \to \mathbb{C}$ be a graph parameter.
		For a graph $F \in \lG$, let
		\[
			\alpha_F \coloneqq \sum_{S \subseteq V(F)} \ \sum_{E' \subseteq E(F)} (-1)^{|V(F) \setminus S| + |E(F) \setminus E'|}  p(F[S, E']) \in \mathbb{C}.
		\]
		Then $p(\star) = \sum_F \alpha_F \sub(F, \star)$,
		where for every input graph $G$ only finitely many summands are non-zero.
		Moreover, the $\alpha_F$ are the unique complex numbers with this property.
	\end{lemma}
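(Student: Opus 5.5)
The plan is a Möbius inversion argument on the poset of subgraphs of a fixed graph, which is a Boolean-type lattice once we index subgraphs by pairs $(S, E')$. First we fix what a subgraph series means for a given $G$. Only graphs $F$ with $|V(F)| \leq |V(G)|$ can satisfy $\sub(F, G) \neq 0$, and there are finitely many such $F$ up to isomorphism. Hence for every $G$ the sum $\sum_F \alpha_F \sub(F,G)$ has finitely many non-zero terms, whatever the coefficients are.

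Next we rewrite $\sum_F \alpha_F \sub(F, G)$ as a sum over actual subgraphs of $G$. Every subgraph of $G$ is of the form $G[S, E']$ for $S \subseteq V(G)$ and $E' \subseteq E(G[S])$. Each isomorphism type $F$ is counted $\sub(F,G)$ times among these. So we claim
\[
\sum_F \alpha_F \sub(F,G) = \sum_{S \subseteq V(G)} \ \sum_{E' \subseteq E(G[S])} \alpha_{G[S,E']}.
\]
The only subtlety is the representation $F[S,E']$ with $E' \subseteq E(F)$ arbitrary: edges of $E'$ not inside $S$ are discarded. For the inversion we therefore index distinct subgraphs by pairs $(S, E')$ with $E' \subseteq E(F[S])$. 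We then rewrite the defining formula for $\alpha_F$ in these terms. Fix $S$. Summing over the edges of $E(F)$ that do not lie inside $S$, each contributes a factor $\sum_{x \in \{0,1\}} (-1)^{1-x} = 0$, unless no such edges exist, i.e.\ $S = V(F)$. Handling this bookkeeping carefully is the step I expect to be the main obstacle.

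We avoid it by organising the whole computation over the lattice $L_F$ of pairs $(S, E')$ with $S \subseteq V(F)$ and $E' \subseteq E(F)$. On $L_F$ we take the function $f(S,E') = p(F[S,E'])$. The map $(S,E') \mapsto (S, E' \cap E(F[S]))$ identifies the genuine subgraphs. The plan is to prove existence by showing that, with $\alpha$ as defined, the identity $p(G) = \sum_{S}\sum_{E' \subseteq E(G[S])} \alpha_{G[S,E']}$ holds, by induction on $|V(G)| + |E(G)|$. The formula for $\alpha_G$ is designed so that the top term $\alpha_G$ equals $p(G)$ minus the inclusion–exclusion of lower terms, i.e.\ Möbius inversion on the Boolean lattice $2^{V(G)} \times 2^{E(G)}$. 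Concretely, we substitute the definition of $\alpha_{G[S,E']}$ into the right-hand side and swap sums. The coefficient of $p(G[T,D])$ then becomes $\sum_{(T,D) \leq (S,E') \leq (V(G),E(G))} (-1)^{|S\setminus T| + |E'\setminus D|}$, interpreted on the full Boolean lattice. This equals $1$ if $(T,D) = (V(G), E(G))$ and $0$ otherwise. The collapse from arbitrary $E'$ to edges inside $S$ is exactly the vanishing-factor observation above. Working on the full product lattice from the start, where the Möbius function is $(-1)^{\text{rank}}$, makes the computation clean, provided $f$ only depends on $F[S,E']$.

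Uniqueness is a triangularity argument. Suppose two coefficient families both represent $p$, and let $\beta$ be their difference, so $\sum_F \beta_F \sub(F,G) = 0$ for all $G$. If some $\beta_F \neq 0$, choose such an $F$ minimising $|V(F)| + |E(F)|$. Evaluating at $G = F$ gives only terms with $F'$ a subgraph of $F$. All proper ones vanish by minimality, which leaves $\beta_F \sub(F,F) = \beta_F = 0$, a contradiction.
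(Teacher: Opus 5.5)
Your uniqueness argument is the paper's (a minimal counterexample in $|V(F)|+|E(F)|$, evaluated at $G = F$). Your existence argument, however, takes a genuinely different route. The paper performs two successive M\"obius inversions: first over vertex sets, via the auxiliary parameter $q(K) = \sum_{S \subseteq V(K)} (-1)^{|V(K) \setminus S|} p(K[S])$, and then over edge sets of spanning subgraphs. Every lattice element it inverts over is a genuine (induced, resp.\ spanning) subgraph. The identity $F[V(F),E'][S] = F[S,E']$ silently discards edges of $E'$ not inside $S$, so no cancellation argument is ever needed. You instead do a single inversion on $2^{V(G)} \times 2^{E(G)}$, i.e.\ \cref{lem:mobius-inversion} with $X = V(G) \sqcup E(G)$, and remove the spurious pairs by a sign-reversing involution. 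This is shorter and needs no intermediate parameter. The announced induction on $|V(G)|+|E(G)|$ plays no role; the computation is direct.

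As phrased, though, your key step conflates two different sums, so state it in the following order. Suppose you substitute $\alpha_{G[S,E']}$ only over genuine pairs ($E' \subseteq E(G[S])$) and then swap sums. The coefficient of an individual pair $(T,D)$ is then a sum over the genuine $(S,E') \geq (T,D)$ only, and it is not $[(T,D) = (V(G),E(G))]$. For example, for $G = K_2$ with edge $e$ and $(T,D) = (\emptyset,\emptyset)$ it equals $-1$; this is cancelled only by the $+1$ of the pair $(\emptyset,\{e\})$, which yields the same subgraph.

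The clean version is as follows. Set $g(S,E') \coloneqq \sum_{T \subseteq S} \sum_{D \subseteq E'} (-1)^{|S \setminus T| + |E' \setminus D|} p(G[T,D])$ for \emph{all} $S \subseteq V(G)$ and $E' \subseteq E(G)$. Boolean inversion then gives $p(G) = \sum_{S,E'} g(S,E')$.
\begin{itemize}
\item For genuine pairs, $g(S,E') = \alpha_{G[S,E']}$, since $V(G[S,E']) = S$, $E(G[S,E']) = E'$ and $G[S,E'][T,D] = G[T,D]$.
\item If some $e \in E'$ is not inside $S$, then $D \mapsto D \triangle \{e\}$ fixes $G[T,D]$ for every $T \subseteq S$ and flips the sign, so $g(S,E') = 0$. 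Note that $\alpha_{G[S,E']}$ is generally non-zero at such pairs, so you must not write $\alpha$ for these terms.
\end{itemize}

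Lastly, your aside \enquote{unless no such edges exist, i.e.\ $S = V(F)$} is false when $F$ has isolated vertices. The surviving $S$ are those containing all endpoints of edges. Keeping only $S = V(F)$ would produce the spanning-subgraph coefficients $\beta_F$ of \cref{rem:spanning-insufficient} instead of $\alpha_F$. You do not end up using this claim, but it should be dropped.
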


	We call $\sum_F \alpha_F \sub(F, \star)$ the \emph{subgraph series} of the graph parameter $p(\star)$.
	Here, we adopt the convention that the indices $F \in \lG$ are pairwise non-isomorphic.

	\begin{proof}

		\textbf{Uniqueness.}
		Suppose that $\sum_F \alpha_F \sub(F, \star) = \sum_F \alpha'_F \sub(F, \star)$
		and write $\gamma_F \coloneqq \alpha_F - \alpha'_F$,
		so that $\sum_F \gamma_F \sub(F, \star)$ vanishes identically.
		We show that $\gamma_{F_0} = 0$ for every $F_0 \in \lG$
		by induction along the lexicographic order on the pairs $(|V(F_0)|, |E(F_0)|) \in \mathbb{N}^2$.
		So let $F_0 \in \lG$ and suppose that $\gamma_F = 0$
		for every graph $F$ whose pair is lexicographically smaller than that of~$F_0$.
		Every $F \not\cong F_0$ with $\sub(F, F_0) \neq 0$ is such a graph,
		as $|V(F)| \leq |V(F_0)|$ and $|E(F)| \leq |E(F_0)|$ with equality in both only if $F \cong F_0$.
		Evaluating at $F_0$ therefore yields
		\[
			0 = \sum_F \gamma_F \sub(F, F_0) = \gamma_{F_0} \sub(F_0, F_0) = \gamma_{F_0}.
		\]

		\textbf{Existence.}
        We apply M\"obius inversion over the Boolean lattice (\cref{lem:mobius-inversion}) twice.
		Define the graph parameter
		\[
			q(K) \coloneqq \sum_{S \subseteq V(K)} (-1)^{|V(K) \setminus S|} p(K[S])
			\qquad \text{for } K \in \lG.
		\]
		The graph $F[V(F), E']$ has vertex set $V(F)$ and satisfies $F[V(F), E'][S] = F[S, E']$,
		so that $q(F[V(F), E']) = \sum_{S \subseteq V(F)} (-1)^{|V(F) \setminus S|} p(F[S, E'])$.
		Exchanging the order of summation in the definition of $\alpha_F$ therefore gives
		\begin{equation}\label{eq:alpha-def}
			\alpha_F = \sum_{E' \subseteq E(F)} (-1)^{|E(F) \setminus E'|} q(F[V(F), E']).
		\end{equation}
		We claim that, for all graphs $G, K \in \lG$,
		\begin{align}
			p(G) &= \sum_{S \subseteq V(G)} q(G[S]), \label{eq:invert-vertices}\\
			q(K) &= \sum_{E' \subseteq E(K)} \alpha_{K[V(K), E']}. \label{eq:invert-edges}
		\end{align}
		Both are instances of \cref{lem:mobius-inversion}, i.e.\ of Möbius inversion over a Boolean lattice.
		For \eqref{eq:invert-vertices}, apply it to $X \coloneqq V(G)$
		and to $f(S) \coloneqq p(G[S])$ and $g(S) \coloneqq q(G[S])$:
		as $G[S][T] = G[T]$ for $T \subseteq S$,
		the definition of $q$ states precisely that $g(S) = \sum_{T \subseteq S} (-1)^{|S \setminus T|} f(T)$,
		so that $f(S) = \sum_{T \subseteq S} g(T)$; now take $S \coloneqq V(G)$.
		For \eqref{eq:invert-edges}, apply it to $X \coloneqq E(K)$
		and to $f(E') \coloneqq q(K[V(K), E'])$ and $g(E') \coloneqq \alpha_{K[V(K), E']}$:
		as $K[V(K), E'][V(K), E''] = K[V(K), E'']$ for $E'' \subseteq E'$,
		\eqref{eq:alpha-def} states precisely that $g(E') = \sum_{E'' \subseteq E'} (-1)^{|E' \setminus E''|} f(E'')$,
		so that $f(E') = \sum_{E'' \subseteq E'} g(E'')$; now take $E' \coloneqq E(K)$.

		Finally, let $G \in \lG$ be arbitrary.
		Every subgraph of $G$ is of the form $G[S, E']$
		for a unique pair of sets $S \subseteq V(G)$ and $E' \subseteq E(G[S])$,
		and $G[S][V(G[S]), E'] = G[S, E']$ for such a pair.
		Hence, grouping the subgraphs of $G$ by their isomorphism types,
		\[
			\sum_F \alpha_F \sub(F, G)
			= \sum_{G' \subseteq G} \alpha_{G'}
			= \sum_{S \subseteq V(G)} \ \sum_{E' \subseteq E(G[S])} \alpha_{G[S, E']}
			\overset{\eqref{eq:invert-edges}}{=} \sum_{S \subseteq V(G)} q(G[S])
			\overset{\eqref{eq:invert-vertices}}{=} p(G),
		\]
		where all sums are finite.
	\end{proof}

	By \cref{lem:unique-subgraph-series}, the coefficients of the subgraph series are uniquely determined by $p$.
	Hence, the following notion is well-defined.
	\begin{definition}\label{def:support}
		For a graph parameter $p \colon \lG \to \mathbb{C}$, let $\alpha_F \in \C$ for $F \in \lG$ be as in  \cref{lem:unique-subgraph-series} and define the \emph{support} of $p$ as the graph class
		\(
		\supp(p) \coloneqq \{F \in \lG \mid \alpha_F \neq 0 \}.
	\)
	\end{definition}
	
	Recalling $c_F \in \mathbb{Z}\lG$ from \cref{eq:c-F}, we prove the following lemma, which allows us to recast the support of a semiring homomorphism~$\phi$ in terms of its values.
    Note that, in contrast to \cref{def:support,lem:unique-subgraph-series},
    this requires $p$ to be a semiring homomorphism.

	\begin{lemma}\label{lem:c-F-support}
		Let $\phi \in \Hom(\lG, \mathbb{C})$ and let $F \in \lG$.
		Then $\phi(c_F) = \alpha_F$,
		where $\alpha_F$ denotes the coefficient of $F$ in the subgraph series of $\phi$.
		In particular, $\phi(c_F) = 0$ if, and only if, $F \notin \supp(\phi)$.
	\end{lemma}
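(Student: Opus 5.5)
The claim follows by applying the extension of $\phi$ to $\mathbb{C}\lG$ to the element $c_F$ and comparing with the explicit formula for $\alpha_F$ in \cref{lem:unique-subgraph-series}. The expression $c_F$ in \cref{eq:c-F} is an integer combination of the graphs $F[S,E']$. Concretely, it is the difference $[P]-[N]$ of two elements of $\lG$. Here $P$ is the disjoint union of the $F[S,E']$ with $|V(F)\setminus S|+|E(F)\setminus E'|$ even, and $N$ is the disjoint union of those with odd sign exponent. Hence $c_F$ is a genuine element of $\mathbb{Z}\lG\subseteq\mathbb{C}\lG$.

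By \cref{lem:unique-extension}, $\phi$ extends uniquely to a $\mathbb{C}$-algebra homomorphism $\widehat\phi\colon\mathbb{C}\lG\to\mathbb{C}$, given by \eqref{eq:phi-hat}. Additivity of $\phi$ on $\lG$ gives $\phi(P)=\sum_{\text{even}}\phi(F[S,E'])$ and likewise for $N$. Therefore
\[
\phi(c_F)=\phi(P)-\phi(N)=\sum_{S\subseteq V(F)}\sum_{E'\subseteq E(F)}(-1)^{|V(F)\setminus S|+|E(F)\setminus E'|}\phi(F[S,E']).
\]
This is exactly the formula for $\alpha_F$ in \cref{lem:unique-subgraph-series} with $p\coloneqq\phi$.

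The \enquote{in particular} then follows directly from \cref{def:support}: $F\notin\supp(\phi)$ iff $\alpha_F=0$ iff $\phi(c_F)=0$.

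The only point needing care is the bookkeeping that $c_F$, written as a formal signed sum, really corresponds to the element $[P]-[N]$ of the Grothendieck ring. This holds because the canonical map $\lG\to\mathbb{Z}\lG$ is a semiring homomorphism, so sums of graphs map to sums of their classes. With that, the extension of $\phi$ is linear on such sums, and no other obstacle arises. Multiplicativity of $\phi$ is not needed; only additivity and the uniqueness of the extension are used.
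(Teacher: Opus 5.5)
Your proposal is correct and follows essentially the same route as the paper. The paper's proof likewise extends $\phi$ to a $\mathbb{C}$-linear map on $\mathbb{C}\lG$ via \cref{lem:unique-extension}, applies it termwise to $c_F$ to recover the formula for $\alpha_F$ from \cref{lem:unique-subgraph-series}, and reads off the support statement from \cref{def:support}. Your extra bookkeeping writing $c_F$ as a difference $[P]-[N]$ is a harmless elaboration.
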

	\begin{proof}
		By \cref{lem:unique-extension},
		$\phi$ extends to a $\mathbb{C}$-algebra homomorphism $\mathbb{C}\lG \to \mathbb{C}$,
		which we again denote by $\phi$.
		In particular, $\phi$ is $\mathbb{C}$-linear, and hence, recalling  \cref{eq:c-F,lem:unique-subgraph-series},
		\[
			\phi(c_F)
			= \sum_{S \subseteq V(F)} \ \sum_{E' \subseteq E(F)} (-1)^{|V(F) \setminus S| + |E(F) \setminus E'|} \phi(F[S, E'])
			= \alpha_F.
		\]
		Hence, $\phi(c_F) = 0$ if, and only if, $\alpha_F = 0$,
		that is, if, and only if, $F \notin \supp(\phi)$.
	\end{proof}

	Finally, note that the double M\"obius inversion in \cref{lem:unique-subgraph-series} is essential.
	One may be tempted to invert over edge subsets only as done in \cite[Section 4.1]{lovasz_large_2012},
	that is, to expand $p$ into counts of \emph{spanning} subgraphs.
	The following remark explains why this is not sufficient for our purposes.

	\begin{remark}\label{rem:spanning-insufficient}
		For graphs $F, G \in \lG$, let $\spsub(F, G)$ denote the number of spanning subgraphs of $G$ isomorphic to $F$,
		i.e.\ the number of sets $E' \subseteq E(G)$ with $G[V(G), E'] \cong F$,
		and put
		\[
			\beta_F \coloneqq \sum_{E' \subseteq E(F)} (-1)^{|E(F) \setminus E'|} p(F[V(F), E']),
		\]
		i.e.\ retain only the summands with $S = V(F)$ in the definition of $\alpha_F$.
		A single application of \cref{lem:mobius-inversion} to $X \coloneqq E(G)$ yields
		$p(G) = \sum_{E' \subseteq E(G)} \beta_{G[V(G), E']} = \sum_F \beta_F \spsub(F, G)$,
		so this expansion is valid as well.
		It is, however, useless for our purposes.

		Indeed, $\spsub(F, G) = 0$ unless $|V(F)| = |V(G)|$,
		so such an expansion amounts to one separate expansion for each vertex count,
		and a parameter with finitely many non-zero $\beta_F$ vanishes on all but finitely many vertex counts.
		In particular, no semiring homomorphism $\phi \in \Hom(\lG, \mathbb{C})$ has finitely many non-zero $\beta_F$,
		as $\phi$ is additive and hence maps the disjoint union of $n$ copies of $\boldsymbol{1}$ to $n$.
		The notion of finite support underlying \cref{lem:c-F-support,lem:intersection-vanishing} would thus be vacuous.
		For example, the parameter $p(\star) = \hom(K_1, \star)$ counting vertices
		satisfies $\alpha_{K_1} = 1$ and $\alpha_F = 0$ for all $F \not\cong K_1$,
		whereas $\beta_{\overline{K_n}} = n$ for every $n \geq 0$, writing $\overline{K_n}$ for the edgeless graph on $n$ vertices.
		Moreover, $\spsub(F, \star)$ is not itself a graph motif parameter, see \cref{sec:gmp}.
\end{remark}
	
	\subsubsection{Graph motif parameters}
	\label{sec:gmp}

	Of particular importance are subgraph series with finite support.
	These were studied by \textcite{curticapean_homomorphisms_2017} under the term \emph{graph motif parameter}, see also the survey \cite{roth_parameterised_2026}.

	\begin{definition}\label{def:gmp}
		A \emph{graph motif parameter} is a function $p \colon \lG \to \mathbb{C}$ with finite support $\supp(p)$.
	\end{definition}

	By definition, the functions $\sub(F, \star)$ for $F \in \lG$ span the $\mathbb{C}$-vector space of graph motif parameters. 
	It is implicit in the work of \textcite{lovasz_operations_1967} that these functions form a basis of this space.
	In the following, it will be more convenient to work with an alternative basis of the space of graph motif parameters, the \emph{homomorphism basis}.
    The basis change is given by 
	\cite[(5.18)]{lovasz_large_2012}, i.e., for every $F \in \lG$, it holds that
	\begin{equation}\label{eq:sub-to-hom}
		\sub(F, \star) = \frac{1}{\aut(F)} \sum_{\pi} \mu_\pi \hom(F/\pi, \star),
	\end{equation}
	where $\aut(F)$ is the number of automorphisms of $F$, $\pi$ ranges over partitions of $V(F)$, $F/\pi$ is a suitably defined graph quotient, and the $\mu_\pi$ are coefficients arising from M\"obius inversion over the set-partition lattice.
	Hence, the homomorphism counts $\hom(F, \star)$ for $F \in \lG$ span the space of graph motif parameters. In fact, they form a basis.

	\begin{theorem}[\textcite{lovasz_operations_1967}, see also {\cite[Proposition~5.44b]{lovasz_large_2012}}]\label{thm:lovasz-linear-independent}
		For pairwise non-isomorphic graphs $F_1, \dots, F_n \in \lG$,
		the functions $\hom(F_1, \star), \dots, \hom(F_n, \star) \colon \lG \to \mathbb{C}$ are linearly independent.
	\end{theorem}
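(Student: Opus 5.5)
We prove linear independence of $\hom(F_1,\star),\dots,\hom(F_n,\star)$ by a triangularity argument with respect to a suitable order on graphs, passing through injective homomorphisms. Suppose $\sum_{i} \lambda_i \hom(F_i, \star) = 0$ with not all $\lambda_i$ zero. Among the $F_i$ with $\lambda_i \neq 0$, choose one, say $F_1$, maximising $|V(F_i)|$ and, subject to that, maximising $|E(F_i)|$, where loops count as edges. We aim to show $\lambda_1 = 0$, a contradiction.

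The tool is the standard decomposition $\hom(F,G) = \sum_{\pi} \operatorname{inj}(F/\pi, G)$. Here $\pi$ ranges over partitions of $V(F)$ for which the quotient $F/\pi$ is a looped-simple graph. A class containing two adjacent vertices acquires a loop in $F/\pi$, which is allowed in $\lG$, so every partition gives a graph. The identity itself holds because each homomorphism factors uniquely through the partition of $V(F)$ given by its fibres, followed by an injective homomorphism. In turn, $\operatorname{inj}(F',G) = \sum_{F''} \operatorname{strinj}(F'',G)\cdot c(F',F'')$, where $F''$ ranges over graphs on $V(F')$ containing $F'$ as a spanning subgraph, and $\operatorname{strinj}$ counts strong (induced, edge- and loop-preserving in both directions) injective homomorphisms. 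Rather than expanding fully, we test at a well-chosen target graph.

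The evaluation step is to take $G = F_1$ and consider $\sum_i \lambda_i\, \operatorname{inj}\text{-contributions}$. That alone does not separate the terms, so we instead use a Lovász-style induction. We evaluate at $G$ ranging over all graphs and rewrite the relation as $\sum_i \lambda_i \sum_\pi \operatorname{inj}(F_i/\pi, \star) = 0$. After collecting by isomorphism type, this becomes $\sum_K \mu_K \operatorname{inj}(K,\star) = 0$. The coefficient of $K = F_1$ is exactly $\lambda_1$: any $F_i/\pi \cong F_1$ with $\pi$ nontrivial would force $|V(F_i)| > |V(F_1)|$, contradicting maximality, and for trivial $\pi$ we have $F_i/\pi = F_i \cong F_1$ only if $i = 1$.

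It then remains to show that the functions $\operatorname{inj}(K,\star)$ for pairwise non-isomorphic $K$ are linearly independent. Take $K_0$ with $\mu_{K_0}\neq 0$ minimising $(|V(K)|, |E(K)|)$ lexicographically, and evaluate at $G = K_0$. We have $\operatorname{inj}(K,K_0)\neq 0$ only if $|V(K)|\le |V(K_0)|$ and, when the vertex counts are equal, $|E(K)|\le|E(K_0)|$, with equality in both only for $K\cong K_0$. Minimality therefore leaves only the $K_0$ term, giving $0 = \mu_{K_0}\operatorname{inj}(K_0,K_0) = \mu_{K_0}\,\aut(K_0)\neq 0$, a contradiction. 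This is the same argument as the uniqueness part of \cref{lem:unique-subgraph-series}. Hence all $\mu_K = 0$; in particular $\lambda_1 = \mu_{F_1} = 0$, which contradicts the choice of $F_1$.

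The main obstacle is the bookkeeping in the middle step: verifying that no other pair $(i,\pi)$ contributes to the coefficient of $F_1$, so that this coefficient really is $\lambda_1$, and handling the looped case correctly, where quotients create loops and injective homomorphisms must map loops to loops. With the lexicographic maximality of $F_1$ this should be routine. Note that we never need the finiteness issue of infinite sums, since for each fixed $G$ all sums are finite.
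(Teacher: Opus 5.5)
Your argument is correct and is essentially the standard Lovász proof behind the citation; the paper gives no proof of its own. You expand $\hom(F,\star)=\sum_\pi \operatorname{inj}(F/\pi,\star)$, which is the relation that \eqref{eq:sub-to-hom} inverts. Vertex-maximality of $F_1$ then makes its coefficient exactly $\lambda_1$, and your lexicographic triangularity for $\operatorname{inj}$ is the uniqueness argument of \cref{lem:unique-subgraph-series} (note $\operatorname{inj}=\aut\cdot\sub$). The aside on strong injective homomorphisms and the abandoned attempt to evaluate at $G=F_1$ are never used and should be deleted.
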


	We use \cref{thm:lovasz-linear-independent} to characterise the graph motif parameters that are of the form $\hom(F, \star)$ for some $F \in \lG$, i.e., they are comprised only of a single homomorphism count.
	We write graph motif parameters in this basis as $\sum \eta_F \hom(F, \star)$, where, in contrast to the definition of subgraph series, all but finitely many coefficients $\eta_F$ vanish.

	\begin{lemma}\label{lem:gmp-multiplicative}
		For every non-zero graph motif parameter $p(\star) = \sum \eta_F \hom(F, \star)$,
		the following are equivalent:
		\begin{assertions}
			\item $p$ is \emph{multiplicative}, i.e., $p(G \otimes H) = p(G) p(H)$ for all $G, H \in \lG$,
			\item $p$ is of the form $p(\star) = \hom(F, \star)$ for some graph $F \in \lG$.
		\end{assertions}
	\end{lemma}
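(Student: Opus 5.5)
The implication from (ii) to (i) is immediate from \cref{eq:prod}. For the converse, write $p(\star) = \sum_{F \in \mathcal{S}} \eta_F \hom(F, \star)$ with $\mathcal{S}$ a finite non-empty set of pairwise non-isomorphic graphs and all $\eta_F \neq 0$. The plan is to expand both sides of multiplicativity in the homomorphism basis and compare coefficients via \cref{thm:lovasz-linear-independent}.

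First, apply \cref{eq:prod} to the left-hand side:
\[
p(G \otimes H) = \sum_{F} \eta_F \hom(F,G)\hom(F,H).
\]
The right-hand side is
\[
p(G)p(H) = \sum_{F,F'} \eta_F\eta_{F'} \hom(F,G)\hom(F',H).
\]
To compare coefficients in two variables, fix $H$ and view both sides as functions of $G$. By \cref{thm:lovasz-linear-independent}, the coefficient of $\hom(F,\star)$ must agree for each $F \in \mathcal{S}$. This gives, for every $H$ and every $F\in\mathcal{S}$,
\[
\eta_F \hom(F,H) = \eta_F\, p(H).
\]
Since $\eta_F \neq 0$, we get $\hom(F,H) = p(H)$ for all $H$.

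Hence every $F \in \mathcal{S}$ satisfies $\hom(F,\star) = p(\star)$. By \cref{thm:lovasz-linear-independent} once more (two non-isomorphic graphs give linearly independent, in particular distinct, functions), $\mathcal{S}$ is a singleton $\{F\}$. Then $p = \eta_F\hom(F,\star) = \hom(F,\star)$, so $\eta_F = 1$ because $\hom(F,\star)$ is not identically zero, e.g.\ $\hom(F,\boldsymbol 1)=1$.

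The only delicate point is justifying the coefficient comparison for fixed $H$. Both sides are finite linear combinations of $\hom(F,\star)$ over the same finite index set $\mathcal{S}$, with coefficients $\eta_F\hom(F,H)$ and $\eta_F p(H)$ respectively, so linear independence applies directly. The non-zero hypothesis ensures $\mathcal{S}\neq\emptyset$.
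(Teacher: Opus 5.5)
Your proof is correct and follows essentially the same route as the paper: you expand both sides of multiplicativity in the homomorphism basis, fix $H$ and compare coefficients via \cref{thm:lovasz-linear-independent} to get $\hom(F,\star) = p(\star)$ for every $F$ with $\eta_F \neq 0$, and then use linear independence once more to conclude that exactly one such $F$ occurs and that $\eta_F = 1$. The only difference is at the very end, and it is cosmetic: you argue that distinct graphs give distinct functions and evaluate at $\boldsymbol{1}$, whereas the paper compares coefficients directly in $\sum_{F'} \eta_{F'} \hom(F', \star) = \hom(F, \star)$.
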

	\begin{proof}
		The backward direction is immediate from \cref{eq:prod}.
		Conversely, 
		let $G, H \in \lG$ be arbitrary.
		By assumption and \cref{eq:prod},
		\begin{align*}
			\sum_{F, F'} \eta_F \eta_{F'} \hom(F, G) \hom(F',H)
			&= p(G) p(H)  \\
			&= p(G \otimes H)\\
			&= \sum_F \eta_F \hom(F, G \otimes H)\\
			&\overset{\eqref{eq:prod}}{=} \sum_F \eta_F \hom(F, G) \hom(F,H).
		\end{align*}
		For fixed $H$, we first view both sides as graph motif parameters in $G$. That is,
		\[
		\sum_{F, F'} \eta_F \eta_{F'} \hom(F, \star) \hom(F',H)  =  \sum_F \eta_F \hom(F, \star) \hom(F,H).
		\]
		Since the functions $\hom(F, \star)$ are linearly independent by \cref{thm:lovasz-linear-independent}, i.e.\ they form a basis of the space of graph motif parameters, we may compare coefficients and obtain
		\[
			\eta_{F} \sum_{F'} \eta_{F'} \hom(F', H)
			= 
			\eta_F \hom(F, H).
		\]
		Fix $F$ with $\eta_F \neq 0$, which exists since $p$ is non-zero, and divide both sides by $\eta_F$.
		Since $H$ was chosen arbitrarily, we can view this identity as an identity of graph motif parameters 
		\[
			\sum_{F'} \eta_{F'} \hom(F', \star)
			= 
			\hom(F, \star)
		\]
		and compare coefficients.
		Hence, $\eta_{F'} = 0$ for all $F' \not\cong F$ and $\eta_F = 1$.
		In particular, $p(\star) = \hom(F, \star)$ for some $F \in \lG$, as desired.
	\end{proof}

	We deduce the following characterisation of homomorphism counts alternative to \textcite{lovasz_dual_2010}.

	\begin{corollary}
		\label{cor:fls-alternative}
		A non-zero function $p \colon \lG \to \mathbb{C}$ is of the form $p(\star) = \hom(F, \star)$ for some graph $F \in \lG$ if, and only if, it is multiplicative and has finite support.
	\end{corollary}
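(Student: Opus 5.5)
The plan is to reduce \cref{cor:fls-alternative} to \cref{lem:gmp-multiplicative}, using that finite support means exactly that $p$ is a graph motif parameter (\cref{def:gmp}).

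For the backward direction, suppose $p$ is non-zero, multiplicative and has finite support. By \cref{lem:unique-subgraph-series}, $p(\star) = \sum_{F \in \supp(p)} \alpha_F \sub(F, \star)$, which is now a finite sum. Substituting the basis change \eqref{eq:sub-to-hom} for each $\sub(F, \star)$ writes $p$ as a finite linear combination $\sum \eta_F \hom(F, \star)$. Hence $p$ is a non-zero graph motif parameter written in the homomorphism basis. Since $p$ is multiplicative, \cref{lem:gmp-multiplicative} yields $p(\star) = \hom(F, \star)$ for some $F \in \lG$.

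For the forward direction, let $p(\star) = \hom(F, \star)$. Then $p$ is multiplicative by \eqref{eq:prod}, and it is non-zero, e.g.\ $\hom(F, \boldsymbol{1}) = 1$. The only real step is to show $\supp(\hom(F, \star))$ is finite. I would write each homomorphism as a surjective homomorphism onto its image, a subgraph of the target:
\[
  \hom(F, G) = \sum_{F'} \frac{\surj(F, F')}{\aut(F')}\, \sub(F', G),
\]
where $\surj(F, F')$ counts homomorphisms $F \to F'$ that are surjective on both vertices and edges. Here $F'$ ranges over isomorphism types with $|V(F')| \le |V(F)|$, so only finitely many coefficients are non-zero. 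The uniqueness part of \cref{lem:unique-subgraph-series} then identifies these coefficients with the $\alpha_{F'}$, so $\supp(p)$ is finite.

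The main point needing care is this identification. One must check that each homomorphism $h \colon F \to G$ corresponds exactly once to a pair (subgraph $G' \subseteq G$, surjective homomorphism $F \to G'$) up to the automorphism count. The natural choice is $G' = (h(V(F)), h(E(F)))$, which is a subgraph in the sense of $F[S, E']$; loops cause no extra trouble. Uniqueness of subgraph series then makes the coefficients agree.
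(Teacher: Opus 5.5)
Your argument is correct and is essentially the paper's. The paper reads the corollary off from \cref{lem:gmp-multiplicative}, using that finite support means being a finite linear combination of homomorphism counts: via \eqref{eq:sub-to-hom} in one direction, and in the other the fact that each $\hom(F,\star)$ is a graph motif parameter, which you justify by the surjection expansion.

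There is one slip in that expansion, though it does not affect the result. Sending $h$ to its image subgraph $G'$, together with $h$ viewed as a surjection $F \to G'$, is an exact bijection that needs no automorphism correction. So the identity is $\hom(F,G) = \sum_{F'} \surj(F,F')\,\sub(F',G)$, or equivalently $\sum_{F'} \frac{\surj(F,F')}{\aut(F')}\emb(F',G)$. Your factor $1/\aut(F')$ is spurious: for $F = G = K_2$ your formula gives $1$ instead of $2$. Since the corrected coefficients vanish for exactly the same $F'$, your finiteness conclusion stands.
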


	Finally, we characterise the homomorphism counts $\hom(F, \star)$ 
	which are semiring homomorphisms $\lG \to \mathbb{C}$.
	For a more general statement, compare \cite[Exercise~4.2]{lovasz_large_2012}.

	\begin{lemma}\label{lem:gmp-additive}
		For every graph $F \in \lG$, the following are equivalent:
		\begin{assertions}
			\item $\hom(F, \star)$ is \emph{additive}, i.e., $\hom(F, G + H) = \hom(F, G) + \hom(F, H)$ for all $G, H \in \lG$,
			\item $F$ is connected.
		\end{assertions}
	\end{lemma}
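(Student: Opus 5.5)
The backward direction is the classical identity \eqref{eq:connected}. If $F$ is connected, then every homomorphism $h \colon F \to G + H$ has a connected image. So $h$ maps entirely into $G$ or entirely into $H$, and the two cases are exclusive because $F$ is non-empty. This partitions $\Hom(F, G+H)$ into homomorphisms into $G$ and homomorphisms into $H$, which gives $\hom(F, G+H) = \hom(F,G) + \hom(F,H)$.

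For the forward direction, I would argue by contraposition and split into two cases.

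\textbf{Case $F = \boldsymbol{0}$.} Here $\hom(\boldsymbol{0}, \star) \equiv 1$. Additivity would give $1 = \hom(\boldsymbol{0}, \boldsymbol{0}+\boldsymbol{0}) = 2$, a contradiction.

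\textbf{Case $F$ has at least two components.} Write $F = F_1 + F_2$ with $F_1, F_2$ non-empty. I would test additivity on $G = H = \boldsymbol{1}$, the looped vertex. Since every graph maps to $\boldsymbol{1}$ in exactly one way, $\hom(F, \boldsymbol{1}) = 1$. On the other hand, $\boldsymbol{1} + \boldsymbol{1}$ has two looped vertices, and any map of $V(F)$ into it that is constant on each component is a homomorphism. Hence $\hom(F, \boldsymbol{1}+\boldsymbol{1}) = 2^{c}$, where $c \geq 2$ is the number of components of $F$. Additivity would require $2^c = 1 + 1 = 2$, which fails for $c \geq 2$. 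Alternatively, one can combine \eqref{eq:coprod} with additivity: $\hom(F, 2\cdot\boldsymbol{1}) = \hom(F_1, 2\cdot\boldsymbol{1})\hom(F_2, 2\cdot\boldsymbol{1}) \geq 2\cdot 2$, which exceeds $2$.

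There is no real obstacle here. The only care needed is the convention that $\boldsymbol{0}$ is not connected, which is exactly why the empty graph must be handled as a separate case.
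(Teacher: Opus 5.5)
Your proof is correct. Your backward direction matches the paper's, which simply cites \eqref{eq:connected}; you prove that identity directly. Your forward direction is more elementary than the paper's. The paper writes $F = F_1 + \dots + F_n$ and expands $\hom(F, G + \boldsymbol{1}) = \sum_{S \subseteq [n]} \hom(\sum_{s \in S} F_s, G)$ for arbitrary $G$. Using additivity, it obtains the identity $\sum_{\emptyset \neq S \subseteq [n]} \hom(\sum_{s \in S} F_s, \star) = \hom(F, \star)$ of graph motif parameters. It then invokes Lov\'asz's linear independence theorem (\cref{thm:lovasz-linear-independent}) to compare coefficients and conclude $n = 1$.

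You instead test additivity only at $G = H = \boldsymbol{1}$ and count by hand. This is exactly the paper's identity evaluated at $G = \boldsymbol{1}$, where it reads $2^n - 1 = 1$ and already forces $n = 1$. So the appeal to linear independence is not actually needed for this lemma. Two small remarks on your write-up. First, your count $2^c \neq 2$ also covers $c = 0$, so the separate case $F = \boldsymbol{0}$ is harmless but redundant. Second, for $c \geq 2$ you only need the lower bound $\hom(F, \boldsymbol{1} + \boldsymbol{1}) \geq 2^c$, coming from the maps that are constant on components, and that is exactly what you justified.

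The paper's route mainly buys uniformity with \cref{lem:gmp-multiplicative} and the graph-motif-parameter framework. Comparing whole functions rather than a single value is the kind of argument that adapts to linear combinations $\sum_F \eta_F \hom(F, \star)$, where one evaluation at $\boldsymbol{1} + \boldsymbol{1}$ could be fooled by cancellation. Your argument is self-contained and avoids a nontrivial theorem.
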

	\begin{proof}
		The backward direction is immediate from \cref{eq:connected}.
		Conversely, let $F_1 + \dots + F_n = F$ be the connected components of $F$.
		Let $G \in \lG$ be arbitrary.
		By \cref{eq:connected,eq:coprod}, we have
		\begin{align*}
		\hom(F, G + \boldsymbol{1}) 
		&= \prod_{i\in [n]} \hom(F_i, G+ \boldsymbol{1}) \\
		&= \prod_{i\in [n]} \left( \hom(F_i, G) + \hom(F_i, \boldsymbol{1}) \right)\\
		&= \sum_{S \subseteq [n]} \hom(\sum_{s \in S} F_s, G) \hom(\sum_{t \in [n] \setminus S} F_t, \boldsymbol{1}).
		\\
		&= \sum_{S \subseteq [n]} \hom(\sum_{s \in S} F_s, G).
		\end{align*}
		By the assumption that $\hom(F, G + \boldsymbol{1}) = \hom(F, G) + \hom(F,  \boldsymbol{1})$, 
		\[
			\sum_{S \subseteq [n]} \hom(\sum_{s \in S} F_s, G)
			= 
			\hom(F, G) + 1.
		\]
		In particular,
		\[
			\sum_{\emptyset \neq S \subseteq [n]} \hom(\sum_{s \in S} F_s, G)
			= 
			\hom(F, G).
		\]
		We view these expressions as a graph motif parameter in $G$, i.e.
		\[
			\sum_{\emptyset \neq S \subseteq [n]} \hom(\sum_{s \in S} F_s, \star)
			= 
			\hom(F, \star).
		\]
		All coefficients on the left-hand side are non-negative, which rules out cancellations.
		There is only one pattern on the right-hand side.
		By comparing coefficients by virtue of \cref{thm:lovasz-linear-independent}, 
		there is only one pattern on the left-hand side.
		Hence, $n = 1$, as desired.
	\end{proof}

	Combining the lemmas above, we obtain our characterisation.

	\thmSemiringHomChar*
	\begin{proof}
		If $\phi(\star) = \hom(F, \star)$ for some connected graph $F$,
		then it is a graph motif parameter, i.e.\ it has finite support.
		It is additive and multiplicative by \cref{eq:connected,eq:prod}.
		Furthermore, $\phi(\boldsymbol{1}) = 1$.
		Conversely,
		since $\phi$ is multiplicative and of finite support,
		it is of the form $\phi(\star) = \hom(F, \star)$ for some graph $F\in \lG$ by \cref{lem:gmp-multiplicative}.
		Since $\phi$ is additive,
		the graph $F$ is connected by \cref{lem:gmp-additive}.
	\end{proof}

	\subsection{Putting the pieces together}

	\label{sec:pieces}

	Towards \cref{thm:radical-intersection},
	we first identify the right-hand side of \eqref{eq:radical-intersection} as a vanishing ideal.

	\begin{lemma}\label{lem:intersection-vanishing}
        \begin{assertions}
            \item\label{it:intersection-vanishing1} For $\ell \geq 1$, it holds that $V(\mathfrak{c}_\ell) = \{\hom(F, \star) \mid F \in \lZ, |V(F)| \leq \ell\}$.
            \item\label{it:intersection-vanishing2} For $\ell \geq 1$, every ideal of  $\C\lG$ containing $\mathfrak{c}_\ell$ is radical.
            \item For an ideal $\mathfrak{a}$ of $\C\lG$, it holds that $\bigcap_{\ell \geq 1} (\mathfrak{a} + \mathfrak{c}_\ell)
			= I(\{\hom(F,\star) \mid F \in \lZ \} \cap V(\mathfrak{a})).$
        \end{assertions}
	\end{lemma}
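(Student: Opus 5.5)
For the first assertion, I will use \cref{lem:c-F-support}. A semiring homomorphism $\phi$ lies in $V(\mathfrak{c}_\ell)$ if and only if $\phi(c_F)=0$ for all $F$ on more than $\ell$ vertices. This holds if and only if $\supp(\phi)$ consists of graphs on at most $\ell$ vertices, and in particular $\supp(\phi)$ is finite, since there are only finitely many such looped-simple graphs. \Cref{thm:characterisation-semiring-homs} then gives $\phi=\hom(F,\star)$ for a connected $F$. To pin down the size of $F$, I rewrite $\hom(F,\star)$ in the subgraph basis. Every homomorphism $F\to G$ has as image a subgraph of $G$ on at most $|V(F)|$ vertices, and among these the injective homomorphisms contribute $\aut(F)\sub(F,\star)$ with non-zero coefficient. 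By uniqueness in \cref{lem:unique-subgraph-series}, $F\in\supp(\hom(F,\star))$, and every graph in the support has at most $|V(F)|$ vertices. Hence $\supp(\hom(F,\star))\subseteq\{F' : |V(F')|\le\ell\}$ holds exactly when $|V(F)|\le\ell$, which gives both inclusions of the first assertion.

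For the second assertion, let $\mathfrak{b}\supseteq\mathfrak{c}_\ell$. Then $V(\mathfrak{b})\subseteq V(\mathfrak{c}_\ell)$ is a finite set $\{\phi_1,\dots,\phi_m\}$. My plan is to show that $\C\lG/\mathfrak{c}_\ell$ is finite-dimensional and reduced.

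\textbf{Finite dimension.} Consider the evaluation map $\C\lG\to\C^m$, $p\mapsto(\phi_i(p))_i$, taken over the finitely many connected $F$ with $|V(F)|\le\ell$. By the Nullstellensatz (\cref{thm:nullstellensatz}) its kernel is $\sqrt{\mathfrak{c}_\ell}$. The quotient $\C\lG/\sqrt{\mathfrak{c}_\ell}$ therefore embeds into $\C^m$, so it is a finite product of copies of $\C$. Every ideal of such a product is radical.

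\textbf{The main obstacle} is showing $\mathfrak{c}_\ell=\sqrt{\mathfrak{c}_\ell}$ itself. For this I will use the basis change \eqref{eq:sub-to-hom} at the level of the algebra. Write each graph $G$ as an element of $\C\lG$ and expand it via Möbius inversion, $G=\sum_{F}\sub(F,G)\,c_F$ (the analogue of \cref{lem:unique-subgraph-series} inside $\mathbb{Z}\lG$, which holds by the same inversion identities applied formally). Modulo $\mathfrak{c}_\ell$ the class of $G$ then lies in the span of the finitely many $c_F$ with $|V(F)|\le\ell$, so $\C\lG/\mathfrak{c}_\ell$ is finite-dimensional.

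\textbf{Reducedness.} A finite-dimensional commutative $\C$-algebra is reduced if and only if its dimension equals the number of its $\C$-points. The points are the $m$ homomorphisms $\hom(F,\star)$ with $F$ connected on at most $\ell$ vertices. So I need $\dim\C\lG/\mathfrak{c}_\ell\le m$. I would attempt this by comparing the span of the $c_F$ modulo $\mathfrak{c}_\ell$ with the evaluation functionals, using multiplicativity and the linear independence in \cref{thm:lovasz-linear-independent}. This dimension count is the step I expect to be delicate. If it fails, I would instead show directly that the evaluation map is injective on the span of the $c_F$, $|V(F)|\le\ell$, modulo $\mathfrak{c}_\ell$. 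Once $\mathfrak{c}_\ell$ is radical, any $\mathfrak{b}\supseteq\mathfrak{c}_\ell$ corresponds to an ideal of a finite product of copies of $\C$, hence is radical.

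For the third assertion, the second assertion gives $\mathfrak{a}+\mathfrak{c}_\ell=\sqrt{\mathfrak{a}+\mathfrak{c}_\ell}$. By \cref{thm:nullstellensatz} this equals $I(V(\mathfrak{a}+\mathfrak{c}_\ell))=I(V(\mathfrak{a})\cap V(\mathfrak{c}_\ell))$. Intersecting over $\ell$ turns the intersection of vanishing ideals into the vanishing ideal of the union $\bigcup_\ell \big(V(\mathfrak{a})\cap V(\mathfrak{c}_\ell)\big)$. By the first assertion this union is $V(\mathfrak{a})\cap\{\hom(F,\star)\mid F\in\lZ\}$.
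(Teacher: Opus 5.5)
\textbf{Assertions one and three.} These are fine and follow the paper's route. The expansion $\hom(F,\star)=\sum_K\surj(F,K)\sub(K,\star)$ gives $F\in\supp(\hom(F,\star))$, with all support graphs on at most $|V(F)|$ vertices. The third assertion is the Nullstellensatz plus the union over $\ell$, once the second assertion is available.

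\textbf{The gap is in the second assertion}, exactly at the step you flag as delicate. You never prove $\dim_{\C}\C\lG/\mathfrak{c}_\ell\le m$, and nothing else in your argument shows $\mathfrak{c}_\ell=\sqrt{\mathfrak{c}_\ell}$; your kernel computation only describes $\sqrt{\mathfrak{c}_\ell}$. The expansion $G=\sum_F\sub(F,G)\,c_F$ shows that the quotient is spanned by the classes of the $c_F$ with $|V(F)|\le\ell$. But this family is indexed by \emph{all} graphs on at most $\ell$ vertices, not just the $m$ connected ones, and you give no reason why its span has dimension at most $m$. Without that bound, the criterion ``reduced iff dimension equals number of $\C$-points'' cannot be invoked. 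Your fallback, injectivity of evaluation modulo $\mathfrak{c}_\ell$, is just a restatement of what is to be shown.

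\textbf{The missing observation} is that $c_F=0$ in $\C\lG$ whenever $F$ is disconnected. Trivially $c_{\boldsymbol{0}}=\boldsymbol{0}$. Otherwise write $F=F_1+F_2$ with $V(F_1),V(F_2)\neq\emptyset$. Each $F[S,E']$ splits as $F_1[S_1,E_1']+F_2[S_2,E_2']$ and the sign factorises. Hence $c_F=\sigma_2\,c_{F_1}+\sigma_1\,c_{F_2}$, where $\sigma_i=\sum_{S_i\subseteq V(F_i)}\sum_{E_i'\subseteq E(F_i)}(-1)^{|V(F_i)\setminus S_i|+|E(F_i)\setminus E_i'|}$. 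Each $\sigma_i=0$ because $V(F_i)\neq\emptyset$.

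So the quotient is spanned by the $m$ classes of $c_F$ with $F\in\lZ$ and $|V(F)|\le\ell$, giving $\dim\le m$. Your criterion then yields $\C\lG/\mathfrak{c}_\ell\cong\C^m$. Alternatively, combine $\dim\le m$ with surjectivity of $p\mapsto(\hom(F,p))_F$, which follows from \cref{thm:lovasz-linear-independent}.

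The paper phrases the same fact as a triangularity argument. Since disjoint union is addition, the connected graphs form a $\C$-basis of $\C\lG$. For connected $F$, the difference $c_F-F$ is a combination of connected graphs that are smaller in (number of vertices, number of edges). So the $c_F$ with $F\in\lZ$ form a basis, and $\mathfrak{c}_\ell$ contains all of those with more than $\ell$ vertices.
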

	\begin{proof}
		By \cref{lem:c-F-support}, it holds that $\phi(c_F) = 0$ if, and only if, $F \notin \supp(\phi)$,
		for every $\phi \in \Hom(\lG, \mathbb{C})$ and every graph $F$.
		Since $\mathfrak{c}_\ell$ is generated by the $c_F$ for graphs $F$ on more than $\ell$ vertices,
		\[
			V(\mathfrak{c}_\ell) = \{\phi \in \Hom(\lG, \mathbb{C}) \mid \supp(\phi) \text{ contains only graphs on at most } \ell \text{ vertices}\}.
		\]
        In particular, each $\phi \in V(\mathfrak{c}_\ell)$ is of finite support
        and thus of the form $\phi(\star) = \hom(F, \star)$ for some $F \in \lZ$ by \cref{thm:characterisation-semiring-homs}.
        By applying M\"obius inversion to \cref{eq:sub-to-hom},
        it follows that $F \in \supp(\hom(F, \star))$ for every $F \in \lG$.
        Hence, $|V(F)| \leq \ell$.
        Conversely, if $F \in \lZ$ has at most $\ell$ vertices,
        then by \cref{eq:sub-to-hom} every graph in $\supp(\hom(F, \star))$ is a quotient of $F$
        and hence has at most $\ell$ vertices, so that $\hom(F, \star) \in V(\mathfrak{c}_\ell)$.

        For the second assertion, observe that the elements of the form $c_F$ where $F$ is connected form a basis of $\C\lG$ over $\C$ since $c_F - F$ is a linear combination of connected graphs with fewer vertices than $F$, or with the same number of vertices and fewer edges. It follows that the dimension of $\C\lG/\mathfrak{c}_\ell$ over $\C$ is at most the	number of connected graphs on $\ell$ vertices or less. By \cref{thm:lovasz-linear-independent}, the functions $\hom(F, \star)$ for $F \in \lZ$ with $|V(F)| \leq \ell$ are linearly independent, so the $\C$-linear evaluation map
		\[ 
		\C\lG \longrightarrow \prod_{\substack{F\in\lZ\\|V(F)|\leq\ell}}\C,
		\qquad
		p \longmapsto (\hom(F,p))_F
		\]
		is surjective. This map factors through $\C\lG/\mathfrak{c}_\ell$ by \cref{it:intersection-vanishing1}, and by the bound on the dimension of $\C\lG/\mathfrak{c}_\ell$, we obtain an isomorphism
		\[
		\C\lG/\mathfrak{c}_\ell
		\cong
		\prod_{\substack{H\in\lZ\\ |V(H)|\leq\ell}}\C.
		\]
		Every ideal of this ring is radical, hence every ideal of $\C\lG$ containing $\mathfrak{c}_\ell$ is radical.
        
        For the third assertion, observe that $V(\mathfrak{a} + \mathfrak{c}_\ell) = V(\mathfrak{a}) \cap V(\mathfrak{c}_\ell)$.
		Therefore, using \cref{thm:nullstellensatz} in the first step,
		\[
			\bigcap_{\ell \geq 1} \sqrt{\mathfrak{a} + \mathfrak{c}_\ell}
			= \bigcap_{\ell \geq 1} I\left(V(\mathfrak{a}) \cap V(\mathfrak{c}_\ell)\right)
			= I\left(\bigcup_{\ell \geq 1} \left(V(\mathfrak{a}) \cap V(\mathfrak{c}_\ell)\right)\right)
            = I\left(V(\mathfrak{a}) \cap \bigcup_{\ell \geq 1} V(\mathfrak{c}_\ell)\right).
		\]
        The assertion now follows by applying \cref{it:intersection-vanishing1} and \cref{it:intersection-vanishing2}.
	\end{proof}

    This concludes the preparations for the proof of \cref{thm:radical-intersection}.
    Note that \eqref{eq:h5} is not among the hypotheses of \cref{thm:radical-intersection}.
	It is in fact implied by \eqref{eq:radical-intersection}:
	the right-hand side of \eqref{eq:radical-intersection} is an intersection of radical ideals,
	so that \eqref{eq:radical-intersection} forces $\mathfrak{a}_{\approx}$ to be radical,
	which is equivalent to \eqref{eq:h5} by \cref{lem:radical-ideal}.
    
	\thmRadInt*
	\begin{proof}
		Write $\mathfrak{a} \coloneqq \mathfrak{a}_{\approx}$,
		$\mathcal{F} \coloneqq \{F \in \lZ \mid \hom(F, \star) \in V(\mathfrak{a})\}$,
		and $\Psi \coloneqq \{\hom(F, \star) \mid F \in \mathcal{F}\}$.
		By \cref{lem:intersection-vanishing}, \eqref{eq:radical-intersection} reads $\mathfrak{a} = I(\Psi)$.
		By \cref{lem:c-ideals}, $G \approx H$ if, and only if, $G - H \in \mathfrak{a}$.

        In the backward direction,
		suppose that $\mathfrak{a} = I(\Psi)$.
		Then $G \approx H$ iff $G - H \in I(\Psi)$ iff $\hom(F, G) = \hom(F, H)$ for all $F \in \mathcal{F}$,
		i.e.\ ${\approx} = {\equiv_{\mathcal{F}}}$.
		As $\mathcal{F}$ consists of connected graphs, ${\equiv_{\mathcal{F}}}$ coincides with homomorphism indistinguishability over the componental closure of $\mathcal{F}$ by \eqref{eq:coprod}.

		Conversely, invoking \cref{eq:coprod}, suppose that ${\approx} = {\equiv_{\mathcal{F}_0}}$ for a class $\mathcal{F}_0$ of connected graphs.
		Let $\Psi_0 \coloneqq \{\hom(F, \star) \mid F \in \mathcal{F}_0\}$.
		Every $\hom(F, \star) \in \Psi_0$ vanishes on the generators $G - H$ of $\mathfrak{a}$,
		so $\mathcal{F}_0 \subseteq \mathcal{F}$ and $\Psi_0 \subseteq \Psi$.
		Moreover, $G - H \in I(\Psi_0)$ iff $G \equiv_{\mathcal{F}_0} H$ iff $G \approx H$,
		i.e.\ ${\approx_{I(\Psi_0)}} = {\approx} = {\approx_{\mathfrak{a}}}$.
		Both $I(\Psi_0)$ and $\mathfrak{a}$ are extended from $\mathbb{Q}\lG$
		by \cref{lem:vanishing-ideal-extended} and \eqref{eq:extended-ideal}, respectively,
		so $I(\Psi_0) = \mathfrak{a}$ by \cref{prop:ideals-to-relations-complex}.
		Hence,
		$\mathfrak{a} \subseteq I(V(\mathfrak{a})) \subseteq I(\Psi) \subseteq I(\Psi_0) = \mathfrak{a}$,
		using $\Psi \subseteq V(\mathfrak{a})$ and $\Psi_0 \subseteq \Psi$.
	\end{proof}

	This concludes the preparations for the proof of \cref{thm:axiomatisation}.

	\thmAxiom*
	\begin{proof}
		\Cref{it:a1,it:a2,it:a3,it:a4} are \cref{eq:h1,eq:h2,eq:h3,eq:h4},
		which hold for every homomorphism indistinguishability relation over a componental graph class.
		Write $\mathfrak{a} \coloneqq \mathfrak{a}_{\approx}$.
		By \cref{lem:c-ideals}, $k(G-H) \approx G'-H'$ if, and only if, $k(G-H) - (G'-H') \in \mathfrak{a}$.
		By \cref{thm:radical-intersection}, it suffices to show that \cref{it:a5} is equivalent to
		$\mathfrak{a} = \bigcap_{\ell} (\mathfrak{a} + \mathfrak{c}_\ell)$.
		Recall that $G' \simeq_\ell H'$ iff $G' - H' \in \mathfrak{c}_\ell$.

		Suppose that $\mathfrak{a} = \bigcap_{\ell \geq 1} (\mathfrak{a} + \mathfrak{c}_\ell)$
		and that $G, H$ satisfy the premise of \cref{it:a5}.
		For every $\ell \geq 1$, it holds that  $k(G-H) \in \mathfrak{a} + \mathfrak{c}_\ell$ and, as $\frac1k \in \C$, $G - H \in \mathfrak{a} + \mathfrak{c}_\ell$.
		Hence, $G - H \in \mathfrak{a}$, i.e.\ $G \approx H$.

		Conversely, assume \cref{it:a5}.
		By \cref{lem:intersection-vanishing,lem:vanishing-ideal-extended},
		the ideals $\mathfrak{b} \coloneqq \bigcap_{\ell} (\mathfrak{a} + \mathfrak{c}_\ell) \supseteq \mathfrak{a}$
		and $\mathfrak{c}_\ell = I(V(\mathfrak{c}_\ell))$ are vanishing ideals of integer-valued homomorphisms and thus extended from $\mathbb{Q}\lG$,
		as is $\mathfrak{a}$ by \eqref{eq:extended-ideal}.
		By \cref{prop:ideals-to-relations-complex}, it suffices to show that $G - H \in \mathfrak{b}$ implies $G \approx H$.
		
		Let $\ell \geq 1$.
		Since $\mathfrak{a}$ and $\mathfrak{c}_\ell$ are extended from $\mathbb{Q}\lG$
		and $G-H \in \mathbb{Q}\lG$, we may write $G-H = a + c$
		with $a \in \mathfrak{a} \cap \mathbb{Q}\lG$ and $c \in \mathfrak{c}_\ell \cap \mathbb{Q}\lG$.
		Every element of $\mathbb{Q}\lG$ is of the form $\frac{G' - H'}{k}$ for some $G', H' \in \lG$ and $k \geq 1$,
		so $c = \frac{G' - H'}{k}$ with $G' \simeq_\ell H'$, and $k(G-H) - (G' - H') = ka \in \mathfrak{a}$,
		i.e.\ $k(G-H) \approx G' - H'$.
		As $\ell \geq 1$ was arbitrary, \cref{it:a5} yields
		$G \approx H$ and thus $G - H\in \mathfrak{a}$.
		It follows that $\mathfrak{a} = \mathfrak{b}$ and applying \cref{prop:ideals-to-relations-complex} concludes the proof.
	\end{proof}

    Finally, we prove \cref{lem:ideal-cl}.
    The lemma shows in particular that $\simeq_\ell$ is a homomorphism indistinguishability relation.
    
	\lemCl*
	\begin{proof}
		Assume that $G - H \in \mathfrak{c}_\ell$.
		Let $F$ be a connected graph on at most $\ell$ vertices.
		By \cref{lem:intersection-vanishing}, it holds that $\hom(F, \star) \in V(\mathfrak{c}_\ell)$.
		In particular, $\hom(F, G) = \hom(F, H)$.
		The case of disconnected $F$ follows by \cref{eq:coprod}, as each of its components has at most $\ell$ vertices.

		Conversely, suppose that $G$ and $H$ are homomorphism indistinguishable over all graphs $F \in \lG$ on at most $\ell$ vertices.
		Let $\phi \in V(\mathfrak{c}_\ell)$.
		By \cref{lem:intersection-vanishing}, $\phi = \hom(F, \star)$ for some graph $F$ on at most $\ell$ vertices.
		It follows that $\phi(G) = \phi(H)$.
		Hence, $G -H \in I(V(\mathfrak{c}_\ell)) = \mathfrak{c}_\ell$ by  \cref{thm:nullstellensatz,lem:intersection-vanishing}.

        For the final assertion, consider the following additional equivalent assertions:
        \begin{assertions}[start=4]
            \item $\sub(F, G) = \sub(F, H)$ for all $F\in \lG$ on at most $\ell$ vertices,
			\item $\indsub(F, G) = \indsub(F, H)$ for all $F\in \lG$ on at most $\ell$ vertices.
        \end{assertions}    
        Here, $\indsub(F, \star)$ counts induced subgraphs isomorphic to $F$.
        For the equivalence of the second and the fifth assertion, 
		we observe that, by \cite{curticapean_homomorphisms_2017},
		the functions $\hom(F, \star)$ for graphs $F$ on at most $\ell$ vertices
		span the same space as the functions $\indsub(F, \star)$ that count induced subgraphs from graphs $F$ on at most $\ell$ vertices.
		The fourth assertion is equivalent to these as well,
		since the functions $\sub(F, \star)$ for graphs $F$ on at most $\ell$ vertices
		span that same space \cite{curticapean_homomorphisms_2017}.
		Finally, it holds that $\indsub(F, G) = \indsub(F, H)$ for every $F$ on at most $\ell$ vertices
		if, and only if, the bijection from the third assertion exists.
	\end{proof}

	\subsection{Applications}

	\label{sec:applications}

    We give three applications of \cref{thm:axiomatisation,thm:axiomatisation-simple}.

    \subsubsection{Ulam--Kelly Reconstruction Conjecture}

    The following \cref{ex:ulam} shows that the involved \cref{it:a5} is not the bottleneck when applying \cref{thm:axiomatisation-simple} to the Ulam--Kelly Reconstruction Conjecture.

	\exUlam*

    In order to establish the equivalence with \cref{it:ulam3}, we require the following lemma.
    Recall the definition of graph motif parameters from \cref{def:gmp}.
    \begin{lemma}\label{lem:ulam-gmp}
		For every graph $F \in \G$, the graph parameter $r_F \colon \G \to \C$
		\[
			G \mapsto \sum_{v \in V(G)} \hom(F, G - v)
		\]
		is a graph motif parameter $\sum_{L \in \G} \eta_L \hom(L, \star)$
        such that the unique coefficient $\eta_{F + K_1}$ is non-zero.
	\end{lemma}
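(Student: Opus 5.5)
The plan is to express $r_F$ through homomorphism counts by a direct combinatorial decomposition. Homomorphisms $F \to G - v$ are exactly the homomorphisms $h \colon F \to G$ whose image avoids $v$. Hence
\[
	r_F(G) = \sum_{h \colon F \to G} \bigl(|V(G)| - |h(V(F))|\bigr) = \hom(F,G)\hom(K_1,G) - \sum_{h \colon F \to G} |\operatorname{im} h|.
\]
By \cref{eq:coprod}, the first term is $\hom(F + K_1, G)$. For the second term, group homomorphisms by their kernel partition $\pi$ of $V(F)$. The homomorphisms with kernel exactly $\pi$ correspond to injective homomorphisms $F/\pi \to G$, and their image size is $|\pi|$. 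The quotient $F/\pi$ may acquire loops; but $G$ is simple, so those terms simply vanish. Thus
\[
	\sum_h |\operatorname{im} h| = \sum_{\pi} |\pi| \cdot \mathrm{inj}(F/\pi, G).
\]

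Next I would expand each $\mathrm{inj}(F/\pi,\star)$ in the homomorphism basis by Möbius inversion over the partition lattice, exactly as in \cref{eq:sub-to-hom}. This gives $\mathrm{inj}(L,\star) = \sum_{\sigma} \mu(\hat 0,\sigma) \hom(L/\sigma,\star)$, where every $L/\sigma$ has at most $|V(L)|$ vertices. So $\sum_h |\operatorname{im} h|$ is a finite linear combination of $\hom(L,\star)$ with $|V(L)| \le |V(F)|$. Together with the first term, this shows that $r_F$ is a graph motif parameter, as claimed. The coefficients are unique by \cref{thm:lovasz-linear-independent}, and any $L$ containing a loop can be discarded since $\hom(L,\star)$ vanishes on $\G$; restricting to simple $L$ keeps linear independence on $\G$.

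The coefficient claim is where the care lies. The pattern $F + K_1$ has $|V(F)|+1$ vertices. Every pattern arising from the second term has at most $|V(F)|$ vertices, so none of them can cancel the contribution of $\hom(F+K_1,\star)$. Therefore $\eta_{F+K_1} = 1 \neq 0$.

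The main obstacle is bookkeeping. One must make sure the quotient patterns are handled consistently on $\G$: patterns with loops contribute zero and are dropped, while isomorphic quotients are merged before appealing to uniqueness. One must also check that the vertex-count bound survives these identifications. None of this affects the top-vertex-count pattern $F+K_1$, which is what the statement needs.
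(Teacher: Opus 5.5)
Your proof is correct and follows essentially the paper's route: write $r_F(G) = \hom(F+K_1,G) - \sum_{h \colon F \to G} |\operatorname{im} h|$, expand the second sum via M\"obius inversion over the partition lattice into homomorphism counts of quotients with at most $|V(F)|$ vertices, and conclude by comparing vertex counts. The only cosmetic difference is that you group homomorphisms by their kernel partition (injective homomorphisms from $F/\pi$), whereas the paper groups them by image type (surjections onto $K$ followed by embeddings of $K$).
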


	\begin{proof}
		First note that, for every $v \in V(G)$,
		\begin{equation}\label{eq:two-types}
			\hom(F, G) = \hom(F, G - v) + \hom(F, G; \text{ hits } v).
		\end{equation}
		Moreover, using an observation of \textcite{lovasz_operations_1967}, see \cite[Lemma~3.1.2]{seppelt_homomorphism_2024},
		\begin{align*}
			\sum_{v \in V(G)} \hom(F, G; \text{ hits } v)
			&= \sum_{h \colon F\to G} |h(F)|  \\
			&= \sum_{K \in \G}  \frac{|V(K)| \cdot \surj(F, K)}{\aut(K)} \cdot \emb(K, G) \\
			&=   \sum_{K \in \G}  \frac{|V(K)| \cdot \surj(F, K)}{\aut(K)} \sum_{\pi \vdash V(K)} \mu_\pi \hom(K/\pi, G).
		\end{align*}
		Here, $\hom(F, G; \text{ hits } v)$ denotes the number of homomorphisms $F \to G$ whose image contains~$v$,
		$\surj(F, K)$ the number of surjective homomorphisms $F \to K$,
		and $\emb(K, G)$ the number of injective homomorphisms $K \to G$, so that $\emb(K, G) = \aut(K) \sub(K, G)$.
		The first sum ranges over all homomorphisms $h \colon F\to G$.
		In the last two lines, the (outer) sum ranges over all graphs $K$ on at most $|V(F)|$ vertices.
		In the last line, the inner sum ranges over all partitions of $V(K)$.
		By summing \cref{eq:two-types} over $v \in V(G)$ and observing that $|V(G)| \hom(F, G) = \hom(F + K_1, G)$, it follows that
		\begin{equation}\label{eq:ulam}
			\sum_{v\in V(G)} \hom(F, G  - v) = \hom(F + K_1, G)
			-  \sum_{K \in \G}  \frac{|V(K)| \cdot \surj(F, K)}{\aut(K)} \sum_{\pi \vdash V(K)} \mu_\pi \hom(K/\pi, G).
		\end{equation}
		This is a graph motif parameter evaluated at $G$ since the patterns and coefficients do not depend on~$G$. 
		Furthermore, after grouping terms for isomorphic patterns it follows that the coefficient of $F + K_1$ is non-zero.
		To that end, observe that the $K/\pi$ all have at most $|V(F)|$ vertices.
	\end{proof}

    The second ingredient we require is the following lemma.

    \begin{lemma}[{\cite[Lemma 6]{seppelt_logical_2024}, cf.\ \cite[Lemma 3.6]{curticapean_homomorphisms_2017}}]\label{lem:gmp-interpolation}
        Let $\approx$ be a simple graph isomorphism relaxation satisfying \cref{it:s1} of \cref{thm:axiomatisation-simple}.
        Let $p(\star) = \sum_{F \in \G} \eta_F \hom(F, \star)$ be a graph motif parameter.
        Suppose that, for all $G, H \in \G$, it holds that $G \approx H \implies p(G) = p(H)$.
        Then, for all $G, H \in \G$, it holds that $G \approx H \implies \hom(F, G) = \hom(F, H)$ for all $F \in \G$ with $\eta_F \neq 0$.
    \end{lemma}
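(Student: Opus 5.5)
The plan is the standard interpolation argument, using \cref{it:s1} to multiply by auxiliary graphs and \cref{thm:lovasz-linear-independent} to separate the patterns. Fix $G \approx H$. By \cref{it:s1}, $K \otimes G \approx K \otimes H$ for every $K \in \G$, so the hypothesis gives $p(K \otimes G) = p(K \otimes H)$ for all $K$. By \cref{eq:prod}, this reads
\[
\sum_{F} \eta_F \bigl(\hom(F, G) - \hom(F, H)\bigr) \hom(F, K) = 0 \qquad \text{for all } K \in \G.
\]
The left-hand side is a graph motif parameter in $K$ with finitely many patterns $F$ (those with $\eta_F \neq 0$) and coefficients $\eta_F(\hom(F,G) - \hom(F,H))$.

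By \cref{thm:lovasz-linear-independent}, the functions $\hom(F, \star)$ for pairwise non-isomorphic $F$ are linearly independent. I would first check that this still holds when restricted to simple graphs $K$, since the statement was formulated over $\lG$. If all patterns $F$ are simple (as they are here, since $p$ is written with $F \in \G$), the Lov\'asz argument runs inside $\G$: the usual proof evaluates on the patterns themselves and on products with them, and all of these are simple. So independence over $\G$ is the version I would invoke. Comparing coefficients then gives $\eta_F(\hom(F,G) - \hom(F,H)) = 0$ for each $F$. Hence $\hom(F,G) = \hom(F,H)$ whenever $\eta_F \neq 0$.

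The main obstacle is this restriction to simple test graphs $K$. A fallback is a Vandermonde argument: group the patterns by their vectors $(\hom(F, K))$ for suitable simple $K$, and use powers $K^{\otimes m}$, which are again simple and satisfy $\hom(F, K^{\otimes m}) = \hom(F,K)^m$ by \cref{eq:prod}. I would choose finitely many simple $K_1, \dots, K_r$ whose products separate the non-isomorphic simple patterns $F$, for instance $K$ equal to each pattern itself combined as a disjoint union, appealing to Lov\'asz's theorem that hom-vectors from all simple graphs determine a simple graph up to isomorphism. Then I would take the products $K_1^{\otimes m_1} \otimes \dots \otimes K_r^{\otimes m_r}$. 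The resulting generalized Vandermonde system forces each coefficient to vanish, because distinct patterns yield distinct monomial evaluation points.
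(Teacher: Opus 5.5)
Your proposal is correct and is essentially the standard argument behind the cited \cite[Lemma~6]{seppelt_logical_2024}; the paper gives no proof of its own. That argument tensors with arbitrary simple $K$ via \cref{it:s1}, expands with \cref{eq:prod}, and separates the finitely many patterns by Lov\'asz's linear independence. The restriction to simple test graphs that worries you follows directly from \cref{thm:lovasz-linear-independent} by evaluating at $L \otimes K_n$ for $L \in \lG$ and $n \geq \max_F |V(F)|$, as in \cref{lem:loops-same-relation}. So the Vandermonde fallback is unnecessary, and it would need extra care anyway: $K^{\otimes 0} = \boldsymbol{1}$ is not simple, and patterns with a zero hom count all give the zero monomial function.
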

    
    \begin{proof}[Proof of \cref{ex:ulam}]
		Given \cite{lovasz_operations_1967} and \cref{thm:axiomatisation-simple},
		only the implications from \cref{it:ulam3,it:ulam4} to \cref{it:ulam1} require an argument.

        For \cref{it:ulam4},
		let $G, H \in \G$ be hypomorphic via a bijection $\pi \colon V(G) \to V(H)$.
		As $G$ and $H$ have the same number of edges and $G - v \cong H - \pi(v)$,
		every $v \in V(G)$ satisfies
		$\deg_G(v) = |E(G)| - |E(G-v)| = |E(H)| - |E(H - \pi(v))| = \deg_H(\pi(v))$.
		Hence, $G$ and $H$ have the same number $\ell \geq 0$ of isolated vertices.
		By \cref{it:s2}, the graphs $G + K_1$ and $H + K_1$ are hypomorphic,
		say via a bijection $\sigma$.
		Both have $\ell + 1$ isolated vertices,
		and removing any one of them yields $G$ and $H$, respectively.
		Let $v$ denote the vertex of the added $K_1$ in $G + K_1$,
		so that $(G + K_1) - v = G$ has $\ell$ isolated vertices.
		Removing a non-isolated vertex of $H + K_1$ leaves the $\ell+1$ isolated vertices isolated.
		Hence, $\sigma(v)$ is isolated and $G \cong (H + K_1) - \sigma(v) = H$, as asserted by the Reconstruction Conjecture.

        For \cref{it:ulam3},
        we apply \cref{lem:ulam-gmp,lem:gmp-interpolation}.
        By definition, if $G, H \in \G$ are hypomorphic, 
        then there exists a bijection $\pi \colon V(G) \to V(H)$ such that $G - v \cong H- \pi(v)$ for all $v \in V(G)$.
        Hence, $r_F(G) = r_F(H)$ for all $F \in \G$ and $r_F$ the graph motif parameter from \cref{lem:ulam-gmp}.
        By \cref{lem:ulam-gmp,lem:gmp-interpolation},
        it follows that
        \[
            \hom(F + K_1, G) =\hom(F+K_1, H)
        \]
        for all $F \in \G$ whenever $G, H \in \G$ are hypomorphic.
        We may assume that $|V(G)| = |V(H)| \geq 1$, as otherwise $G$ and $H$ are both empty.
        By \eqref{eq:coprod},
        $\hom(F + K_1, G) = \hom(F, G) \cdot |V(G)|$ and likewise for $H$,
        so that $\hom(F, G) = \hom(F, H)$.
        Hence, $G$ and $H$ are homomorphism indistinguishable over all graphs and thus isomorphic by \cite{lovasz_operations_1967}.
	\end{proof}

	\subsubsection{\texorpdfstring{$D$}{D}-fractional isomorphism}
    
    For an integer $D \geq 1$, two graphs $G$ and $H$ are said to be \emph{$D$-fractionally isomorphic}, written $G \cong_\opfrac^D H,$ if there exists a bistochastic matrix $u$ such that $A_G^{(t)} u = u A_H^{(t)}$ for all $t \leq D$, where
	\[ (A_G^{(t)})_{ij} = \begin{cases}
		1, \quad \text{if $\dist_G(i,j) = t$}; \\
		0, \quad \text{otherwise}.
	\end{cases} \]
    This isomorphism relaxation was introduced by Botteron and Weber in \cite{botteron_communication_2024}, motivated by nonlocal games and communication complexity.

	In \cite[Open Question 2]{botteron_communication_2024}, Botteron and Weber ask for a homomorphism indistinguishability characterisation of $D$\nobreakdash-fractional isomorphism.
	We use \cref{thm:axiomatisation-simple} to show that neither $2$\nobreakdash-fractional isomorphism, nor $D$\nobreakdash-fractional isomorphism for some fixed $D \geq 2$, nor $D$-fractional isomorphism for all $D$, can be characterised as homomorphism indistinguishability over some graph class.
    
    We show this by exhibiting two graphs that violate \cref{it:s1}. A priori, this only shows that $\cong_\opfrac^D$ is not a homomorphism indistinguishability relation over a \emph{componental} graph class. However, it is easy to see that $\cong_\opfrac^D$ satisfies \cref{eq:h2}, that is
    \begin{equation*}
        G \cong_\opfrac^D H \implies K + G \cong_\opfrac^D K + H.
    \end{equation*}
    As was shown in \cite[Theorem~7]{seppelt_logical_2024}, a homomorphism indistinguishability relation satisfying \cref{eq:h2} is necessarily a homomorphism indistinguishability relation over a componental graph class. Our negative result thus follows from the contrapositive.
	
	\begin{plainexample}
		\label{xmpl:D-frac}
		Let $G = K_{3,3}$ and let $H$ be the triangular prism.
		Then both $A_G^{(1)}$ and $A_H^{(1)}$ are $3$-regular, both $A_G^{(2)}$ and $A_H^{(2)}$ are $2$-regular, and $A_G^{(t)} = A_H^{(t)} = 0$ for all $t > 2$, so the normalised all-ones matrix $u = \frac{1}{6} J$ gives a $D$-fractional isomorphism between $G$ and $H$ for all $D \geq 1$.
		However, we claim that $G \otimes K_2 \not \cong_\opfrac^D H \otimes K_2$ whenever $D \geq 2$, so $\cong_\opfrac^D$ fails \cref{it:s1} in \cref{thm:axiomatisation-simple}.
		Indeed, $G \otimes K_2$ is the disjoint union of two copies of $K_{3,3}$ and $H \otimes K_2$ is the hexagonal prism.
		The former has $12$ pairs of vertices at distance $2$, while the latter has $24$, so they cannot be $2$-fractionally isomorphic.
		We conclude that $\cong_\opfrac^D$ does not admit a homomorphism indistinguishability characterisation when $D \geq 2$.
	\end{plainexample}

    \subsubsection{Chromatic- and \texorpdfstring{$\operatorname{FO}$-equivalence}{FO-equivalence}}
    \label{subsec:FO-equiv}

    In \cite{atserias_expressive_2021}, the authors proved the following negative results.

    \begin{proposition}[{\cite[Theorem 17, Proposition 1, Proposition 2]{atserias_expressive_2021}}]
        The following equivalence relations are not homomorphism indistinguishability relations. 
        \begin{enumerate}
            \item Chromatic equivalence, having the same chromatic polynomial.
            \item $\operatorname{FO}^k$-equivalence, being indistinguishable by a sentence of first-order logic using at most $k$ distinct variables.
            \item $\operatorname{FO}_q$-equivalence, being indistinguishable by a sentence of first-order logic of quantifier-rank at most $q$.
        \end{enumerate}
    \end{proposition}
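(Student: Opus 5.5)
By \cref{thm:axiomatisation-simple}, each item follows once we exhibit graphs witnessing the failure of one axiom. The paper already indicates which ones: chromatic equivalence should violate \cref{it:s1}, while $\operatorname{FO}^k$- and $\operatorname{FO}_q$-equivalence should violate \cref{it:s3}. One caveat applies throughout. A priori this only rules out componental graph classes. So for each relation I will also check that it satisfies \cref{eq:h2}, and then invoke \cite[Theorem~7]{seppelt_logical_2024} exactly as in \cref{xmpl:D-frac}.

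\textbf{Chromatic equivalence.} Condition \eqref{eq:h2} holds because $P(K+G,x)=P(K,x)P(G,x)$. For the violation of \cref{it:s1}, I take two non-isomorphic trees on $n$ vertices, say the path $P_4$ and the star $K_{1,3}$. Both have chromatic polynomial $x(x-1)^3$. I then choose a small $K$, the first candidate being $K=K_2$. The products $P_4\otimes K_2$ and $K_{1,3}\otimes K_2$ are the bipartite double covers, namely $2P_4$ and $2K_{1,3}$, and these are again chromatically equivalent. So that choice fails. Next I try $K=K_3$ (or a looped vertex combined with $K_2$). I compare a cheap invariant of the chromatic polynomial: the number of edges, which is the second coefficient, or the number of triangles, which is read off the third coefficient. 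The edge count of $T\otimes K_3$ is $2|E(T)|\cdot 3=6|E(T)|$, which is the same for both trees, so I move to the triangle count, or to the third coefficient, which also involves the 4-cycles. Because $P_4$ and $K_{1,3}$ have different degree sequences, I expect the number of $4$-cycles in $T\otimes K_3$ to differ between them. That would give different chromatic polynomials. Finding and verifying a concrete witness is the main computational step. If trees fail, I fall back to a known pair of chromatically equivalent graphs with different numbers of, for instance, $C_4$'s after taking the product.

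\textbf{$\operatorname{FO}^k$ and $\operatorname{FO}_q$.} Condition \eqref{eq:h2} holds by a standard Ehrenfeucht--Fra\"iss\'e/pebble game composition argument: Duplicator plays componentwise, using the identity on $K$. For the violation of \cref{it:s3}, I use the fact that first-order logic cannot count beyond its resources. Let $G=nK_1$ and $H=(n+1)K_1$ with $n\ge k$, respectively $n$ large in terms of $q$, and take $K$ empty. These are equivalent, so this choice is useless. Instead I choose $K=mK_1$ large with $G=K_1$ and $H=\boldsymbol 0$. Then $K+G=(m+1)K_1$ and $K+H=mK_1$ are $\operatorname{FO}^k$-equivalent for $m\ge k$ (and $\operatorname{FO}_q$-equivalent for $m\ge q$). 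On the other hand, $G=K_1$ and $H=\boldsymbol 0$ are distinguished by $\exists x\,(x=x)$. If one insists on nonempty graphs, I take $G=2K_1$ and $H=K_1$, which are distinguished by $\exists x\exists y\, x\ne y$, a sentence using two variables and quantifier rank $2$ (so this needs $k,q\ge2$; the small cases are handled separately, or the statement is understood for $k,q\ge2$).

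Collecting these gives the proposition. The only genuinely nontrivial part is producing an explicit chromatically equivalent pair whose products with some fixed $K$ have different chromatic polynomials.
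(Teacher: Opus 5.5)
\textbf{First-order items.} This part is fine and is essentially the paper's argument. You check \eqref{eq:h2}, invoke \cite[Theorem~7]{seppelt_logical_2024}, and break \cref{it:s3} using edgeless graphs. The paper takes $K=K_1$ and $G,H=\overline{K_m},\overline{K_{m-1}}$, distinguished by an $m$-variable sentence of quantifier rank $m$. You instead keep $G,H$ small and let $K=mK_1$ carry the size. Both work, and your handling of small $k,q$ is as careful as the paper's, which also uses the empty graph when $m=1$.

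\textbf{Chromatic item: the gap.} You never exhibit a verified witness. The argument stops at ``I expect the number of $4$-cycles in $T\otimes K_3$ to differ'', and that computation is the entire content of this item. There is also a misattribution. The coefficient of $x^{n-2}$ is $\binom{e}{2}$ minus the number of triangles, where $e$ is the number of edges. Since $T\otimes K_3$ maps onto the bipartite graph $T$, it is triangle-free, so this coefficient and the edge count ($18$) agree for $P_4$ and $K_{1,3}$. The $4$-cycles only enter the coefficient of $x^{n-3}$, which for a triangle-free graph equals $-\binom{e}{3}+\#C_4$.

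\textbf{Your candidate does work.} Since $T$ is a tree, a $4$-cycle in $T\otimes K_3$ projects to a backtracking closed walk in $T$. A projection onto a single edge is impossible with only three colours. So the projection is a path $u-v-w$. The two cycle vertices over $v$ are $(v,b),(v,b')$ with $b\neq b'$, and both vertices over $u$ and $w$ must carry the third colour. Hence
\[
\#C_4(T\otimes K_3)=3\sum_{v\in V(T)}\binom{\deg_T v}{2},
\]
which is $6$ for $P_4$ and $9$ for $K_{1,3}$. The coefficients of $x^{9}$ are therefore $-810$ and $-807$, so $P_4\otimes K_3$ and $K_{1,3}\otimes K_3$ are not chromatically equivalent.

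\textbf{The paper's witness.} The paper uses $K_1+P_3$ and $2K_2$, both with chromatic polynomial $x^2(x-1)^2$, and $K=P_3$. Because $P_3\otimes P_3=C_4+K_{1,4}$, the full chromatic polynomials $x^4(x-1)^8+x^4(x-1)^5$ and $x^4(x-1)^8$ can be read off directly, with no coefficient analysis. With either witness filled in, your proof is complete.
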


    We can recover these statements as corollaries of \cref{thm:axiomatisation-simple}. Clearly, the three relations satisfy \cref{eq:h2}, so it again suffices to show that they violate one of the conditions of the theorem.

    Indeed, chromatic equivalence violates \cref{it:s1}. Consider the graphs $G_1 = K_1 + P_3$ and $G_2 = 2 K_2$. As noted in \cite{atserias_expressive_2021}, it is easy to verify that they both have chromatic polynomial $x^2 (x-1)^2$ and hence are chromatically equivalent. On the other hand, we have $P_3 \otimes G_1 = 3 K_1 + C_4 + K_{1, 4}$ and $P_3 \otimes G_2 = 4 P_3$. The chromatic polynomial of the former is given by $x^3[(x-1)^4 + (x-1)]x(x-1)^4 = x^4(x-1)^8 + x^4(x-1)^5$ while the latter has chromatic polynomial $(x(x-1)^2)^4 = x^4(x-1)^8$.

    On the other hand, equivalence in first-order logic $\operatorname{FO}$---both over the bounded variable- and the bounded quantifier-rank fragment---violates \cref{it:s3}. Let $m \in \mathbb{N}$ be arbitrary and consider the graphs $G_1 = \overline{K_m}$ and $G_2 = \overline{K_{m-1}}$. Clearly, they are distinguished by the sentence
    \begin{equation*}
        \exists x_1 \dots \exists x_m \bigwedge_{1 \leq i < j \leq m} x_i \neq x_j
    \end{equation*}
    which is in $\operatorname{FO}^m \cap \operatorname{FO}_m$.
    On the other hand, we have $K_1 + G_1 = \overline{K_{m+1}}$ and $K_1 + G_2 = \overline{K_m}$. These graphs are not distinguishable by any $\operatorname{FO}^m$ or $\operatorname{FO}_m$ sentence, as a routine EF-game argument shows.

	\subsubsection{Fractional clique number}

    Finally, we give an example of a graph isomorphism relaxation satisfying \cref{it:s1,it:s2} but failing \cref{it:s3}. We answer a further question of \textcite{atserias_expressive_2021}.

    In \cite[Section~VII]{atserias_expressive_2021}, \citeauthor{atserias_expressive_2021}
    asked whether the relation $G \approx_{\omega_f} H \defiff \omega_f(G) = \omega_f(H)$ is a homomorphism indistinguishability relation (for simple graphs $G, H \in \G$).
    Here, $\omega_f(G)$ denotes the \emph{fractional clique number} defined as the optimum of the linear program
    \begin{align*}
        \max &\sum_{v\in V(G)} y_v \\
        \text{such that}& \sum_{u \in U} y_u \leq 1 \qquad \forall U \in I(G) \\
        & y_v \geq 0 \qquad \forall v \in V(G)
    \end{align*}
    Here, $I(G)$ denotes the set of independent sets in $G$. By the duality theorem of linear programming, $\omega_f(G)$ equals the \emph{fractional chromatic number} $\chi_f(G)$.
    We show that $\approx_{\omega_f}$ is not a homomorphism indistinguishability relation.

    \begin{example}\label{ex:fractional-clique-number}
        The relation $\approx_{\omega_f}$ satisfies \cref{it:s1,it:s2} but fails to satisfy \cref{it:s3} of \cref{thm:axiomatisation-simple}.
        In particular, it is not a homomorphism indistinguishability relation.
    \end{example}
    \begin{proof}
        By \cite[Theorem~2]{zhu_fractional_2011} and $\omega_f = \chi_f$ \cite{atserias_expressive_2021}, for all $G, H \in \G$,
        \[
            \omega_f(G \otimes H) = \min\left\{ \omega_f(G), \omega_f(H) \right\}.
        \]
        This identity readily implies \cref{it:s1}.
    
        For \cref{it:s2},
        we show that
        $\omega_f(G + H) = \max\left\{\omega_f(G), \omega_f(H)\right\}$ for $G, H \in \G$.
        For any feasible $y$ on $G+H$, put $\alpha = \max_{U \in  I(G)} \sum_{u\in U} y_u$ and $\beta = \max_{U \in  I(H)} \sum_{u\in U} y_u$. Every independent set of $G+H$ splits as the union of independent sets $U_G \cup U_H$, so $\alpha + \beta \le 1$; and $y|_G/\alpha$ is feasible for $G$, giving $y(G) \le \alpha\omega_f(G)$ and likewise for $H$. Hence \[ \sum_{v \in V(G + H)} y_v \le \alpha\omega_f(G) + \beta\omega_f(H) \le \max\{\omega_f(G),\omega_f(H)\},\] and putting all the weight on the better component attains it.

        In order to see that \cref{it:s3} is violated, consider
        $$\omega_f(K_3 + K_1) = \max\{3,1\} = 3 = \max\{3,2\} = \omega_f(K_3 + K_2),$$
        while $\omega_f(K_1) = 1 \neq 2 = \omega_f(K_2)$.

        By \cite[Theorem~7]{seppelt_logical_2024}, the relation $\approx_{\omega_f}$ satisfying \cref{it:s2} has to be a homomorphism indistinguishability  relation over a componental graph class if it is one at all.
        However, as it does not satisfy \cref{it:s3}, it is no homomorphism indistinguishability relation by \cref{thm:axiomatisation-simple}.
    \end{proof}

	\section{Examples of semiring homomorphisms on \texorpdfstring{$\lG$}{Gcirc}}
	\label{sec:examples}

	In this section, we give examples of graph isomorphism relaxations and semiring homomorphisms highlighting the necessity of the axioms in \cref{thm:axiomatisation}.

	\subsection{Necessity of \texorpdfstring{\cref{eq:h6}}{c\_l condition}}

	\exRank*
	\begin{proof}
		In order to verify \cref{eq:h1,eq:h2,eq:h3,eq:h4,eq:h5}, 
		it suffices by \cref{thm:semiring-hom-congruence}
		to show that $\rk \colon \lG \to \mathbb{C}$ is a semiring homomorphism.
		It holds that $\rk(\boldsymbol{1}) = \rk((1)) = 1$
		and for matrices $A, B$,
		\[
			\rk(A \oplus B) = \rk(A) + \rk(B), \qquad
			\rk(A \otimes B) = \rk(A) \cdot \rk(B)
		\]
		where $A \oplus B = \left( \begin{smallmatrix}
			A & 0 \\ 0 & B
		\end{smallmatrix}\right)$ is the matrix direct sum and $\otimes$ denotes the Kronecker product of matrices.
		The direct sum of the adjacency matrices of two graphs is the adjacency matrix of their disjoint union;
		the Kronecker product of the adjacency matrices of two graphs is the adjacency matrix of their categorical product.

		Suppose $\approx_{\rk}$ is a homomorphism indistinguishability relation over some graph class~$\mathcal{F} \subseteq \lG$.
		The adjacency matrices of the graphs $K_2^\circ$ and $K_3^\circ$ 
		are the $2 \times 2$ and the $3 \times 3$ 
		all-ones matrix, respectively.
		They both have rank~$1$.
		However, for every $n$-vertex graph $F \in \lG$, it holds that
		$\hom(F, K_\ell^\circ) = \ell^n$, hence $K_2^\circ$ and $K_3^\circ$ are not homomorphism indistinguishable over any graph except the empty graph $\boldsymbol{0}$, for which $\hom(\boldsymbol{0}, K_\ell^\circ) = 1$.
		This implies that $\mathcal{F} = \{\boldsymbol{0}\}$ which renders $\approx_{\rk}$ trivial, a contradiction.
	\end{proof}

	\cref{thm:characterisation-semiring-homs} implies that the support of  the graph parameter $\rk$ is infinite. We leave computing this support explicitly as an interesting open problem.

    Finally, we note that \cref{ex:rank} is an instance of a more general phenomenon.
    \begin{lemma}\label{lem:rk}
		The functions from Hermitian matrices to $\mathbb{C}$ satisfying, for all Hermitian $A, B$ and unitary~$X$,
		\[
			f(X^{*}AX) = f(A), \quad
			f(A \oplus B) = f(A) + f(B), \quad
			f(A \otimes B) = f(A) f(B)
		\]
		are of the form $f \colon A \mapsto \sum_{\lambda \in \Spec(A)} r(\lambda)$
		where $r$ is a semigroup homomorphism $(\mathbb{R}, \cdot) \to (\mathbb{C}, \cdot)$
        and $\Spec(A)$ denotes the multiset of eigenvalues of $A$.
	\end{lemma}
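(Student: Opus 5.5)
The plan is to reduce everything to $1 \times 1$ matrices via the spectral theorem, and then read off $r$ as the restriction of $f$ to these scalars. The two directions are handled separately, and the converse is a direct check.

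For the forward direction, let $f$ satisfy the three identities and define $r \colon \mathbb{R} \to \mathbb{C}$ by $r(\lambda) \coloneqq f((\lambda))$, where $(\lambda)$ is the $1\times 1$ Hermitian matrix with entry $\lambda$. Eigenvalues of Hermitian matrices are real, so $r$ is defined on all values that will occur.

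First, $r$ is a semigroup homomorphism $(\mathbb{R},\cdot) \to (\mathbb{C},\cdot)$. For $\lambda,\mu\in\mathbb{R}$, the Kronecker product $(\lambda)\otimes(\mu)$ equals $(\lambda\mu)$. Multiplicativity of $f$ therefore gives
\[
r(\lambda\mu)=f\bigl((\lambda)\otimes(\mu)\bigr)=r(\lambda)\,r(\mu).
\]

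Next, let $A$ be an $n\times n$ Hermitian matrix. By the spectral theorem there are a unitary $X$ and real $\lambda_1,\dots,\lambda_n$, the eigenvalues of $A$ listed with multiplicity, such that
\[
A = X^{*}\operatorname{diag}(\lambda_1,\dots,\lambda_n)X.
\]
Unitary invariance then yields $f(A)=f(\operatorname{diag}(\lambda_1,\dots,\lambda_n))$. Since $\operatorname{diag}(\lambda_1,\dots,\lambda_n)=(\lambda_1)\oplus\dots\oplus(\lambda_n)$, induction on $n$ with additivity gives
\[
f(A)=\sum_{i=1}^n r(\lambda_i)=\sum_{\lambda\in\Spec(A)} r(\lambda).
\]
If $0\times 0$ matrices are allowed, additivity applied to $\emptyset\oplus\emptyset=\emptyset$ forces $f(\emptyset)=0$, which matches the empty sum.

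For the converse, let $r$ be a semigroup homomorphism and define $f(A)\coloneqq\sum_{\lambda\in\Spec(A)} r(\lambda)$. Each identity follows from a standard spectral fact:
\begin{itemize}
\item $\Spec(X^{*}AX)=\Spec(A)$ for unitary $X$, since this is a similarity transformation, so $f$ is unitarily invariant;
\item $\Spec(A\oplus B)$ is the multiset union of $\Spec(A)$ and $\Spec(B)$, so $f$ is additive;
\item $\Spec(A\otimes B)=\{\!\!\{\lambda\mu \mid \lambda\in\Spec(A),\ \mu\in\Spec(B)\}\!\!\}$, as one sees by diagonalising $A$ and $B$ and taking $X_A\otimes X_B$ as the unitary. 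Multiplicativity of $r$ then gives
\[
f(A\otimes B)=\sum_{\lambda,\mu} r(\lambda)\,r(\mu)=f(A)\,f(B).
\]
\end{itemize}

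No step is a genuine obstacle. The only point needing care is the edge cases. In particular, $r(1)$ may be $0$ rather than $1$, which is why only a \emph{semigroup} homomorphism is claimed (for instance $f\equiv 0$ is allowed). Also, the direct-sum decomposition of a diagonal matrix must be taken with the correct multiplicities.
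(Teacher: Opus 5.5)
Your proof is correct and matches the paper's argument. You reduce $f$ by unitary diagonalisation and the $\oplus$-identity to its values $r(\lambda)=f((\lambda))$ on $1\times 1$ matrices, obtain multiplicativity of $r$ from $(\lambda)\otimes(\mu)=(\lambda\mu)$, and prove the converse from how spectra behave under unitary conjugation, direct sums and Kronecker products. Your edge-case remarks ($f(\emptyset)=0$, and $r(1)$ possibly being $0$, so that only a semigroup homomorphism can be claimed) are slightly more careful than the paper, which in the converse direction loosely calls $r$ a semiring homomorphism.
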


    For the rank, we have $r(x) = [x \neq 0]$.
    Since permutation matrices are unitary, every function~$f$ satisfying the properties in \cref{lem:rk} induces (by application to the adjacency matrix) a graph parameter $f \colon \lG \to \mathbb{C}$ which is a semiring homomorphism.
     
	\begin{proof}[Proof of \cref{lem:rk}]
        Let $f \colon A \mapsto \sum_{\lambda \in \Spec(A)} r(\lambda)$ for some semiring homomorphism~$r$.
        Since $X^* A X$ and $A$ are cospectral for every unitary~$X$, the first property holds.
        For the second property, observe that $\Spec(A \oplus B) = \Spec(A) \uplus \Spec(B)$ where we take the multiset union on the right hand-side.
        For the final property, $\Spec(A \otimes B) =\multiset{\mu\lambda \mid \mu \in \Spec(A), \lambda \in \Spec(B)}$.
        Hence, $f$ is of the desired form.

        Conversely,
		every Hermitian matrix can be unitarily diagonalised
		into a diagonal matrix of real eigenvalues $\lambda_1, \dots, \lambda_n$.
		Applying the $\oplus$-identity, the only remaining degree of freedom is $f( (\lambda) )$ for $\lambda \in \mathbb{R}$.
		Let $r \colon \mathbb{R} \to \mathbb{C}$ be this map. 
		The $\otimes$ identity implies that $r(ab) = r(a) r(b)$ for all $a,b \in \mathbb{R}$, that is, $r$ is a semigroup homomorphism.
	\end{proof}

\subsection{Semiring homomorphism on looped-simple graphs with imaginary values}

Next, we show that not all semiring homomorphisms $\lG \to \C$ have image contained in $\R$.
	To that end, we use the following lemmas.

	\begin{lemma}[{\cite[Exercise~5.52]{lovasz_large_2012}}]\label{lem:lov5.52}
		Let $G_1, \dots, G_n \in \G$ be pairwise non-isomorphic connected non-bipartite simple graphs.
		Then the functions $\hom(\star, G_1), \dots, \hom(\star, G_n)$ are linearly independent, even when restricted to connected graphs.
	\end{lemma}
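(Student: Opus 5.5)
The plan is to go through the duality between left and right homomorphism counts, using the standard ``inclusion matrix'' argument. Suppose $\sum_{i=1}^n a_i \hom(F, G_i) = 0$ for every connected graph $F$, with $a_i \in \C$. We aim to show that all $a_i$ vanish.

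\textbf{Step 1 (pass to injective homomorphisms).} Write each $\hom(F, G_i)$ as a sum over quotients: $\hom(F,G_i) = \sum_{\pi} \emb(F/\pi, G_i)$, where $\pi$ ranges over partitions of $V(F)$ whose classes are independent sets. A quotient of a connected graph is connected, so the family of functionals $F \mapsto \sum_i a_i \hom(F,G_i)$ on connected $F$ is related to $F \mapsto \sum_i a_i \emb(F,G_i)$ on connected $F$ by a unitriangular transformation. We order the graphs by vertex count and invert by induction on $|V(F)|$. This yields
\[
\sum_i a_i \emb(F, G_i) = 0 \quad\text{for all connected } F.
\]
It also yields the same with $\sub$ in place of $\emb$, since the two differ only by the factor $\aut(F)$.

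\textbf{Step 2 (triangularity).} Order $G_1,\dots,G_n$ so that $(|V|,|E|)$ is lexicographically non-increasing. Take $F = G_1$, which is connected. For any $j$ with $\emb(G_1, G_j) \neq 0$, the graph $G_j$ contains a subgraph isomorphic to $G_1$. By maximality of $G_1$, this forces $G_j \cong G_1$, hence $j=1$. Therefore $a_1 \emb(G_1,G_1) = a_1 \aut(G_1) = 0$, so $a_1 = 0$. Repeating the argument on the remaining graphs gives $a_i = 0$ for all $i$.

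\textbf{Expected obstacle: where non-bipartiteness enters.} In the argument above, connectedness of the $G_i$ is used so that $F = G_i$ is a legitimate connected test graph. Non-bipartiteness did not seem to be needed, which is suspicious. Indeed, bipartite $G_i$ can fail: for example, $\hom(F, K_2)$ and $\hom(F, P_3)$ for connected $F$ are both zero on non-bipartite $F$ and otherwise determined by degree data, and I would expect dependencies of this kind to arise.

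The real subtlety, I suspect, is in Step 1's inversion. The triangular system expresses $\emb$ in terms of $\hom$ of quotients, and all quotients of a connected $F$ are connected, so the system is closed. I would therefore carefully recheck Step 1, in particular loops and quotients. Quotients by partitions with non-independent classes create loops; since the $G_i$ are simple, such terms vanish, and restricting to simple $F$ is fine.

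If Step 1 holds, the non-bipartite hypothesis is only needed for a stronger, exercise-intended variant, for instance one restricted to a subclass of $F$ or involving $G_i$ with loops. I would state this and proceed, noting that the case I am least sure of is whether the closed inversion over connected $F$ truly avoids disconnected quotient terms. It does, because the quotient map is surjective and hence preserves connectivity.
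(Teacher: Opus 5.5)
Your argument is correct. The paper gives no proof of this lemma (it only cites Lovász's book), so yours is a self-contained substitute rather than a variant of a route in the paper.

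The inversion $\hom(F,G_i)=\sum_\pi \emb(F/\pi,G_i)$ does close up over connected simple $F$. Loopless quotients of a connected graph are connected and have fewer vertices, and quotients with loops contribute nothing because the $G_i$ are simple. Taking $F=G_1$ lexicographically maximal in $(|V|,|E|)$ then makes the system unitriangular, so all $a_i$ vanish. Since you only test against connected simple $F$, this also gives independence over the connected looped-simple graphs used in \cref{lem:non-bipartite}.

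Your route also shows that non-bipartiteness is superfluous for the statement as written: pairwise non-isomorphic connected simple $G_i$ suffice. In the paper the hypothesis does its work elsewhere, in \cref{lem:non-bipartite}. There it guarantees that the powers $G^{\otimes k}$ are connected, so that the lemma applies to them at all. The hypothesis would also become essential if the test graphs were restricted to connected \emph{non-bipartite} $F$, since then bipartite $G_i$ give the zero function. Your proof adapts verbatim to that setting: $G_1$ is then an admissible test graph, and loopless quotients of non-bipartite graphs remain non-bipartite.

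You should drop the hedging in your last paragraph; Step~1 is fine. In particular, remove the claim that bipartite $G_i$ such as $K_2$ and $P_3$ produce dependencies. It contradicts your own Steps~1--2, and it is false. At $F=K_1,K_2$ the functions $\hom(\star,K_2)$ and $\hom(\star,P_3)$ take the values $(2,2)$ and $(3,4)$. More generally, for connected bipartite $F$ with sides $A,B$ one has $\hom(F,K_2)=2$ but $\hom(F,P_3)=2^{|A|}+2^{|B|}$.
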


    \begin{lemma}[\textcite{lovasz_cancellation_1971}, cf.\ {\cite[Theorem~9.10]{hammack_handbook_2011}, \cite[Theorem 5.37]{lovasz_large_2012}}]\label{lem:lovasz-cancellation}
        Let $A, B, C \in \lG$
        and suppose that $C$ has an odd cycle.
        Then $A \otimes C \cong B \otimes C \implies A \cong B$.
    \end{lemma}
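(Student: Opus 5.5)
This is Lovász's cancellation theorem, which the paper cites rather than proves. I would prove it through the classical homomorphism-counting route, using the identity \eqref{eq:prod} together with a counting argument for injective homomorphisms.

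The first step uses the hypothesis $A \otimes C \cong B \otimes C$ and \eqref{eq:prod}. For every $F \in \lG$ it gives $\hom(F,A)\hom(F,C) = \hom(F,B)\hom(F,C)$. Whenever $\hom(F,C) \neq 0$ we may cancel to get $\hom(F,A) = \hom(F,B)$. The difficulty is the graphs $F$ with $\hom(F,C)=0$.

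This is where the odd cycle enters. Let $C_{2k+1}$ be an odd cycle in $C$; if $C$ has a loop, then $\hom(F,C)\ge 1$ for every $F$ and we are done immediately. Every bipartite graph maps to an edge of $C$. Every graph maps to $C$ if $C$ is looped, but in general non-bipartite $F$ (for example large cliques) need not map to $C$. So the naive argument only yields $\hom(F,A)=\hom(F,B)$ for all $F$ with $F \to C$. I would therefore follow Lovász's original trick and work with the counts $\hom(F, \star)$ for $F$ restricted to a class whose homomorphism counts still determine the target up to isomorphism.

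The class I would use is the graphs $F$ that admit a homomorphism to $C$, e.g.\ $F \to C_{2k+1}$. Take $F = G \otimes C_{2k+1}$ for arbitrary $G$: this maps to $C_{2k+1}$ by projection, hence to $C$. So $\hom(G\otimes C_{2k+1}, A) = \hom(G \otimes C_{2k+1}, B)$ for all $G$. Then run Lovász's inclusion–exclusion argument. Write $\hom(F,A)$ as a sum over quotients of $F$ of injective homomorphism counts. Proceed by induction on $|V(F)|$, with $F$ ranging over a quotient-closed subclass of graphs mapping to $C$, to show that the number of injective homomorphisms $\mathrm{inj}(F,A) = \mathrm{inj}(F,B)$ for all such $F$. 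Finally, take $F$ to be $A$ itself, but first make it admissible. The key claim is that the inequality $\mathrm{inj}(A,A)>0$ transfers through graphs $A\otimes C'$ where $C'$ is a suitable odd-cycle-containing graph. Concretely, the projection $A\otimes C_{2k+1}\to A$ together with the surjectivity of $\star\otimes C_{2k+1}$ on structure lets one recover $A$ from counts of the form $\hom(G\otimes C_{2k+1},\star)$.

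The main obstacle is exactly this last step: showing that the counts $\hom(F,A)$ for the $F$ admitting a homomorphism to $C$ determine $A$. First I would try the direct argument that $\hom(F\otimes C_{2k+1}, A)$ for all $F$ determines $\hom(F, A\otimes$-something$)$. Failing that, I would use that for large odd $n$, $C_n \to C_{2k+1}$ and $K_2 \otimes C_n\cong C_{2n}$. Then I would adapt Lovász's proof of the cancellation theorem via counting homomorphisms from graphs $F$ weighted by an odd-girth parameter, showing $\mathrm{inj}(A,A) = \mathrm{inj}(A,B)$ and symmetrically, which yields injections both ways and hence $A\cong B$. Alternatively, I would simply cite \cite{lovasz_cancellation_1971}, as the paper does.
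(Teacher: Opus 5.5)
The paper does not prove this lemma; it cites Lov\'asz~\cite{lovasz_cancellation_1971}. Your opening reduction is correct. By \eqref{eq:prod}, $A\otimes C\cong B\otimes C$ gives $\hom(F,A)=\hom(F,B)$ for every $F$ with $\hom(F,C)\neq 0$. A looped $C$ is then immediate from Lov\'asz's 1967 theorem. What remains is to show that counts from graphs admitting a homomorphism to $C$ already determine $A$. That remaining step is the whole content of the theorem, and your proposal does not supply it.

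The induction you describe cannot work. In $\hom(F,\star)=\sum_{P}\operatorname{inj}(F/P,\star)$, and in its M\"obius inverse, the quotients $F/P$ range over all partitions of $V(F)$, and $\{F \mid F\to C\}$ is far from quotient-closed:
\begin{itemize}
\item Merging the ends of an edge creates a loop. So for loopless $C$, every quotient-closed subclass consists of edgeless graphs, and the induction yields nothing beyond $|V(A)|=|V(B)|$.
\item You cannot discard looped quotients, since $A,B\in\lG$ may have loops.
\item Even merging non-adjacent vertices can leave the class: identifying the ends of a path on four vertices produces a triangle, which does not map to $C_5$.
\end{itemize}
The closing steps (``make $A$ admissible'', transferring $\operatorname{inj}(A,A)>0$ through $A\otimes C'$, ``surjectivity of $\star\otimes C_{2k+1}$ on structure'') are not arguments. $A$ need not map to $C$, and you never relate $\hom(G\otimes C_{2k+1},\star)$ to $\hom(G,\star)$. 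As written, the only complete route in your proposal is the fallback of citing~\cite{lovasz_cancellation_1971}.

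What is missing is an operation $F\mapsto F'$ such that $F'\to C$ for \emph{every} $F$, and $\hom(F,\star)$ can be recovered from the counts $\hom(F',\star)$. Your remark about long odd cycles points the right way. Writing $M_A$ for the adjacency matrix of $A$, we have $\hom(C_n,A)=\operatorname{tr}(M_A^n)$, and these traces for all large odd $n$ determine the value at $n=1$.

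The same works for arbitrary $F\in\lG$. Let $F^{(r)}$ arise by replacing every edge by a path of length $r$ and every loop by a cycle of length $r$. Suppose $C$ contains $C_{2k+1}$ and $r\geq 2k+1$ is odd. Send all original vertices of $F$ to one vertex of $C_{2k+1}$, and each path or cycle to a closed walk of length $r$. This shows $F^{(r)}\to C$, hence $\hom(F^{(r)},A)=\hom(F^{(r)},B)$. On the other hand,
\[
\hom(F^{(r)},A)=\sum_{\phi\colon V(F)\to V(A)}\ \prod_{uv\in E(F)}\left(M_A^r\right)_{\phi(u)\phi(v)}.
\]
By the spectral decomposition of the symmetric matrix $M_A$, this is a finite sum $\sum_{\mu}a_\mu\mu^r$ over real $\mu$, and likewise for $B$.

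Write $r=2m+1$ and group terms according to $\mu^2$; terms with $\mu=0$ vanish for $r\geq 1$. Equality for all large $m$ then forces equality of the grouped coefficients by a Vandermonde argument, hence equality at $m=0$. That is, $\hom(F,A)=\hom(F,B)$ for all $F\in\lG$, and Lov\'asz's theorem~\cite{lovasz_operations_1967} gives $A\cong B$.
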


	\begin{lemma}
		\label{lem:non-bipartite}
		Let $G \in \lG$ be connected, simple, and non-bipartite.
		Then:
		\begin{assertions}
			\item\label{itm:nb:trnsc} $G$ is transcendental in $\C\lG$, i.e., it is not the root of any non-zero polynomial $f \in \C[x]$.
			\item\label{itm:nb:non-inv} If $f \in \C[x]$ is any non-constant polynomial, then $f(G) \in \C\lG$ is non-invertible.
		\end{assertions}
	\end{lemma}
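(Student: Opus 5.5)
The plan is to test elements of $\C\lG$ against the semiring homomorphisms $\hom(F,\star)$ with $F \in \lZ$, using \cref{lem:unique-extension}. Part \ref{itm:nb:non-inv} will additionally rely on the Nullstellensatz (\cref{thm:nullstellensatz}) together with a leading-term argument.

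\emph{Part \ref{itm:nb:trnsc}.} Suppose $f(G) = 0$ in $\C\lG$ for some $f \in \C[x]$. Applying the extension of $\hom(F,\star)$ for connected $F$ gives $f(\hom(F,G)) = 0$ for all $F \in \lZ$. So it suffices to show that $\hom(F,G)$ takes infinitely many values on connected $F$; then $f$ has infinitely many roots and must be zero. I would use the paths $P_n$. Writing $A$ for the adjacency matrix of $G$, we have $\hom(P_n,G) = \mathbf{1}^T A^{n-1}\mathbf{1} \geq \lambda_{\max}(A)^{n-1}$, since the Perron vector has entries at most $1$ in the sup-norm. A connected non-bipartite simple graph contains an odd cycle, so $\lambda_{\max}(A) \geq 2$. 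Hence these counts are unbounded.

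\emph{Part \ref{itm:nb:non-inv}.} Factor $f = a\prod_i (x - c_i)$ with $a \neq 0$. If $f(G)$ were invertible, then each factor $G - c_i$ would be invertible. It therefore suffices to show that $G - c$ is a non-unit for every $c \in \C$. By \cref{thm:nullstellensatz}, the ideal $(G-c)$ is proper if and only if $V((G-c)) \neq \emptyset$, i.e.\ if and only if some $\phi \in \Hom(\lG,\C)$ has $\phi(G)=c$; this is the statement one ultimately wants. I would therefore prove properness directly.

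Suppose $(G - c)p = \boldsymbol{1}$ with $p = \sum_H b_H H$, a finite combination of connected graphs (recall that $\lG$ is free as an additive monoid on $\lZ$). Evaluating at all connected $F$ gives the identity of functions on $\lZ$
\[
  \sum_H b_H \hom(\star, H\otimes G) - c\sum_H b_H\hom(\star,H) = \hom(\star,\boldsymbol{1}).
\]
Now decompose each $H \otimes G$ into connected components. I would pick a leading term: an $H_0$ in the support with the maximal number of vertices. Then $H_0 \otimes G$ has a component on more vertices than any graph occurring in $\sum_H b_H H$ or in $\boldsymbol{1}$, and I would try to show that its coefficient cannot cancel. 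Cancellation against other $H \otimes G$ should be excluded using \cref{lem:lovasz-cancellation}, since $G$ has an odd cycle. Once the largest component survives with a non-zero coefficient, linear independence of the functions $\hom(\star, X)$ restricted to connected $F$ gives the contradiction. This independence is \cref{lem:lov5.52}, and it is where non-bipartiteness is essential.

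\emph{Main obstacle.} The difficulty is that \cref{lem:lov5.52} needs connected non-bipartite simple targets, while the $H$ in the support may be looped or bipartite. The products $H\otimes G$ are simple because $G$ is, but for bipartite $H$ they are bipartite, and for such targets independence over connected $F$ fails. My first attempt would be to restrict attention to the non-bipartite (or looped) components and argue that bipartite components contribute only in coherent pairs. Failing that, I would replace $F$ by $F \otimes K_3$-type test graphs to kill the bipartite contributions before comparing coefficients.
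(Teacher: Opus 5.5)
Your part (i) is correct and takes a different route from the paper. The paper expands $0=\sum_i a_i\hom(K,G^i)$ over connected $K$, removes $a_0$ using $K=\boldsymbol 1$, and applies \cref{lem:lov5.52} to the powers $G,\dots,G^d$. You instead apply the ring homomorphisms $\hom(F,\star)$ for connected $F$ and observe that $\hom(P_n,G)\ge 2^{n-1}$, so $f$ has infinitely many roots. Your argument is more elementary and needs only a cycle in $G$, not non-bipartiteness. Your reduction of (ii) to the linear factors $G-c$ is also fine.

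However, (ii) itself is not proved. The ``main obstacle'' you name is self-inflicted: it comes from passing to the functions $\hom(\star,X)$. Since $\lG$ is the free commutative monoid on $\lZ$, the connected graphs form a $\C$-basis of $\C\lG$. You can therefore compare coefficients directly in $(G-c)p=\boldsymbol 1$. Whether the $H$ in the support of $p$ are bipartite or looped is then irrelevant, and \cref{lem:lov5.52} plays no role.

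Your belief that independence over connected $F$ fails for bipartite targets is also mistaken. Take a dependence $\sum_i c_i\hom(F,X_i)=0$; it may be taken integral, since all values are integers. It yields $P=\sum_{c_i>0}c_iX_i$ and $N=\sum_{c_i<0}(-c_i)X_i$ with $\hom(F,P)=\hom(F,N)$ for connected $F$ by \eqref{eq:connected}, hence for all $F$ by \eqref{eq:coprod}. By Lovász's isomorphism theorem $P\cong N$, which forces all $c_i=0$.

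Your fallback would genuinely fail. Killing bipartite contributions with non-bipartite test graphs such as $F\otimes K_3$ also kills the leading terms whenever every $H$ of maximal size is bipartite.

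What must actually be supplied is the leading-term step, which is the content of the paper's proof. Let $m$ be the maximal number of vertices of an $H$ with $b_H\neq 0$.

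Suppose $m\ge 2$. Then every such $H$ has an edge, so $H\otimes G$ is connected because $G$ is connected and non-bipartite. Indeed, walks of every sufficiently large length join any two vertices of $G$, while walks in $H$ can be lengthened by $2$. Distinct such $H$ give non-isomorphic products $H\otimes G$ by \cref{lem:lovasz-cancellation}. Since $|V(G)|\ge 3$, these products have $m|V(G)|$ vertices. That is more than any component of $cH'$, of $H'\otimes G$ with $|V(H')|<m$, or of $\boldsymbol 1$. Hence their coefficients $b_H$ cannot cancel, a contradiction.

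Your assertion that $H_0\otimes G$ always has a component larger than every graph in the support fails when $m=1$, since $K_1\otimes G=|V(G)|K_1$. In that case $p=b_1K_1+b_2\boldsymbol 1$. The coefficient of $G$ in $(G-c)p$ forces $b_2=0$, and then the coefficient of $\boldsymbol 1$ reads $0=1$. (The paper's claim that all the top terms $G^dH_j$ are connected overlooks the same case $H_j=K_1$.)
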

	\begin{proof}
		\ref{itm:nb:trnsc}.
		Since $G$ is connected and non-bipartite, the categorical powers $G^k$ are connected and non-bipartite for all $k \geq 0$.
		Note that $G^k \neq G^\ell$ whenever $k \neq \ell$.
		Suppose that $f = a_d x^d + \cdots + a_1 x + a_0 \in \C[x]$ is a polynomial such that $f(G) = \boldsymbol{0}$.
		Then for all connected graphs $K$ we have by \cref{eq:connected}
		\begin{align*}
			0 &= \hom(K,\boldsymbol{0}) \\
			&= \hom(K,a_d G^d + \cdots + a_1 G + a_0 \boldsymbol{1}) \\
			&= a_d \hom(K,G^d) + \cdots + a_1 \hom(K,G) + a_0 \hom(K,\boldsymbol{1}).
		\end{align*}
		By taking $K = \boldsymbol{1}$, it follows that $a_0 = 0$ since the looped graph $\boldsymbol{1}$ does not admit homomorphisms to any of the simple graphs $G^k$.
		Now by applying \cref{lem:lov5.52} to the remaining expression, it follows that $a_1 = \cdots = a_d = 0$.
		This shows that $G$ is transcendental.
		
		\ref{itm:nb:non-inv}.
		Write $f = a_d x^d + \cdots + a_1 x + a_0$ with $a_d \neq 0$ for some $d \geq 1$ (since $f$ is non-constant).
		Let $H \in \C\lG \setminus \{\boldsymbol{0}\}$ be any non-zero element, and write $H = \sum_{i=1}^k \beta_i H_i$ with $H_1,\ldots,H_k \in \lG$ connected and pairwise non-isomorphic, and $\beta_1,\ldots,\beta_k \in \C^\times$ (where $k \geq 1$, since $H \neq \boldsymbol{0}$).
		We show that $f(G)H \neq \boldsymbol{1}$.
		To that end, reorder the $H_i$ by decreasing size, and let $\ell \in [k]$ be such that $|V(H_1)| = \cdots = |V(H_\ell)| > |V(H_{\ell+1})| \geq \cdots \geq |V(H_k)|$.
		Now we write
		\[ f(G)H = \sum_{i=0}^d \sum_{j=1}^k a_i \beta_j G^i H_j = \underbrace{a_d \beta_1 G^d H_1 + \cdots + a_d \beta_\ell G^d H_\ell}_{\substack{\text{a linear combination of}\\\text{graphs of size $|V(G)|^d \cdot |V(H_1)|$}}} + \underbrace{\text{remaining terms}}_{\substack{\text{a linear combination of}\\\text{graphs of strictly smaller size}}}, \]
		and note that the graphs $G^d H_1 , \ldots , G^d H_\ell$ are connected (since $G$ is non-bipartite) and pairwise non-isomorphic by \cref{lem:lovasz-cancellation}.
		Since $|V(G)| \geq 3$ and $d,k,\ell \geq 1$, it follows that $f(G)H$ contains non-constant terms, so we have $f(G)H \neq \boldsymbol{1}$.
		Since $H$ was arbitrary, we conclude that $f(G)$ is non-invertible.
	\end{proof}

	\begin{corollary}
		\label{cor:non-bipartite-quotient}
		Let $G \in \lG$ be connected, simple, and non-bipartite.
		Then for every $\alpha \in \C$, there exists $\phi \in \Hom(\lG , \C)$ such that $\phi(G) = \alpha$.
	\end{corollary}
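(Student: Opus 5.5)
The plan is to apply the Nullstellensatz for semiring algebras (\cref{thm:nullstellensatz}) to the principal ideal $\mathfrak{a} \coloneqq (G - \alpha\boldsymbol{1})$ of $\C\lG$. Any $\phi \in V(\mathfrak{a})$ is a semiring homomorphism $\lG \to \C$ with $\phi(G) - \alpha\phi(\boldsymbol{1}) = \phi(G) - \alpha = 0$, which is exactly what we want. So it suffices to show $V(\mathfrak{a}) \neq \emptyset$.

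Suppose for contradiction that $V(\mathfrak{a}) = \emptyset$. Then $I(V(\mathfrak{a})) = \C\lG$. By \cref{thm:nullstellensatz}, this means $\sqrt{\mathfrak{a}} = \C\lG$, so $\boldsymbol{1} \in \sqrt{\mathfrak{a}}$. Since $\boldsymbol{1}^n = \boldsymbol{1}$, this gives $\boldsymbol{1} \in \mathfrak{a}$. Hence there is $H \in \C\lG$ with $(G - \alpha\boldsymbol{1})H = \boldsymbol{1}$, i.e.\ $f(G)$ is invertible for the non-constant polynomial $f(x) = x - \alpha$. Here we use that $\boldsymbol{1}$ is the unit of $\C\lG$, so that substituting $G$ into polynomials makes sense. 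This contradicts \cref{itm:nb:non-inv} of \cref{lem:non-bipartite}. The hypotheses of \cref{thm:nullstellensatz} are met because $\lG$ is a countable commutative cancellative semiring by \cref{lem:graphs-form-semiring}.

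There is no real obstacle here: all the work sits in \cref{lem:non-bipartite}. The only points to check are that the Nullstellensatz is applied to the ideal of $\C\lG$ itself, with no extension from $\mathbb{Q}\lG$ needed, and that the proper-ideal step is valid. Concretely, $\mathfrak{a}$ is proper precisely because $G - \alpha\boldsymbol{1}$ is not a unit. Alternatively, one could pass to a maximal ideal $\mathfrak{m} \supseteq \mathfrak{a}$ and argue that $\C\lG/\mathfrak{m} \cong \C$ by countable dimension. That is essentially the content of \cref{thm:zeidler}, so invoking \cref{thm:nullstellensatz} directly is cleaner.
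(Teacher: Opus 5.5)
Your proof is correct and is essentially the paper's argument: the ideal is proper by \cref{lem:non-bipartite}\ref{itm:nb:non-inv}, and a weak Nullstellensatz then gives a $\C$-point (you invoke \cref{thm:nullstellensatz}, while the paper takes a maximal ideal and uses that its residue field is $\C$). The only real difference is the generator: the paper uses $(G-\alpha)(G-\overline{\alpha})$ and may compose with complex conjugation, an extra step that shows even $\R\lG$ has a maximal ideal with residue field $\C$, and a remark after its proof notes that your linear generator $G-\alpha$ would suffice.
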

	\begin{proof}

		Let $\mathfrak{a} \subseteq \C\lG$ be the ideal generated by $(G - \alpha)(G - \overline{\alpha}) = G^2 - 2\Re(\alpha)G + |\alpha|^2 \cdot \boldsymbol{1}$.
		By \cref{lem:non-bipartite}\ref{itm:nb:non-inv}, the generator is not invertible, so $\mathfrak{a}$ is a proper ideal.
		Therefore, $\mathfrak{a}$ is contained in a maximal ideal $\mathfrak m$ \cite[\href{https://stacks.math.columbia.edu/tag/00E0}{Tag 00E0}]{STACKS}.
		By \cite[\href{https://stacks.math.columbia.edu/tag/00FU}{Tag 00FU}]{STACKS}, we have $\C\lG / \mathfrak m \cong \C$.
		Let $\psi \colon \C\lG \to \C\lG / \mathfrak m \cong \C$ be the canonical map.
		We have $(\psi(G) - \alpha)(\psi(G) - \overline{\alpha}) = 0$, so either $\psi(G) = \alpha$ or $\psi(G) = \overline{\alpha}$.
		It follows that either $\psi$ or $\overline{\psi} = \bar{\, }\, \circ \psi$ restricts to the desired homomorphism $\phi \colon \lG \to \C$ such that $\phi(G) = \alpha$.
	\end{proof}
	
	In particular, \cref{cor:non-bipartite-quotient} shows that homomorphisms $\lG \to \C$ are not necessarily real.
	\begin{remark}
		In the proof of \cref{cor:non-bipartite-quotient}, we could have taken the ideal generated by $G - \alpha$.
		Working with the ideal generated by $(G - \alpha)(G - \overline{\alpha})$ for some $\alpha \in \C \setminus \R$ shows something stronger, namely that even $\R\lG$ has a maximal ideal $\mathfrak m$ such that $\R\lG / \mathfrak m \cong \C$.
		So even when working with real coefficients, there still exist semiring homomorphisms onto the complex numbers.
	\end{remark}

    \begin{remark}
    When $\alpha$ is not a non-negative integer, it is clear that the semiring homomorphism $\phi \in \Hom(\lG , \C)$ constructed in \cref{cor:non-bipartite-quotient} is not of the form $\phi(H) = \hom(F, H)$ for some $F \in \lG$ and all $H \in \lG$, so this gives a wealth of \enquote{exotic} semiring homomorphisms $\lG \to \C$.
    Further, the kernel of the associated homomorphism $\psi \colon \C\lG \to \C$ is a maximal ideal, and therefore radical, so this gives a wealth of radical ideals in $\C\lG$ that are not associated to homomorphism indistinguishability relations.
\end{remark}

\section{Discarding loops} \label{sec:loops}

In this section, we discuss the transition from homomorphism indistinguishability relations on $\lG$ to homomorphism indistinguishability between simple graphs from $\G$.
To that end, we first answer an open question of \textcite{roberson_oddomorphisms_2022}.

\begin{question}[{\textcite[Question~1]{roberson_oddomorphisms_2022}}]
    For a family of simple graphs $\mathcal{F} \subseteq \G$,
    is  $\cl(\mathcal{F}) \subseteq \cl^\circ(\mathcal{F})$?\label{question:rob1}
\end{question}

Here, $\cl^\circ(\star)$ is the homomorphism distinguishing closure in $\lG$, i.e., for a graph class $\mathcal{F} \subseteq \lG$,
\[
\cl^\circ(\mathcal{F}) = \{F \in \lG \mid \forall G, H \in \lG.\ G \equiv_{\mathcal{F}} H \implies \hom(F, G) = \hom(F, H)\}.
\]
The difference to \cref{def:closure} is that $F, G, H \in \lG$ range over looped-simple graphs.
\Cref{question:rob1} asks for the relationship between $\cl(\star)$ and $\cl^\circ(\star)$.
We resolve this question in the affirmative. 

\begin{lemma}\label{lem:roberson-question}
	For every family $\mathcal{F} \subseteq \G$ of simple graphs,
	$\cl(\mathcal{F}) \subseteq \cl^\circ(\mathcal{F})$.
\end{lemma}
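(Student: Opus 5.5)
The plan is to reduce the looped-simple case to the simple case by tensoring with a complete graph, using \eqref{eq:prod}. Let $F \in \cl(\mathcal{F})$; by \cref{def:closure}, $F$ is a simple graph. We must show that, for all looped-simple $G, H \in \lG$ with $G \equiv_{\mathcal{F}} H$, it holds that $\hom(F, G) = \hom(F, H)$. The key observation is the following. For every $G \in \lG$ and $n \geq 1$, the categorical product $G \otimes K_n$ is a simple graph. Indeed, a loop at $(g,i)$ would require a loop at $i$ in $K_n$, which does not exist.

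The steps, in order, are as follows. First, fix $G, H \in \lG$ with $G \equiv_{\mathcal{F}} H$ and fix $n \geq 1$. For every $F' \in \mathcal{F}$, \eqref{eq:prod} gives
\[
\hom(F', G \otimes K_n) = \hom(F', G)\hom(F', K_n) = \hom(F', H)\hom(F', K_n) = \hom(F', H \otimes K_n).
\]
Hence $G \otimes K_n \equiv_{\mathcal{F}} H \otimes K_n$, and both of these graphs lie in $\G$. Second, since $F \in \cl(\mathcal{F})$ and the quantification in \cref{def:closure} ranges over simple graphs, we obtain $\hom(F, G \otimes K_n) = \hom(F, H \otimes K_n)$. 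Applying \eqref{eq:prod} again turns this into
\[
\hom(F, G)\hom(F, K_n) = \hom(F, H)\hom(F, K_n).
\]
Third, choose $n \geq |V(F)|$. Because $F$ is loopless, any injective map $V(F) \to V(K_n)$ is a homomorphism, so $\hom(F, K_n) > 0$. Dividing by $\hom(F, K_n)$ then yields $\hom(F, G) = \hom(F, H)$, and therefore $F \in \cl^\circ(\mathcal{F})$.

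There is no serious obstacle in this argument. The only points needing care are that $G \otimes K_n$ is genuinely loop-free, so that the simple closure applies to it, and that $\hom(F, K_n)$ is nonzero, which uses that $F$ itself is simple. This is essentially the same mechanism that underlies \cref{it:s6} of \cref{thm:axiomatisation-simple}.
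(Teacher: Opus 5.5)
Your argument is correct and is essentially the paper's proof. The only difference is the multiplier: the paper tensors with $F$ itself and divides by $\hom(F,F)\neq 0$, whereas you tensor with $K_n$ and divide by $\hom(F,K_n)>0$. The mechanism is the same in both: a simple factor removes the loops, \eqref{eq:prod} gives multiplicativity, and one divides by a nonzero homomorphism count.
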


The proof of \cref{lem:roberson-question} requires the following fact, that we will use throughout this section.
\begin{fact}\label{fact:drop-loops}
    For $G \in \lG$ and $H \in \G$, it holds that $G \otimes H \in \G$.
\end{fact}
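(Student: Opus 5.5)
Fact~\ref{fact:drop-loops} follows directly from the definition of the categorical product, so the plan is a short verification. The product $G \otimes H$ is looped-simple by construction: its edge relation on $V(G)\times V(H)$ is symmetric, and there are no multiple edges. It is therefore enough to show that $G \otimes H$ has no loops.

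A loop at a vertex $gh$ would be an edge between $gh$ and itself. By the definition of $\otimes$, this needs $gg \in E(G)$ and $hh \in E(H)$, that is, a loop at $g$ in $G$ and a loop at $h$ in $H$. Since $H \in \G$ is simple, there is no loop at $h$, so there is no loop at $gh$. Hence $G \otimes H$ is loopless and lies in $\G$. Nothing here is an obstacle; the only point to check is that the product really is defined coordinatewise, which it is in the Preliminaries.

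For context, this fact is what makes the argument for Lemma~\ref{lem:roberson-question} possible. If $G \equiv_{\mathcal F} H$ for looped-simple $G,H$, then multiplying by a simple graph such as $K_n$ yields simple graphs. By \eqref{eq:prod}, these still satisfy $G\otimes K_n \equiv_{\mathcal F} H \otimes K_n$. Membership of $F$ in $\cl(\mathcal{F})$ then gives $\hom(F,G)\hom(F,K_n) = \hom(F,H)\hom(F,K_n)$. Choosing $n$ large, for example $n \ge |V(F)|$, makes $\hom(F,K_n)\neq 0$ for the simple graph $F$, and this yields $F\in \cl^\circ(\mathcal F)$.
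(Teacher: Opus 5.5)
Your verification is correct and is exactly the immediate check from the coordinatewise definition of $\otimes$: a loop at $(g,h)$ would need a loop at $h$ in the simple graph $H$. The paper states this as a fact without proof for that reason. Your side remark on \cref{lem:roberson-question} is also sound, though the paper multiplies by $F$ itself (using $\hom(F,F)\neq 0$) rather than by $K_n$.
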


\begin{proof}[Proof of \cref{lem:roberson-question}]
	Let $F \in \cl(\mathcal{F})$.
	By definition, $F$ is a simple graph.
	Let $G, H \in \lG$ be not necessarily simple graphs such that $G \equiv_{\mathcal{F}} H$.
	Then $G \otimes F \equiv_{\mathcal{F}} H \otimes F$ by \eqref{eq:h1}.
	Since $F$ is simple, so are $G \otimes F$ and $H \otimes F$ by \cref{fact:drop-loops}.
	Hence, by assumption,
	\[
		\hom(F, G) \cdot \hom(F, F)
		\overset{\eqref{eq:prod}}{=} \hom(F, G\otimes F)
		= \hom(F, H \otimes F)
		\overset{\eqref{eq:prod}}{=} \hom(F, H) \cdot \hom(F, F). 
	\]
	Since $\hom(F, F) \neq 0$, we may divide both sides by this term.
	Hence, $\hom(F, G) = \hom(F, H)$ and $F \in \cl^\circ(\mathcal{F})$.
\end{proof}

\Cref{lem:roberson-question}
implies that the looped-simple homomorphism distinguishing closure of a class of simple graphs collapses to the simple homomorphism distinguishing closure.
That is, for every $\mathcal{F} \subseteq \G$, it holds that
\begin{equation}\label{eq:drop-loops-closure}
    \cl(\mathcal{F}) = \cl^\circ(\mathcal{F}) \cap \G.
\end{equation}

As a step towards \cref{thm:axiomatisation-simple},
we characterise the homomorphism indistinguishability relations between looped-simple graphs that are homomorphism indistinguishability relations over a class of simple graphs.

\begin{lemma}\label{lem:loops-same-relation}
		Let $\mathcal{F} \subseteq \lG$ be a class of looped-simple graphs.
		The relation $\equiv_{\mathcal{F}}$ is a homomorphism indistinguishability relation over a class of simple graphs
		if, and only if, for all graphs $G, H \in \lG$,
		\[
			\left( \forall n \geq 1.\ G \otimes K_n \equiv_{\mathcal{F}} H\otimes K_n \right)
			\implies G \equiv_{\mathcal{F}} H.
		\]
 	\end{lemma}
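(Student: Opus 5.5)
The plan is to prove both directions directly from the multiplicativity identity \eqref{eq:prod}, using two elementary facts about complete graphs. First, a graph $F \in \lG$ with at least one loop satisfies $\hom(F, K_n) = 0$ for all $n$, since homomorphisms map loops to loops and $K_n$ is simple. Second, a simple graph $F$ satisfies $\hom(F, K_n) \geq 1$ whenever $n \geq |V(F)|$, since any injective map $V(F) \to V(K_n)$ is a homomorphism.

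For the forward direction, suppose ${\equiv_{\mathcal{F}}} = {\equiv_{\mathcal{F}'}}$ for some class $\mathcal{F}' \subseteq \G$ of simple graphs. Let $G, H \in \lG$ satisfy $G \otimes K_n \equiv_{\mathcal{F}} H \otimes K_n$ for all $n \geq 1$, and let $F' \in \mathcal{F}'$. By \eqref{eq:prod}, we have $\hom(F', G)\hom(F', K_n) = \hom(F', H)\hom(F', K_n)$ for every $n$. Choosing $n \geq |V(F')|$ makes $\hom(F', K_n)$ non-zero, so we may divide and obtain $\hom(F', G) = \hom(F', H)$. Hence $G \equiv_{\mathcal{F}'} H$, that is, $G \equiv_{\mathcal{F}} H$.

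For the backward direction, I would take the concrete witness $\mathcal{F}' \coloneqq \mathcal{F} \cap \G$, the simple members of $\mathcal{F}$. Since $\mathcal{F}' \subseteq \mathcal{F}$, the relation $\equiv_{\mathcal{F}}$ refines $\equiv_{\mathcal{F}'}$, so only the converse needs an argument. Let $G \equiv_{\mathcal{F}'} H$, fix $n \geq 1$, and let $F \in \mathcal{F}$. By \eqref{eq:prod}, $\hom(F, G \otimes K_n) = \hom(F, G)\hom(F, K_n)$, and analogously for $H$. There are two cases.
\begin{assertions}
\item If $F$ has a loop, both sides vanish because $\hom(F, K_n) = 0$.
\item If $F$ is simple, then $F \in \mathcal{F}'$, so $\hom(F, G) = \hom(F, H)$, and the two products agree.
\end{assertions}
Thus $G \otimes K_n \equiv_{\mathcal{F}} H \otimes K_n$ for every $n \geq 1$. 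The hypothesis of the lemma then yields $G \equiv_{\mathcal{F}} H$, so ${\equiv_{\mathcal{F}}} = {\equiv_{\mathcal{F}'}}$.

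I do not expect a real obstacle here. The only point needing care is that the looped members of $\mathcal{F}$ are rendered invisible by tensoring with $K_n$. The case split in the backward direction handles exactly this, and the hypothesis quantifies over all $n$ precisely so that the forward direction can pick $n$ large enough.
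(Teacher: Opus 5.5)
Your proposal is correct and follows essentially the same route as the paper: in the forward direction you divide by $\hom(F', K_n) \neq 0$ for large enough $n$ via \eqref{eq:prod}, and in the backward direction you take the simple members of $\mathcal{F}$ as the witness class, noting that looped members vanish on $K_n$. The paper's proof makes exactly these moves, choosing $n \coloneqq \max\{|V(F)|, 1\}$ and performing the same case split.
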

	\begin{proof}
		In the forward direction,
		suppose that $\equiv_{\mathcal{F}}$ coincides with $\equiv_{\mathcal{F}'}$ for a class $\mathcal{F}'$ of simple graphs
		and let $G, H \in \lG$ satisfy $G \otimes K_n \equiv_{\mathcal{F}} H \otimes K_n$ for all $n \geq 1$.
		Let $F \in \mathcal{F}'$ and put $n \coloneqq \max\{|V(F)|, 1\}$.
		As $F$ is simple, $\hom(F, K_n) \neq 0$.
		By \eqref{eq:prod},
		\[
			\hom(F, G)\hom(F, K_n)
			= \hom(F, G \otimes K_n)
			= \hom(F, H \otimes K_n)
			= \hom(F, H)\hom(F, K_n),
		\]
		and dividing by $\hom(F, K_n) \neq 0$ yields $\hom(F, G) = \hom(F, H)$.
		As $F \in \mathcal{F}'$ was arbitrary, $G \equiv_{\mathcal{F}'} H$ and hence $G \equiv_{\mathcal{F}} H$.

		For the backward direction, let $\mathcal{F}' \subseteq \mathcal{F}$ be the class of simple graphs in $\mathcal{F}$.
		Clearly, if $G \equiv_{\mathcal{F}} H$, then $G \equiv_{\mathcal{F}'} H$.
		Conversely, suppose $G \equiv_{\mathcal{F}'} H$ and let $n \geq 1$ and $F \in \mathcal{F}$.
		If $F \in \mathcal{F}'$, then \eqref{eq:prod} gives
		$\hom(F, G \otimes K_n) = \hom(F, G)\hom(F, K_n) = \hom(F, H)\hom(F, K_n) = \hom(F, H \otimes K_n)$.
		Otherwise, $F$ contains a loop, so $\hom(F, K_n) = 0$ as $K_n$ has none,
		and both sides vanish by \eqref{eq:prod}.
		Hence, $G \otimes K_n \equiv_{\mathcal{F}} H \otimes K_n$ for all $n \geq 1$,
		and the assumption yields $G \equiv_{\mathcal{F}} H$.
		It follows that $\equiv_{\mathcal{F}}$ and $\equiv_{\mathcal{F}'}$ coincide, as desired.
	\end{proof}

	\subsection{An axiomatisation for simple graph isomorphism relaxations}
	\label{sec:simple-axiomatisation}

	\Cref{thm:axiomatisation} axiomatises the homomorphism indistinguishability relations between \emph{looped-simple} graphs.
	In the literature, one is typically interested in relations between \emph{simple} graphs.
	Our machinery does not immediately apply to simple graphs.
	This is because the class of simple graphs $\G$ does not contain the multiplicative unit $\boldsymbol{1}$, i.e., the single-vertex looped graph.
	The complete graphs $K_n$ serve as a substitute.

	In \cref{thm:axiomatisation-simple}, we deduce the simple case from \cref{thm:axiomatisation} at the cost of one additional axiom.
	To that end, we first describe how to extend a \emph{simple graph isomorphism relaxation}, i.e., an equivalence relation $\approx$ on $\G$ such that $G \cong H$ implies $G \approx H$, to a graph isomorphism relaxation between looped-simple graphs.

	\begin{definition}\label{def:circ-extension}
		Let $\approx$ be a simple graph isomorphism relaxation.
		Its \emph{saturation} is the graph isomorphism relaxation $\approx^\circ$ defined,
		for $G, H \in \lG$, by
		\[
			G \approx^\circ H
			\quad \defiff \quad
			\forall L \in \G.\ G \otimes L \approx H \otimes L.
		\]
	\end{definition}

	This is well-defined since $G \otimes L$ and $H \otimes L$ are simple for every simple $L \in \G$ by \cref{fact:drop-loops}.
	We deduce \cref{thm:axiomatisation-simple} from \cref{thm:axiomatisation} after adding the additional \cref{it:s6}.
	Recall that $\simeq_\ell$ is the graph isomorphism relaxation characterised by \cref{lem:ideal-cl}.

	\thmAxiomSimp*

	\begin{proof}
		Necessity follows from \cref{thm:axiomatisation,lem:loops-same-relation}.
		For sufficiency, let $\approx$ satisfy \crefrange{it:s1}{it:s6}.
		We show that its saturation $\approx^\circ$ satisfies the conditions of \cref{thm:axiomatisation}.

		\begin{claim}\label{cl:circ-restricts}
			For all simple graphs $G, H \in \G$, it holds that $G \approx^\circ H$ if, and only if, $G \approx H$.
		\end{claim}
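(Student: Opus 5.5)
We prove the two implications of \cref{cl:circ-restricts} separately; only the backward direction needs the hypotheses on $\approx$.

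\textbf{Forward direction.} Let $G, H \in \G$ with $G \approx^\circ H$. By \cref{def:circ-extension}, $G \otimes L \approx H \otimes L$ holds for every simple $L$. In particular it holds for $L = K_n$ and every $n \geq 1$. Then \cref{it:s6} of \cref{thm:axiomatisation-simple} gives $G \approx H$. This is exactly why the extra axiom \cref{it:s6} was added: $\G$ has no unit, so we cannot simply take $L = \boldsymbol{1}$.

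\textbf{Backward direction.} Let $G \approx H$ with $G, H$ simple, and let $L \in \G$ be arbitrary. By commutativity of $\otimes$, \cref{it:s1} with $K \coloneqq L$ gives $L \otimes G \approx L \otimes H$, that is, $G \otimes L \approx H \otimes L$. Since $L$ was arbitrary, this is the definition of $G \approx^\circ H$.

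Neither step presents a real obstacle. One bookkeeping point must be checked: all graphs involved stay in $\G$, so that $\approx$ is actually defined on them. This holds by \cref{fact:drop-loops}, and directly here because $G$ and $H$ are already simple. If anything is delicate, it is only that the forward direction uses the complete graphs $K_n$ for all $n$, not a single one. This matches the premise of \cref{it:s6} exactly, so no further argument is needed.
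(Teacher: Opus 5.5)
Your proof is correct and is essentially the paper's argument: \cref{it:s1} with $K \coloneqq L$ gives $G \approx H \Rightarrow G \approx^\circ H$, and specialising $L$ to the complete graphs $K_n$ and invoking \cref{it:s6} gives the converse. One small slip: your opening sentence says only one direction uses the hypotheses on $\approx$, but both do (\cref{it:s6} for one direction, \cref{it:s1} for the other); this does not affect the argument itself.
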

		\begin{claimproof}
			If $G \approx H$, then $G \otimes K \approx H \otimes K$ for every $K \in \G$ by \cref{it:s1},
			so $G \approx^\circ H$.
			Conversely, if $G \approx^\circ H$, then in particular $G \otimes K_n \approx H \otimes K_n$ for all $n \geq 1$,
			so $G \approx H$ by \cref{it:s6}.
		\end{claimproof}

		\begin{claim}\label{cl:circ1}
			The relation $\approx^\circ$ satisfies \cref{it:a1} of \cref{thm:axiomatisation}.
		\end{claim}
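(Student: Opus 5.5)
The claim asks us to show that $\approx^\circ$ satisfies \cref{it:a1} of \cref{thm:axiomatisation}: if $G \approx^\circ H$ for $G, H \in \lG$, then $K \otimes G \approx^\circ K \otimes H$ for every $K \in \lG$. The plan is to unfold \cref{def:circ-extension} and use associativity and commutativity of the categorical product. No axiom of $\approx$ beyond the definition of the saturation should be needed.

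First I fix $G, H \in \lG$ with $G \approx^\circ H$ and an arbitrary $K \in \lG$. By definition, the goal is that $(K \otimes G) \otimes L \approx (K \otimes H) \otimes L$ for every simple graph $L \in \G$. Fix such an $L$. By associativity and commutativity of $\otimes$ in the semiring $\lG$ (\cref{lem:graphs-form-semiring}), we have $(K \otimes G) \otimes L \cong G \otimes (K \otimes L)$, and likewise with $H$ in place of $G$. By \cref{fact:drop-loops}, the graph $L' \coloneqq K \otimes L$ is simple, since $K \in \lG$ and $L \in \G$. So $L'$ is an admissible test graph in the definition of $G \approx^\circ H$, which yields $G \otimes L' \approx H \otimes L'$.

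Finally, $\approx$ is a simple graph isomorphism relaxation, so it is invariant under isomorphism. Transferring the relation along the isomorphisms above gives $(K \otimes G) \otimes L \approx (K \otimes H) \otimes L$. Since $L$ was arbitrary, $K \otimes G \approx^\circ K \otimes H$.

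I do not expect a real obstacle. The only points needing care are that $K \otimes L$ is genuinely simple, which is exactly \cref{fact:drop-loops}, and that we are working with isomorphism types so the rearrangement of products is harmless. In particular, \cref{it:s1} of $\approx$ is not needed here, because the quantification over all simple $L$ in the saturation already absorbs the extra factor $K$.
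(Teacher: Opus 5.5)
Your argument is correct, and it takes a more direct route than the paper. The paper starts by instantiating the saturation at $L$ itself, obtaining $G \otimes L \approx H \otimes L$. It then multiplies by the simple graph $K \otimes K_n$ using \cref{it:s1} of $\approx$. Finally it cancels the factor $K_n$ via \cref{it:s6} to reach $G \otimes L \otimes K \approx H \otimes L \otimes K$.

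You instead instantiate the saturation directly at the test graph $K \otimes L$, which is simple by \cref{fact:drop-loops}. This absorbs the possibly looped factor $K$ in a single step. As a result, your argument shows that the saturation of \emph{any} simple graph isomorphism relaxation satisfies \cref{it:a1}, with no appeal to \cref{it:s1} or \cref{it:s6}.

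The paper's detour through $K \otimes K_n$ gains nothing here. Those two axioms are still needed elsewhere in the proof, in \cref{cl:circ-restricts} and in the final application of \cref{lem:loops-same-relation}.
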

		\begin{claimproof}
			Suppose that $G \approx^\circ H$ for $G, H \in \lG$.
			Let $K \in \lG$.
			Let $L \in \G$ be arbitrary.
			Then $G \otimes L \approx H \otimes L$ by \cref{def:circ-extension}.
			Let $n \geq 1$ be arbitrary.
			The graph $K_n \otimes K$ is simple by \cref{fact:drop-loops}.
			Hence, by \cref{it:s1} for $\approx$,
			it holds that
			$G \otimes L \otimes (K \otimes K_n) \approx H \otimes L \otimes (K \otimes K_n)$.
			Since $L$ is simple, $G \otimes L \otimes K$
			and $H \otimes L \otimes K$ are simple by \cref{fact:drop-loops} and \cref{it:s6} applies.
			It follows that $G \otimes L \otimes K \approx H \otimes L \otimes K$.
			Hence, by \cref{def:circ-extension}, as $L$ was arbitrary,
			$G \otimes K \approx^\circ H \otimes K$.
		\end{claimproof}

		\begin{claim}\label{cl:circ2}
			The relation $\approx^\circ$ satisfies \cref{it:a2} of \cref{thm:axiomatisation}.
		\end{claim}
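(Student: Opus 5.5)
We need to show: if $G \approx^\circ H$ for $G, H \in \lG$ and $K \in \lG$, then $K + G \approx^\circ K + H$. By \cref{def:circ-extension}, this means showing, for every simple $L \in \G$, that $(K+G)\otimes L \approx (K+H)\otimes L$.

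The plan is to expand the products using distributivity of $\otimes$ over $+$. This distributivity holds in $\lG$ by \cref{lem:graphs-form-semiring}, and gives
\[
(K+G)\otimes L \cong K\otimes L + G\otimes L, \qquad (K+H)\otimes L \cong K\otimes L + H\otimes L.
\]
By \cref{fact:drop-loops}, all of $K\otimes L$, $G\otimes L$ and $H\otimes L$ are simple, since $L$ is simple. Because $G \approx^\circ H$, we have $G\otimes L \approx H\otimes L$ directly from \cref{def:circ-extension}.

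We then apply \cref{it:s2} for $\approx$, taking the simple graph $K\otimes L$ in the role of $K$. This yields
\[
K\otimes L + G\otimes L \approx K\otimes L + H\otimes L.
\]
Since $\approx$ is a graph isomorphism relaxation, it contains isomorphism, and transitivity transfers this to $(K+G)\otimes L \approx (K+H)\otimes L$. As $L \in \G$ was arbitrary, we conclude $K+G \approx^\circ K+H$.

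The only point needing care is that \cref{it:s2} is stated only for simple graphs. This is handled by noting, via \cref{fact:drop-loops}, that every graph involved is simple; unlike \cref{cl:circ1}, no appeal to \cref{it:s6} is needed here. I expect no substantive obstacle.
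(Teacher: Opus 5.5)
Your proposal is correct and matches the paper's proof essentially step for step. The paper likewise expands $(G+K)\otimes L$ by distributivity, uses \cref{fact:drop-loops} to see that $K\otimes L$, $G\otimes L$ and $H\otimes L$ are simple, and applies \cref{it:s2} with $K\otimes L$ in the role of $K$, without invoking \cref{it:s6}.
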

		\begin{claimproof}
			Suppose that $G \approx^\circ H$ for $G, H \in \lG$.
			Let $K \in \lG$.
			Let $L \in \G$ be arbitrary.
			Then $G \otimes L \approx H \otimes L$ by \cref{def:circ-extension}.
			Since $K \otimes L$ is simple by \cref{fact:drop-loops}, \cref{it:s2} applies and 
			$(G + K) \otimes L = G \otimes L + K \otimes L \approx H \otimes L + K \otimes L = (H + K) \otimes L$.
			As $L$ was arbitrary, by \cref{def:circ-extension},
			$G + K \approx^\circ H + K$.
		\end{claimproof}

		\begin{claim}\label{cl:circ3}
			The relation $\approx^\circ$ satisfies \cref{it:a3} of \cref{thm:axiomatisation}.
		\end{claim}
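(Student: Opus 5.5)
Claim~\ref{cl:circ3} asserts that $K + G \approx^\circ K + H$ implies $G \approx^\circ H$ for all $G, H, K \in \lG$. The argument should run exactly parallel to the proof of \cref{cl:circ2}, using the cancellation axiom \cref{it:s3} for $\approx$ in place of \cref{it:s2}.

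Fix $G, H, K \in \lG$ with $K + G \approx^\circ K + H$, and fix an arbitrary simple graph $L \in \G$. The goal is to show $G \otimes L \approx H \otimes L$; since $L$ is arbitrary, \cref{def:circ-extension} then gives $G \approx^\circ H$.

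By \cref{def:circ-extension}, the hypothesis yields $(K + G) \otimes L \approx (K + H) \otimes L$. In the semiring $\lG$ the categorical product distributes over disjoint union, so this relation reads
\[
K \otimes L + G \otimes L \approx K \otimes L + H \otimes L .
\]
By \cref{fact:drop-loops}, the three graphs $K \otimes L$, $G \otimes L$ and $H \otimes L$ are all simple, so this is a relation among simple graphs of the form to which \cref{it:s3} applies. Cancelling $K \otimes L$ with \cref{it:s3} then gives $G \otimes L \approx H \otimes L$, as required.

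I do not expect a real obstacle here. The only point needing care is that every graph to which an axiom of $\approx$ is applied must be simple, since $\approx$ is defined only on $\G$. This is guaranteed because each graph appears multiplied by the simple graph $L$, and \cref{fact:drop-loops} then makes it simple. In particular, unlike \cref{cl:circ1}, this step does not require \cref{it:s6}.
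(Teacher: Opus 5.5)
Your proof is correct and matches the paper's argument: you unfold the saturation at an arbitrary simple $L$, distribute $\otimes$ over $+$, use the fact that products with a simple graph are simple to see that $K \otimes L$, $G \otimes L$, $H \otimes L$ all lie in $\G$, and then cancel $K \otimes L$ using the simple-graph cancellation axiom. Your remark that the $K_n$-axiom is not needed at this step is also accurate.
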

		\begin{claimproof}
			Suppose that $G + K \approx^\circ H + K$ for $G, H, K \in \lG$.
			Let $L \in \G$ be arbitrary.
			By \cref{def:circ-extension},
			$G \otimes L + K \otimes L = (G + K) \otimes L \approx (H +K) \otimes L = H \otimes L + K \otimes L$.
			Since $K \otimes L$ is simple by \cref{fact:drop-loops}, it cancels from both sides by \cref{it:s3}, as desired.
		\end{claimproof}

		\begin{claim}\label{cl:circ4}
			The relation $\approx^\circ$ satisfies \cref{it:a4} of \cref{thm:axiomatisation}.
		\end{claim}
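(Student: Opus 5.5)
The plan mirrors the proofs of \cref{cl:circ2,cl:circ3}: unfold \cref{def:circ-extension}, use distributivity of $\otimes$ over $+$, and invoke the corresponding simple axiom \cref{it:s4}.

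Suppose that $nG \approx^\circ nH$ for some $G, H \in \lG$ and $n \geq 1$. Fix an arbitrary simple graph $L \in \G$. By \cref{def:circ-extension}, $(nG) \otimes L \approx (nH) \otimes L$. Since the categorical product distributes over disjoint union (\cref{lem:graphs-form-semiring}), we have $(nG) \otimes L = n(G \otimes L)$, and likewise $(nH) \otimes L = n(H \otimes L)$. Hence $n(G \otimes L) \approx n(H \otimes L)$.

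By \cref{fact:drop-loops}, both $G \otimes L$ and $H \otimes L$ are simple, so \cref{it:s4} applies to them and yields $G \otimes L \approx H \otimes L$. As $L \in \G$ was arbitrary, \cref{def:circ-extension} gives $G \approx^\circ H$, which is the claim.

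There is no real obstacle here. The only point to check is that every graph to which a simple axiom is applied is indeed simple, and \cref{fact:drop-loops} guarantees this. Unlike \cref{cl:circ1}, no appeal to \cref{it:s6} is needed.
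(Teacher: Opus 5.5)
Your proposal is correct and follows essentially the same route as the paper: unfold \cref{def:circ-extension} for an arbitrary simple $L$, use $(nG)\otimes L = n(G\otimes L)$, note via \cref{fact:drop-loops} that $G\otimes L$ and $H\otimes L$ are simple, and cancel $n$ by \cref{it:s4}. You also state explicitly that $G \approx^\circ H$ follows because $L$ was arbitrary, a step the paper leaves implicit.
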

		\begin{claimproof}
			Suppose that $n G \approx^\circ n H $ for $G, H\in \lG$ and $n\geq 1$.
			Let $L \in \G$ be arbitrary.
			By \cref{def:circ-extension},
			$n (G \otimes L) = (nG)\otimes L \approx (nH)\otimes L  = n(H \otimes L)$.
			Since $G \otimes L$ is simple by \cref{fact:drop-loops}, \cref{it:s4} applies and $n$ cancels from both sides, as desired.
		\end{claimproof}

		\begin{claim}\label{cl:circ5}
			The relation $\approx^\circ$ satisfies \cref{it:a5} of \cref{thm:axiomatisation}.
		\end{claim}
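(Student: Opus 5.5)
The plan is to unfold the definition of the saturation $\approx^\circ$ and push everything through a tensor product with an arbitrary simple graph $L$. This turns the looped premises into simple ones, after which \cref{it:s5} for $\approx$ applies directly. Throughout, I read the expression $k(G-H) \approx^\circ G'-H'$ in \cref{it:a5} as the relation $kG + H' \approx^\circ kH + G'$ between genuine graphs. This reading is consistent with the correspondence of \cref{lem:c-ideals}.

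So let $G, H \in \lG$ satisfy the premise of \cref{it:a5} for $\approx^\circ$. That is, for every $\ell \geq 1$ there are $k_\ell \geq 1$ and $G'_\ell, H'_\ell \in \lG$ with $G'_\ell \simeq_\ell H'_\ell$ and $k_\ell G + H'_\ell \approx^\circ k_\ell H + G'_\ell$. Fix an arbitrary $L \in \G$; by \cref{def:circ-extension} it suffices to show $G \otimes L \approx H \otimes L$. By \cref{def:circ-extension} applied to the premise, and by distributivity of $\otimes$ over $+$, we get for every $\ell$
\[
k_\ell (G\otimes L) + H'_\ell \otimes L \approx k_\ell (H \otimes L) + G'_\ell \otimes L .
\]
All four graphs here are simple by \cref{fact:drop-loops}. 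Hence this is exactly the statement $k_\ell\bigl((G\otimes L)-(H\otimes L)\bigr) \approx (G'_\ell\otimes L) - (H'_\ell \otimes L)$ within $\G$.

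The remaining step is to check that $G'_\ell \otimes L \simeq_\ell H'_\ell \otimes L$. By \cref{lem:ideal-cl}, $\simeq_\ell$ is homomorphism indistinguishability over all $F \in \lG$ on at most $\ell$ vertices. By \eqref{eq:prod}, $\hom(F, G'_\ell\otimes L) = \hom(F,G'_\ell)\hom(F,L) = \hom(F,H'_\ell)\hom(F,L) = \hom(F,H'_\ell\otimes L)$, which gives the claim. Alternatively, one can use that $\mathfrak{c}_\ell$ is an ideal, so $(G'_\ell - H'_\ell)\cdot L \in \mathfrak{c}_\ell$.

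The simple graphs $G\otimes L$ and $H \otimes L$ therefore satisfy the premise of \cref{it:s5} for every $\ell$, with witnesses $k_\ell$, $G'_\ell\otimes L$ and $H'_\ell \otimes L$. So \cref{it:s5} yields $G\otimes L \approx H \otimes L$. As $L$ was arbitrary, $G \approx^\circ H$. The only point that needs care, and which I expect to be the main obstacle, is the bookkeeping of the formal differences. One must make sure the premise of \cref{it:a5} for $\approx^\circ$ is an honest relation between looped graphs, so that \cref{def:circ-extension} and distributivity transport it to simple graphs. Beyond that, the argument is routine.
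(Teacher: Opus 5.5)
Your proof is correct and follows essentially the same route as the paper. You fix $L \in \G$, transport the premise through \cref{def:circ-extension} and distributivity, and observe that $G'\otimes L \simeq_\ell H'\otimes L$; the paper gets this last step from \cref{lem:ideal-cl} and \eqref{eq:h1}, which amounts to your homomorphism-count argument. You then apply \cref{it:s5} to the simple graphs $G\otimes L$ and $H\otimes L$, exactly as the paper does. Your explicit reading of $k(G-H) \approx^\circ G'-H'$ as $kG + H' \approx^\circ kH + G'$ is the one the paper uses implicitly via \cref{lem:c-ideals}, and it makes the bookkeeping cleaner than the paper's write-up.
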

		\begin{claimproof}
			Let $G, H \in \lG$.
			Suppose that, for all $\ell \geq 1$, there exist $k \geq 1$ and graphs $G', H' \in \lG$ such that $G' \simeq_\ell H'$ and $k(G-H) \approx^\circ G'-H'$.
			Let $L \in \G$ be arbitrary.
			Let $\ell \geq 1$ and choose $k$, $G'$, and $H'$ as above.
By \cref{def:circ-extension} applied with $L^{\otimes m}$ and distributivity,
			\[
			k(L \otimes G - L \otimes H) = L \otimes k(G-H) \approx L \otimes ( G'-H') = L \otimes G' - L \otimes H'.
			\]
			By \cref{lem:ideal-cl}, $\simeq_\ell$ is a homomorphism indistinguishability relation.
			Hence, by \cref{eq:h1}, $G' \simeq_\ell H'$ implies that $L \otimes G' \simeq_\ell L \otimes H'$.
			The graphs $L \otimes G$, $L \otimes H$, $L \otimes G'$, and $L \otimes H'$ are simple by \cref{fact:drop-loops}.
			As $\ell$ was arbitrary, \cref{it:s5} yields $L \otimes G \approx L \otimes H$.
			Since $L$ was arbitrary, $G \approx^\circ H$ by \cref{def:circ-extension}, establishing the conclusion of \cref{it:a5} of \cref{thm:axiomatisation}.
		\end{claimproof}

		By \cref{cl:circ1,cl:circ2,cl:circ3,cl:circ4,cl:circ5} and \cref{thm:axiomatisation},
		$\approx^\circ$ is a homomorphism indistinguishability relation over a componental class $\mathcal{F}$ of looped-simple graphs.
		Moreover, $\approx^\circ$ satisfies the condition of \cref{lem:loops-same-relation}:
		if $G \otimes K_n \approx^\circ H \otimes K_n$ for all $n \geq 1$,
		then, for every $L \in \G$, applying \cref{it:s6} to the graphs $G \otimes L$ and $H \otimes L$, which are simple by \cref{fact:drop-loops},
		yields $G \otimes L \approx H \otimes L$, i.e.\ $G \approx^\circ H$.
		By \cref{lem:loops-same-relation},
		$\approx^\circ$ is a homomorphism indistinguishability relation over the class of simple graphs in $\mathcal{F}$, cf.\ its proof.
		It remains to observe that via \cref{cl:circ-restricts} the relations $\approx$ and $\approx^\circ$ coincide on simple graphs.
	\end{proof}

    \section{Material omitted in \texorpdfstring{\cref{sec:topology}}{Topology}}

    \subsection{Homomorphism distinguishing topology as Zariski topology}

    Before turning to the elementary perspective of \cref{sec:topology},
    we relate the homomorphism distinguishing topology to the Zariski topology on the semiring algebras of \cref{sec:which-relations}.

    For a subset $P$ of $\mathbb{Z}\lG$ or of $\mathbb{C}\lG$, let
    \[
        \mathcal{V}(P) \coloneqq \{F \in \lG \mid \forall G, H \in \lG.\ G - H \in P \implies \hom(F,G) = \hom(F,H)\}.
    \]
    
    \begin{lemma}\label{lem:closed-to-vanishing-looped}
        For a componental graph class $\mathcal{F} \subseteq \lG$, the following are equivalent.
        \begin{assertions}
            \item\label{it:ctv1} $\mathcal{F}$ is homomorphism distinguishing closed, i.e., $\cl^\circ(\mathcal{F}) = \mathcal{F}$,
            \item\label{it:ctv2} there exists an ideal $\mathfrak{a} \subseteq \mathbb{Z}\lG$ such that $\mathcal{F} = \mathcal{V}(\mathfrak{a})$,
            \item\label{it:ctv3} there exists an ideal $\mathfrak{a} \subseteq \mathbb{C}\lG$ such that $\mathcal{F} = \mathcal{V}(\mathfrak{a})$.
			\item\label{it:ctv4} there exists a set $P \subseteq \mathbb{Z}\lG$ such that $\mathcal{F} = \mathcal{V}(P)$,
            \item\label{it:ctv5} there exists a set $P \subseteq \mathbb{C}\lG$ such that $\mathcal{F} = \mathcal{V}(P)$.
        
		\end{assertions}
		Furthermore, for a componental class $\mathcal{F} \subseteq \G$ of simple graphs, the following are equivalent:
		\begin{assertions}[resume]
			\item\label{it:ctv6} $\mathcal{F}$ is homomorphism distinguishing closed, i.e., $\cl(\mathcal{F}) = \mathcal{F}$,
			\item\label{it:ctv7} there exists a set $P \subseteq \mathbb{Z}\lG$ such that $\mathcal{F} = \mathcal{V}(P) \cap \G$,
			\item\label{it:ctv8} there exists a set $P \subseteq \mathbb{C}\lG$ such that $\mathcal{F} = \mathcal{V}(P) \cap \G$.
		\end{assertions}
    \end{lemma}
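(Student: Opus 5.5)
We prove the first block of equivalences as a cycle, using $P \coloneqq \{G - H \mid G \equiv_{\mathcal{F}} H\}$ as the witness throughout. First observe that $\mathcal{V}$ is antitone and depends only on the differences $G - H$ lying in $P$. For any $P \subseteq \mathbb{C}\lG$, the set $\mathcal{V}(P)$ is exactly $\mathcal{K}(\approx_P)$, where $G \approx_P H \iff G - H \in P$.

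The implications \ref{it:ctv2}$\Rightarrow$\ref{it:ctv4} and \ref{it:ctv3}$\Rightarrow$\ref{it:ctv5} are trivial. For \ref{it:ctv4}$\Rightarrow$\ref{it:ctv5}, view $\mathbb{Z}\lG \to \mathbb{C}\lG$ as the canonical map. A difference $G - H$ lies in the image of $P$ iff it lies in $P$, because $\mathbb{Z}\lG \to \mathbb{C}\lG$ is injective: $\mathbb{Z}\lG$ is torsion-free, since $n(G - H) = 0$ forces $nG = nH$ and hence $G \cong H$ by counting homomorphisms from connected graphs via \eqref{eq:connected}.

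For \ref{it:ctv5}$\Rightarrow$\ref{it:ctv1}, we show that every $\mathcal{V}(P)$ is homomorphism distinguishing closed. Let $F \in \cl^\circ(\mathcal{V}(P))$ and suppose $G - H \in P$. By definition every $F' \in \mathcal{V}(P)$ satisfies $\hom(F', G) = \hom(F', H)$, so $G \equiv_{\mathcal{V}(P)} H$. Hence $\hom(F, G) = \hom(F, H)$, which gives $F \in \mathcal{V}(P)$.

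For \ref{it:ctv1}$\Rightarrow$\ref{it:ctv2} and \ref{it:ctv3}, set $\approx \coloneqq \equiv_{\mathcal{F}}$. Since $\mathcal{F}$ is componental, $\approx$ satisfies \eqref{eq:h1}--\eqref{eq:h4}. So \cref{prop:ideals-to-relations} yields an ideal $\mathfrak{a} \subseteq \mathbb{Z}\lG$, and \cref{lem:c-ideals} yields an ideal of $\mathbb{C}\lG$, in each case with $G - H \in \mathfrak{a} \iff G \equiv_{\mathcal{F}} H$. Consequently $\mathcal{V}(\mathfrak{a}) = \cl^\circ(\mathcal{F}) = \mathcal{F}$.

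The only subtle point in this block is that the ideal may contain elements other than differences. This does no harm, because $\mathcal{V}$ only inspects differences, and the two propositions characterise exactly which differences lie in $\mathfrak{a}$.

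For the simple block, \ref{it:ctv7}$\Rightarrow$\ref{it:ctv8} follows as before. For \ref{it:ctv8}$\Rightarrow$\ref{it:ctv6}, we know $\mathcal{V}(P)$ is $\cl^\circ$-closed. Using \eqref{eq:drop-loops-closure} and monotonicity of $\cl^\circ$,
\[
\cl(\mathcal{F}) = \cl^\circ(\mathcal{F}) \cap \G \subseteq \cl^\circ(\mathcal{V}(P)) \cap \G = \mathcal{V}(P) \cap \G = \mathcal{F}.
\]
For \ref{it:ctv6}$\Rightarrow$\ref{it:ctv7}, take $P \coloneqq \{G - H \mid G, H \in \lG,\ G \equiv_{\mathcal{F}} H\} \subseteq \mathbb{Z}\lG$. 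Then $\mathcal{V}(P) = \cl^\circ(\mathcal{F})$, and by \eqref{eq:drop-loops-closure} (i.e.\ \cref{lem:roberson-question}) we get $\mathcal{V}(P) \cap \G = \cl(\mathcal{F}) = \mathcal{F}$.

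The step I expect to be the main obstacle is exactly this use of \cref{lem:roberson-question}. It is needed to pass from closure over looped test graphs $G, H$ to closure over simple ones, and it is what makes the intersection with $\G$ behave correctly.
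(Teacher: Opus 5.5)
Your proof is correct and follows essentially the same route as the paper. Both use the difference set $\{G - H \mid G \equiv_{\mathcal{F}} H\}$, realised as an ideal via \cref{prop:ideals-to-relations} and \cref{lem:c-ideals}, together with the direct argument that every $\mathcal{V}(P)$ is closed and \eqref{eq:drop-loops-closure} for the simple block. The one deviation, your detour \ref{it:ctv4}$\Rightarrow$\ref{it:ctv5} via injectivity of $\mathbb{Z}\lG \to \mathbb{C}\lG$, is valid but unnecessary: your argument for \ref{it:ctv5}$\Rightarrow$\ref{it:ctv1} applies verbatim to $P \subseteq \mathbb{Z}\lG$, which is how the paper handles both cases at once.
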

    \begin{proof}
        \ref{it:ctv2} $\Rightarrow$ \ref{it:ctv4} and \ref{it:ctv3} $\Rightarrow$ \ref{it:ctv5}
        hold as every ideal is in particular a subset.

        \ref{it:ctv4} $\Rightarrow$ \ref{it:ctv1} and \ref{it:ctv5} $\Rightarrow$ \ref{it:ctv1}.
        Let $\mathcal{F} = \mathcal{V}(P)$.
        By definition, $G \equiv_{\mathcal{F}} H$ for all graphs $G, H \in \lG$ with $G - H \in P$.
        Hence, every $F \in \cl^\circ(\mathcal{F})$ satisfies $\hom(F,G) = \hom(F,H)$ for all such $G, H$,
        that is, $\cl^\circ(\mathcal{F}) \subseteq \mathcal{V}(P) = \mathcal{F}$.
        The reverse inclusion holds by the definition of $\cl^\circ$.

        \ref{it:ctv1} $\Rightarrow$ \ref{it:ctv2} and \ref{it:ctv1} $\Rightarrow$ \ref{it:ctv3}.
        Since $\mathcal{F}$ is componental, ${\equiv_{\mathcal{F}}} = {\equiv_{\mathcal{F} \cap \lZ}}$ by \eqref{eq:coprod}.
        Hence, \eqref{eq:connected} yields both directions of \eqref{eq:c1}
        and \eqref{eq:prod} yields \eqref{eq:c2},
        so that $\equiv_{\mathcal{F}}$ is a torsion-free cancellative congruence on $\lG$.
        By \cref{prop:ideals-to-relations}, the set $\mathfrak{a} \coloneqq \{G - H \mid G \equiv_{\mathcal{F}} H\}$
        is an ideal of $\mathbb{Z}\lG$ with
        \begin{equation}\label{eq:V-differences}
            G - H \in \mathfrak{a} \iff G \equiv_{\mathcal{F}} H
            \qquad \text{for all } G, H \in \lG.
        \end{equation}
        Therefore, $\mathcal{V}(\mathfrak{a}) = \cl^\circ(\mathcal{F}) = \mathcal{F}$ by the definition of $\cl^\circ$.

        For \ref{it:ctv3}, note that $\equiv_{\mathcal{F}}$ is a graph isomorphism relaxation
        satisfying \cref{eq:h1,eq:h2,eq:h3,eq:h4}, the last by \eqref{eq:connected}.
        By \cref{lem:c-ideals}, there is an ideal $\mathfrak{a}^{\mathbb{C}}$ of $\mathbb{C}\lG$
        such that $G - H \in \mathfrak{a}^{\mathbb{C}}$ if, and only if, $G \equiv_{\mathcal{F}} H$,
        so that $\mathcal{V}(\mathfrak{a}^{\mathbb{C}}) = \cl^\circ(\mathcal{F}) = \mathcal{F}$ as before.

        Finally, let $\mathcal{F} \subseteq \G$ be componental.
        Note that the ideals constructed above satisfy $\mathcal{V}(\mathfrak{a}) = \cl^\circ(\mathcal{F})$
        and $\mathcal{V}(\mathfrak{a}^{\mathbb{C}}) = \cl^\circ(\mathcal{F})$
        without any assumption on $\mathcal{F}$,
        since this only used the definitions of $\mathfrak{a}$, $\mathfrak{a}^{\mathbb{C}}$, and $\cl^\circ$.
        Hence, by \eqref{eq:drop-loops-closure},
        \[
            \cl(\mathcal{F}) = \cl^\circ(\mathcal{F}) \cap \G = \mathcal{V}(\mathfrak{a}) \cap \G = \mathcal{V}(\mathfrak{a}^{\mathbb{C}}) \cap \G,
        \]
        so \ref{it:ctv6} implies \ref{it:ctv7} and \ref{it:ctv8}.
        Conversely, let $\mathcal{F} = \mathcal{V}(P) \cap \G$ for a set $P \subseteq \mathbb{Z}\lG$ or $P \subseteq \mathbb{C}\lG$.
        As above, $G \equiv_{\mathcal{F}} H$ for all $G, H \in \lG$ with $G - H \in P$,
        so every $F \in \cl^\circ(\mathcal{F})$ satisfies $\hom(F,G) = \hom(F,H)$ for all such $G,H$.
        By \eqref{eq:drop-loops-closure},
        $\cl(\mathcal{F}) = \cl^\circ(\mathcal{F}) \cap \G \subseteq \mathcal{V}(P) \cap \G = \mathcal{F}$,
        and the reverse inclusion holds by the definition of $\cl$.
    \end{proof}

	\Cref{lem:closed-to-vanishing-looped} shows that the homomorphism distinguishing topology (on $\lZ$ or $\Z$) is the subspace topology of the Zariski topology of $\C\lG$ or $\mathbb{Z}\lG$ on the subspace $\{\hom(F, \star) \mid F \in \lZ\}$.
	In particular, it is indeed a topology.

    \subsection{The homomorphism distinguishing topology is a topology}

    In this section, we give an elementary proof of the fact that the homomorphism distinguishing topology on $\Z$ forms a topology. To that end,
    \textcite{roberson_oddomorphisms_2022}
    observed that $\cl(\star)$ is a closure operator, i.e., for all graph classes $\mathcal{F},\mathcal{F}_1, \mathcal{F}_2 \subseteq \G$ with $\mathcal{F}_1 \subseteq \mathcal{F}_2$,
    \begin{align}
        \mathcal{F} &\subseteq \cl(\mathcal{F}), \label{eq:cl-extensive}\\
        \cl(\mathcal{F}_1) &\subseteq \cl(\mathcal{F}_2), \label{eq:cl-increasing} \\
        \cl(\cl(\mathcal{F})) &= \cl(\mathcal{F}). \label{eq:cl-idempotent}
    \end{align}

    We require the following lemma.

	\begin{lemma}[{\cite[Theorem~7]{seppelt_logical_2024}}]\label{lem:cl-componental}
		If $\mathcal{F} \subseteq \G$ is componental, then $\cl(\mathcal{F})$ is componental.
	\end{lemma}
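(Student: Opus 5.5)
We need to prove: if $\mathcal{F} \subseteq \G$ is componental, then $\cl(\mathcal{F})$ is componental, i.e.\ for all $F_1, F_2 \in \G$ we have $F_1 + F_2 \in \cl(\mathcal{F})$ iff $F_1, F_2 \in \cl(\mathcal{F})$.

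The plan splits into the two directions of componentality.

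\textbf{Closure under disjoint unions.} This direction follows directly from \eqref{eq:coprod}. If $F_1, F_2 \in \cl(\mathcal{F})$ and $G \equiv_{\mathcal{F}} H$, then
\[
\hom(F_1 + F_2, G) = \hom(F_1, G)\hom(F_2, G) = \hom(F_1, H)\hom(F_2, H) = \hom(F_1 + F_2, H),
\]
so $F_1 + F_2 \in \cl(\mathcal{F})$.

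\textbf{Closure under summands.} This is the substantive direction. Suppose $F_1 + F_2 \in \cl(\mathcal{F})$ and $G \equiv_{\mathcal{F}} H$; we want $\hom(F_1, G) = \hom(F_1, H)$. Since $\mathcal{F}$ is componental, $\equiv_{\mathcal{F}}$ equals $\equiv_{\mathcal{F} \cap \Z}$ by \eqref{eq:coprod}. By \eqref{eq:connected} it is then preserved under adding any graph $K$, and by \eqref{eq:prod} it is preserved under products $\otimes K$ with simple $K$.

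Write the connected components of $F_1 + F_2$ as $Z_1, \dots, Z_m$. The idea is to plug in $G + L$ and $H + L$ for an auxiliary simple graph $L$, which still satisfy $G + L \equiv_{\mathcal{F}} H + L$. By \eqref{eq:coprod} and \eqref{eq:connected},
\[
\hom(F_1 + F_2, G + L) = \prod_i \bigl(\hom(Z_i, G) + \hom(Z_i, L)\bigr) = \sum_{S \subseteq [m]} \hom\Bigl(\sum_{i \in S} Z_i, G\Bigr)\hom\Bigl(\sum_{i \notin S} Z_i, L\Bigr).
\]
This is a polynomial identity in the values $\hom(\cdot, L)$, and membership in $\cl(\mathcal{F})$ says these polynomials agree for $G$ and $H$ for every $L$.

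To extract individual coefficients, I would replace $L$ by formal combinations. Using $L = \sum_j c_j L_j$, i.e.\ disjoint unions with multiplicities, together with products $L \otimes K$, each $\hom(Z_i, L)$ becomes a polynomial in the multiplicities $c_j$. Interpolation in the $c_j$ then isolates the coefficient of a chosen monomial. Lovász's linear independence (\cref{thm:lovasz-linear-independent}), restricted to connected graphs, lets us choose finitely many $L_j$ so that the vectors $(\hom(Z, L_j))_j$ separate the finitely many isomorphism types among the $Z_i$. In effect, the variables $x_Z = \hom(Z, L)$ for distinct connected types $Z$ behave like independent indeterminates.

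Expanding in these indeterminates, the coefficient of the monomial $\prod_{i \in T} x_{Z_i}$, where $T$ indexes the components of $F_2$, is a positive integer multiple of $\hom(F_1, G)$, plus terms coming from other subsets $S$ with the same multiset of complementary component types. All of those terms are also $\hom(F_1', G)$ for graphs $F_1' \cong F_1$, so the coefficient is exactly a positive integer times $\hom(F_1, G)$. Comparing this coefficient for $G$ and for $H$ gives $\hom(F_1, G) = \hom(F_1, H)$.

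\textbf{Main obstacle.} The delicate step is justifying this coefficient extraction: showing that enough simple test graphs $L$ make the $x_Z$ algebraically independent on the relevant finite set of connected types. I would first try to deduce this from \cref{thm:lovasz-linear-independent} by a Vandermonde-style argument using multiplicities $c_j$ and disjoint unions.

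An alternative route, which may be cleaner, is to invoke the ideal-theoretic description instead. By \cref{lem:closed-to-vanishing-looped} and the Nullstellensatz (\cref{thm:nullstellensatz}), $\cl$ can be described via the vanishing ideal of $\{\hom(Z, \star) \mid Z \in \mathcal{F} \cap \Z\}$. Then $\hom(F_1 + F_2, \star)$, a product of characters, vanishing on that ideal would force each connected factor to lie in its zero set, by the same independence of characters.
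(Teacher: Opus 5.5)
\textbf{Verdict.} Your argument is correct, and the step you flag as the main obstacle is actually immediate. Write $Z_S \coloneqq \sum_{i\in S} Z_i$ and $Z_{S^c} \coloneqq \sum_{i\notin S} Z_i$.

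\textbf{The coefficient extraction.} By \eqref{eq:coprod}, $\prod_{i\notin S}\hom(Z_i,L)=\hom(Z_{S^c},L)$. So your expansion says that, for every simple $L$,
\[
\sum_{S\subseteq[m]}\bigl(\hom(Z_S,G)-\hom(Z_S,H)\bigr)\hom(Z_{S^c},L)=0 .
\]
Group the $S$ by the isomorphism type of $Z_{S^c}$. This gives a vanishing finite linear combination of functions $\hom(F',\star)$ over pairwise non-isomorphic, possibly disconnected, simple graphs $F'$. By \cref{thm:lovasz-linear-independent}, every grouped coefficient is zero.

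The coefficient of $\hom(F_2,\star)$ is $N\bigl(\hom(F_1,G)-\hom(F_1,H)\bigr)$ with $N\geq 1$. This is because $Z_{S^c}\cong F_2$ forces $Z_S\cong F_1$, by uniqueness of the multiset of components.

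Put differently, linear independence of homomorphism counts from \emph{disconnected} patterns is exactly the statement that distinct monomials in your $x_Z$ are linearly independent functions of $L$. You need no multiplicities, interpolation, or Vandermonde argument.

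\textbf{Minor points.}
\begin{itemize}
\item \cref{thm:lovasz-linear-independent} is stated on $\lG$, but it transfers to simple test graphs. Evaluate at $L'\otimes K_n$, which is simple by \cref{fact:drop-loops}, for $n$ at least the largest pattern size. Then use $\hom(F',L'\otimes K_n)=\hom(F',L')\hom(F',K_n)$ and $\hom(F',K_n)\neq 0$, as in the proof of \cref{lem:loops-same-relation}.
\item If you kept the detour via $L=\sum_j c_jL_j$, the vectors $(\hom(Z,L_j))_j$ would have to be linearly independent, not merely pairwise distinct. Otherwise the $x_Z$ are not algebraically independent.
\item Drop the ideal-theoretic alternative. $\hom(F_1+F_2,\star)$ is not additive, so it does not induce a well-defined linear functional on $\mathbb{C}\lG$. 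Membership of $F_1+F_2$ in $\cl(\mathcal F)$ is not a statement about a product of characters vanishing on an ideal.
\end{itemize}

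\textbf{Comparison with the paper.} The paper gives no proof here; it simply imports the lemma from \cite[Theorem~7]{seppelt_logical_2024}. Your argument is self-contained, using only \eqref{eq:coprod}, \eqref{eq:connected} and \cref{thm:lovasz-linear-independent}.

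It also uses componentality of $\mathcal F$ only to pass from $G\equiv_{\mathcal F}H$ to $G+L\equiv_{\mathcal F}H+L$. Hence it shows more generally that $\cl(\mathcal F)$ is componental whenever $\equiv_{\mathcal F}$ satisfies \eqref{eq:h2} for simple $L$. Combined with ${\equiv_{\mathcal F}}={\equiv_{\cl(\mathcal F)}}$, this gives the nontrivial direction of the statement the paper attributes to that theorem in its footnote to \eqref{eq:h2}.
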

    
    \lemComponentalClosed*
    \begin{proof}
        Let $\mathcal{F} \coloneqq \mathcal{F}_1 + \mathcal{F}_2$.
		By \cref{lem:cl-componental}, cf.\ \cite[Corollary~7.1.5]{seppelt_homomorphism_2024},
		it suffices to construct, for every connected graph $F \not\in \mathcal{F}$,
		two graphs $G$ and $H$ such that $G \equiv_{\mathcal{F}} H$
		and $\hom(F, G) \neq \hom(F, H)$.
        
		Let  $F \not\in \mathcal{F}$ be connected.
        This implies that $F\not\in \mathcal{F}_1$ and $F\not\in \mathcal{F}_2$.
		By assumption,
		there exist graphs $G_1, G_2, H_1, H_2 \in \G$,
		such that $G_1 \equiv_{\mathcal{F}_1} H_1$,
		$G_2 \equiv_{\mathcal{F}_2} H_2$,
		$\hom(F, G_1) \neq \hom(F, H_1)$,
		and $\hom(F, G_2) \neq \hom(F, H_2)$.
		
		We combine these two pairs of graphs into one pair of graphs with the desired properties.
		Let $G \coloneqq G_1 \otimes G_2 + H_1 \otimes H_2$
		and $H \coloneqq G_1 \otimes H_2 + G_2 \otimes H_1$.
        By \cref{eq:connected,eq:prod},
		for every connected graph $K$,
		it holds that $\hom(K, G) = \hom(K, H)$
		if, and only if,
		\[
			\left( \hom(K, G_1) - \hom(K, H_1) \right)
			\left( \hom(K, G_2) - \hom(K, H_2) \right)
			= 0.
		\]
		It follows that $\hom(F, G) \neq \hom(F, H)$
        and $G \equiv_{\mathcal{F}_1 + \mathcal{F}_2} H$.
	\end{proof}

    The following corollary allows us to compute the homomorphism distinguishing closure of a sum of two componental graph classes as the sum of their closures.
	
	\begin{corollary}\label{cor:componental-closure}
		For componental graph classes $\mathcal{F}_1, \mathcal{F}_2 \subseteq \G$, it holds that
		\[ \cl(\mathcal{F}_1 + \mathcal{F}_2) = \cl(\mathcal{F}_1) + \cl(\mathcal{F}_2).\]
	\end{corollary}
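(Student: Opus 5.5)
We prove the two inclusions separately, in both cases working with connected graphs and using that componental classes are determined by their connected members.

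For $\supseteq$, observe that $\mathcal{F}_i \subseteq \mathcal{F}_1 + \mathcal{F}_2$. Monotonicity \eqref{eq:cl-increasing} then gives $\cl(\mathcal{F}_i) \subseteq \cl(\mathcal{F}_1 + \mathcal{F}_2)$ for $i = 1, 2$. By \cref{lem:cl-componental}, $\cl(\mathcal{F}_1 + \mathcal{F}_2)$ is componental. It therefore contains the smallest componental class containing both $\cl(\mathcal{F}_1)$ and $\cl(\mathcal{F}_2)$, namely $\cl(\mathcal{F}_1) + \cl(\mathcal{F}_2)$.

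For $\subseteq$, we first note that each $\cl(\mathcal{F}_i)$ is componental by \cref{lem:cl-componental}. It is also homomorphism distinguishing closed by idempotence \eqref{eq:cl-idempotent}. Hence \cref{lem:componental-closed} applies and shows that $\cl(\mathcal{F}_1) + \cl(\mathcal{F}_2)$ is homomorphism distinguishing closed. Extensivity \eqref{eq:cl-extensive} gives $\mathcal{F}_i \subseteq \cl(\mathcal{F}_i)$, so
\[
\mathcal{F}_1 + \mathcal{F}_2 \subseteq \cl(\mathcal{F}_1) + \cl(\mathcal{F}_2).
\]
Applying $\cl$ to both sides, monotonicity and closedness yield
\[
\cl(\mathcal{F}_1 + \mathcal{F}_2) \subseteq \cl\bigl(\cl(\mathcal{F}_1) + \cl(\mathcal{F}_2)\bigr) = \cl(\mathcal{F}_1) + \cl(\mathcal{F}_2).
\]

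Every step here is formal once \cref{lem:componental-closed} and \cref{lem:cl-componental} are available. The only point to check is that \cref{lem:componental-closed} really requires componentality of its inputs, and this is exactly what \cref{lem:cl-componental} supplies for the closures.
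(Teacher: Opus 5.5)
Your proof is correct and follows the paper's argument essentially step for step. For $\subseteq$ you use extensivity together with \cref{lem:componental-closed}, applied to the closures, which are componental by \cref{lem:cl-componental}; for $\supseteq$ you use monotonicity together with componentality of $\cl(\mathcal{F}_1+\mathcal{F}_2)$. You are if anything slightly more careful than the paper: you invoke idempotence to check that each $\cl(\mathcal{F}_i)$ is closed, which is a hypothesis of \cref{lem:componental-closed}, and you correctly attribute the final equality $\cl\bigl(\cl(\mathcal{F}_1)+\cl(\mathcal{F}_2)\bigr)=\cl(\mathcal{F}_1)+\cl(\mathcal{F}_2)$ to closedness of the sum rather than to idempotence.
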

	\begin{proof}
		By \eqref{eq:cl-extensive}, $\mathcal{F}_1 + \mathcal{F}_2 \subseteq \cl(\mathcal{F}_1) + \cl(\mathcal{F}_2)$.
		By \cref{lem:cl-componental}, $\cl(\mathcal{F}_1)$ and $\cl(\mathcal{F}_2)$ are componental.
		By \cref{lem:componental-closed}, $\cl(\mathcal{F}_1) + \cl(\mathcal{F}_2)$ is homomorphism distinguishing closed
		and thus 
        \[
            \cl(\mathcal{F}_1 + \mathcal{F}_2)  
            \overset{\eqref{eq:cl-increasing}}{\subseteq }\cl(\cl(\mathcal{F}_1) + \cl(\mathcal{F}_2))
            \overset{\eqref{eq:cl-idempotent}}{=} \cl(\mathcal{F}_1) + \cl(\mathcal{F}_2).
        \]
		Conversely, $\cl(\mathcal{F}_1), \cl(\mathcal{F}_2) \subseteq \cl(\mathcal{F}_1 + \mathcal{F}_2)$ by \eqref{eq:cl-increasing}.
		By \cref{lem:cl-componental},
		$\cl(\mathcal{F}_1) + \cl(\mathcal{F}_2) \subseteq \cl(\mathcal{F}_1 + \mathcal{F}_2)$.
	\end{proof}

    \subsection{Topological properties}

    \propFrechet*

    The proof of \cref{prop:frechet} requires the following theorem.

    \begin{theorem}[\textcite{lovasz_operations_1967,chaudhuri_optimization_1993}]\label{thm:dual-lovasz}
        Two graphs $F, K \in \G$ are isomorphic if, and only if,  for all graphs $G \in \G$, it holds that $\hom(F, G) = \hom(K,G)$.       
    \end{theorem}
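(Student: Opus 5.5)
The forward direction is immediate, since isomorphic graphs have the same homomorphism counts into every $G$. For the backward direction I would not use the closure machinery. Instead I would pass from homomorphism counts to counts of \emph{fully surjective} homomorphisms by M\"obius inversion over the subgraph lattice, as in the proof of \cref{lem:unique-subgraph-series}, and then plug in $G \coloneqq F$ and $G \coloneqq K$.

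Concretely, for simple graphs $F, G$ let $\surj^*(F, G)$ be the number of homomorphisms $h \colon F \to G$ that are surjective on vertices and also surjective on edges, meaning every edge of $G$ is the image of some edge of $F$. Every homomorphism $h \colon F \to G$ maps onto a unique subgraph of $G$, namely its image $G[h(V(F)), h(E(F))]$, and it is fully surjective onto that subgraph. Hence
\[
	\hom(F, G) = \sum_{S \subseteq V(G)} \ \sum_{E' \subseteq E(G[S])} \surj^*(F, G[S, E']).
\]
This has exactly the shape of \eqref{eq:invert-vertices} and \eqref{eq:invert-edges} combined. Applying \cref{lem:mobius-inversion} twice, first over vertex subsets and then over edge subsets exactly as in the existence part of \cref{lem:unique-subgraph-series}, gives
\[
	\surj^*(F, G) = \sum_{S \subseteq V(G)} \ \sum_{E' \subseteq E(G)} (-1)^{|V(G)\setminus S| + |E(G)\setminus E'|} \hom(F, G[S, E']).
\]
The coefficients here depend only on $G$, not on $F$. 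So the hypothesis $\hom(F, \star) = \hom(K, \star)$ yields $\surj^*(F, G) = \surj^*(K, G)$ for every simple graph $G$. The step needing the most care is this inversion. One must check that the pairs $(S, E')$ with $E' \subseteq E(G[S])$ form a product-like lattice on which the double inversion is valid. One must also check that the subgraphs $G[S, E']$ are counted as labelled subgraphs of $G$, not up to isomorphism, so that no automorphism factors enter.

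Now take $G \coloneqq F$. The identity map shows $\surj^*(K, F) = \surj^*(F, F) \geq 1$, so there is a homomorphism $K \to F$ surjective on vertices and edges; hence $|V(K)| \geq |V(F)|$ and $|E(K)| \geq |E(F)|$. Symmetrically, $G \coloneqq K$ gives $|V(F)| \geq |V(K)|$ and $|E(F)| \geq |E(K)|$. Thus the fully surjective homomorphism $h \colon K \to F$ is a bijection on vertices. Since $h$ is injective on vertices, it is also injective on edges, and the edge counts agree, so it induces a bijection $E(K) \to E(F)$. A vertex bijection that maps edges bijectively onto edges is an isomorphism, so $F \cong K$.
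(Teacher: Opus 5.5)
Your proof is correct. The paper does not prove this theorem: it cites it from Lov\'asz (1967) and Chaudhuri--Vardi (1993) and uses it as a black box in the proof of \cref{prop:frechet}. Your argument is therefore a genuinely different route, namely a self-contained version of the classical surjective-homomorphism argument, and it runs entirely on the paper's own machinery.

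The inversion step you flag as delicate needs no separate verification. Since $\hom(F,\star) = \sum_{L} \surj^*(F,L)\,\sub(L,\star)$ and $\surj^*(F,\star)$ is isomorphism-invariant, the uniqueness part of \cref{lem:unique-subgraph-series}, applied to $p = \hom(F,\star)$, gives $\surj^*(F,L) = \alpha_L$ directly, and $\alpha_L$ is exactly your alternating formula. If you prefer to check it by hand: the subgraph poset is not a product of Boolean lattices, but your two-step inversion is still valid. The intermediate quantity $\sum_{E' \subseteq E(G)} \surj^*(F, G[V(G),E'])$ is just the number of vertex-surjective homomorphisms $F \to G$, so each of the two single inversions takes place over a genuine Boolean lattice.

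You could also drop the edge inversion altogether. Vertex-surjective counts alone give $|V(F)| = |V(K)|$. Then the vertex-surjective homomorphisms $K \to F$ and $F \to K$ are vertex bijections, hence injective on edges, which yields $|E(K)| \le |E(F)| \le |E(K)|$, and the isomorphism follows exactly as in your last paragraph.

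Compared with the citation, your route makes the paper self-contained, and it also covers the vertex-less graph and, verbatim, looped-simple graphs. The citation buys brevity.
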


	\begin{proof}[Proof of \cref{prop:frechet}]
		Let $F$ be a connected graph.
		We show that the componental graph class generated by $\{F\}$ is homomorphism distinguishing closed.
		That is, for every graph $K$ not in this class, there exist graphs $X$ and $Y$ such that $\hom(K, X) \neq \hom(K, Y)$ and $\hom(F, X) = \hom(F, Y)$.
		
		By \cref{lem:cl-componental}, it suffices to consider a connected graph $K$ such that $F \not\cong K$.
		By \cref{thm:dual-lovasz},
		there exists a graph $G \in \G$ such that $\hom(F, G) \neq \hom(K, G)$.
		Consider the graphs $X \coloneqq G \otimes K$ and $Y \coloneqq \hom(F, G) K$.
		Then $\hom(F, X) = \hom(F, G) \hom(F, K) = \hom(F, Y)$ by \cref{eq:connected,eq:prod}.
		Moreover, $\hom(K, X) = \hom(K, G) \hom(K, K) \neq \hom(F, G) \hom(K, K) = \hom(K, Y)$
		since $\hom(K, K) \neq 0$.
		The final assertion now follows from \cref{lem:componental-closed}. 
	\end{proof}
    \propDisconnected*
	\begin{proof}
		In order to show that our topology is disconnected, we must exhibit a non-trivial clopen set. 
		In other words, by \cite[Lemma 1.1]{milne_basic_2012}, we must find a non-trivial idempotent in $\mathbb{C}\lG$.
		We use the identity $K_2 \otimes K_2 = K_2 + K_2$, which yields the non-trivial idempotent $\frac12 K_2$. In particular, the sets $\{F \in \Z \mid \hom(F, K_2) = 0\} = \mathcal{V}(\{K_2\}) \cap \Z$ and $\{F \in \Z  \mid \hom(F, K_2) = 2\} = \mathcal{V}(\{K_2 - 2\}) \cap \Z$ are closed by \cref{lem:closed-to-vanishing-looped} and disjoint. They are the componental graph classes of bipartite and non-bipartite graphs respectively.

		A further example is given by the idempotent $K_1 = K_1 \otimes K_1$, which shows that the set  $\{F \in \Z \mid \hom(F, K_1) = 0\}$ of all graphs with at least one edge is closed; hence $\{K_1\}$ is open, and it is closed by \cref{prop:frechet}, thus clopen.

		For non-compactness, we recast the proof of \cite[Theorem~6.15]{roberson_oddomorphisms_2022}, which shows that, for every $S \subseteq \mathbb{N}$, the componental graph class $\mathcal{K}_S$ generated by the complete graphs $K_s$ with $s \in S$ is homomorphism distinguishing closed.
		Write $\mathcal{K}_\ell \coloneqq \mathcal{K}_{\{\ell, \ell+1, \ell+2, \dots \}}$.
		The family of (closed) sets $\mathcal{K}_\ell$ for $\ell \in \mathbb{N}$ has the finite intersection property, i.e., any finite subfamily has non-empty intersection.
		However, the intersection $\bigcap_{\ell \in \mathbb{N}} \mathcal{K}_\ell = \emptyset$ is empty.
	\end{proof}

    \appendix

    \section{Möbius inversion over the Boolean lattice}
    \label{sec:mobius}

	For the reader's convenience, we state the fact about Möbius inversion
	underlying the proof of \cref{lem:unique-subgraph-series}.
	It is the instance of the Möbius inversion formula
	\cite[Proposition~3.7.1]{stanley_enumerative_2012}
	for the Boolean lattice $2^X$,
	whose Möbius function is $\mu(T, S) = (-1)^{|S \setminus T|}$,
	and it is equivalent to the principle of inclusion--exclusion.

	\begin{lemma}[{\cite[Example~3.8.3]{stanley_enumerative_2012}}]\label{lem:mobius-inversion}
		Let $X$ be a finite set and let $f, g \colon 2^X \to \mathbb{C}$.
		The following are equivalent:
		\begin{assertions}
			\item\label{it:mobius-down} $g(S) = \sum_{T \subseteq S} (-1)^{|S \setminus T|} f(T)$ for all $S \subseteq X$,
			\item\label{it:mobius-up} $f(S) = \sum_{T \subseteq S} g(T)$ for all $S \subseteq X$.
		\end{assertions}
	\end{lemma}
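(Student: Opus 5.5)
We will prove both implications directly by substituting one formula into the other and collapsing the resulting double sum with a single binomial identity. That identity, which we establish first, states that for $T \subseteq S \subseteq X$,
\[
	\sum_{U \colon T \subseteq U \subseteq S} (-1)^{|S \setminus U|} = [S = T].
\]
To see it, note that the sets $U$ with $T \subseteq U \subseteq S$ correspond bijectively to subsets $W \coloneqq S \setminus U$ of $S \setminus T$. Grouping by $j = |W|$, the sum becomes $\sum_{j=0}^{m} \binom{m}{j}(-1)^j = (1-1)^m$ with $m = |S \setminus T|$. This equals $1$ if $m = 0$, i.e.\ $S = T$, and $0$ otherwise (using $0^0 = 1$). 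A second, symmetric version, $\sum_{U \colon T \subseteq U \subseteq S} (-1)^{|U \setminus T|} = [S=T]$, follows by the same argument.

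For \ref{it:mobius-up} $\Rightarrow$ \ref{it:mobius-down}, we fix $S$ and substitute $f(U) = \sum_{T \subseteq U} g(T)$ into $\sum_{U \subseteq S} (-1)^{|S \setminus U|} f(U)$. Exchanging the two finite sums gives
\[
	\sum_{T \subseteq S} g(T) \sum_{T \subseteq U \subseteq S} (-1)^{|S \setminus U|}.
\]
By the identity, the inner sum is $[T = S]$, so the whole expression reduces to $g(S)$, as required.

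For \ref{it:mobius-down} $\Rightarrow$ \ref{it:mobius-up}, we proceed symmetrically. We substitute $g(U) = \sum_{T \subseteq U} (-1)^{|U \setminus T|} f(T)$ into $\sum_{U \subseteq S} g(U)$ and exchange the sums to obtain
\[
	\sum_{T \subseteq S} f(T) \sum_{T \subseteq U \subseteq S} (-1)^{|U \setminus T|}.
\]
Here the inner sum is $[T=S]$ by the second version of the identity, so the expression equals $f(S)$.

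Alternatively, one can view the two maps $f \mapsto g$ and $g \mapsto f$ as linear endomorphisms of the finite-dimensional space $\mathbb{C}^{2^X}$. One composition is the identity by the first computation. Since the space is finite-dimensional, a one-sided inverse is automatically two-sided, which yields the converse without repeating the calculation.

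No step is a genuine obstacle. The only care needed is the index bookkeeping when exchanging the order of summation, and checking that the exponent $|S\setminus U|$ in the first computation and $|U \setminus T|$ in the second are the ones that produce $(1-1)^{|S\setminus T|}$.
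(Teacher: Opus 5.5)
Your proof is correct and is essentially the standard argument. The paper gives no proof of its own and simply cites M\"obius inversion over the Boolean lattice (Stanley, Example~3.8.3). Your two binomial identities are exactly the statement that $\mu(T,S) = (-1)^{|S \setminus T|}$ is a two-sided inverse of the zeta function on $2^X$, and substituting and exchanging the finite sums is the general inversion proof specialised to this lattice.
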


	\printbibliography
\end{document}